\newtheorem{theorem}{Theorem}[section]
\newtheorem{prop}[theorem]{Proposition}
\newtheorem{lemma}[theorem]{Lemma}
\newtheorem{cor}[theorem]{Corollary}
\theoremstyle{remark}
\newtheorem{dfn}[theorem]{Definition}
\newtheorem{remark}[theorem]{Remark}
\def\co{\colon\thinspace}
\def\delbar{\bar{\partial}}
\def\barmu{\bar{\mu}}
\def\ep{\epsilon}
\def\AR{\mathbb{A}_{R_M}}
\def\EB{\mathbb{E}_{\mathcal{B}}}
\def\R{\mathbb{R}}
\def\Z{\mathbb{Z}}
\def\S{Spec\,}
\def\p{\frak{p}}
\def\ev{\mathbf{ev}}
\title[Deformed Hamiltonian Floer theory]{Deformed Hamiltonian Floer theory, capacity estimates, and Calabi quasimorphisms}
\author{Michael Usher}
\date{\today}\address{Department of Mathematics\\University of Georgia\\Athens, GA 30602}\email{usher@math.uga.edu}
\begin{document}
\begin{abstract}
We develop a family of deformations of the differential and of the pair-of-pants product on the Hamiltonian Floer complex of a symplectic manifold $(M,\omega)$ which upon passing to homology yields ring isomorphisms with the \emph{big} quantum homology of $M$.  By studying the properties of the resulting deformed version of the Oh--Schwarz spectral invariants, we obtain a Floer-theoretic interpretation of a result of Lu which bounds the Hofer--Zehnder capacity of $M$ when $M$ has a nonzero Gromov--Witten invariant with two point constraints, and we produce a new algebraic criterion for $(M,\omega)$ to admit a Calabi quasimorphism and a symplectic quasi-state.  This latter criterion is found to hold whenever $M$ has generically semisimple quantum homology in the sense considered by Dubrovin and Manin (this includes all compact toric $M$), and also whenever $M$ is a point blowup of an arbitrary closed symplectic manifold.
\end{abstract}
\maketitle

\section{Introduction}

The three-point genus zero Gromov--Witten invariants of a closed symplectic manifold $(M,\omega)$ can be organized in such a way as to give a product $\ast_0$ on the homology $H_*(M;\Lambda_{\omega})$ of $M$ where $\Lambda_{\omega}$ is a certain Novikov ring.  The resulting ring $(H_*(M,\Lambda_{\omega}),\ast_0)$, called the (undeformed) quantum homology of $(M,\omega)$, has become a fundamental tool of modern symplectic topology, in part due to the existence of a ring isomorphism \cite{PSS} from the quantum homology to Hamiltonian Floer homology with its pair of pants product.

From the early days of physicists' conception of quantum homology (see \cite[p. 323]{Wi90},\cite[Section 3]{Wi91}) it was anticipated that $\ast_0$ would be just one member of a whole family of quantum products $\ast_{\eta}$, with the deformation parameter $\eta$ varying in the homology of $M$,\footnote{our convention will be that, if $\dim M=2n$, $\eta$ is an element of $\oplus_{i=0}^{n-1}H_{2i}(M;\Lambda_{\omega}^{0})$ for a certain subring $\Lambda_{\omega}^{0}<\Lambda_{\omega}$ containing $\mathbb{C}$ (see Section \ref{conv}(c) below).} each of which has its structure constants given by an $\eta$-dependent formula involving counts of genus-zero pseudoholomorphic curves.  As demonstrated in \cite{KM}, the composition law which was used in \cite{RT} to prove the existence and associativity of the small quantum product $\ast_0$ also establishes the corresponding facts for all of the $\ast_{\eta}$; the family of rings $\{(H_*(M;\Lambda_{\omega}),\ast_{\eta})\}$ (or some other structure carrying equivalent information) is known in the literature as the big quantum homology of $(M,\omega)$. 

While much interesting work has been done relating to the rich algebraic and geometric structure intrinsic to the big quantum homology (see for instance \cite{Man}), the big quantum homology does not seem to have had many external applications to problems in symplectic topology.  In this paper we provide some such applications.  The applications have in common the following outline: one shows that a symplectic manifold satisfies some desirable property if there is some $\eta$ such that the ring $(H_*(M,\Lambda_{\omega}),\ast_{\eta})$ obeys a certain condition, and then argues that the condition is satisfied for some $\eta$.  Since the rings $(H_*(M,\Lambda_{\omega}),\ast_{\eta})$ are typically not mutually  isomorphic, the freedom to vary $\eta$ allows one to prove more results than would be available if one confined oneself to using undeformed $(\eta=0)$ quantum homology.

Our applications concern problems in Hamiltonian dynamics, and we bring big quantum homology to bear on these problems by connecting it to Hamiltonian Floer theory. In particular, we prove:
\begin{theorem}\label{floermain} Let $\eta\in \oplus_{i=0}^{n-1}H_*(M;\Lambda_{\omega}^{0})$.  Then for generic Hamiltonians $H\co S^1\times M\to\mathbb{R}$ we may construct an $\mathbb{R}$-filtered, $\eta$-deformed Hamiltonian Floer complex \[ \left(CF(H)=\cup_{\lambda\in\mathbb{R}}CF^{\lambda}(H),\partial^{\eta,H}\right)\] with the following properties:
\begin{itemize} \item[(i)] $(\partial^{\eta,H})^{2}=0$, and where $HF(H)_{\eta}=H_*(CF(H),\partial^{\eta,H})$ is the resulting homology there is an isomorphism $\underline{\Phi}_{\eta,H}^{PSS}\co H_*(M,\Lambda_{\omega})\to HF(H)_{\eta}$.
\item[(ii)] For generic pairs $(H,K)$, where $H\lozenge K\co S^1\times M\to\mathbb{R}$ denotes the concatenation of $H$ and $K$ (Section \ref{conv}(g)), there is a pair-of-pants product $\ast_{\eta}^{Floer}\co CF(H)\otimes CF(H)\to CF(H\lozenge K)$ which is a chain map with respect to the differentials $\partial^{\eta,\cdot}$, restricts for each $\lambda,\mu\in\mathbb{R}$ to a map $CF^{\lambda}(H)\otimes CF^{\mu}(H)\to CF^{\lambda+\mu}(H\lozenge K)$, and whose induced map on homology $\underline{\ast}^{Floer}_{\eta}$ fits into a commutative diagram 
\[  \xymatrix{ H_*(M;\Lambda_{\omega})\otimes H_*(M;\Lambda_{\omega})\ar[d]_{\ast_{\eta}}\ar[rr]^{\underline{\Phi}^{PSS}_{\eta,H}\otimes \underline{\Phi}^{PSS}_{\eta,K}}& & HF(H)_{\eta}\otimes HF(K)_{\eta} \ar[d]^{\underline{\ast}^{Floer}_{\eta}} \\ H_*(M;\Lambda_{\omega})\ar[rr]^{\underline{\Phi}^{PSS}_{\eta,H\lozenge K}} & & HF(H\lozenge K)_{\eta}
} \] \end{itemize}
\end{theorem}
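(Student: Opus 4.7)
The plan is to imitate in the Floer-theoretic setting the way the small quantum product $\ast_0$ is deformed to $\ast_\eta$ by insertions of marked points constrained to cycles representing $\eta$. Writing $\eta$ as a formal combination of basis classes $e_\alpha \in H_{2i}(M;\Lambda_\omega^0)$ with $i\leq n-1$, one defines the $\eta$-deformed Floer differential on a generator $x$ by summing over periodic orbits $y$ and integers $k\geq 0$ the weighted count
\[ \langle \partial^{\eta,H} x, y\rangle \;=\; \sum_{k\geq 0}\frac{1}{k!}\,\#\mathcal{M}(x,y;\eta^{\otimes k}), \]
where $\mathcal{M}(x,y;\eta^{\otimes k})$ is the zero-dimensional part of the moduli space of Floer cylinders from $x$ to $y$ equipped with $k$ ordered interior marked points whose evaluations are constrained by $k$ independent copies of $\eta$. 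The grading restriction $\eta \in \oplus_{i\leq n-1} H_{2i}$ is critical: it forces each marked-point incidence to have strictly positive codimension, so that the expected-dimension count is finite and that bubbling configurations where a sphere absorbs some marked points occur in codimension at least two.

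The identity $(\partial^{\eta,H})^2=0$ then follows from the usual boundary analysis of the compactified one-dimensional moduli spaces. Their codimension-one boundary splits into three kinds of strata: Floer breakings of the cylinder, which assemble to $(\partial^{\eta,H})^2$ once one accounts for how the $k$ marked points distribute among the two resulting cylinders (the factorials precisely absorb the multinomial combinatorics); collisions of marked points, which cancel in symmetric pairs thanks to the $1/k!$ weights; and sphere bubbling carrying some constrained points, which is excluded by the codimension count just noted. The PSS map $\underline{\Phi}^{PSS}_{\eta,H}$ and its inverse are built analogously by weighted counts of spiked half-cylinders with a sphere bubble carrying the input cycle as an incidence condition and with $k$ additional marked points constrained by $\eta$; the deformed PSS is an isomorphism by the standard cobordism argument identifying the two composites with the identity up to chain homotopy, again carried out with $\eta$-insertions glued through.

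The pair-of-pants product $\ast_\eta^{Floer}$ is defined by the same recipe applied to genus-zero surfaces with three cylindrical ends asymptotic to the given periodic orbits of $H$, $H$, and $H\lozenge K$, with $k$ interior marked points subject to $\eta$ and weighted by $1/k!$. The filtration property $CF^\lambda(H)\otimes CF^\mu(H)\to CF^{\lambda+\mu}(H\lozenge K)$ is automatic from the usual energy-action inequality, as closed $\eta$-incidence conditions do not interact with the action filtration; the chain-map property is the same boundary analysis as in (i). The commutativity of the diagram with $\ast_\eta$ is obtained by neck-stretching: attaching PSS caps to all three ends of a pair of pants and letting the necks degenerate produces, in the limit, a nodal rational curve in $M$ with three PSS half-cylinders attached, and the $\eta$-insertions survive into this limit to yield precisely the configuration defining the big quantum product. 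The resulting cobordism identifies the two chain-level compositions in the diagram up to homotopy.

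The principal analytic obstacle is transversality for the moduli spaces carrying both periodic-orbit asymptotics and marked-point incidence constraints, especially for general closed symplectic $M$ beyond the semipositive case. This forces the use of an abstract virtual-perturbation scheme compatible with the marked-point evaluation maps, with coherent choices across the cylinder, sphere, spiked-disc, and pair-of-pants families so that the differentials, PSS maps, and products are all simultaneously well defined and the stated compatibilities hold. A secondary but essential issue is verifying that the Novikov-valued infinite sums in $k$ converge in the $\Lambda_\omega$-adic topology; this is controlled by the positive-codimension restriction on $\eta$ combined with Gromov compactness for curves of bounded energy.
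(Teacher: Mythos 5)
Your outline reproduces the headline structure of the paper's construction---a deformed differential, PSS maps, and pair-of-pants product each defined by $\eta$-constrained marked points with $1/k!$ weights, with transversality handled by a virtual-perturbation scheme in the general case---but it misstates the central technical obstruction that makes the construction delicate, and the mechanism you invoke to resolve marked-point collisions is not the correct one. You say the $k$ marked points are constrained by ``$k$ independent copies of $\eta$'' and that collisions of marked points ``cancel in symmetric pairs thanks to the $1/k!$ weights.'' Both claims fail. Because Floer moduli spaces have codimension-one boundary (unlike Gromov--Witten moduli), one cannot use independently chosen, mutually generic representatives of $\eta$ for the different marked points: the gluing argument for $(\partial^{\eta,H})^2=0$ produces from a broken configuration---each half carrying one incidence constraint on a fixed cycle $N$---a single cylinder that must meet the \emph{same} $N$ twice, not two homologous translates of it, and moving the cycle in a one-parameter family changes the chain-level operation. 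Once one commits to fixed cycles, transversality for the fiber products genuinely fails along the diagonals in $(\mathbb{R}\times S^1)^k$ where marked points collide; this is a failure of surjectivity of linearized operators, and the symmetry factor $1/k!$ does nothing to fix it. The paper resolves it either by the fiber-product Kuranishi formalism of \cite{FOOO09}, or (in the semipositive case) by modifying the evaluation maps: the $i$th marked point is postcomposed with the time-$\tau_{\beta,i}(\vec z)$ flow of a gradient-like vector field $V$, with $\tau_{\beta,i}$ determined by the relative positions of the other marked points via a positive function $\beta$ decaying at $\pm\infty$. This separates colliding marked points in the target while remaining compatible with cylinder breaking precisely because $\beta\to 0$ at infinity.

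A secondary point: you treat the codimension-two part of $\eta$ on the same footing as the higher-codimension part, whereas the paper deliberately splits them, handling $\eta_D\in H_{2n-2}$ by weighting each cylinder by $\exp(\int_C\theta)$ for a closed $2$-form $\theta$ Poincar\'e dual to $\eta_D$, and reserving marked-point incidence conditions for classes of codimension at least four. This split is not merely cosmetic. The finiteness of the sum over $k$ at a fixed homology class $C$ comes from the inequality $\delta(I)\geq 2k$ together with $\delta(I)\leq\bar\mu(C)-1$, and $\delta(I)\geq 2k$ holds only because each constraint cycle has codimension at least four. Codimension-two incidence constraints would contribute zero to $\delta(I)$, so your formula would a priori involve infinitely many $k$ at each energy level; the paper sidesteps this by invoking the divisor axiom to resum the codimension-two insertions into an exponential factor.
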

For proofs of the various statements in Theorem \ref{floermain}, see Corollary \ref{delsquared} and Propositions \ref{prodfilt} and \ref{ringiso}.
 
 Theorem \ref{floermain} allows one to define deformed versions $\rho(a;H)_{\eta}$ of the Oh-Schwarz spectral invariants (\cite{Sc00},\cite{Oh06})
in a standard way: where $a\in H_*(M;\Lambda_{\omega})$ and $H$ is suitably nondegenerate $\rho(a;H)_{\eta}$ is the infimal filtration level of a chain representing the homology class $\underline{\Phi}_{\eta,H}^{PSS}a$. One can show that these invariants satisfy a number of properties similar to familiar properties from the undeformed case; see Proposition \ref{rhoprop} for a list of several of these.  Among these properties we mention in particular that $\rho(a;\cdot)_{\eta}$ extends by continuity to arbitrary continuous functions $H\co S^1\times M\to\mathbb{R}$, and that we have a triangle inequality \begin{equation}\label{inttri} \rho(a\ast_{\eta}b;H\lozenge K)_{\eta}\leq \rho(a;H)_{\eta}+\rho(b;K)_{\eta}.\end{equation} 

The spectral invariants of most Hamiltonians are difficult to compute, so (\ref{inttri}) is an important tool in understanding them; since the sort of information conveyed by (\ref{inttri}) changes whenever the quantum multiplication $\ast_{\eta}$ changes, we can begin to see how it may be useful to study the spectral invariants for all values of the deformation parameter $\eta$, rather than just those for $\eta=0$ as has been done in the past.

Let us now discuss our applications.
\subsection{Hofer--Zehnder capacity}  The $\pi_1$-sensitive Hofer--Zehnder capacity of the closed symplectic manifold $(M,\omega)$ may be expressed as \[ c_{HZ}^{\circ}(M,\omega)=\sup\left\{\max H-\min H\left|\begin{array}{cc}H\co M\to\mathbb{R},\mbox{ all contractible periodic orbits of the}\\ \mbox{Hamiltonian vector field $X_H$ of period $\leq 1$ are constant}\end{array}\right.\right\}.\]  While it often holds that $c_{HZ}^{\circ}(M,\omega)=\infty$, the case when $c_{HZ}^{\circ}(M,\omega)<\infty$ is of considerable interest, for instance because it implies  the Weinstein conjecture for contact (or indeed stable) hypersurfaces of $M$.  Using the deformed Hamiltonian Floer complexes, we give a new proof of the following theorem of Lu (see also \cite{HV},\cite{LiuT00} for earlier related results).

\begin{theorem}[{\cite[Corollary 1.19]{Lu06}}] \label{gwhz} Suppose that $(M,\omega)$ admits a nonzero Gromov--Witten invariant of the form \[ \langle [pt],a_0,[pt],a_1,\ldots,a_k\rangle_{0,k+3,A},\] where $A\in H_2(M;\mathbb{Z})/torsion$ and $a_0,\ldots,a_k$ are rational homology classes of even degree.  Then \[ c_{HZ}^{\circ}(M,\omega)\leq \langle[\omega],A\rangle.\]\end{theorem}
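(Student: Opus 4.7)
The plan is to use the deformed spectral invariants $\rho(\cdot;\cdot)_\eta$ of Theorem \ref{floermain} in a Schwarz--Oh style capacity argument, with the deformation $\eta$ chosen so that the hypothesized nonzero $(k+3)$-point Gromov--Witten invariant (with its two point insertions) forces $[pt] \ast_\eta [pt]$ to have a specific nontrivial contribution at Novikov level $\langle[\omega], A\rangle$. The triangle inequality (\ref{inttri}) applied to $[pt] \ast_\eta [pt]$, for a Hamiltonian with only constant contractible $1$-periodic orbits, will then yield the claimed bound.

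\emph{Step 1 (Algebraic translation).} By the Kontsevich--Manin composition law, higher-point genus-zero Gromov--Witten invariants appear as Taylor coefficients of the structure constants of $\ast_\eta$ around $\eta = 0$. I would introduce formal variables $s_1, \ldots, s_k$ in the subring $\Lambda_\omega^0$ (this being one reason the parameter space is enlarged to include Novikov coefficients) and set $\eta = s_1 a_1 + \cdots + s_k a_k$. The hypothesis $\langle[pt], a_0, [pt], a_1, \ldots, a_k\rangle_{0,k+3,A} \neq 0$ then translates into the assertion that the pairing $\langle [pt] \ast_\eta [pt], a_0\rangle_{\mathrm{PD}}$ has a nonzero coefficient of $s_1 \cdots s_k T^{\langle[\omega], A\rangle}$; consequently $[pt] \ast_\eta [pt] \neq 0$ and contains a nontrivial term at Novikov level $\langle[\omega], A\rangle$.

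\emph{Step 2 (Spectral contradiction).} Suppose $c_{HZ}^\circ(M,\omega) > \langle[\omega], A\rangle$, and pick an autonomous $H\co M\to\mathbb{R}$ with $\max H - \min H > \langle[\omega], A\rangle$ whose only contractible $1$-periodic orbits are constants. Perturbing to meet nondegeneracy and using continuity of $\rho(\cdot;\cdot)_\eta$ in $H$, observe that the action spectra of $H$ and $H \lozenge H$ lie in the discrete sets $\{-H(x) + \lambda\}$ and $\{-2H(x) + \lambda\}$ (with $x \in \mathrm{Crit}(H)$ and $\lambda$ in the period group of $\omega$) respectively. The triangle inequality
\[ \rho([pt] \ast_\eta [pt]; H \lozenge H)_\eta \leq 2\rho([pt]; H)_\eta, \]
combined with spectrality, the Step 1 observation about the Novikov content of $[pt]\ast_\eta[pt]$, and the standard Lipschitz-in-$H$ estimates on $\rho(\cdot;\cdot)_\eta$ recorded in Proposition \ref{rhoprop}, forces $\max H - \min H \leq \langle[\omega], A\rangle$, contradicting our choice of $H$.

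The main technical obstacle is Step 1: verifying carefully that, in the conventions of this paper, the deformed product $\ast_\eta$ for $\eta \in \oplus_{i=0}^{n-1}H_{2i}(M;\Lambda_\omega^0)$ faithfully records every higher-point Gromov--Witten invariant as an independent coefficient in its expansion, and in particular that the $A$-contribution at the multi-index $(1,\ldots,1)$ in the $s_i$'s does not cancel with contributions from other curve classes or multi-indices. The introduction of independent Novikov formal variables $s_i$ is designed precisely to separate these contributions. Once Step 1 is established, Step 2 should reduce to a direct adaptation of the standard Schwarz--Oh capacity argument carried out using the deformed spectral invariants.
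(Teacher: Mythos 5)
Your overall outline — use the deformed invariants $\rho(\cdot;\cdot)_\eta$, extract nontrivial quantum-product information from the hypothesized GW invariant, and close with a Schwarz--Oh style spectral estimate — matches the paper's strategy (Lemma \ref{mainhzlemma}, Corollary \ref{mainhzcor}, Corollary \ref{hzest}, Theorem \ref{slowthm}). But both steps as written contain gaps that would block a complete proof.

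In Step 1, introducing variables $s_1,\ldots,s_k$ dual to the specific classes $a_1,\ldots,a_k$ does \emph{not} by itself separate contributions of different curve classes. If $B\neq A$ also has $\langle[\omega],B\rangle=\langle[\omega],A\rangle$, then the $B$-contribution to $\langle[pt]\ast_\eta[pt],a_0\rangle$ lands on exactly the same monomial $s_1\cdots s_k\,T^{\langle[\omega],A\rangle}$, and your proposed coefficient could vanish by cancellation. Lemma \ref{mainhzlemma} resolves this by also varying the divisor part of $\eta$ over a full basis $\Delta_1,\ldots,\Delta_s$ of $H_{2n-2}$, so that the divisor axiom produces exponential factors $\prod_k (e^{y_k})^{\Delta_k\cap B}$, and the injectivity of $B\mapsto(\Delta_1\cap B,\ldots,\Delta_s\cap B)$ separates the competing curve classes. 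Without this (or an equivalent device) you cannot conclude that the relevant coefficient is a nonzero polynomial. Also note that what the argument actually needs is nontriviality of the \emph{fundamental-class component} of $[pt]\ast_\eta a_0$ at the indicated Novikov level (equivalently $\langle[pt]\ast_\eta[pt],a_0\rangle_{\mathrm{PD}}\neq 0$ there), because the downstream duality step pairs against $[pt]$; a ``nontrivial term'' in some other grading would not help.

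In Step 2, the inequality $\rho([pt]\ast_\eta[pt];H\lozenge H)_\eta\leq 2\rho([pt];H)_\eta$ is correct but toothless for this purpose: the right-hand side is bounded above by $2\max H$ (Proposition \ref{rhoprop}(iii)), and a lower bound for the left-hand side in terms of $\max(H\lozenge H)$ via Theorem \ref{slowthm} also involves only $\max H$, so $\min H$ never enters and no Hofer--Zehnder bound can come out. The paper instead uses $H$ together with the time-reversed $\bar H$, the duality formula $\rho(a;\bar H)_\eta=-\inf\{\rho(x;H)_\eta\,|\,\Pi(x,a)\neq 0\}$ (Proposition \ref{rhoprop}(vii)), and the fact that $\Pi(\cdot,[pt])\neq 0$ forces $\nu_{[M]}(\cdot)\geq 0$, so that Theorem \ref{slowthm} produces $\max H+\max\bar H=\max H-\min H$ on the lower side. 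Finally, your characterization of Step 2 as ``a direct adaptation of the standard Schwarz--Oh capacity argument'' substantially underestimates the work: the key lower bound $\rho(a;H)_\eta\geq\max H+\nu_{[M]}(a)$ for slow autonomous $H$ (Theorem \ref{slowthm}) is nontrivial even when $\eta=0$ and requires the $S^1$-equivariance argument discussed in Remark \ref{nokur}; it is not an automatic consequence of the formal properties listed in Proposition \ref{rhoprop}.
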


We prove this Theorem below as Corollary \ref{mainhzcor}.  Strictly speaking Lu's result is (at least superficially) a bit more general, for instance because it allows for descendant insertions and does not require the $a_i$ to have even degree; however I am not aware of any examples of manifolds satisfying Lu's hypotheses but not those of Theorem \ref{gwhz}.  As seen in \cite[Theorem 1.21]{Lu06}, the estimate provided by Theorem \ref{gwhz} is sharp for $M$ equal to a product $\prod_{i=1}^{k}\mathbb{C}P^{n_i}$ with any split symplectic form which is a multiple of the standard one on each factor.

Our proof of Theorem \ref{gwhz} brings it into the purview of Hamiltonian Floer theory.  In fact, we deduce Theorem \ref{gwhz} from part (ii) of the following Theorem \ref{hzestintro} about the deformed spectral invariants, which is new even in the undeformed case $\eta=0$ (though an important special case appears in \cite{U10a}).  As notation, if $H\co S^1\times M\to\mathbb{R}$ is a smooth function, $\bar{H}\co S^1\times M\to\mathbb{R}$ denotes the time-reversed Hamiltonian $\bar{H}(t,m)=-H(t,\phi_{H}^{t}(m))$, whose flow is inverse to that of $H$.

\begin{theorem}\label{hzestintro}  Fix $C>0$ and $\eta\in \oplus_{i=0}^{n-1}H_{2i}(M;\Lambda_{\omega})$ and suppose that one of the following two conditions holds: \begin{itemize} \item[(i)] There are $a,b\in H_*(M;\Lambda_{\omega})\setminus\{0\}$ such that, for all autonomous $H\co M\to\mathbb{R}$, \[ (\rho(a,H)_{\eta}-\nu_{[M]}(a))+(\rho(b,\bar{H})_{\eta}-\nu_{[M]}(b))\leq C;\] \emph{or} \item[(ii)] Where $[pt]$ is the standard generator of $H_0(M;\mathbb{C})$, for all $H\co M\to\mathbb{R}$, \[ \rho([pt];H)_{\eta}+\rho([pt];\bar{H})_{\eta}\geq -C.\] \end{itemize} Then the $\pi_1$-sensitive Hofer-Zehnder capacity of $(M,\omega)$ obeys the bound \[ c_{HZ}^{\circ}(M)\leq C.\]
\end{theorem}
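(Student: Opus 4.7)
The plan is to reduce both hypotheses (i) and (ii) to a single computation of the deformed spectral invariants on autonomous Hamiltonians $F\co M\to\mathbb{R}$ whose contractible 1-periodic orbits are all constant---these are exactly the Hamiltonians appearing in the supremum defining $c_{HZ}^\circ(M,\omega)$. The theorem will follow once I show $\max F - \min F \leq C$ for every such $F$.

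The central step is to prove, for $F$ as above, the pair of estimates
\[ \rho([pt]; F)_\eta \leq -\max F \qquad\text{and}\qquad \rho(a; F)_\eta - \nu_{[M]}(a) \geq -\min F \]
for every $a \in H_*(M;\Lambda_\omega)$ with $\nu_{[M]}(a) > -\infty$. My approach is a non-bifurcation / continuation argument along the rescaled family $\{sF\}_{s\in[0,1]}$. Each $sF$ is itself HZ-admissible (its period-$1$ orbits correspond to period-$s$ orbits of $X_F$ and are therefore constant), so after small nondegenerate perturbation the generators of $CF(sF)$ correspond to critical points of $F$ with Novikov shifts, and the relevant action spectrum consists of linear-in-$s$ branches $s\mapsto -sF(p)+\omega(A)$ with $p\in\mathrm{Crit}(F)$, each branch having slope $-F(p)$. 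Both $\rho([pt]; sF)_\eta$ and $\rho(a; sF)_\eta$ are Lipschitz in $s$ and must lie in this spectrum. For $s$ small enough that $sF$ is $C^2$-small, the deformed PSS isomorphism $\underline{\Phi}^{PSS}_{\eta,sF}$ from Theorem~\ref{floermain}(i) reduces to the classical Morse-theoretic identification modulo higher-energy quantum corrections, identifying $[pt]$ with a Morse maximum of $F$ and the leading $[M]$-component of $a$ with a Morse minimum. This pins the spectral invariants at small $s$ to the branches $-s\max F$ (for $[pt]$) and $-s\min F + \nu_{[M]}(a)$ (for $a$, as a lower bound). By the non-bifurcation of linear branches with distinct slopes, these (in)equalities propagate up to $s=1$.

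Granting the above estimates, the deduction is immediate. Under (ii), since $\bar F = -F$ in the autonomous case, the first estimate gives $\rho([pt]; F)_\eta \leq -\max F$ and $\rho([pt]; \bar F)_\eta \leq -\max(-F) = \min F$; summing and invoking the hypothesis $\rho([pt]; F)_\eta + \rho([pt]; \bar F)_\eta \geq -C$ yields $\max F - \min F \leq C$. Under (i), the second estimate gives $\rho(a; F)_\eta - \nu_{[M]}(a) \geq -\min F$ and $\rho(b; \bar F)_\eta - \nu_{[M]}(b) \geq -\min(-F) = \max F$, so the hypothesis that the sum is $\leq C$ again yields $\max F - \min F \leq C$. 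The hardest part will be making the non-bifurcation argument rigorous in the presence of the $\eta$-deformation: the spectral invariants in Theorem~\ref{floermain} are a priori defined only for generic $H$, so the computation along $\{sF\}$ must be carried out via small nondegenerate perturbations with carefully tracked effects on $\rho$, and one must verify that the $\eta$-dependent quantum corrections built into the deformed PSS map $\underline{\Phi}^{PSS}_{\eta,sF}$ do not disrupt the qualitative Morse-theoretic identification of $[pt]$ with a Morse maximum and of the leading $[M]$-component with a Morse minimum in the $C^2$-small limit.
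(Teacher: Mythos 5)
Your proposal contains a sign error, and even after correcting it, the non-bifurcation mechanism you invoke does not yield the required estimate; the paper's actual argument is of a fundamentally different character.

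\textbf{Sign error.} In the paper's conventions ($\iota_{X_H}\omega=-dH$ and $\mathcal{A}_H([\gamma,v])=-\int_{D^2}v^*\omega+\int_0^1 H\,dt$), the Floer complex of a $C^2$-small autonomous $F$ is the Morse complex of $F$ itself, and the PSS map identifies $[pt]$ with a Morse \emph{minimum} (action $\approx\min F$) and $[M]$ with a Morse \emph{maximum} (action $\approx\max F$). You have these reversed, which flips both of your claimed estimates. Both, as you state them, are false. For $F\equiv c>0$ one has $\rho([pt];F)_\eta=c$ by Proposition~\ref{rhoprop}(i),(iii), contradicting your $\rho([pt];F)_\eta\leq-\max F=-c$. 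For $a=[M]$ one has $\rho([M];F)_\eta=\max F$ by Theorem~\ref{slowthm} together with the upper bound in Proposition~\ref{rhoprop}(iii), which contradicts your $\rho(a;F)_\eta-\nu_{[M]}(a)\geq-\min F$ whenever $-\min F>\max F$. The correct individual bounds are $\rho(a;F)_\eta-\nu_{[M]}(a)\geq\max F$ (this \emph{is} Theorem~\ref{slowthm}) and $\rho([pt];F)_\eta\leq\min F$ (which the paper obtains from Theorem~\ref{slowthm} via the duality of Proposition~\ref{rhoprop}(vii), rather than directly). Because the hypotheses of Theorem~\ref{hzestintro} are symmetric under $F\leftrightarrow\bar F$, the sums of your two wrong estimates coincidentally agree with the sums of the correct ones, so the final inequality $\max F-\min F\leq C$ comes out; but the claimed intermediate identifications are incorrect.

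\textbf{The non-bifurcation argument is insufficient.} The key input is the lower bound $\rho(a;F)_\eta\geq\max F+\nu_{[M]}(a)$ for slow $F$. A continuation along $\{sF\}$ does not produce this: the branches $s\mapsto sF(p)-\omega(A)$ have distinct slopes and therefore cross (infinitely often if $\Gamma_\omega$ is dense), and at crossings $\rho(a;sF)$ may jump to another branch. The only control one extracts from Lipschitz continuity (Proposition~\ref{rhoprop}(ii)) is $\nu(a)+s\min F\leq\rho(a;sF)_\eta\leq\nu(a)+s\max F$, i.e.\ Proposition~\ref{rhoprop}(iii) again, which is strictly weaker than the estimate needed. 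The paper obtains the lower bound by an entirely different route: for time-independent $H$ and $J$ the relevant moduli spaces for the differential and PSS map carry $S^1$-actions whose only fixed points are $t$-independent configurations, and after a careful $S^1$-equivariant multisection argument one shows that the coefficient of the maximum $[p_0,0]$ in any representative of $\underline{\Phi}^{PSS}_{\eta,H}a$ cannot be cancelled by adding boundaries. This is the technically delicate core of Theorem~\ref{slowthm}, requiring either Kuranishi-structure technology or the minimal-Chern-number restriction of Remark~\ref{nokur}; a naive continuation / non-bifurcation sketch does not substitute for it.
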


Theorem \ref{hzestintro} is proven below as Corollary \ref{hzest}; we refer there for the definition of the quantity $\nu_{[M]}(a)\in \mathbb{R}\cup\{-\infty\}$, noting here only that $\nu_{[M]}(a)$ is finite if and only if $a\in H_*(M;\Lambda_{\omega})$ has a nontrivial component in $H_{2n}(M;\Lambda_{\omega})$.  Thus whenever $a,b\in H_*(M;\Lambda_{\omega})$ each have nontrivial components in $H_{2n}(M;\Lambda_{\omega})$ and there holds a universal bound $\rho(a;H)_{\eta}+\rho(b;\bar{H})_{\eta}\leq C'$ we obtain an explicit finite bound for the Hofer--Zehnder capacity of $(M,\omega)$.

We obtain Theorem \ref{gwhz} from Theorem \ref{hzestintro} by using the assumed nonzero Gromov--Witten invariant to find a value (indeed an open dense set of values) $\eta\in \oplus_{i=0}^{n-1}H_{2i}(M;\mathbb{C})$ so that $[pt]\ast_{\eta} a_0$ has a nontrivial component in $H_{2n}(M;\Lambda_{\omega})$; the triangle inequality and a standard duality property (Proposition \ref{rhoprop}(vii)) of the spectral invariants allow one to use this to obtain an estimate as in (ii) of Theorem \ref{hzestintro}.  Here it is essential to consider the deformed products $\ast_{\eta}$ and not just $\ast_0$. This point is obvious when the number $k+3$ of insertions in the assumed Gromov--Witten invariant is larger than $3$, since in that case the invariant is unseen by $\ast_0$ whereas it does contribute to $\ast_{\eta}$ for most values of $\eta$. In fact it is useful to allow $\eta$ to vary even in the case that $k+3=3$, since if one considered only $\eta=0$ the possibility would remain that the invariant would be cancelled by invariants coming from other classes in $H_{2}(M;\mathbb{Z})$, whereas  one can show that there are always some values of $\eta$ for which such cancellation does not occur.

\subsection{Calabi quasimorphisms}

Beginning with \cite{EP03}, work of Entov--Polterovich and others has shown that, on some symplectic manifolds, the asymptotic spectral invariant $\tilde{\phi}_H\mapsto \lim_{k\to\infty}\frac{\rho(e;H^{\lozenge k})_{0}}{k}$ (for $H$ normalized and $e$ a well-chosen idempotent with respect to the quantum multiplication $\ast_{0}$) defines (after multiplication by a constant) a so-called Calabi quasimorphism $\mu_{e,0}$ on the universal cover $\widetilde{Ham}(M,\omega)$ of the Hamiltonian diffeomorphism group.\footnote{In this paper we only discuss Calabi quasimorphisms on $\widetilde{Ham}(M,\omega)$; we do not address the interesting question of whether these can be arranged to descend to $Ham(M,\omega)$.  Thus in what follows a ``Calabi quasimorphism on $M$'' should be understood as being defined on $\widetilde{Ham}(M,\omega)$.}  Thus $\mu_{e,0}$ is a homogeneous quasimorphism (i.e. a map which fails to be a homomorphism by a uniformly bounded amount and is a homomorphism when restricted to cyclic subgroups) which, on the subgroup given by the flows generated by Hamiltonians supported in any given displaceable open subset, coincides with the classical Calabi homomorphism of \cite[Proposition II.4.1]{Ban}.  Moreover, in this case the formula $\zeta_{e,0}(H)=\lim_{k\to\infty}\frac{\rho(e;kH)_{0}}{k}$ defines a function $\zeta_{e,0}\co C(M)\to\mathbb{R}$ satisfying the axioms of a symplectic quasi-state, as defined in \cite{EP06}.  These constructions have had many interesting applications, \emph{e.g.}, to the structure of $\widetilde{Ham}(M,\omega)$ and to rigidity properties of subsets of $M$; see for instance \cite{EP03},\cite{EPZ},\cite{EP09}.

For  $\eta\in \oplus_{i=0}^{n-1}H_{2i}(M;\Lambda_{\omega}^{0})$ write $QH(M,\omega)_{\eta}$ for the commutative algebra $(H_{ev}(M;\Lambda_{\omega}),\ast_{\eta})$ given by restricting $\eta$-deformed quantum multiplication to even-dimensional homology.

After establishing basic properties of the deformed spectral invariants as described in Proposition \ref{rhoprop}, we extend the arguments of \cite{EP03},\cite{EP06},\cite{EP08},\cite{U10b} to obtain the following theorem, which summarizes results of Section \ref{Calabi}:

\begin{theorem}\label{qmintro} Let $(M,\omega)$ be a closed symplectic manifold, $\eta\in \oplus_{i=0}^{n-1}H_{2i}(M;\Lambda_{\omega}^{0})$, and $e\in QH(M,\omega)_{\eta}$ with $e\ast_{\eta}e=e$.  \begin{itemize} \item[(i)] Suppose that, for some $C>0$ we have an estimate \begin{equation}\label{eeH} \rho(e;H)_{\eta}+\rho(e;\bar{H})_{\eta}\leq C\end{equation} for all $H\co S^1\times M\to\mathbb{R}$.  Then the functions $\mu_{e,\eta}\co \widetilde{Ham}(M,\omega)\to\mathbb{R}$ and $\zeta_{e,\eta}\co C(M)\to\mathbb{R}$ given by \[ \mu_{e,\eta}(\tilde{\phi}_H)=-vol(M)\frac{\rho(e;H^{\lozenge k})_{\eta}}{k}\quad \zeta_{e,\eta}(H)=\lim_{k\to\infty}\frac{\rho(e;kH)_{\eta}}{k}\] define respectively a Calabi quasimorphism and a symplectic quasi-state.  The quasimorphism $\mu_{e,\eta}$ obeys the stability property of \cite[(3)]{EPZ}.
\item[(ii)] Suppose that we have a direct sum decomposition (of algebras) \[ QH(M,\omega)_{\eta}=F\oplus A \] where $F$ is a field.  Then an estimate (\ref{eeH}) holds with $e$ equal to the identity in the field summand $F$.\end{itemize}
\end{theorem}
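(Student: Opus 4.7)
The plan is to follow the template of \cite{EP03,EP06,EP08,U10b} for the undeformed theory, systematically replacing the classical spectral invariants with the deformed ones $\rho(\cdot;\cdot)_{\eta}$, and checking that each step uses only properties provided by Proposition \ref{rhoprop}. Throughout let $c(H) = \rho(e;H)_{\eta}$.

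For part (i), subadditivity $c(H\lozenge K)\leq c(H)+c(K)$ is immediate from the triangle inequality (\ref{inttri}) applied to $e\ast_{\eta}e = e$; hence by Fekete's lemma the limit $\lim_k c(H^{\lozenge k})/k$ exists, so $\mu_{e,\eta}$ is well defined and automatically homogeneous on cyclic subgroups. To upgrade subadditivity to a two-sided quasi-morphism bound, factor $\tilde\phi_H = (\tilde\phi_H\tilde\phi_K)\tilde\phi_K^{-1}$ and apply subadditivity to obtain $c(\tilde\phi_H)\leq c(\tilde\phi_H\tilde\phi_K)+c(\tilde\phi_K^{-1})$; invoking (\ref{eeH}) as $c(\tilde\phi_K)+c(\tilde\phi_K^{-1})\leq C$ then yields $c(\tilde\phi_H\tilde\phi_K)\geq c(\tilde\phi_H)+c(\tilde\phi_K)-C$. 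Averaging over iterates preserves this bound, so $\mu_{e,\eta}$ is a homogeneous quasi-morphism. The Calabi property on displaceable open subsets, the stability property of \cite{EPZ}, and the quasi-state axioms for $\zeta_{e,\eta}$ (monotonicity, normalization, Hofer--Lipschitz continuity, subadditivity on Poisson-commuting pairs) then follow by transcribing the standard arguments, each invoking only the corresponding property in Proposition \ref{rhoprop}.

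For part (ii), given $QH(M,\omega)_{\eta}=F\oplus A$ with $F$ a field and $e = 1_F$, the task is to produce the universal bound (\ref{eeH}). The starting point is the Poincar\'e duality formula $\rho(a;H)_{\eta}=-\inf\{\rho(b;\bar H)_{\eta}:\Pi(a,b)\neq 0\}$ from Proposition \ref{rhoprop}(vii), where $\Pi$ denotes the Poincar\'e pairing on $H_*(M;\Lambda_{\omega})$. The Frobenius identity $\Pi(x,y)=\Pi(x\ast_{\eta}y,[M])$ combined with the orthogonality of the algebra decomposition shows that $\Pi|_{F\times F}$ is non-degenerate, since $F$ is a finite-dimensional field extension of the coefficient field. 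One then uses this restricted non-degeneracy to produce, given any $H$, a near-optimal test class in the duality formula for $\rho(e;H)_{\eta}$ that lies in $F$; the field structure (every nonzero element of $F$ is invertible) allows one to replace this class by $e$ modulo a bounded valuation correction, yielding (\ref{eeH}) with $C$ expressible in terms of $\nu_{[M]}$-valuations on a fixed basis for $F$.

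The main difficulty is expected to lie in part (ii): part (i) is essentially a formal consequence of the triangle inequality together with the hypothesized bound (\ref{eeH}), whereas part (ii) requires verifying that Poincar\'e duality, the Frobenius property for $\ast_{\eta}$, and the behavior of $\rho_{\eta}$ on constants all transfer unchanged to the deformed setting, and then assembling these ingredients with the algebraic structure of $F$ to force a uniform \emph{upper} bound on $\rho(e;H)_{\eta}+\rho(e;\bar H)_{\eta}$ (whereas the obvious inequalities only supply the reverse direction). Once the required items from Proposition \ref{rhoprop} are in hand, the remaining argument is careful but standard algebraic bookkeeping.
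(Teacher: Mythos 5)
Your proposal follows essentially the same route as the paper. Part (i) is handled exactly as in Proposition \ref{qmprop1} and the proposition that follows it: subadditivity from the triangle inequality plus idempotence, the two-sided bound by conjugating and using (\ref{eeH}), homogenization, and then the Calabi property via Proposition \ref{disp} and the quasi-state axioms by transcribing \cite[Section 6]{EP06}. For part (ii) your sketch captures the paper's chain of inequalities in Proposition \ref{splitbound}: start from the duality formula $-\rho(e;\bar H)_{\eta}=\inf\{\rho(a;H)_{\eta}\,|\,\Pi(e,a)\neq 0\}$, project via $a\mapsto e\ast_{\eta}a\in F$ (using $\Pi(e\ast_{\eta}a,[M])=\Pi(e,a)$), and then, for nonzero $x\in F$, use $e=x\ast_{\eta}x^{-1}$ and the triangle inequality together with $\Pi(x,[M])\neq0\Rightarrow\nu(x)\geq0$.

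Two things should be sharpened. First, the phrase ``bounded valuation correction'' is doing real work: the argument requires a uniform constant $K$ with $\nu(x)+\nu(x^{-1})\leq K$ for all nonzero $x\in F$, which is exactly \cite[Lemma 3.2]{EP03}; without invoking this explicitly (or reproving it), the inequality $\rho(x;H)_{\eta}\geq\rho(e;H)_{\eta}-\nu(x^{-1})$ does not by itself give a uniform bound, since a priori $\nu(x^{-1})$ could blow up as $x$ varies. The non-degeneracy of $\Pi|_{F\times F}$ that you derive is a reasonable observation but is not the lemma that actually closes the loop; the uniform $K$ is. Second, the bound the paper obtains is $C=\nu(e)+K$, expressed in terms of the valuation $\nu$ on $H_*(M;\Lambda_\omega)$, not $\nu_{[M]}$ (which measures only the $[M]$-component and can be $-\infty$); your ``$\nu_{[M]}$-valuations on a fixed basis'' should be $\nu$ throughout.
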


Note the similarity of the criteria denoted (i) in, respectively, Theorems \ref{hzestintro} and \ref{qmintro}.  In particular, if $e\in QH(M,\omega)_{\eta}$ is an idempotent which contains a nontrivial component in $H_{2n}(M;\Lambda_{\omega})$, we find that an estimate $\rho(e;H)_{\eta}+\rho(e;\bar{H})_{\eta}\leq C$ simultaneously gives rise to two deep and quite distinct conclusions about Hamiltonian dynamics on $(M,\omega)$: a bound on the $\pi_1$-sensitive Hofer--Zehnder capacity on the one hand, and the existence of a Calabi quasimorphism and a symplectic quasi-state on the other.  In light of Theorem \ref{qmintro}(ii), if $QH(M,\omega)_{\eta}$ is semisimple (i.e. decomposes as a finite direct sum of fields), then at least one of the multiplicative identities in the field summands will satisfy these conditions, since the sum of these multiplicative identities is $[M]$.

Of course, Theorem \ref{qmintro}(ii) begs the question of under what circumstances a decomposition  $QH(M,\omega)_{\eta}=F\oplus A$ with $F$ a field exists.  In the later sections of the paper we make a detailed study of this matter.  To any \emph{deformation class} $(M,\mathcal{C})$ of symplectic manifolds we introduce a ``universal quantum coefficient ring'' $R_M$ and an $R_M$-algebra $\AR$ and define (see Definition \ref{ssdfn} and the end of Section \ref{mult}) what it means for $(M,\mathcal{C})$ to have ``generically field-split big quantum homology'' in terms of the properties of this algebra $\AR$.  Since it is also of interest to know when $QH(M,\omega)_{\eta}$ satisfies the stronger condition of being semisimple (for instance results of \cite{EP09} show that semisimplicity leads to stronger rigidity results in some contexts than does the property of having a field direct summand), we also define a notion of $(M,\mathcal{C})$   having ``generically semisimple big quantum homology.''  Using methods from scheme theory, we show:

\begin{theorem}\label{algintro}  If $(M,\omega)$ is a closed symplectic manifold with $\omega$ belonging to the deformation class $\mathcal{C}$, the following are equivalent: \begin{itemize} \item[(i)] $(M,\mathcal{C})$ has generically field-split (resp. generically semisimple) big quantum homology.
\item[(ii)] There is $\eta\in \oplus_{i=0}^{n-1}H_{2i}(M;\Lambda_{\omega}^{0})$ such that the deformed quantum homology $QH(M,\omega)_{\eta}$ splits as a direct sum $F\oplus A$ where $F$ is a field (resp. splits as a finite direct sum of fields).
\item[(iii)] There is an open dense set $\mathcal{U}\subset \oplus_{i=0}^{n-1}H_{2i}(M;\mathbb{C})$ such that $QH(M,\omega)_{\eta}$ splits as a direct sum $F\oplus A$ where $F$ is a field (resp. splits as a finite direct sum of fields) whenever $\eta\in\mathcal{U}$.
\end{itemize}
\end{theorem}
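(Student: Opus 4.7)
The approach is scheme-theoretic: view $\mathbb{A}_{R_M}$ as a finite $R_M$-algebra, giving a sheaf of finite algebras over $\mathrm{Spec}\, R_M$; treat each $\eta\in\oplus_{i=0}^{n-1}H_{2i}(M;\mathbb{C})$ as producing a closed point of $\mathrm{Spec}\, R_M$ (or rather of an open subscheme obtained by specializing the Novikov-type variables); and relate the algebraic behavior at the generic point of $\mathrm{Spec}\, R_M$ (which is what ``generically'' in (i) refers to) to the behavior at such closed points. I would prove the cycle (iii)$\Rightarrow$(ii)$\Rightarrow$(i)$\Rightarrow$(iii), treating the semisimple and field-split versions in parallel.

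The implication (iii)$\Rightarrow$(ii) is trivial. For (ii)$\Rightarrow$(i) I would use the following general principle: a property on fibers of a finite $R_M$-algebra that is detected by the non-vanishing of some $f\in R_M$ propagates from a closed point where it holds to the generic point, since $f(\eta)\neq 0$ forces $f\neq 0$ in $R_M$. Semisimplicity is detected by the discriminant $\delta\in R_M$ of the trace pairing on $\mathbb{A}_{R_M}$: the fiber at $\eta$ is semisimple iff $\delta(\eta)\neq 0$. For the field-split case the obstruction is more elaborate, but one can package it as the non-vanishing of a finite collection of elements of $R_M$: a field direct summand corresponds to a central idempotent $e\in\mathbb{A}_{R_M}\otimes R_M[1/f]$ (for some $f\in R_M$) with $e\mathbb{A}_{R_M}$ of generic rank $1$ over its image in $\mathrm{Spec}\, R_M[1/f]$, and the existence of such $e$ at a closed point implies, after clearing denominators, that a corresponding structure exists generically.

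For (i)$\Rightarrow$(iii), run the same argument in reverse. The locus
\[ \mathcal{U}_{ss}=\{\eta : QH(M,\omega)_{\eta}\text{ is semisimple}\}\subset \oplus_{i=0}^{n-1}H_{2i}(M;\mathbb{C})\]
is the non-vanishing locus of the specialization $\bar\delta$ of $\delta$, hence Zariski-open. Generic semisimplicity of $\mathbb{A}_{R_M}$ says $\delta\not\equiv 0$ in $R_M$, so $\bar\delta\not\equiv 0$ as a function on the irreducible affine space $\oplus_{i=0}^{n-1}H_{2i}(M;\mathbb{C})$; any nonempty Zariski-open subset of an irreducible variety is dense, so $\mathcal{U}_{ss}$ works for (iii). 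The field-split case is analogous, with $\mathcal{U}_{fs}$ defined as the locus where the above idempotent $e$ specializes to yield a field direct summand; this locus is again open (it is the complement of the zero loci of finitely many regular functions coming from the rank condition and the minimal polynomial of $e$) and nonempty by hypothesis (i), hence dense.

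The main technical obstacle is the field-split half. Semisimplicity is cleanly captured by a single discriminant, but having a field direct summand is a local-algebraic condition: one needs a primitive central idempotent whose image is not merely local but a field, and this must be tracked through the passage between the generic and closed points. I would handle this with the following commutative-algebra input: if $\mathcal{A}$ is a finite commutative algebra over a Noetherian domain $R$ whose generic fiber decomposes as $F\oplus A$ with $F$ a field over $\mathrm{Frac}(R)$, then after inverting a single element of $R$ one has a global decomposition $\mathcal{A}|_U=\mathcal{F}\oplus\mathcal{A}'$ with $\mathcal{F}\to U$ finite \'etale of rank $1$, whose fibers are therefore fields. This gives openness of the field-split locus. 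A secondary subtlety is that (ii) allows $\eta$ to have Novikov coefficients in $\Lambda_{\omega}^{0}$ while (iii) restricts to $\mathbb{C}$-coefficients; one verifies that the obstructions above are polynomial in $\eta$ with coefficients in a subring that admits a well-defined specialization to $\mathbb{C}$, so that the Zariski density argument applies after this specialization.
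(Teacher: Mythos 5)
Your basic architecture---use the openness in $\S R_M$ of the locus where the fiber of $\AR$ is semisimple or field-split, then translate to the parameter space of $\eta$'s---is the same as the paper's, and using the trace-form discriminant $\delta\in R_M$ to cut out the semisimple locus is a nice concrete alternative to the paper's appeal to unramifiedness of $\S\AR\to\S R_M$ (Theorem~\ref{mainalg}). For the field-split half, though, you still need something playing the role of Lemma~\ref{foc} and the implication (B4)$\Rightarrow$(B1), and your assertion that the field-summand sheaf $\mathcal F$ is finite \'etale ``of rank $1$'' is wrong: its rank is $\dim_{\mathrm{Frac}(R_M)}F$, typically $>1$, and a higher-rank finite \'etale cover has fibers which are \emph{products} of fields rather than a single field. (The weaker conclusion---that each fiber therefore contains a field direct summand---does follow and suffices, but the rank-$1$ claim as stated is false.)

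The genuine gap is in (i)$\Rightarrow$(iii). The map $\phi_\eta\co R_M\to\Lambda_\omega$ does not produce complex numbers: $\phi_\eta(\delta)$ is a Novikov series $\sum_\beta f_\beta(\eta_{s+1},\dots,\eta_N)\exp(\eta_D\cap\beta)T^{\langle[\omega],\beta\rangle}\in\Lambda_\omega$, and the relevant condition is that this entire series be nonzero. The set $\{\eta:\phi_\eta(\delta)\neq 0\}$ is therefore \emph{not} the nonvanishing locus of a polynomial, and not Zariski-open: the dependence on the divisor components of $\eta$ is through the exponentials $e^{\eta_i}$, so the relevant functions are transcendental, and ``any nonempty Zariski-open subset of an irreducible variety is dense'' is not available. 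The paper closes exactly this gap (proof of Theorem~\ref{bigsymp}, step (ii)$\Rightarrow$(iii)): one isolates the coefficient of the minimal power $T^{\lambda_0}$ of $T$ in $\phi_\eta(g)$, notes by Lemma~\ref{cone} that only finitely many $\beta$ contribute at that level, and shows this coefficient is a nonzero Laurent polynomial in $\EB(\eta)=(e^{\eta_1},\dots,e^{\eta_s},\eta_{s+1},\dots,\eta_N)$, the crucial input being injectivity of $\beta\mapsto(\Delta_1\cap\beta,\dots,\Delta_s\cap\beta)$, which prevents cancellation among the finitely many contributing $\beta$. Without this argument you cannot conclude density of $\mathcal U$, so this step must be supplied.
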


See Theorem \ref{bigsymp} for a slightly more specific version of this theorem and its proof.

To illustrate that the equivalent conditions in Theorem \ref{algintro} are far from vacuous, we mention:
\begin{theorem}\label{examples}\begin{itemize} \item[(i)] The deformation class of any closed symplectic toric manifold has generically semisimple big quantum homology.
\item[(ii)] The deformation class of the blowup at a point of any closed symplectic manifold has generically field-split big (and also small) quantum homology.\end{itemize}
Consequently, any closed symplectic toric manifold, and any blowup of any closed symplectic manifold at a point, admits a Calabi quasimorphism and a symplectic quasi-state.
\end{theorem}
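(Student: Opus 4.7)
The strategy for both parts is to verify one of the equivalent conditions in Theorem \ref{algintro}, at which point Theorem \ref{qmintro}(ii) immediately produces the desired Calabi quasimorphism $\mu_{e,\eta}$ and symplectic quasi-state $\zeta_{e,\eta}$.  Since the conditions in Theorem \ref{algintro} depend only on the deformation class of $\omega$, it is enough to exhibit a suitable field summand for one convenient symplectic form in each case.

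For part (i), the plan is to invoke the mirror-symmetric description of the big quantum cohomology of a closed symplectic toric manifold as a bulk-deformed Jacobian ring of a Landau--Ginzburg superpotential $W_\eta$ on the algebraic torus, available in the toric setting through work of Fukaya--Oh--Ohta--Ono.  Semisimplicity of this Jacobian ring is equivalent to all critical points of $W_\eta$ being Morse (non-degenerate).  Because $\{W_\eta\}$ is a parametric family of Laurent polynomials depending analytically on the components of $\eta$, a standard Sard- or Bertini-type genericity argument shows that for $\eta$ in an open dense subset of $\oplus_{i=0}^{n-1}H_{2i}(M;\mathbb{C})$ the critical points are Morse, verifying condition (iii) of Theorem \ref{algintro} with semisimplicity.

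For part (ii), I would work directly with the small quantum homology ($\eta=0$) of the point blowup $\pi\co\tilde M\to M$ and build a field direct summand using the exceptional divisor class $E\in H_{2n-2}(\tilde M)$.  Geometrically the exceptional divisor is a copy of $\mathbb{C}P^{n-1}$ with normal bundle $\mathcal{O}(-1)$, and curve classes of the form $dL$, where $L$ is the line class in the exceptional $\mathbb{C}P^{n-1}$, are represented by holomorphic curves lying entirely inside $E$ provided that the symplectic form is chosen so that the exceptional divisor has sufficiently small area.  Using known blowup formulas for the corresponding genus-zero Gromov--Witten invariants (or a neck-stretching argument along $E$), one shows that these exceptional classes contribute to $QH_*(\tilde M,\omega)_0$ a direct summand isomorphic, over an appropriate Novikov subring, to the small quantum cohomology of $\mathbb{C}P^{n-1}$, which is a field.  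Concretely this produces a polynomial $p(E)$ for which $e=p(E)$ is idempotent under $\ast_0$ and $e\ast_0 QH_*(\tilde M,\omega)_0$ is a field, verifying condition (ii) of Theorem \ref{algintro} at $\eta=0$ for both the small and big quantum homologies.

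The main obstacle is part (ii): one must separate the ``local'' contributions of exceptional curves from the ``global'' contributions of proper transforms of curves in $M$, and verify that the local contribution indeed yields a field summand for an arbitrary base $M$, not assumed to be toric, K\"ahler, or simply connected.  This is made tractable by the fact that for curve classes $dL$ lying entirely in the exceptional $\mathbb{C}P^{n-1}$ the relevant moduli spaces coincide with those computing the quantum cohomology of $\mathbb{C}P^{n-1}$ itself, while positivity of symplectic area (with $E$ chosen small) rules out mixed contributions from globally supported curves.  Once the idempotent $e$ is in place, the final sentence of Theorem \ref{examples} is immediate from Theorem \ref{qmintro}(ii).
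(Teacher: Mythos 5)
Your proposal reaches the right destination but by a partly different road, and for part (ii) the road has a washed-out bridge.

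For part (i) your Landau--Ginzburg/Jacobian-ring argument via Fukaya--Oh--Ohta--Ono is a legitimate alternative to what the paper actually does. The paper instead cites Iritani's theorem (\cite[Theorem 1.3]{Ir}) that the big quantum homology of a \emph{projective} toric manifold has convergent structure constants and is generically semisimple over $\mathbb{C}$, combines it with Delzant's theorem to pass from an arbitrary closed symplectic toric manifold to a projective one in the same deformation class, and then uses Theorem \ref{cvgthm} to translate between the convergent-coefficient notion and the abstract $R_M$-algebra notion. Your mirror route gives essentially equivalent information; the trade-off is that Iritani's statement is already packaged in exactly the form the paper needs (in particular it avoids any Fano hypothesis and is already phrased in terms of the semisimplicity of a commutative algebra, rather than requiring you to verify Morseness of critical points), whereas the superpotential picture is more explicit but requires you to supply the dictionary between semisimplicity of quantum homology and Morseness of $W_\eta$ and to check a genericity statement in that setting.

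For part (ii) there is a genuine gap. You propose to take $\eta=0$, choose $\omega$ so that the exceptional divisor $E\cong\mathbb{C}P^{n-1}$ has small area, and argue that the moduli spaces for exceptional curve classes $dL$ ``coincide with those computing the quantum cohomology of $\mathbb{C}P^{n-1}$,'' and then deduce a field summand. This does not work as stated. The Gromov--Witten invariants of $\tilde M$ in the exceptional classes do \emph{not} coincide with those of $\mathbb{C}P^{n-1}$: the curves in the exceptional divisor carry a nontrivial obstruction coming from the normal bundle $\mathcal{O}(-1)$, and the computation (\cite[Lemma 2.3]{M}, used in the paper's Lemma \ref{blowlemma}) yields $\langle E_i,E_j,E_k\rangle_{0,3,rE'}=0$ unless $r=0$ (classical) or $r=1$ (with value $-1$ when $i+j+k=2n-1$). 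The resulting algebra is $k[s]/\langle s^n-(-1)^n s\rangle$, not $k[s]/\langle s^n-q\rangle$; the former still has a field summand but for a different reason, and conflating the two obscures where the idempotent comes from. More seriously, making $E$ small does not by itself control the quantum products $\Delta_i\ast E_j$ mixing a class from $M$ with an exceptional class. These terms are governed by mixed Gromov--Witten invariants in classes $\beta'+d_\beta E'$ with $\beta'\neq 0$, for which the symplectic area of $E$ gives no effective constraint. The paper's Lemma \ref{blowlemma} handles exactly this issue algebraically: it adjoins $Z=q^{-E'/(n-1)}$, works in the subring $B$ where only nonnegative powers of $Z$ appear, and uses Gathmann's vanishing theorem \cite[Proposition 3.1]{Ga} together with Hu's results \cite[Lemma 1.1, Theorem 1.2]{Hu} to show that all mixed products land in $ZN$; reducing mod $Z$ then kills them, and it is only after this reduction that the exceptional-divisor subalgebra separates cleanly. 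Without this (or an equivalent) mechanism, your ``positivity of symplectic area rules out mixed contributions'' step does not close, and the purported idempotent $p(E)$ is not justified.

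The paper's approach has the additional advantage of being manifestly deformation-invariant and of not requiring any special choice of $\omega$ (or any assumption such as non-uniruledness of $M$, which is what McDuff's result as used in \cite{EP08} requires); the price is the somewhat elaborate coefficient-ring bookkeeping via the diagram (\ref{bldiag}).
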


\begin{proof}
 As noted in Section \ref{lit}, part (i) follows directly from a theorem of \cite{Ir} and Theorem \ref{cvgthm}. Part (ii) is proven as Theorem \ref{blowup}. 
\end{proof}

Part (ii) of Theorem \ref{examples} generalizes a result of McDuff (which appears in \cite{EP08}) which produces a Calabi quasimorphism on the blowup of any non-uniruled symplectic manifold.  Meanwhile (i) generalizes results that have been proven for a variety of toric Fano manifolds; in particular in \cite{OT} and \cite{FOOO10} it is shown that if $M$ is toric and Fano then $QH(M,\omega)_{0}$ is semisimple for generic choices of the toric symplectic form $\omega$.  However Theorem \ref{examples} does not require any Fano condition, and also does not require the symplectic form to be chosen generically.

Let us emphasize some aspects of Theorem \ref{algintro}.  First of all, note that it shows that, as soon we can obtain a single $\eta$ giving rise to a Calabi quasimorphism $\mu_{e,\eta}$ via Theorems \ref{algintro} and \ref{qmintro}, there are in fact uncountably many such $\eta$.  It is natural to ask whether these quasimorphisms are the same.  We do not prove any results in this direction here, but note that recent work of Fukaya--Oh--Ohta--Ono \cite[Theorem 1.10]{FOOO} shows that, in certain toric cases, an uncountable subset of these
quasimorphisms can be seen to be linearly independent by means of their interactions with Lagrangian Floer theory.  Meanwhile it is also possible for the Calabi quasimorphisms $\mu_{e,\eta}$ to change as one varies the idempotent $e$: examples of such behavior (with $\eta=0$) have been found for certain symplectic forms on $\mathbb{C}P^2\#\overline{\mathbb{C}P^2}$ in \cite[Corollary F]{OT} and on $S^2\times S^2$ in \cite{ElP}. 

We also point out that our criterion for the existence of a Calabi quasimorphism on $\widetilde{Ham}(M,\omega)$ depends only on the deformation class of $\omega$.  If we had confined ourselves to the case $\eta=0$ we could not have expected to obtain a deformation-invariant criterion, since the algebraic structure of $QH(M,\omega)_0$ can be surprisingly sensitive to deformations of $\omega$.  For example, in \cite[Section 5]{OT} Ostrover--Tyomkin produce a toric Fano $4$-fold $M$ such that $QH(M,\omega)_0$ fails to be semisimple when $\omega$ is the monotone toric symplectic form but is semisimple for generic toric symplectic forms $\omega$.  Calculations in \cite[Proposition 3.5.5]{BaM} suggest that a similar phenomenon occurs for the manifold obtained by blowing up $\mathbb{C}P^2$ at $5$ or more  points.  However, by changing the question from, ``Is $QH(M,\omega)_0$ semisimple (or field-split)?'' to ``Does there exist $\eta$ such that $QH(M,\omega)_{\eta}$ is semisimple (or field-split)?'' we evade such issues.

Algebraic geometers  have been studying a notion of generically semisimple big quantum homology for some time (see, \emph{e.g.}, \cite{Dub}, \cite{Man}, \cite{Bay}), and we show in Section \ref{cvgsect} that our notion is equivalent to theirs.\footnote{As alluded to in Section \ref{cvgsect}, different authors use different coeffecient rings in the algebraic geometry literature when discussing quantum homology, and in some cases it is assumed that certain power series defining the structure constants converge; however in any event our results show that the resulting notions of generically semisimple big quantum homology are equivalent on their common domains of definition, a fact which seems to be familiar to algebraic geometers.}  This effectively answers a question raised in \cite[Remark 3.2]{EP08} as to whether semisimplicity in the sense studied there (which in our language amounts to the semisimplicity of $QH(M,\omega)_0$) can be deduced from generic semisimplicity in the algebraic geometry sense.  As stated, the answer is evidently negative, since the Ostrover-Tyomkin example has generically semisimple quantum homology but fails to have $QH(M,\omega)_0$ semisimple.  On the other hand, from the standpoint of applications this paper shows that generic semisimplicity suffices for the constructions for which Entov and Polterovich employed the semisimplicity of $QH(M,\omega)_0$.  

\subsection{Summary of the paper, with additional remarks about the deformed Hamiltonian Floer complexes}

In Section \ref{quant1}, we briefly review the definition of the deformed quantum rings $(H_*(M,\omega),\ast_{\eta})$.

Section \ref{hamsect} develops deformed Hamiltonian Floer theory.  Thus to a suitably nondegenerate Hamiltonian $H$ and a deformation parameter $\eta\in \oplus_{i=0}^{n-1}H_{2i}(M;\Lambda_{\omega}^{0})$ we associate an $\mathbb{R}$-filtered Floer complex $(CF(H),\partial^{\eta,H})$.  We also define continuation maps $(CF(H_-),\partial^{\eta,H_-})\to (CF(H_+),\partial^{\eta,H_+})$; PSS-type maps \cite{PSS} $\Phi_{\eta,H}^{PSS}\co CM(f;\Lambda_{\omega})\to CF(H)$ and $\Psi_{\eta,H}\co CF(H)\to CM(f;\Lambda_{\omega})$ where $CM(f;\Lambda_{\omega})$ is the Morse complex; and a pair of pants product $\ast_{\eta}^{Floer}\co CF(H)\otimes CF(K)\to CF(H\lozenge K)$.  The deformations are especially easy to describe when $\eta\in H_{2n-2}(M;\Lambda_{\omega}^{0})$.  For example, whereas matrix elements for the ordinary Floer differential $\partial^{0,H}$ count certain cylinders $u\co \mathbb{R}\times S^1\to M$ with rational weights $\ep(u)$, for $\eta\in H_{2n-2}(M;\Lambda_{\omega}^{0})$ we choose a closed $2$-form $\theta$ Poincar\'e dual to $\eta$ and form the deformed differential $\partial^{\eta,H}$ by instead counting these cylinders with weight $\ep(u)\exp(\int_{\mathbb{R}\times S^1}u^*\theta)$.  Similar modifications are made to obtain deformed versions of the standard continuation and PSS maps and the pair of pants product.  The various standard identities involving these maps (for example the facts that $(\partial^{\eta,H})^{2}=0$ and 
that the other maps are chain maps with respect to $\partial^{\eta,H}$) then follow from the standard arguments together with Stokes' theorem and the fact that the chosen $2$-form $\theta$ is closed.   This approach to ``twisting'' the Hamiltonian Floer complex, though quite simple and useful (it alone would suffice for our applications in many though not all cases---in particular it is enough to yield Calabi quasimorphisms on all point blowups and on all toric Fano manifolds), 
does not seem to have been used on closed symplectic manifolds before.  However, in the context of the symplectic cohomology of Liouville domains the approach has been used by Ritter \cite{Ri}, and it is also similar to the use of "B-fields" in Lagrangian Floer theory (see \cite{Fu}, \cite{Cho}).  Note that in these other contexts (and also in a similarly deformed version of Morse homology, where one integrates closed one-forms over gradient flowlines) the resulting homology does depend on the deformation parameter $\eta$, whereas in our case it does not, perhaps surprisingly at first glance.  This independence can be explained by the facts that, first, the $\eta$-deformed homology will be independent of the Hamiltonian $H$ by a standard continuation argument, and second, for a small, time-independent $H$, all of the gradient flow cylinders can be arranged to degenerate to lines, over which the $2$-form $\theta$ integrates trivially, so that $\theta$ does not affect the differential.  On the other hand $\theta$ does affect the pair-of-pants product, which unavoidably involves genuinely $2$-dimensional objects. 

For more general $\eta\in \oplus_{i=0}^{n-1}H_{2i}(M;\Lambda_{\omega}^{0})$, the differential and the other maps on $CF(H)$ are further deformed by the imposition of incidence conditions corresponding to the higher-codimension components of $\eta$.  Some subtleties arise here because the fact that the moduli spaces have codimension-one boundary leads one to need to use the same chains for the incidence conditions at each marked point (rather than just homologous ones as in Gromov--Witten theory) in order for the maps to satisfy the appropriate identities, which in turn causes potential transversality problems as the marked points approach each other.  However such issues are readily handled by using the same machinery (the fiber product construction for Kuranishi structures) that is used in \cite{FOOO09} to deal with similar (and indeed in some cases more difficult) matters in the construction of Lagrangian Floer theory with bulk deformations.  Our use of these sorts of deformations in Hamiltonian Floer theory appears to be new, though Fukaya--Oh--Ohta--Ono arrived at this technique independently, see \cite{FOOO}.  For the benefit of readers who lack fluency in \cite{FOOO09}, we also provide a construction of the deformed Floer complexes in the semipositive case which does not rely on Kuranishi structures; due to the aforementioned transversality problems for the relevant fiber products this does require some work, most of which is consigned to Appendix \ref{app}.  However, it is still our opinion that the approach based on \cite{FOOO09} is the most appropriate framework for these sorts of constructions.  The reader should see \cite{FOOO} for a more thorough development of this approach, including a relation to Lagrangian Floer theory.

In any event, after constructing these maps, we find that the $\eta$-deformed Hamiltonian Floer theory behaves in much the same way as the undeformed version, except that its multiplicative structure on homology is isomorphic via the appropriate PSS maps to the $\eta$-deformed quantum ring structure.  In particular we can construct spectral invariants in the usual way and prove that they satisfy standard properties as laid out in Section \ref{specsec}.

Section \ref{capsec} contains the applications discussed above which obtain Hofer--Zehnder capacity estimates by means of the behavior of the deformed spectral invariants.  We use arguments depending on the construction of $S^1$-equivariant Kuranishi structures on moduli spaces associated to the Floer differentials and the PSS maps (similarly to what is done in \cite{FO}, \cite{LiuT98} to prove the Arnold conjecture, see also the proof of \cite[Theorem IV]{Oh05} for a case closely related to ours);  the polyfold theory of  \cite{HWZ} should eventually provide a preferable framework for such arguments.  With current technology, as we explain in Remark \ref{nokur}, the use of $S^1$-equivariant Kuranishi structures can be avoided only under a rather strong assumption on the first Chern class of the manifold.  The results of Section \ref{capsec} are not used elsewhere in the paper.

Section \ref{Calabi} establishes Theorem \ref{qmintro}, thus reducing the existence of a Calabi quasimorphism on $\widetilde{Ham}(M,\omega)$ to the statement that, for some $\eta\in \oplus_{i=0}^{n-1}H_{2i}(M;\Lambda_{\omega}^{0})$, the commutative algebra $QH(M,\omega)_{\eta}$ satisfies an algebraic condition.

The last two sections of the body of the paper are much more algebraic, oriented toward the goal of finding equivalent conditions to that identified in Section \ref{Calabi}.  Section \ref{algsect} is pure commutative algebra, devoted mainly to a theorem about sets of equivalent conditions under which the spectral cover map $\S A\to \S R$ associated to a finite $R$-algebra $A$ either has an unramified point in its domain or else has a point in its range over which all points in the fiber are unramified.  The final Section \ref{quant2} connects this to the problem at hand, reformulating big quantum homology as an algebra $\AR$ over a ring $R_M$ (as alluded to earlier, $R_M$ and $\AR$ are, unlike $QH(M,\omega)_{\eta}$, symplectic deformation invariants).  For a deformation parameter $\eta$, the $\eta$-deformed quantum homology is then recovered as the tensor product $\AR\otimes_{R_M}\Lambda_{\omega}$ associated to an appropriate homomorphism $\phi_{\eta}\co R_M\to \Lambda_{\omega}$.  The sets of equivalent conditions identified in Section \ref{algsect} are seen to in turn be equivalent in this context to the existence of an $\eta$ such that $QH(M,\omega)_{\eta}$, respectively, has a field direct summand or is semisimple.  The nice behavior of our algebraic conditions under base change (Proposition \ref{basechange}) enables us to move between coefficient systems with ease, leading to results such as Theorem \ref{algintro}.  Throughout Section \ref{quant2} we discuss both big and small quantum homology;\footnote{In our language, the small quantum homology corresponds to the deformations obtained by allowing $\eta$ to only vary in $H_{2n-2}(M;\Lambda_{\omega}^{0})$; this differs from some references which have small quantum homology correspond just to $\eta=0$.} while this in principle does not lead to greater generality since if our algebraic conditions hold for small quantum homology then they also hold for big quantum homology, the small quantum homology is easier to compute since it only involves $3$-point Gromov--Witten invariants, and so our conditions can be easier to check for small quantum homology. At the end of Section \ref{quant2} we verify that one-point blowups always have generically field-split small (and hence also big) quantum homology, drawing on results of \cite{Ga},\cite{Hu},\cite{Bay} about the Gromov--Witten invariants of blowups. 

Finally, in Appendix \ref{app}, we outline the proof of the result from Section \ref{bigdefsect} which underlies our construction in the semipositive case of the deformed Floer complexes for classes $\eta\in \oplus_{i=0}^{n-1}H_{2i}(M;\Lambda_{\omega}^{0})$ which are not of codimension two.  This is included only to provide a more self-contained treatment for readers who prefer to avoid relying on the more sophisticated constructions from \cite{FOOO09}.

\subsection*{Acknowledgements} I am grateful to the organizers of the MSRI Workshop on Symplectic and Contact Topology and Dynamics (March 2010) and of the Chern Institute Conference on Symplectic Geometry and Physics (May 2010) for the opportunity to present some of this work as it was in preparation.  I also thank K. Fukaya, Y.-G. Oh, and K. Ono for telling me about their related (and independent) joint work with Ohta \cite{FOOO},  and L. Polterovich for interesting comments.   The Stacks Project \cite{Sta} was helpful to me as I prepared Section \ref{algsect}.

\subsection{Some conventions}\label{conv}  There are multitudes of different conventions in the literature used for signs, coefficient rings, and so forth for the various characters in our story; for this reason it seems prudent to collect some of our conventions in one place for the reader's convenience (though the reader will generally be reminded of these as they become relevant).\begin{itemize}\item[(a)] $(M,\omega)$ always denotes a closed symplectic manifold, and $S^1=\mathbb{R}/\mathbb{Z}$.
\item[(b)] A ``Hamiltonian'' is a smooth function $H\co S^1\times M\to\mathbb{R}$.  The Hamiltonian vector field of $H$ is the time dependent vector field $X_H=X_H(t,\cdot)$ defined by $\iota_{X_H(t,\cdot)}=-d(H(t,\cdot))$.  (N.B.: The negative sign is contrary to my usage in previous papers.)
\item[(c)]  The Novikov ring associated to $\omega$ is the generalized formal power series ring \[\Lambda_{\omega}=\left\{\left.\sum_{g\in \Gamma_{\omega}}a_gT^g\right| a_g\in\mathbb{C},(\forall C>0)(\#\{g|a_g\neq 0,g<C\}<\infty)\right\},\] where \[ \Gamma_{\omega}=Im(\langle \omega,\cdot\rangle\co \pi_2(M)\to\mathbb{R}).\]  Thus $\Lambda_{\omega}$ is ``completed upward.'' The subring $\Lambda_{\omega}^{0}$ is defined by \[ \left\{\left.\sum_{g\in\Gamma_{\omega}}a_gT^g\in \Lambda_{\omega}\right|a_g\neq 0 \Rightarrow g\geq 0\right\}.\]
\item[(d)] The Floer complex of a (suitably nondegenerate) Hamiltonian $H$ has underlying module \[ CF(H)=\left\{\left.\sum_{[\gamma,v]\in\tilde{P}(H)}a_{[\gamma,v]}[\gamma,v]\right|(\forall C<0)(\#\{[\gamma,v]|a_{[\gamma,v]}\neq 0,\mathcal{A}_H([\gamma,v])>C\}<\infty)\right\}.\]  Thus $CF(H)$ is, unlike $\Lambda_{\omega}$, ``completed downward.''
\item[(e)] Algebraic structures such as $CF(H)$ and the quantum rings $(H_*(M,\Lambda_{\omega}),\ast_{\eta})$ will be graded by $\mathbb{Z}/2\mathbb{Z}$, not $\mathbb{Z}$.  Correspondingly, there is no ``degree-shifting element'' such as that denoted $q$ in \cite{EP08}.
\item[(f)] Any ring whose name begins with the initials $QH$ (such as $QH(M,\omega)_{\eta}$) corresponds only to the \emph{even-degree} part of the quantum homology (and thus is commutative and, in light of (e) above, is ungraded).  When we wish to include both even and odd degree elements we will directly refer to   $H_*(M,\Lambda_{\omega})$.
\item[(g)] Hamiltonians $H,K\co S^1\times M\to\mathbb{R}$ are composed via the ``concatenation'' operation \[ H\lozenge K(t,m)=\left\{\begin{array}{ll}\chi'(t)H(\chi(t),m) & 0\leq t\leq 1/2 \\ \chi'(t-1/2)K(\chi(t),m) & 1/2\leq t\leq 1\end{array}\right.\] for a suitable smooth monotone surjection $\chi\co [0,1/2]\to [0,1]$.  In particular the time-one maps are related by \[ \phi_{H\lozenge K}^{1}=\phi_{K}^{1}\circ\phi_{H}^{1}.\]
\item[(h)]  A fixed parameter $\eta$ will determine a certain natural number $m$ and cycles $c_1,\ldots,c_m$ such that $c_i$ has dimension $2d(i)$, as explained in Section \ref{hamsect}. The letter $I$ will typically refer to an element $(i_1,\ldots,i_k)$ of $\{1,\ldots,m\}^k$ for some $k\in\mathbb{N}$.  For a previously-chosen set collection of elements $z_i\in \Lambda_{\omega}^{0}$ $(1\leq i\leq m)$ we will write \[ z_I=z_{i_1}\cdots z_{i_k}.\]  Also, \[ \delta(I)=\sum_{j=1}^{k}(2n-2-2d(i_j)) \]  (This latter number is interpreted as the codimension associated to a certain set of incidence conditions associated to $I$.)

\end{itemize}

\section{Quantum homology I} \label{quant1}

To fix notation, we now review the definition of the deformed quantum ring structures $\ast_{\eta}$ on a closed $2n$-dimensional symplectic manifold $(M,\omega)$.  These structures will be viewed from a different perspective in Section \ref{quant2}.

Where \[ \Gamma_{\omega}=\{\langle [\omega],A\rangle|A\in Im(\pi_2(M)\to H_2(M;\mathbb{Z}))\}\leq \mathbb{R},\]  the \emph{Novikov ring} associated to $(M,\omega)$ is \[ \Lambda_{\omega}=\left\{\left.\sum_{i=1}^{\infty}a_iT^{g_i}\right|a_i\in\mathbb{C},g_i\in \Gamma_{\omega},(\forall C>0)\left(\#\left(\{i|a_i\neq 0,g_i<C\}\right)<\infty\right)\right\}\] where $T$ is a formal variable. 
It is not difficult to check that $\Lambda_{\omega}$ is a field (see for instance \cite[Theorem 4.1]{HS}). 

The Novikov ring has a distinguished subring $\Lambda_{\omega}^{0}\leq \Lambda_{\omega}$, defined by \[ \Lambda_{\omega}^{0}=\left\{\left.\sum_i a_iT^{g_i}\in \Lambda_{\omega}\right|(\forall i)(g_i\geq 0)\right\}.\]  An easy exercise shows that there is a well-defined map \[ \exp\co \Lambda_{\omega}^{0}\to\Lambda_{\omega}^{0}\] defined by the usual Taylor series \[ \exp(a)=\sum_{k=0}^{\infty}\frac{a^k}{k!}\] and satisfying the identity \[ \exp(a+b)=\exp a\exp b\]  (on the other hand,  for a more general element $a\in \Lambda_{\omega}$, the formal sum $\sum \frac{a^k}{k!}$ typically does \emph{not} give a well-defined element of $\Lambda_{\omega}$).

For each $\eta\in \oplus_{i=0}^{n-1}H_{2i}(M;\Lambda_{\omega}^{0})$,  we will now define an $\eta$-deformed quantum multiplication $\ast_{\eta}$, which makes $H_*(M,\Lambda_{\omega})$ into a $\Lambda_{\omega}$-algebra.  Our reason for only considering even-dimensional classes $\eta$ is that it will allow us to obtain an algebra which is commutative by restricting $\ast_{\eta}$ to the even-dimensional part of $H_*(M,\Lambda_{\omega})$.  Additionally, orientation-related issues would make it difficult to obtain results such as Corollary \ref{delsquared} below if we did not require $\eta$ to be even-dimensional.

Given a class $A\in H_2(M;\mathbb{Z})$ and classes $a_1,\ldots,a_k\in H_*(M;\Lambda_{\omega}^{0})$ with $k\geq 3$, let \[ \langle a_1,\ldots,a_k\rangle_{0,k,A} \] denote the Gromov--Witten invariant \cite{RT},\cite{FO},\cite{LiT},\cite{R} enumerating (in the appropriate virtual sense) genus zero $J$-holomorphic curves $u\co S^2\to M$ representing $A$ with $k$ freely-varying marked points $z_1,\ldots,z_k$ such that $u(z_i)\in N_i$, where $J$ is a generic almost complex structure compatible with $\omega$ and the $N_i$ are generic cycles representing the $a_i$.   (Recall that this invariant is multilinear in the $a_i$, so may be expressed in terms of invariants in which each inserted homology class is homogeneous.)  Let $\{c_1,\ldots,c_r\}$ be a homogeneous basis for $H_*(M;\mathbb{Q})$, with Poincar\'e dual basis $\{c^1,\ldots,c^r\}$ (i.e. $c_i\cap c^j=\delta_{i}^{j}$ where $\cap$ is the Poincar\'e interection pairing).  The operation $\ast_{\eta}\co H_*(M;\Lambda_{\omega})\otimes H_*(M;\Lambda_{\omega})\to H_*(M;\Lambda_{\omega})$ is then obtained by extending linearly from the formula \begin{equation}\label{firstmult} x\ast_{\eta} y=\sum_{A\in H_2(M;\mathbb{Z})}\sum_{k=0}^{\infty}\frac{1}{k!}\sum_{j=1}^{r}\langle x,y,c_j,\underbrace{\eta,\ldots,\eta}_{k}\rangle_{0,k+3,A}T^{\langle [\omega],A\rangle}c^j.\end{equation}  The reader may verify the fact that the above formula does indeed give a well-defined element of $H_*(M;\Lambda_{\omega})$; in any event this follows from what is done later in Section \ref{quant2}.  As seen in \cite[Section 4]{KM}, the fact that $\ast_{\eta}$ is associative follows from properties of Gromov--Witten invariants (in particular the composition law) that are proven in \cite{RT} and elsewhere.

If we write $\eta=\eta_D+\eta'$ where $\eta_D\in H_{2n-2}(M;\Lambda_{\omega}^{0})$ and $\eta'\in \oplus_{i=0}^{n-2}H_{2i}(M;\Lambda_{\omega}^{0})$, 
the divisor axiom\footnote{Recall that the divisor axiom \cite[Proposition 7.5.6]{MS} asserts that if $c_{k+1}\in H_{2n-2}(M;\mathbb{Q})$ then one has the relation \[ \langle c_1,\ldots,c_k,c_{k+1}\rangle_{0,k+1,A}=(c_{k+1}\cap A)\langle c_1,\ldots,c_k\rangle_{0,k,A}.\]} for Gromov--Witten invariants shows that \[ x\ast_{\eta} y=\sum_{A\in H_2(M;\mathbb{Z})}\sum_{k=0}^{\infty}\frac{1}{k!}\sum_{i=1}^{r}\langle x,y,c_j,\underbrace{\eta',\ldots,\eta'}_{k}\rangle_{0,k+3,A}\exp(\eta_D\cap A)T^{\langle [\omega],A\rangle}c^j;\] in particular if $\eta\in H_{2n-2}(M;\Lambda_{\omega}^{0})$ we simply have \[ x\ast_{\eta} y=\sum_{A\in H_2(M;\mathbb{Z})}\sum_{j=1}^{r}\langle x,y,c_j\rangle_{0,3,A}\exp(\eta\cap A)T^{\langle [\omega],A\rangle}c^j.\]

\section{Hamiltonian Floer theory and its deformations}\label{hamsect}

Let us fix our notations and conventions for Hamiltonian Floer theory.  Let $H\co S^1 \times M\to \mathbb{R}$ be a smooth function (here and below the circle $S^1$ will be identified with $\mathbb{R}/\mathbb{Z}$).  Our convention will be that the (time-dependent) Hamiltonian vector field $X_H$ associated to $H$ is given by \[ \iota_{X_H(t,\cdot)}\omega=-d(H(t\cdot)).\]   This is consistent with the convention generally used in the papers of Entov-Polterovich, but opposite to that used by Oh and by the present author in earlier papers; various signs appearing below will accordingly differ from those in corresponding results from those papers.

The Hamiltonian isotopy generated by the vector field $X_H$ will be denoted $\{\phi_{H}^{t}\}_{t\in\mathbb{R}}$ (with $\phi_{H}^{0}$ equal to the identity).  The path $\{\phi_{H}^{t}\}_{0\leq t\leq 1}$ determines an element in $\widetilde{Ham}(M,\omega)$, denoted $\tilde{\phi}_H$.

Let $\mathcal{L}_0M$ be the space of contractible loops in $M$, and \[ \widetilde{\mathcal{L}_0M}=\frac{\{(\gamma,v)|\gamma\in \mathcal{L}_0M,v\co D^2\to M,v|_{\partial D^2}=\gamma\}}{(\gamma,v)\sim (\gamma',v')\mbox{ iff }\gamma=\gamma'\mbox{ and }\int_{D^2}v^*\omega=\int_{D^2}v'^*\omega}.\] 

(In this paper the Hamiltonian Floer complex will be treated as just $\mathbb{Z}_2$-graded; since we do not impose a $\mathbb{Z}$-grading, we will not incorporate the first Chern class of $M$ into the definition of  $\widetilde{\mathcal{L}_0M}$ as is often done).  

  Define \[ P(H)=\{\gamma\in\mathcal{L}_0M|\dot{\gamma}(t)=X_H(t,\gamma(t))\}\] and \[ \tilde{P}(H)=\{[\gamma,v]\in \widetilde{\mathcal{L}_0M}|\gamma\in P(H)\}.\]  

Thus $\tilde{P}(H)$ is the set of critical points of the action functional \[ \mathcal{A}_H([\gamma,v])=-\int_{D^2}v^*\omega+\int_{0}^{1}H(t,\gamma(t))dt.\]

Assume that $H$ is nondegenerate in the usual sense that, for each $p\in Fix(\phi_{H}^{1})$,  the linearization $d_p\phi_{H}^{1}\co T_pM\to T_pM$ does not have $1$ as an eigenvalue. We then have a map \[ \mu\co \tilde{P}(H)\to \{\pm 1\}\] defined by \[ \mu([\gamma,v])=sign \det (Id-d_p\phi_{H}^{1}).\]

As an abelian group, the Floer complex of $H$ is then defined by \[ CF(H)=\left\{\left.\sum_{[\gamma,v]\in\tilde{P}(H)}a_{[\gamma,v]}[\gamma,v]\right|a_{[\gamma,v]}\in\mathbb{C},(\forall C\in\mathbb{R})(\#\{[\gamma,v]|a_{[\gamma,v]}\neq 0,\mathcal{A}_H([\gamma,v])>C\}<\infty)\right\}.\]

We thus have \[ CF(H)=CF_{ev}(H)\oplus CF_{odd}(H) \] where\[ CF_{ev}(H)=\left\{\left.\sum_{[\gamma,v]\in\tilde{P}(H)}a_{[\gamma,v]}[\gamma,v]\in CF(H)\right|a_{[\gamma,v]}\neq 0\Rightarrow \mu([\gamma,v])=+1\right\} \] and \[ CF_{odd}(H)=\left\{\left.\sum_{[\gamma,v]\in\tilde{P}(H)}a_{[\gamma,v]}[\gamma,v]\in CF(H)\right|a_{[\gamma,v]}\neq 0\Rightarrow \mu([\gamma,v])=-1\right\}. \]

 For any $g\in \Gamma_{\omega}$ choose an arbitrary $A_g\in \pi_2(M)$ such that $\langle[\omega],A_g\rangle=g$.  Then $CF(H)$ has the structure of a $\Lambda_{\omega}$ module, via the action \[ \left(\sum a_iT^{g_i}\right)\cdot \left(\sum b_{[\gamma,v]}[\gamma,v]\right)=\sum a_ib_{[\gamma,v]}[\gamma,v\#A_g].\]  Note that $\Lambda_{\omega}$ and $CF(H)$ are completed in the ``opposite directions''; this is an artifact of the negative sign in the formula for $\mathcal{A}_H$.  

\subsection{Homotopy classes of cylinders}
For two nondegenerate Hamiltonians $H_{\pm}\co S^1\times M\to\mathbb{R}$ and  $\gamma^-,\gamma^+\in P(H_{\pm})$ define $\pi_2(\gamma^-,\gamma^+)$ to be the set of path components of the space of $C^0$ maps $u\co [-\infty,\infty]\times S^1\to M$ obeying $u(\{\pm\infty\} \times S^1)=\gamma^{\pm}$.  

Let $u\co \mathbb{R}\times S^1\to M$ be a $C^1$ map such that (where $s$ is the $\mathbb{R}$-variable on $\mathbb{R}\times S^1$) $u(s,\cdot)\to \gamma^{\pm}$ uniformly; $\frac{\partial u}{\partial s}(s,\cdot)\to 0$ in $L^2$; and $\frac{\partial u}{\partial t}(s,\cdot)\to X_{H_{\pm}}$ in $L^2$ as $s\to \pm\infty$. Then $u$ extends continuously to a map $u\co [-\infty,\infty]\times S^1\to M$ with $u(\{\pm\infty\} \times S^1)=\gamma^{\pm}$, so that $u$ represents some class $C\in \pi_2(\gamma^-,\gamma^+)$. 

Write \[\Omega^{2}_{cl}(M;\Lambda_{\omega}^{0})=\left\{\left.\sum_{i=1}^{\infty}\theta_iT^{g_i}\right| \theta_i\in\Omega^2(M;\mathbb{C}),\,d\theta_i=0,\,0\leq g_i\nearrow\infty,\,g_i\in\Gamma_{\omega}\right\}.\] 
It is a straightforward consequence of Stokes' theorem that, if $\theta\in \Omega^2(M;\mathbb{C})$ is any \emph{closed} complexified $2$-form on $M$, the quantity $\int_{\mathbb{R}\times S^1}u^*\theta$ (which is easily seen to be finite by our asymptotic assumptions) depends only on the homotopy class $C$ and not on the representative $u$.  Accordingly, for $C\in \pi_2(\gamma^-,\gamma^+)$ and for $\theta\in \Omega^2(M;\mathbb{C})$ with $d\theta=0$ we denote \[ \int_{C}\theta:=\int_{\mathbb{R}\times S^1}u^*\theta\] where $u$ is any representative of $C$ as described above.
Consequently, for any $\theta\in \Omega^2_{cl}(M,\Lambda_{\omega}^{0})$ and any $\gamma^{\pm}\in P(H)$, $C\in \pi_2(\gamma^-,\gamma^+)$, we have a well-defined value \[ \int_C\theta=\sum_{i=1}^{\infty}\left(\int_C\theta_i\right)T^{g_i}\in \Lambda_{\omega}^{0}.\]

Given three periodic orbits $\gamma_0,\gamma_1,\gamma_2$ we have a ``concatenation'' map \begin{align*} \pi_2(\gamma_0,\gamma_1)\times \pi_2(\gamma_1,\gamma_2)&\to \pi_2(\gamma_0,\gamma_2)   
\\ (C_-,C_+)&\mapsto C_-\#C_+\end{align*} where $C_-\#C_+$ is the equivalence class of a map $w\co [-\infty,-\infty]\times S^1\to M$ defined (for the sake of definiteness) as follows.  Let $u,v\co [-\infty,\infty]\times S^1\to M$ be arbitrary representatives of, respectively, $C_-$ and $C_+$, and define $w\co [-\infty,\infty]\times S^1\to M$ by \[ w(s,t)=\left\{\begin{array}{ll} u(-\log(-s),t) & s\leq 0 \\ v(\log(s),t) & s\geq 0\end{array}\right.\] 
(here we've extended the natural logarithm to a continuous map $\log\co [0,\infty]\to [-\infty,\infty]$; the definition is consistent at $s=0$ since by assumption $u(\infty,\cdot)=v(-\infty,\cdot)=\gamma_1$). 


Evidently, if $\theta\in \Omega^2(M;\Lambda_{\omega}^{0})$, $C_-\in \pi_2(\gamma_0,\gamma_1),C_+\in\pi_2(\gamma_1,\gamma_2)$, then \begin{equation}\label{concat} \int_{C_-\#C_+}\theta=\int_{C_-}\theta+\int_{C_+}\theta.\end{equation} 

Another important map on $\pi_2(\gamma^-,\gamma^+)$ is the \emph{Maslov map} \[ \bar{\mu}\co \pi_2(\gamma^-,\gamma^+)\to\mathbb{Z}.\]  To define it, let $C\in \pi_2(\gamma^-,\gamma^+)$ and let $u\co [-\infty,\infty]\times S^1$ be an arbitrary representative of $C$.  Since $\gamma^-$ is by assumption contractible, choose any disc $v\co D^2\to M$ with $v|_{\partial D^2}=\gamma^-$.  Glue the negative end of $u$ to the boundary of $v$ along $\gamma^-$ to form a new disc $u\#v\co D^2\to M$, now with $u\#v|_{\partial D^2}=\gamma^+$. Now define \[ \bar{\mu}(C)=\mu_{CZ}(\gamma^+,v\#u)-\mu_{CZ}(\gamma^-,v).\]  Here the Conley--Zehnder index  $\mu_{CZ}$ is defined as usual (see \cite[Section 2]{Sal}): for $\gamma\in P(H)$ and $v\co D^2\to M$ with $v|_{\partial D^2}=\gamma$, symplectically trivialize $v^*TM$ over $D^2$, and let $\mu_{CZ}(\gamma,v)$ be the Conley--Zehnder index (in the sense of \cite[Remark 5.4]{RS}) of the path of symplectic matrices which represent the linearizations $d_p\phi_{H}^{t}$ in terms of this trivialization.  The quantity $\bar{\mu}(C)$ is easily seen to be independent of the various choices involved and to obey \[ \bar{\mu}(C_-\#C_+)=\bar{\mu}(C_-)+\bar{\mu}(C_+)\quad (C_-\in \pi_2(\gamma_0,\gamma_1),C_+\in\pi_2(\gamma_1,\gamma_2)).\]  

\subsection{Differentials}

An $S^1$-family $J=\{J_t\}_{t\in S^1}$ of $\omega$-compatible almost complex structures on $M$ induces an $S^1$-family of metrics and hence an $L^2$ metric on $\widetilde{\mathcal{L}_0M}$ in a standard way.  The negative gradient flow equation for the action functional $\mathcal{A}_H$ with respect to this metric asks for a map $u\co \mathbb{R}\times S^1\to M$ obeying \begin{equation}\label{floereqn} \delbar_{J,H}u:=\frac{\partial u}{\partial s}+J(t,u(s,t))\left(\frac{\partial u}{\partial t}-X_H(t,u(s,t))\right)=0.\end{equation}  For a fixed $\gamma^-,\gamma^+\in P(H)$ and $C\in \pi_2(\gamma^-,\gamma^+)$ write\footnote{to avoid clutter we are omitting $J$ and $H$ from the notation for $\tilde{\mathcal{M}}(\gamma^-,\gamma^+,C)$; we hope this will not cause confusion} \[ \tilde{\mathcal{M}}(\gamma^-,\gamma^+,C)=\left\{u\co \mathbb{R}\times S^1\to M\left|\begin{array}{c}\delbar_{J,H}u=0,\int_{\mathbb{R}\times S^1}\left|\frac{\partial u}{\partial s}\right|^2<\infty,\\u(s,\cdot)\to\gamma^{\pm}\mbox{ as }s\to \pm \infty, \\  \, [u]=C\in\pi_2(\gamma^-,\gamma^+)\end{array}\right.\right\}.\]
Where $\mathbb{R}$ acts by $s$-translation, let \[ \mathcal{M}(\gamma^-,\gamma^+,C)=\tilde{\mathcal{M}}(\gamma^-,\gamma^+,C)/\mathbb{R}.\]

This latter space has a standard Gromov compactification $C\mathcal{M}(\gamma^-,\gamma^+,C)$ (by broken trajectories and sphere bubbles), and in \cite[Theorem 19.14]{FO} this compactification $C\mathcal{M}(\gamma^-,\gamma^+,C)$ is endowed with an oriented Kuranishi structure with corners, of dimension $\bar{\mu}(C)-1$; moreover as $C$ varies the various Kuranishi structures are compatible with gluing maps in an appropriate sense made precise in \cite[Addendum 19.16]{FO} (see also \cite[Section 4]{LiuT98} for an alternative construction).   In particular, if $\bar{\mu}(C)=1$ the zero set of a generic multisection $\frak{s}_C$ associated to this Kuranishi structure consists of finitely many points each with a corresponding rational multiplicity, and the sum of these rational mutliplicities gives a value which we denote \[ |\frak{s}_{C}^{-1}(0)|.\]  

(See \cite[Appendix A]{FOOO09} for a general introduction to Kuranishi structures.  Very briefly, a Kuranishi structure on a compact space $Z$ identifies a neighborhood $U_p$ of each $p\in Z$ with a quotient by a finite group $\Gamma_p$ of the zero locus of some $\Gamma_p$-equivariant smooth map $s_p\co V_p\to E_p$ where the smooth manifold with corners $V_p$ and the finite-dimensional vector space $E_p$ both carry $\Gamma_p$-actions.  The multisection $\frak{s}_C$ amounts to the  data of suitably compatible multi-valued perturbations $s'_{p_i}$ of $s_{p_i}$  for certain specially-chosen  $p_i$ (as in \cite[Lemma A1.11]{FOOO09}) such that the $U_{p_i}$ cover $Z$, with each $s'_{p_i}$ transverse to zero (so that $s'_{p_i}$ vanishes at just finitely many points when the virtual dimension is zero).  \cite[Definition A1.27]{FOOO09} then prescribes multiplicities at each point in the vanishing locus, and what we denote by    $|\frak{s}_{C}^{-1}(0)|$ is the sum of these multiplicities.)

Readers who do not like Kuranishi structures but are willing to restrict themselves to the case where $(M,\omega)$ is strongly semipositive (\emph{i.e.}, for any $A\in \pi_2(M)$ with $2-n\leq \langle c_1(TM),A\rangle< 0$ we have $\int_{A}\omega\leq 0$) can instead follow the constructions of \cite{HS}.  Provided that the pair $(J,H)$  belongs to a suitable residual set the space  $\mathcal{M}(\gamma^-,\gamma^+,C)$ will be a an oriented manifold of dimension $\barmu(C)-1$, consisting in the case that $\barmu(C)=1$ of finitely many points; what we denote by $|\frak{s}_{C}^{-1}(0)|$ should then be interpreted as the signed number of points in $\mathcal{M}(\gamma^-,\gamma^+,C)$. (The section $\frak{s}_C$ can in this context be regarded as the restriction of the operator $\delbar_{J,H}$ to the space of cylinders representing $C\in \pi_2(\gamma^-,\gamma^+)$.)  

The standard Floer differential is the operator $\partial_0\co CF(H)\to CF(H)$ obtained by extending linearly from the formula \[ \partial_0[\gamma^-,v^-]=\sum_{\gamma^+\in P(H)}\sum_{\substack{ C\in \pi_2(\gamma^-,\gamma^+),\\  \bar{\mu}(C)=1}}|\frak{s}_{C}^{-1}(0)|[\gamma^+,v^-\#C].\]

(Here and below the notation $[\gamma^+,v^-\#C]$ is to be interpreted in the obvious way using concatenation; of course since the equivalence relation defining elements $[\gamma,v]$ of $\tilde{P}(H)$ is much weaker than homotopy the assignment $C\mapsto [\gamma^+,v^-\#C]$ will be many-to-one.)

\subsubsection{Small deformations} 
We consider additionally a whole family of Floer differentials, $\partial_{\theta}$, where $\theta$ takes values in the  set $\Omega^2_{cl}(M;\Lambda_{\omega}^{0})$ of closed Novikov-ring valued $2$-forms. For  $\theta=\sum_i\theta_iT^{g_i}\in \Omega^2_{cl}(M,\Lambda_{\omega}^{0})$ and any $\gamma^{\pm}\in P(H)$, $C\in \pi_2(\gamma^-,\gamma^+)$, we have a well-defined value \[ \int_C\theta=\sum_{i=1}^{\infty}\left(\int_C\theta_i\right)T^{g_i}\in \Lambda_{\omega}^{0}.\]

For any such $\theta$, we then set \[ \partial_{\theta}[\gamma^-,v^-]=\sum_{\gamma^+\in P(H)}\sum_{\substack{ C\in \pi_2(\gamma^-,\gamma^+),\\  \bar{\mu}(C)=1}}|\frak{s}_{C}^{-1}(0)|\exp\left(\int_{C}\theta\right)[\gamma^+,v^-\#C].\]

Said differently, for each cylinder $u$ counted by the Floer boundary operator, if $u$ is ordinarily (with respect to the standard boundary operator $\partial_0$) counted with weight $\ep(u)$, we instead count $u$ with weight $\ep(u)\exp{\int_{\mathbb{R}\times S^1}u^*\theta}\in \Lambda_{\omega}^{0}$. 

\begin{prop} If $\theta\in\Omega^{2}_{cl}(M,\Lambda_{\omega}^{0})$, then $\partial_{\theta}^{2}=0$.
\end{prop}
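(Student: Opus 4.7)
The plan is to reduce the claim to the standard identity $\partial_0^2=0$, exploiting the multiplicativity of the exponential weight under concatenation of cylinders.

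First I would simply expand $\partial_\theta^2[\gamma^-,v^-]$ by applying $\partial_\theta$ twice: the result is a sum over intermediate orbits $\gamma^0\in P(H)$ and pairs of classes $C_-\in\pi_2(\gamma^-,\gamma^0)$, $C_+\in\pi_2(\gamma^0,\gamma^+)$ with $\barmu(C_-)=\barmu(C_+)=1$, each weighted by $|\frak{s}_{C_-}^{-1}(0)||\frak{s}_{C_+}^{-1}(0)|\exp(\int_{C_-}\theta)\exp(\int_{C_+}\theta)$ and attached to the generator $[\gamma^+,v^-\#C_-\#C_+]$. Grouping the terms according to the total class $C=C_-\#C_+\in\pi_2(\gamma^-,\gamma^+)$ (which satisfies $\barmu(C)=2$ by additivity of $\barmu$), the concatenation identity (\ref{concat}) together with the rule $\exp(a+b)=\exp(a)\exp(b)$ on $\Lambda_\omega^{0}$ lets me pull out a common factor of $\exp(\int_{C}\theta)$ from each grouped summand.

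The coefficient of $[\gamma^+,v^-\#C]$ in $\partial_\theta^2[\gamma^-,v^-]$ therefore equals $\exp(\int_{C}\theta)$ times the \emph{undeformed} count $\sum_{\gamma^0,C_-\#C_+=C}|\frak{s}_{C_-}^{-1}(0)||\frak{s}_{C_+}^{-1}(0)|$ of broken trajectories. This count is precisely what appears as the coefficient in $\partial_0^2[\gamma^-,v^-]$, and the usual analysis of the codimension-one boundary of the oriented Kuranishi space $C\mathcal{M}(\gamma^-,\gamma^+,C)$ (of virtual dimension $\barmu(C)-1=1$) shows that it vanishes: the only boundary strata are broken trajectories (sphere and cylinder bubbles contribute in higher codimension), and the compatible Kuranishi/gluing structure constructed in \cite{FO}, \cite{LiuT98} (or the manifold-theoretic version in \cite{HS} in the semipositive case) guarantees that these broken trajectories cancel in signed pairs. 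Multiplying by $\exp(\int_C\theta)$ preserves the vanishing, so every coefficient of $\partial_\theta^2[\gamma^-,v^-]$ is zero.

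The one matter to verify beyond this purely formal manipulation is that $\partial_\theta$ genuinely defines an operator on the downward-completed module $CF(H)$. Since $\theta$ has components in $\Omega^2(M;\mathbb{C})$ weighted by Novikov monomials $T^{g_i}$ with $g_i\geq 0$, the weights $\exp(\int_C\theta)$ lie in $\Lambda_\omega^{0}$ and in particular have non-negative $T$-valuation; the action of $\Lambda_\omega$ on $CF(H)$ then ensures that these weights do not create any new issues with the downward-completion of $CF(H)$, and standard Gromov compactness shows the sum makes sense exactly as for $\partial_0$. Apart from this, the proof is entirely formal, turning on the interaction of Stokes' theorem (via (\ref{concat})), the exponential identity, and the standard boundary structure of the moduli spaces—no new transversality or analytical input is required.
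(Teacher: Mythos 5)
Your proof is correct and follows essentially the same route as the paper: expand $\partial_\theta^2$ over intermediate orbits, use (\ref{concat}) and $\exp(a+b)=\exp a\exp b$ to factor out $\exp(\int_C\theta)$ for each total class $C$ with $\barmu(C)=2$, and then invoke the standard vanishing of the signed count of broken trajectories on the boundary of the $1$-dimensional (Kuranishi) moduli space. The only addition is your closing paragraph on well-definedness of $\partial_\theta$, which the paper handles separately rather than inside this proof.
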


\begin{proof} This follows from the standard argument (together with Stokes' theorem as manifested in the formula (\ref{concat})).  Indeed, for $[\gamma^-,v^-]\in \tilde{P}(H)$, $\partial_{\theta}^{2}[\gamma^-,v^-]$ is a formal sum of generators $[\gamma^+,v^+]$ where $\gamma^+\in P(H)$ and $\mu_{CZ}(\gamma^+,v^+)-\mu_{CZ}(\gamma^-,v^-)=2$.   The coefficient on $[\gamma^+,v^+]$ is equal to \[ \sum_{\substack{C\in \pi_2(\gamma^-,\gamma^+),\\ \int_C\omega=\int_{D^2}v^{+*}\omega-\int_{D^2}v^{-*}\omega  }}\sum_{\gamma\in P(H)}\sum_{\substack{C_-\in \pi_2(\gamma^-,\gamma),\\ C_+\in \pi_2(\gamma,\gamma^+),\\ C_-\#C_+=C,\\ \bar{\mu}(C_{\pm})=1}}|\frak{s}_{C_-}^{-1}(0)||\frak{s}_{C_+}^{-1}(0)|\exp\left(\int_{C_-}\theta\right)\exp\left(\int_{C_+}\theta\right),\] which in turn is, by (\ref{concat}), equal to \begin{equation}\label{dts} \sum_{\substack{C\in \pi_2(\gamma^-,\gamma^+),\\ \int_C\omega=\int_{D^2}v^{+*}\omega-\int_{D^2}v^{-*}\omega   }}\exp\left(\int_{C}\theta\right)\sum_{\gamma\in P(H)}\sum_{\substack{C_-\in \pi_2(\gamma^-,\gamma),\\ C_+\in \pi_2(\gamma,\gamma^+),\\ C_-\#C_+=C,\\ \bar{\mu}(C_{\pm})=1}}|\frak{s}_{C_-}^{-1}(0)||\frak{s}_{C_+}^{-1}(0)|.\end{equation}  
But as in \cite[Lemma 20.2]{FO}, for each $C\in \pi_2(\gamma^-,\gamma^+)$ appearing in the sum, the multisection $\frak{s}_C$ associated to the $1$-dimensional oriented  Kuranishi structure on $C\mathcal{M}(\gamma^-,\gamma^+,C)$ has \[ \partial \frak{s}_{C}^{-1}(0)=\bigcup_{\substack{\gamma,C_-,C_+:\\C_-\#C_+=C}}\frak{s}_{C_-}^{-1}(0)\times \frak{s}_{C_+}^{-1}(0).\]  So since the sum of the multiplicities of the points of $\partial \frak{s}_{C}^{-1}(0)$ is zero it follows that the sum in (\ref{dts}) is zero.

(In the semipositive case where one does not use Kuranishi structures the same argument works, as in \cite[Theorem 5.1]{HS}: for $C\in \pi_2(\gamma^-,\gamma^+)$ the space $\mathcal{M}(\gamma^-,\gamma^+,C)$ has a compactification which is an oriented manifold with boundary equal to \[ \bigcup_{\substack{\gamma,C_-,C_+:\\C_-\#C_+=C}}\mathcal{M}(\gamma^-,\gamma,C_-)\times \mathcal{M}(\gamma,\gamma^+,C_+),\] which again causes the sum in (\ref{dts}) to be zero.)
\end{proof}

All of the boundary operators $\partial_{\theta}$ as $\theta$ varies are different; however one can show\footnote{using arguments similar to those in the proof of \cite[Lemma 3.8]{U09}---we omit the proof since we will not need to use the statement} that if $\theta,\theta'$ are cohomologous then the complexes built from $\partial_{\theta},\partial_{\theta'}$ are isomorphic in the category of $\mathbb{R}$-filtered chain complexes.  Thus we have introduced a way of deforming the standard Floer complex $(CF(H),\partial_0)$ by a class in $H^2(M;\Lambda_{\omega}^{0})$, or equivalently (by Poincar\'e duality) by a class in $H_{2n-2}(M;\Lambda_{\omega}^{0})$.  In what follows the boundary operator denoted $\partial_{\theta}$ above will instead be called $\partial^{PD[\theta],H}$.  Presently, these deformed boundary operators will be generalized to the case where $PD[\theta]$ is replaced by a more general even-dimensional homology class.

\subsubsection{Big deformations} \label{bigdefsect}
 This generalization entails counting Floer trajectories which obey certain incidence conditions, but requires more subtle technical arguments than one might initially imagine.   Conceptually, the relevant technical hurdles form a proper subset of the technical hurdles overcome in the construction of  $A_{\infty}$-algebras associated to appropriate Lagrangian submanifolds in \cite{FOOO09}, so we could just appeal to that work.  In the interests of not limiting the audience of this part of the paper to those with a working knowledge of \cite{FOOO09}, though, we provide a more self-contained description which in the case where $(M,\omega)$ is strongly semipositive does not make any use of Kuranishi structures.  If $(M,\omega)$ is not strongly semipositive then one of course will need Kuranishi structures or something similar to deal with multiple covers of spheres of negative Chern number, but at least in principle one could use constructions just at the level of \cite{FO} rather than \cite{FOOO09}.

Let us first give an indication of the basic strategy, with some explanations of where the novel technical issues arise.  Suppose for simplicity that we wish to deform the boundary operator by a class $\eta=f_*[N]$ where $f\co N\to M$ is a smooth map of a closed, $2d$-dimensional manifold $N$, where $2d\leq 2n-4$.  The plan is to construct \[ \partial^{\eta,H}=\sum_{k=0}^{\infty}\frac{1}{k!}\partial^{\eta,H,k},\] where, as a first (inaccurate) approximation, each $\partial^{\eta,k}$ counts (modulo reparametrization) solutions $u$ to (\ref{floereqn}) having $k$ freely-varying marked points $z_1,\ldots,z_k$ with incidence conditions $u(z_i)\in f(N)$: in fiber product notation, we wish to count elements of the spaces \begin{equation}\label{motfp} \left((\tilde{\mathcal{M}}(\gamma^-,\gamma^+;C)\times (\mathbb{R}\times S^1)^k)_{(ev_1,\ldots,ev_k)}\times_{(f,\ldots,f)}N^k\right)/\mathbb{R}\end{equation}  where $ev_i$ denotes evaluation at the $i$th marked point: $ev_i(u,z_1,\ldots,z_k)=u(z_i)$. The astute reader will already have noticed a transversality problem in (\ref{motfp}) occurring when $k\geq 2$ and when two of the marked points $z_i$ collide.  One way of dealing with such transversality problems is to use the fiber product construction for Kuranishi structures, as described in \cite[Section A1.2]{FOOO09}, but we will also describe a more direct approach.  

Note that in Gromov--Witten theory, one can avoid these sorts of transversality problems by, instead of requiring that $u(z_i)\in f(N)$ for all $i$, requiring that $u(z_i)\in f_i(N)$ where the various $f_i(N)$ are in general position with respect to each other.  However in the Floer-theoretic context this strategy will not work, essentially because Floer theory involves moduli spaces have that boundary of codimension $1$ rather than $2$, so that varying the constraint cycle $N$ in a one-parameter family will typically cause the operations to change. If we apply the typical gluing argument to a chain of two Floer trajectories each obeying one incidence constraint on the cycle $N$, then we are led to considering Floer trajectories with two incidence conditions \emph{both on $N$}, which is \emph{not} equivalent to considering Floer trajectories with incidence conditions on different cycles homologous to $N$.  
This suggests that, in order to ensure that $\partial^{\eta,H}\circ\partial^{\eta,H}=0$, the operator $\partial^{\eta,H,2}$ should enumerate elements of a space such as the $k=2$ version of (\ref{motfp}); however, as noted earlier, this space suffers from transversality problems.

Thus, in defining the operators $\partial^{\eta,H,k}$, there are two competing issues to address: on the one hand we wish the $\partial^{\eta,H,k}$ to be suitably compatible with the gluing arguments that are typically used to show that Floer differentials have square zero, while on the other hand we need to maintain transversality as marked points approach each other.  While the preceding two paragraphs suggest that these goals are in tension, it is reasonable to expect that the problem can be solved: the former issue relates to behavior when marked points are far away from each other, while the latter relates to behavior when marked points are near to each other, so one can hope to resolve both issues simultaneously.  We now set about fulfilling this hope.

For any positive integer $k$, we have an evaluation map \[ \mathbf{ev}_k\co C^0(\R\times S^1,M)\times (\R\times S^1)^k\to M^k\] 
which figures in (\ref{motfp}).  Our basic strategy is to (simultaneously for all $k$) modify these maps $\ev_k$ to maps $\tilde{\ev}_k$ with respect to which the fiber products as in (\ref{motfp}) can be made transverse, while preserving the appropriate compatibility under gluing of cylinders.  We construct $\tilde{\ev}_{k}$ along the following lines.  
For a smooth positive function $\beta\co \R\to (0,\infty)$ with $\beta(s)\to 0$ as $s\to\pm\infty$, for $1\leq i\leq k$ define $\tau_{\beta,i}\co(\R\times S^1)^k\to \R$ by \begin{equation} \label{taubetadef} \tau_{\beta,i}((s_1,t_1),\ldots,(s_k,t_k))=\sum_{j=1}^{i-1}\beta(s_i-s_j).\end{equation}

Then where $\{\psi^{\tau}_{V}\}_{\tau\in\R}$ denotes the flow of a suitable vector field $V$, we will set, for a map $u\co \R\times S^1\to M$ and an element $\vec{z}=(z_1,\ldots,z_k)=((s_1,t_1),\ldots,(s_k,t_k))\in (\R\times S^1)^k$, \begin{equation}\label{evscheme} \tilde{\ev}_k\left(u,\vec{z}\right)=\left(u(z_1),\psi^{\tau_{\beta,2}(\vec{z})}_{V}(u(z_2)),\ldots, \psi^{\tau_{\beta,k}(\vec{z})}_{V}(u(z_k))\right).\end{equation}  Thus the evaluation map at the $i$th marked point is modified by flowing along the vector field $V$ by an amount dependent on the locations of all previous marked points relative to the $i$th one. (Since $\tau_{\beta,1}(\vec{z})=0$ we have omitted it from the notation above.) 

\begin{remark}\label{mainrmk}  Here is an intuitive description of why this approach can be expected to address the issues described above.  Consider first the issue mentioned earlier of non-transversality as marked points approach each other.  Notice that the definition (\ref{taubetadef}) of the functions $\tau_{\beta,i}$ ensures that, if $\vec{z}\in (\R\times S^1)^k$ has $z_{i_1}=z_{i_2}$ where $i_1<i_2$, then $\tau_{\beta,i_2}(\vec{z})>\tau_{\beta,i_1}(\vec{z})$.  Thus whereas the original fiber product (\ref{motfp}) would have had an excess-dimensional stratum in which the (equal) marked points $z_1$ and $z_2$ are both mapped to the same point on $f(N)$, by replacing $\ev_k$ with $\tilde{\ev}_k$ we force $z_1$ and $z_2$ to be connected to different points on $f(N)$ by prescribed-length flowlines of the vector field $V$ as $z_1$ and $z_2$ approach each other (at least as long as they are not mapped to zeros of  $V$, as we will arrange), and this will (for generic choices of the auxiliary data) prevent the diagonals in $(\R\times S^1)^k$ from contributing problematic additional strata.

At the same time,  these perturbations of the evaluation maps are compatible with the gluing maps that arise in the standard proofs that the Floer differential has square zero.  This is essentially because of the fact that $\beta(s)\to 0$ as $s\to \pm\infty$, so that distinct marked points on the domain do not influence each other in the limit as the distance between them approaches infinity.
  Somewhat more precisely, a typical end of one of these moduli spaces involves a sequence of Floer trajectories $u^{(n)}\co \R\times S^1\to M$ splitting into two trajectories $u_{\pm}\co \R\times S^1\to M$, with the marked points
 $z_{1}^{(n)}=(s_{1}^{(n)},t_{1}^{(n)}),\ldots,z_{k}^{(n)}=(s_{k}^{(n)},t_{k}^{(n)})$  distributing themselves  among the two components; write
$i_{1}^{-}<\ldots<i_{l_-}^{-}$ for the indices in $\{1,\ldots,k\}$ of the marked points which limit to points on the domain of
 $u_-$, and $i_{1}^{+}<\ldots <i_{l_+}^{+}$ for those indices corresponding to marked points which limit to points on the domain of
  $u_+$. Thus for some $T_{+}^{(n)},T^{(n)}_{-}\in \R$ with $T_{+}^{(n)}-T^{(n)}_{-}\to\infty$ there will be \[ z_{i_{1}^{-}},\ldots,z_{i_{l_-}^{-}},z_{i_{1}^{+}},\ldots,z_{i_{l_+}^{+}}\in\R\times S^1\] such that, where for $T\in\R$ $\sigma_T\co \R\times S^1\to \R\times S^1$ denotes the map $(s,t)\mapsto (s+T,t)$, we have $u^{(n)}\circ \sigma_{T_{\pm}^{n}}\to u_{\pm}$ uniformly on compact subsets, and $\sigma_{T_{\pm}^{(n)}}^{-1}(z_{i_{j}^{\pm}}^{(n)})\to z_{i_{j}^{\pm}}$.  

Note in particular that $s_{{i_j}^{+}}^{(n)}- s_{{i_l}^{-}}^{(n)}\to\infty$ as $n\to\infty$ for each $j,l$.  Consequently, in view of the definition (\ref{taubetadef}), we will have for each $j=1,\ldots,l_{\pm}$, \[ \tau_{\beta,j}(z_{i_{1}^{\pm}},\ldots,z_{i_{l_{\pm}}^{\pm}})=\lim_{n\to\infty}\tau_{\beta,i_{j}^{\pm}}(z_{1}^{(n)},\ldots,z_{k}^{(n)}).\]  Again, this statement can roughly be interpreted as meaning that our perturbations to the incidence conditions behave compatibly under gluing of broken trajectories, as is required for the usual statement that one-dimensional spaces of unparametrized Floer trajectories have boundaries given by unions of products of zero-dimensional spaces of trajectories and that therefore the boundary operator squares to zero.

\end{remark}

We now begin to make these statements more precise.  
By \cite[Th\'eor\`eme II.29]{Th}, we may fix once and for all a basis for $\oplus_{j=0}^{n-2}H_{2j}(M;\mathbb{Q})$, say with $m$ elements, each member of which may be represented by an embedding $f_i\co N_i\to M$ where $N_i$ is a closed smooth oriented manifold, say of dimension $2d(i)$ ($1\leq i\leq m$).   We should emphasize that we consider the $f_i$  to be chosen at the very outset of the construction (before, for instance, we have chosen a Hamiltonian or an almost complex structure), and we will not really make a full investigation of the extent to which our constructions are independent of $f_i$ up to appropriate isomorphism. At least at the level of the homology ring, our main results do imply such independence; however we do not address (and do not need to address) whether a different choice of the  $f_i$ would give rise to different deformed spectral invariants.\footnote{In the version of this construction that appears in \cite{FOOO}, a result stating that the deformed spectral invariants depend only on the homology class $\eta$ appears as Theorem 7.7(2).}

\begin{dfn}\label{sndeg} A Hamiltonian $H\co S^1\times M\to\R$ is \emph{strongly nondegenerate} if, for each $p\in Fix(\phi_{H}^{1})$, \begin{itemize} \item the linearization $d_p\phi_{H}^{1}\co T_p M\to T_pM$ does not have 1 as an eigenvalue, and \item the orbit $\{\phi_{H}^{t}(p)|t\in [0,1]\}$ is disjoint from each submanifold $f_i(N_i)$.\end{itemize}
\end{dfn}

A standard argument shows that the space of strongly nondegenerate $H$ is residual in the $C^l$-topology for any $2\leq l\leq \infty$. 

 Let $\mathcal{J}^l$ denote the space of $S^1$-families of $\omega$-compatible almost complex structures of class $C^l$. Let $g\co M\to\R$ be a Morse function $g$ all of whose critical points are disjoint from the various $f_i(N_i)$.  For a positive  integer $l$ let $\mathcal{V}^l$ denote the space of $C^l$ gradient-like vector fields $V$ for $g$---in other words, those $V$ for which there are coordinate charts near each critical point of $g$ in terms of which $V$ vanishes linearly, while $dg(V)< 0$ on the complement of the critical points of $g$.  Thus if $V\in\mathcal{V}^l$ then $V$ has no nontrivial periodic orbits, and zero locus of $V$ is precisely the set $Crit(g)$ of critical points of $g$, which in particular is disjoint from each $f_i(N_i)$.  For $V\in\mathcal{V}^l$ let $\{\psi^{\tau}_{V}\}_{\tau\in\R}$ denote the flow generated by $V$.
  
 Meanwhile fix a strongly nondegenerate Hamiltonian $H_0$, and let $\mathcal{H}^l$ denote a small ball around $H_0$ in the space of $C^{l+1}$ Hamiltonians $H\co S^1\times M\to\R$ which coincide to order two with $H_0$ at the $1$-periodic orbits of $X_{H_0}$.  Finally  choose a Banach space $(\tilde{\mathcal{B}},\|\cdot\|)$ of functions $\mathbb{R}\to \R$ which is dense in $L^2$ and all of whose elements belong to $C^{\infty}(\R;\R)$ (for definiteness let us follow \cite[Section 5]{Fl} and use for $\tilde{\mathcal{B}}$ the completion of $C^{\infty}_{0}(\R;\R)$ with respect to a norm of the form $\sum_{k=0}^{\infty}\ep_k\|\cdot\|_{C_k}$ for sufficiently rapidly decreasing $\ep_k$, with $\ep_0=1$) and let $\mathcal{B}$ denote the space of functions of the form $\beta(s)=e^{-s^2}(1+f(s))$ for $f\in \tilde{\mathcal{B}}$ with $\|f\|<1$.  Thus $\mathcal{B}$ has an obvious identification with an open set in a Banach space and so is a Banach manifold; moreover all of its elements are smooth, positive functions $\beta\co \R\to (0,2)$ which decay in Gaussian fashion at $\pm \infty$.
 
  The data of $V\in \mathcal{V}^l$ and $\beta\in\mathcal{B}$ give rise to functions $\tau_{\beta,i}\co (\R\times S^1)^k\to \R$ and $\tilde{\ev}_k\co C^0(\R\times S^1)^k\times (\R\times S^1)^k\to M^k$ as in
(\ref{taubetadef})  and (\ref{evscheme}) above.

Write $\mathcal{A}^l=\mathcal{J}^l\times \mathcal{H}^l\times \mathcal{J}^l\times\mathcal{B}$, equipped with the topology coming from the $C^l$ topology on the first three factors and the topology mentioned in the previous paragraph on the last factor, and let $\mathcal{A}=\cap_{l=2}^{\infty}\mathcal{A}^l$, with the topology coming from the $C^{\infty}$-topology on the first three factors.  Thus each $\mathcal{A}^l$ is a Banach manifold and $\mathcal{A}$ is a Frechet manifold.

Given $\frak{a}=(J,H,V,\beta)\in \mathcal{A}^l$; $\gamma^-,\gamma^+\in P(H)$; $C\in \pi_2(\gamma^-,\gamma^+)$; and $I=(i_1,\ldots,i_k)\in \{1,\ldots,m\}^k$ (where $k\in\Z_{\geq 0}$), 
let 
 \[ \tilde{\mathcal{M}}^{\frak{a}}(\gamma^-,\gamma^+,C;N_I)=\left\{(u,\vec{z},n_1,\ldots,n_k)\in \tilde{\mathcal{M}}(\gamma^-,\gamma^+,C)\times(\R\times S^1)^k\times\prod_{j=1}^{k}N_{i_j}\left|\begin{array}{c}\tilde{\ev}_k(u,z_1,\ldots,z_k)=\\ \,(f_{i_1}(n_1),\ldots,f_{i_k}(n_k))\end{array}\right.\right\}.\]  Again, the map $\tilde{\ev}_k$ is determined by $\beta$ and $V$ via (\ref{evscheme}).

Note that since the functions $\tau_{\beta,i}\co(\R\times S^1)^k\to \R$ of (\ref{taubetadef}) obey \[ \tau_{\beta,i}(\sigma_T(z_1),\ldots,\sigma_T(z_k))=\tau_{\beta,i}(z_1,\ldots,z_k),\] the spaces $\tilde{\mathcal{M}}^{\frak{a}}(\gamma^-,\gamma^+,C;N_I)$ admit $\R$-actions induced by translation of the domain.  These $\R$-actions are free except in the degenerate case where $\gamma^-=\gamma^+$, $k=0$, and $C\in \pi_2(\gamma^-,\gamma^-)$ is the trivial homotopy class. Let \[ \mathcal{M}^{\frak{a}}(\gamma^-,\gamma^+,C;N_I)=\tilde{\mathcal{M}}^{\frak{a}}(\gamma^-,\gamma^+,C;N_I)/\R.\]

Recalling that the dimension of the manifold $N_i$ is $2d(i)$ where $d(i)\leq n-2$, for $I=(i_1,\ldots,i_k)\in\{1,\ldots,m\}^k$ let \[ \delta(I)=\sum_{j=1}^{k}(2n-2-2d(i_j)).\]

We have:

\begin{prop}\label{pertmain}  Assume that $(M,\omega)$ is strongly semipositive.  Then there is a residual subset $\mathcal{A}^{reg}\subset\mathcal{A}$ such that for each $\frak{a}=(J,H,V,\beta)\in\mathcal{A}^{reg}$ the following holds:
\begin{itemize}
\item[(i)] Each moduli space $\tilde{\mathcal{M}}^{\frak{a}}(\gamma^-,\gamma^+,C;N_I)$ is a union of a smooth oriented manifold of dimension $\barmu(C)-\delta(I)$ together with spaces which are contained in smooth manifolds of dimension at most $\barmu(C)-\delta(I)-2$.
\item[(ii)] When $\barmu(C)-\delta(I)=1$, the quotient $\mathcal{M}^{\frak{a}}(\gamma^-,\gamma^+,C;N_I)$ consists of finitely many points; we denote the oriented number of these points by $|\frak{s}_{C,I}^{-1}(0)|$.
\item[(iii)] When $\barmu(C)-\delta(I)=2$, the quotient $\mathcal{M}^{\frak{a}}(\gamma^-,\gamma^+,C;N_I)$  has a compactification which is a smooth oriented $1$-manifold with oriented boundary $\partial \mathcal{M}^{\frak{a}}(\gamma^-,\gamma^+,C;N_I)$ given by \begin{equation}\label{bdryfib}
\coprod_{\substack{\gamma,C_-,C_+:\\C_-\#C_+=C}}\left(\coprod_{S\subset\{1,\ldots,k\}}\mathcal{M}^{\frak{a}}(\gamma^-,\gamma,C_-;N_{J_-(I,S)})\times \mathcal{M}^{\frak{a}}(\gamma,\gamma^+,C_+;N_{J_+(I,S)})\right).\end{equation}   Here for $I=(i_1,\ldots,i_k)\in \{1,\ldots,m\}^{k}$ and $S\subset \{1,\ldots,k\}$ (say with $\#S=l$) we denote by $J_-(I,S)$ the element of $\{1,\ldots,m\}^{l}$ consisting of the entries $i_m$ with $m\in S$ (taken in order) and by $J_+(I,S)$ the element of $\{1,\ldots,m\}^{k-l}$ consisting of the entries $i_m$ with $m\in \{1,\ldots,k\}\setminus S$.\end{itemize}
\end{prop}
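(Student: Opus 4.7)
The argument follows the familiar transversality/compactness/gluing trilogy from Floer theory; the new ingredient is that the perturbation data $(V,\beta)\in\mathcal{V}^l\times\mathcal{B}$ entering the definition of $\tilde{\ev}_k$ are precisely what permits transversality to survive along the diagonals in $(\R\times S^1)^k$ where marked points collide. Throughout, strong semipositivity is used only to suppress sphere-bubbling contributions to the boundaries of the one- and two-dimensional quotient moduli spaces, exactly as in the standard treatment \cite{HS}.

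\textbf{Transversality (part (i)).} I would set up a universal moduli space over $\mathcal{A}^l$ and stratify $(\R\times S^1)^k$ according to the partition describing which of the $z_j$ coincide. On the open stratum where all $z_j$ are distinct, $\tilde{\ev}_k$ differs from $\ev_k$ only by a diffeomorphism applied in each factor of $M$, so the usual parametric argument using variations of $(J,H)$ produces a submersive universal evaluation onto $M^k$; Sard--Smale then gives a residual set of $\frak{a}$ for which the fiber product is transverse and cut out of dimension $\barmu(C)-\delta(I)$. On a diagonal stratum where $z_{i_1}=\cdots=z_{i_l}$ with $i_1<\cdots<i_l$, the fiber product condition reduces to the single incidence $u(z_{i_1})\in f_{i_1}(N_{i_1})$ together with the flowed conditions $\psi_V^{\tau_j}(u(z_{i_1}))\in f_{i_j}(N_{i_j})$ for $j=2,\ldots,l$, where $\tau_j:=\tau_{\beta,i_j}(\vec z)-\tau_{\beta,i_1}(\vec z)>0$ is strictly positive by the positivity of $\beta$. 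Varying $\beta\in\mathcal{B}$ perturbs these time shifts $\tau_j$ independently, while varying $V\in\mathcal{V}^l$ moves the flowed points $\psi_V^{\tau_j}(u(z_{i_1}))$ independently (here one uses that $u(z_{i_1})\notin Crit(g)$, which follows from strong nondegeneracy since $f_{i_1}(N_{i_1})$ is disjoint from $Crit(g)$). A countable intersection over $(\gamma^\pm,C,I)$ and over the diagonal strata produces $\mathcal{A}^{reg}$; since each diagonal has strictly positive codimension in $(\R\times S^1)^k$, the corresponding loci in $\tilde{\mathcal{M}}^{\frak{a}}$ lie in smooth manifolds of dimension at most $\barmu(C)-\delta(I)-2$, yielding (i). Part (ii) is then the dimension-zero case.

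\textbf{Compactification and boundary (part (iii)).} Gromov compactness produces a compactification of $\mathcal{M}^{\frak{a}}(\gamma^-,\gamma^+,C;N_I)$ by broken trajectories and sphere bubbles. Since the cycles $f_i(N_i)$ have codimension at least four, strong semipositivity together with the dimension count forces all bubbled strata to have codimension at least two, so only trajectory-breaking contributes to the codimension-one boundary. For a sequence $u^{(n)}$ breaking into $(u_-,u_+)$ joined at some $\gamma\in P(H)$, the marked points distribute according to a subset $S\subset\{1,\ldots,k\}$, and the Gaussian decay of $\beta$ gives (as in Remark \ref{mainrmk}) the key convergence $\tau_{\beta,i_j^{\pm}}(z_1^{(n)},\ldots,z_k^{(n)})\to\tau_{\beta,j}(z_{i_1^{\pm}},\ldots,z_{i_{l_{\pm}}^{\pm}})$. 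Consequently the perturbed incidence conditions on $u^{(n)}$ pass in the limit to the perturbed incidence conditions defining the two factors in (\ref{bdryfib}). The converse (every broken configuration arises from a unique end of the one-dimensional moduli space) is a standard pre-gluing plus Newton iteration argument; the same Gaussian decay of $\beta$ guarantees that the perturbed evaluations on the glued cylinder agree, up to arbitrarily small error as the neck length grows, with the perturbed evaluations on the two pieces, so the transversality set up in Step 1 survives gluing.

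\textbf{Main obstacle.} The core difficulty, and the only genuinely new point beyond the standard Floer machinery, is the submersivity of the universal $\tilde{\ev}_k$ on the diagonal strata. This ultimately comes down to the assertion that, for any $p\in M\setminus Crit(g)$ and any $\tau>0$, variations of $V\in\mathcal{V}^l$ span all tangent vectors at $\psi_V^{\tau}(p)$. This is intuitively plausible given that $V$ is only required to be gradient-like, but verifying it rigorously in combination with the simultaneous Sard--Smale argument over all diagonal strata and all $(\gamma^\pm,C,I)$ is where the bulk of the technical work lies; this verification is what is consigned to Appendix \ref{app}.
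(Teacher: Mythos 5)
Your outline captures the correct ingredients for the transversality step, but it underestimates the compactness step, and it misidentifies where the bulk of the new technical work actually lies. Two substantive points.

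First, on part (i): stratifying $(\R\times S^1)^k$ by the pattern of coinciding marked points and observing that each diagonal has codimension $\geq 2$ is indeed the paper's strategy (its Propositions \ref{univevsub}, \ref{ghostsub}, \ref{maindim}), and your "variations of $V$ span all tangent vectors at $\psi_V^\tau(p)$" is exactly the flow-box argument of Proposition \ref{ghostsub} — which is in fact straightforward, not the main difficulty. What your transversality discussion omits is a separate family of substrata with no counterpart among the diagonals in $(\R\times S^1)^k$: the loci where the contact points coincide, $f_{i_j}(n_j)=f_{i_{j'}}(n_{j'})$ for distinct $j,j'$. There the fiber-product condition is not cut out by independent constraints, and one must replace the duplicated incidence with the condition that the two marked points lie on the same $V$-flowline passing through $f_{i_j}(N_{i_j})$; this is the paper's Proposition \ref{substrat}, and without it part (i) as stated is not proved.

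Second, and more seriously, for part (iii) the sentence "strong semipositivity together with the dimension count forces all bubbled strata to have codimension at least two" is exactly the assertion that the paper flags as \emph{failing} in general. When the cylindrical component degenerates to a trivial cylinder $u(s,t)=\gamma(t)$, Proposition \ref{univevsub} is unavailable (the trivial class is excluded from its hypotheses), the marked points are forced onto the bubble trees, and the relevant Chern numbers can be arbitrarily large; worse, the same sphere can bubble off at several distinct $s$-values, so the usual "replace multiple covers by their underlying simple curve" trick does not yield a transverse stratum of lower expected dimension. This is precisely why the paper allows $\beta$ to vary in the universal moduli space: the appendix notes explicitly that Corollary \ref{nobubmain} (and hence parts (i)–(ii)) would hold with $\beta$ fixed, and the freedom to perturb $\beta$ is invoked \emph{only} to kill the trivial-cylinder and duplicated-sphere substrata arising in the compactification (Proposition \ref{nns}). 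So the "main obstacle" is not the submersivity of $\tilde{\ev}_k$ on diagonal strata, and the role you assign to varying $\beta$ in the transversality step is not how it is actually used; as written, your compactness argument has a genuine gap at exactly the configurations that the appendix is built to exclude.
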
    

The proof of Proposition \ref{pertmain} is outlined in Appendix \ref{app}; the basic scheme of the proof is fairly standard but there are some tricky details to address regarding various strata in the compactifications of the moduli spaces.  To connect (iii) above to Remark \ref{mainrmk}, the set $S$ corresponds to what would be denoted there by $\{i_{1}^{-},\ldots,i_{l_-}^{-}\}$; thus $S$ corresponds to the set of marked points which fall onto the first component of a broken trajectory, while $\{1,\ldots,k\}\setminus S$ corresponds to those which fall onto the last component.

To generalize to the non-semipositive case (in which one has problems arising from the bubbling of multiply-covered spheres of negative Chern number), one can put Kuranishi structures with boundary and corners on the Gromov-Floer compactifications of the moduli spaces $\mathcal{M}^{\frak{a}}(\gamma^-,\gamma^+,C;N_I)$ using the techniques of \cite{FO}.  Alternately, and we would say somewhat more naturally (though it requires more machinery), one can use the constructions of Kuranishi structures on (unperturbed) fiber products from \cite{FOOO09}.  Choose any strongly nondegenerate Hamiltonian $H$ and an $S^1$-family $J$ of $\omega$-compatible almost complex structures.  For $\gamma^-,\gamma^+\in P(H)$ and $C\in \pi_2(\gamma^-,\gamma^+)$, denote \[ \tilde{\mathcal{M}}_k(\gamma^-,\gamma^+,C)= \tilde{\mathcal{M}}(\gamma^-,\gamma^+,C)\times (\mathbb{R}\times S^1)^k.\]  Where again $\mathbb{R}$ acts on each factor by translation of the first variable, we have a quotient \[ \mathcal{M}_k(\gamma^-,\gamma^+,C)=
\tilde{\mathcal{M}}_k(\gamma^-,\gamma^+,C)/\mathbb{R}\] whose compactification $C\mathcal{M}_k(\gamma^-,\gamma^+,C)$, carries  an oriented Kuranishi structure with corners (this can be shown by adapting the construction of \cite[Section 7.1]{FOOO09} from the Lagrangian to the Hamiltonian context).  Moreover, we have evaluation maps $ev_1,\ldots,ev_k\co C\mathcal{M}_k(\gamma^-,\gamma^+,C)\to M$ ($ev_i$ is given by evaluating at the $i$th marked point; in other words, for $[u,z_1,\ldots,z_k]$ in the dense subset $\mathcal{M}_k(\gamma^-,\gamma^+,C)$ we have $ev_i([u,z_1,\ldots,z_k])=u(z_i)$) which are weakly submersive (see \cite[Definition A1.13]{FOOO09} for the definition), in view of which, for any $I=(i_1,\ldots,i_k)$, there is  (as explained in \cite[Section A1.2]{FOOO09}) an oriented Kuranishi structure with corners on the fiber product \[ \mathcal{M}(\gamma^-,\gamma^+,C;N_I):=C\mathcal{M}_k(\gamma^-,\gamma^+,C)_{(ev_1,\ldots,ev_k)}\times_{(f_{i_1},\ldots,f_{i_k})}(N_{i_1}\times\cdots\times N_{i_k}).\]  Where $I=(i_1,\ldots,i_k)$, this Kuranishi structure has dimension \[ \dim \mathcal{M}(\gamma^-,\gamma^+,C;N_{i_1},\ldots,N_{i_k})=\bar{\mu}(C)-1-\delta(I),\] and just as in Proposition \ref{pertmain}(iv) the codimension-one stratum of its boundary consists of the interior of \begin{equation} \coprod_{\substack{\gamma,C_-,C_+:\\C_-\#C_+=C}}\left(\coprod_{S\subset\{1,\ldots,k\}}\mathcal{M}(\gamma^-,\gamma,C_-;N_{J_-(I,S)})\times \mathcal{M}(\gamma,\gamma^+,C_+;N_{J_+(I,S)})\right),\end{equation}  where the notation $J_{\pm}(I,S)$ has the same meaning as in (\ref{bdryfib}).

In the case that $\bar{\mu}(C)=1+\delta(I)$, let $\frak{s}_{C,I}$ denote the multisection associated to the Kuranishi structure on $\mathcal{M}(\gamma^-,\gamma^+,C;N_I)$, and let  $|\frak{s}_{C,I}^{-1}(0)|$ denote the sum of the rational multiplicities of the points of the (zero-dimensional) vanishing locus of $\frak{s}_{C,I}$.  

With this preparation regarding the relevant moduli spaces, we can now explain how to deform the standard Floer differential by a general homology class of even codimension.  Recall that at the outset of the construction we have specified smooth embeddings $f_i\co N_i\to M$ ($i=1,\ldots,m$) of smooth $2d(i)$-dimensional manifolds $N_i$ such that the classes $c_i=f_{i*}[N_i]$ represent a basis for $\oplus_{i=0}^{n-2}H_{2i}(M;\mathbb{Q})$.  For any $\eta\in \oplus_{i=0}^{n-1}H_{2i}(M;\Lambda_{\omega}^{0})$, let $\theta\in \Omega_{cl}^{2}(M;\Lambda_{\omega}^{0})$ be Poincar\'e dual to the degree-$(2n-2)$ component of $\eta$, and write the lower-dimensional part of $\eta$ as \[ \eta-PD[\theta]=\sum_{i=1}^{m}z_i c_i\in\oplus_{i=0}^{n-2}H_{2i}(M;\Lambda_{\omega}^{0}) \]  (so $z_i\in \Lambda_{\omega}^{0}$). For any $I=(i_1,\ldots,i_k)\in \{1,\ldots,m\}^k$, let \[ z_I=z_{i_1}\cdots z_{i_m}.\] 

Either choose $(J,H,V,\beta)\in\mathcal{A}^{reg}$ as in Proposition \ref{pertmain}, or choose a strongly nondegenerate Hamiltonian $H$ and an $S^1$-family of almost complex structures $J$ for input into the Kuranishi structure construction described after Proposition \ref{pertmain}; in either case this gives rise to curve counts $|\frak{s}_{C,I}^{-1}(0)|$.
Given two elements $[\gamma^-,v^-],[\gamma^+,v^+]\in\tilde{P}(H)$,  consider the expression \begin{equation}\label{defmatrix} \langle \partial^{\eta,H}[\gamma^-,v^-],[\gamma^+,v^+]\rangle=\sum_{k=0}^{\infty}\left(\sum_{\substack{C\in \pi_2(\gamma^-,\gamma^+): \\  \,[\gamma^+,v^+]=[\gamma^+,v^-\#C]}}\sum_{\substack{I\in\{1,\ldots,m\}^k:\\ \bar{\mu}(C)=\delta(I)+1}}\frac{1}{k!}|\frak{s}_{C,I}^{-1}(0)|\exp\left(\int_C\theta\right)z_I   \right).\end{equation}

\begin{prop} Only finitely many nonzero terms appear in the sum defining (\ref{defmatrix}).  Consequently we may define an endomorphism \[ \partial^{\eta,H}\co CF(H)\to CF(H) \] by extending $\Lambda_{\omega}$-linearly from \[ \partial^{\eta,H}[\gamma^-,v^-]=\sum_{[\gamma^+,v^+]\in\tilde{P}(H)}
 \langle \partial^{\eta,H}[\gamma^-,v^-],[\gamma^+,v^+]\rangle[\gamma^+,v^+].\]
\end{prop}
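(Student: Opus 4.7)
My plan is to fix $[\gamma^-, v^-], [\gamma^+, v^+] \in \tilde{P}(H)$ and show that only finitely many triples $(k, C, I)$ can give a nonzero contribution to the coefficient on $[\gamma^+, v^+]$ in (\ref{defmatrix}). First I would observe that the equivalence $[\gamma^+, v^- \# C] = [\gamma^+, v^+]$ in $\tilde{P}(H)$ pins down the symplectic area, forcing $\int_C \omega = \int_{D^2}(v^+)^*\omega - \int_{D^2}(v^-)^*\omega =: E$; thus $C$ is already constrained to lie in a single $\omega$-level set of $\pi_2(\gamma^-, \gamma^+)$.

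The substantive step is an appeal to Gromov--Floer compactness: the set $\mathcal{N}$ of $C \in \pi_2(\gamma^-,\gamma^+)$ with $\int_C\omega = E$ and $C\mathcal{M}(\gamma^-, \gamma^+, C) \neq \emptyset$ is finite. Indeed, a sequence of Floer trajectories in pairwise distinct classes $C_n$ of common energy $E$ would, after passing to a subsequence, Gromov-converge to a stable cusp trajectory of some specific topological type (prescribing a breaking pattern and a sphere bubble tree), and this type in turn determines the homotopy class of every sufficiently nearby approximating trajectory, contradicting the distinctness of the $C_n$. (In the non-semipositive setting the same principle is already built into the Kuranishi construction of \cite{FO, FOOO09}, where $C\mathcal{M}_k(\gamma^-,\gamma^+,C)$ is assembled from only finitely many topological-type strata for each energy bound.) If $|\frak{s}_{C,I}^{-1}(0)| \neq 0$, the forgetful projection from $\mathcal{M}^{\frak{a}}(\gamma^-, \gamma^+, C; N_I)$ to $C\mathcal{M}(\gamma^-, \gamma^+, C)$ obtained by dropping the marked points and incidence data has nonempty image, so $C$ automatically lies in $\mathcal{N}$.

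For each $C \in \mathcal{N}$ the Maslov index $\bar{\mu}(C)$ is a specific integer, and the virtual-dimension identity $\bar{\mu}(C) = \delta(I) + 1$ combined with the elementary lower bound $\delta(I) = \sum_{j=1}^{k}(2n - 2 - 2d(i_j)) \geq 2k$ (each summand is at least $2$ because $d(i_j) \leq n - 2$) forces $k \leq (\bar{\mu}(C) - 1)/2$; and for each such $k$ there are at most $m^k$ sequences $I \in \{1, \ldots, m\}^k$. Summing finitely many $(k, I)$ over the finite set $\mathcal{N}$ establishes the required finiteness, after which $\Lambda_\omega$-linear extension from $\tilde{P}(H)$ to $CF(H)$ produces the endomorphism $\partial^{\eta, H}$.

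The main obstacle is the middle step: what is needed is finiteness of the set of admissible homotopy classes $C$ at fixed energy, not merely precompactness of any one moduli space, and this becomes genuinely nontrivial whenever the kernel of $\omega \colon \pi_2(M) \to \mathbb{R}$ is nonzero. The needed finiteness rests on the finite stratification of the Gromov compactification by topological types and, in the Kuranishi setting, on the analogous finiteness already used to assemble the Kuranishi structures of \cite{FO}; no analysis beyond what already underlies $\partial_0^2 = 0$ in the undeformed Floer complex is required.
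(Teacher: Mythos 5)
Your treatment of the first assertion --- that (\ref{defmatrix}) has only finitely many nonzero terms --- matches the paper's argument step for step: fix the area $\int_C\omega$ using $[\gamma^+,v^-\#C]=[\gamma^+,v^+]$, invoke Gromov--Floer compactness (via the energy identity $E(u)=\mathcal{A}_H([\gamma^-,v^-])-\mathcal{A}_H([\gamma^+,v^+])$) to get finiteness of admissible $C$, then use $\delta(I)\geq 2k$ together with $\bar\mu(C)=\delta(I)+1$ to bound $k$ and hence the number of multi-indices $I$. Your elaboration of \emph{why} Gromov--Floer compactness yields finiteness of homotopy classes at fixed energy is more detailed than the paper's, which simply cites the standard result; your version is sound, and as you note, in the Kuranishi setting the finiteness is baked into the construction.

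However, the proposition has a second claim that your proof only touches implicitly: that the \emph{formal sum} $\partial^{\eta,H}[\gamma^-,v^-]=\sum_{[\gamma^+,v^+]}\langle\cdots\rangle[\gamma^+,v^+]$ actually lands in $CF(H)$, and that the $\Lambda_\omega$-linear extension is consistent. The sum runs over the infinite set $\tilde{P}(H)$, and $CF(H)$ is defined by a Novikov finiteness condition (only finitely many $[\gamma^+,v^+]$ with $\mathcal{A}_H([\gamma^+,v^+])>C$ may have nonzero coefficient, for each $C$). Your closing sentence treats the $\Lambda_\omega$-linear extension as automatic once the individual matrix coefficients are finite sums, which is not quite right: you need a separate appeal to Gromov--Floer compactness (again using $E(u)\geq 0$ and the energy identity above, which bounds $\mathcal{A}_H([\gamma^+,v^+])$ from above) to get the Novikov condition, and you should check $\partial^{\eta,H}(T^g[\gamma,v])=T^g\partial^{\eta,H}[\gamma,v]$ so that the extension is well-defined. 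These are short, standard verifications --- the paper dispatches them in two sentences --- but they are genuinely part of what the proposition asserts, and your write-up does not supply them.
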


\begin{proof}   The condition that $[\gamma^+,v^+]=[\gamma^+,v^-\#C]$ is equivalent to the statement that $\int_C\omega=\int_{D^2}v^{+*}\omega-\int_{D^2}v^{-*}\omega$. A standard computation shows that  any element of $\mathcal{M}(\gamma^-,\gamma^+,C)$ has energy precisely equal to $\mathcal{A}_H([\gamma^-,v^-])-\mathcal{A}_H([\gamma^+,v^+])=\int_C\omega+\int_{0}^{1}(H(t,\gamma^-(t))-H(t,\gamma^+(t)))dt$. Gromov-Floer compactness then implies that there are just finitely many homotopy classes $C\in \pi_2(\gamma^-,\gamma^+)$ with $\int_C\omega=\int_{D^2}v^{+*}\omega-\int_{D^2}v^{-*}\omega$ which have $\mathcal{M}(\gamma^-,\gamma^+,C)\neq\varnothing$.  Thus there are only finitely many $C$ which contribute nonzero terms in (\ref{defmatrix}).

Let $\mu_0$ be the maximal value of $\bar{\mu}(C)$ over these finitely many $C$.  For any $k$ and $I\subset\{1,\ldots,m\}^k$ contributing to (\ref{defmatrix}) we have $\delta(I)\leq \mu_0-1$.  But we also have $\delta(I)\geq 2k$ since each $d(i_j)\leq n-2$, so all nonzero terms appearing in  (\ref{defmatrix}) have $k\leq \frac{1}{2}(\mu_0-1)$.  Since the union of the sets $\{1,\ldots,m\}^k$ for these values of $k$ is finite, this shows that there are only finitely many possible choices of $I$ which can contribute a nonzero term to (\ref{defmatrix}).  This completes the proof that the right hand side of (\ref{defmatrix}) is a finite sum.

It quickly follows that $\partial^{\eta,H}[\gamma^-,v^-]$ is a well-defined element of $CF(H)$; the relevant finiteness condition is satisfied by the standard Gromov-Floer compactness argument, owing to the fact that an element of $\mathcal{M}(\gamma^-,\gamma^+,C)$ has energy $\mathcal{A}_H([\gamma^-,v^-])-\mathcal{A}_H([\gamma^+,v^+])$.  It is easy to see that we have $\partial^{\eta,H}(T^g[\gamma^-,v^-])=T^g\partial^{\eta,H}[\gamma^-,v^-]$ for any $g\in \Gamma_{\omega}$, from which it follows that $\partial^{\eta,H}$ may be extended to a $\Lambda_{\omega}$-linear operator on $CF(H)$.
\end{proof}

\begin{prop}\label{combins}
Given $\gamma^-,\gamma^+\in P(H)$,  $C\in \pi_2(\gamma^-,\gamma^+)$, and $k\in\mathbb{N}$, we have \begin{equation}\label{uglysum} \sum_{k_-=0}^{k}\sum_{\gamma\in P(H)}\sum_{\substack{ C_-\in \pi_2(\gamma^-,\gamma),\\C_+\in \pi_2(\gamma,\gamma^+),\\C_-\#C_+=C}} \sum_{\substack{J_-\in\{1,\ldots,m\}^{k_-},\\J_+\in \{1,\ldots,m\}^{k-k_-},\\\bar{\mu}(C_{\pm})=\delta(J_{\pm})+1}}\frac{z_{J_-}z_{J_+}}{k_-!(k-k_-)!}|\frak{s}_{C_-,J_-}^{-1}(0)||\frak{s}_{C_+,J_+}^{-1}(0)|=0.\end{equation}
\end{prop}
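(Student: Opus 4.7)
The plan is to reduce the identity to the boundary relation for one-dimensional moduli spaces supplied by Proposition \ref{pertmain}(iii) (or, in the non-semipositive setting, its Kuranishi-structure analog described immediately afterward), and then to reorganize combinatorially.

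First, I would fix any multi-index $I=(i_1,\ldots,i_k)\in\{1,\ldots,m\}^k$ with $\delta(I)=\bar{\mu}(C)-2$. By Proposition \ref{pertmain}(iii), the compactified moduli space $\mathcal{M}^{\frak{a}}(\gamma^-,\gamma^+,C;N_I)$ is a one-dimensional oriented manifold with boundary given by the union (\ref{bdryfib}) of products of zero-dimensional moduli spaces. Since the signed count of boundary points of any compact oriented 1-manifold is zero, this gives
\[ \sum_{\substack{\gamma,C_-,C_+:\\C_-\#C_+=C}}\sum_{S\subset\{1,\ldots,k\}}|\frak{s}_{C_-,J_-(I,S)}^{-1}(0)|\cdot|\frak{s}_{C_+,J_+(I,S)}^{-1}(0)|=0; \]
here any $(S,\gamma,C_-,C_+)$ for which the corresponding product in (\ref{bdryfib}) is not zero-dimensional contributes nothing, as for generic perturbation data either one factor has negative virtual dimension (hence is empty) or the product has positive dimension (hence has no isolated points). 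The same conclusion holds in the Kuranishi-structure setup, since the multisection perturbation of a one-dimensional Kuranishi space with boundary has zero rational boundary count.

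Next I would multiply this identity by $z_I/k!$ and sum over all $I\in\{1,\ldots,m\}^k$. For $I$ with $\delta(I)\neq\bar{\mu}(C)-2$ the sum is already vacuous: if $\bar{\mu}(C_\pm)=\delta(J_\pm(I,S))+1$ for some $S$ and some decomposition $C=C_-\#C_+$, then $\bar{\mu}(C)=\delta(I)+2$. The key combinatorial observation is that, for each $k_-\in\{0,\ldots,k\}$ and each pair $(J_-,J_+)\in\{1,\ldots,m\}^{k_-}\times\{1,\ldots,m\}^{k-k_-}$, the assignment $(I,S)\mapsto (J_-(I,S),J_+(I,S))$ (restricted to $|S|=k_-$) is $\binom{k}{k_-}$-to-one onto such pairs: the preimages correspond bijectively to the $\binom{k}{k_-}$ subsets $S\subset\{1,\ldots,k\}$ of cardinality $k_-$, each of which determines a unique $I$ by interleaving the entries of $J_-$ at the positions in $S$ with those of $J_+$ at the positions in the complement. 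Since $z_I=z_{J_-(I,S)}z_{J_+(I,S)}$ for any such $(I,S)$, this reindexing converts the factor $\binom{k}{k_-}/k!$ into $1/(k_-!(k-k_-)!)$ and recovers (\ref{uglysum}) exactly.

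Essentially all of the substantive content has already been packaged in Proposition \ref{pertmain}; the remainder is a formal combinatorial rearrangement of a sum indexed by multi-indices of length $k$ and subsets into a sum indexed by pairs of multi-indices of complementary lengths. I do not anticipate any genuine obstacle beyond careful bookkeeping; the only point requiring a moment's care is verifying that contributions in which a factor has the ``wrong'' virtual dimension drop out, which follows from the genericity hypotheses built into the choice of $\frak{a}\in\mathcal{A}^{reg}$ (or from the standard properties of Kuranishi multisections).
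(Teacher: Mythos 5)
Your argument is correct and follows essentially the same route as the paper's proof: both rest on the identity $\frac{1}{k_-!(k-k_-)!}=\binom{k}{k_-}/k!$ to translate between the sum over pairs $(J_-,J_+)$ and the sum over pairs $(I,S)$, and both then invoke the boundary relation (\ref{bdryfib}) to see that the $I$-indexed inner sum vanishes. You simply run the manipulation in the opposite direction (starting from the known vanishing of the boundary count for each $I$ and reindexing to reach (\ref{uglysum}), rather than starting from (\ref{uglysum}) and reducing to the boundary count) and you make explicit the point, which the paper folds silently into (\ref{bdryfib}), that strata of the wrong virtual dimension contribute nothing for generic $\frak{a}$.
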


\begin{proof} Since the number of subsets of cardinality $k_-$ in $\{1,\ldots,k\}$ is $\frac{k!}{k_-!(k-k_-)!}$, the sum is equivalent to \[ \frac{1}{k!}\sum_{\gamma,C_-,C_+}\sum_{J_-,J_+,S}z_{J_-}z_{J_+}|\frak{s}_{C_-,J_-}^{-1}(0)||\frak{s}_{C_+,J_+}^{-1}(0)|\] where $\gamma,C_-,C_+$ satisfy the same constraints as in (\ref{uglysum}), and where $J_-,J_+$ are required to be tuples of elements of $\{1,\ldots,m\}$ of combined length $k$ and $S$ is a subset of $\{1,\ldots,k\}$ of cardinality equal to the length of the tuple $J_-$.

Now the assignment $(I,S)\mapsto (J_-(I,S),J_+(I,S),S)$ (notation as in (\ref{bdryfib})) is a bijection from the set \[ \{(I,S)|I\subset\{1,\ldots,m\}^k,S\subset \{1,\ldots,k\}\}\]  to the set of data $(J_-,J_+,S)$ as in the inner sum above, and if we have $J_{\pm}(I',S')=J_{\pm}(I,S)$ then \[ z_{I}=z_{J_{+}(I,S)}z_{J_-(I,S)}=z_{I'}.\]    So the sum reduces to \[ \frac{1}{k!}\sum_{I\in\{1,\ldots,m\}^k}z_I\sum_{S\subset\{1,\ldots,k\}}\sum_{\gamma,C_-,C_+}|\frak{s}_{C_-,J_-(I,S)}^{-1}(0)|
|\frak{s}_{C_+,J_+(I,S)}^{-1}(0)|.\]  But by (\ref{bdryfib}) the inner two sums, for any given $I$, enumerate with multiplicity the points on the boundary of $\frak{s}_{C,I}^{-1}(0)$, and so for each $I$ the coefficient on $z_I$ is equal to zero.
\end{proof}

\begin{cor}\label{delsquared}
For any  $\theta\in\Omega^{2}_{cl}(M;\Lambda_{\omega}^{0})$ and any $c=\sum z_ic_i$ as above we have \[ (\partial^{\eta,H})^{2}=0.\]
\end{cor}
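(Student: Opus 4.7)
The plan is to expand $(\partial^{\eta,H})^2[\gamma^-,v^-]$ as a double sum, regroup the terms by the homotopy class of the concatenated cylinder, and then reduce the vanishing of each coefficient to Proposition \ref{combins}. The multiplicativity of $\exp$ together with the additivity identity (\ref{concat}) will be used to combine the two exponential weights into a single factor that can be pulled outside the combinatorial sum.

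First, I would fix a target generator $[\gamma^+,v^+]\in\tilde P(H)$ and write out the matrix coefficient
\[
\langle (\partial^{\eta,H})^{2}[\gamma^-,v^-],[\gamma^+,v^+]\rangle
=\sum_{[\gamma,v]\in\tilde P(H)}\langle \partial^{\eta,H}[\gamma^-,v^-],[\gamma,v]\rangle\,
\langle \partial^{\eta,H}[\gamma,v],[\gamma^+,v^+]\rangle.
\]
Using (\ref{defmatrix}) twice, each inner pairing is a sum over $k_\pm$, $C_\pm\in\pi_2$, and $I_\pm\in\{1,\dots,m\}^{k_\pm}$ with $\bar\mu(C_\pm)=\delta(I_\pm)+1$. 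The constraint $[\gamma,v]=[\gamma,v^-\#C_-]$ and $[\gamma^+,v^+]=[\gamma^+,v\#C_+]$ collapses the intermediate $v$ and forces $C:=C_-\#C_+\in\pi_2(\gamma^-,\gamma^+)$ to satisfy $[\gamma^+,v^+]=[\gamma^+,v^-\#C]$, i.e.\ $\int_C\omega=\int_{D^2}v^{+*}\omega-\int_{D^2}v^{-*}\omega$.

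Second, I would replace $\exp\bigl(\int_{C_-}\theta\bigr)\exp\bigl(\int_{C_+}\theta\bigr)$ by $\exp\bigl(\int_C\theta\bigr)$ using (\ref{concat}) and the identity $\exp(a+b)=\exp a\exp b$ on $\Lambda_{\omega}^{0}$. Pulling this out and setting $k=k_-+k_+$, the coefficient of $[\gamma^+,v^+]$ rearranges to
\[
\sum_{\substack{C\in\pi_2(\gamma^-,\gamma^+)\\ \int_C\omega=\int v^{+*}\omega-\int v^{-*}\omega}}
\exp\!\left(\int_C\theta\right)\sum_{k=0}^{\infty}\Bigl(\,\text{expression in (\ref{uglysum})}\,\Bigr).
\]
By Proposition \ref{combins}, the inner bracketed expression vanishes for every $k\geq 0$ and every choice of $C$, so the whole coefficient is zero; since $[\gamma^+,v^+]$ was arbitrary, $(\partial^{\eta,H})^{2}=0$.

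The only issue requiring care is the interchange of summations: one must verify that all the sums in sight are absolutely convergent (in the $\mathbb{R}$-filtered sense) so that the regrouping by $C=C_-\#C_+$ and by $k=k_-+k_+$ is legitimate. This however follows from the same Gromov--Floer energy argument already used to show $\partial^{\eta,H}[\gamma^-,v^-]\in CF(H)$: for each upper bound on $-\int_C\omega$, only finitely many $C$ and only finitely many admissible $(C_-,C_+,J_-,J_+)$ appear, so the rearrangement is finite at each filtration level. No further geometric input is needed; Proposition \ref{combins} (which in turn encodes the oriented boundary description (\ref{bdryfib})) is doing all the real work.
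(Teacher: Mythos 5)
Your proposal is correct and follows essentially the same route as the paper: expand the matrix coefficient of $(\partial^{\eta,H})^2$, combine the two exponential weights into $\exp(\int_C\theta)$ via (\ref{concat}), regroup by $C=C_-\#C_+$ and $k=k_-+k_+$, and then invoke Proposition \ref{combins} to kill each inner sum. Your remark about the Gromov--Floer energy argument justifying the rearrangement is a good point to make explicit; the paper leaves that implicit.
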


\begin{proof}Denote the left hand side of (\ref{uglysum}) by $r_k(\gamma_-,\gamma_+,C)$.  It is straightforward to see that, in the sum defining $(\partial^{\eta,H})^{2}[\gamma^-,v^-]$, the coefficient on $[\gamma^+,v^+]$ is \[ \sum_{k=0}^{\infty}\frac{1}{k!}\sum_{\substack{C\in \pi_2(\gamma^-,\gamma^+),\\\int_{C}\omega=\int_{D^2}v^{+*}\omega-\int_{D^2}v^{*-}\omega }}\exp\left(\int_C\theta\right)r_k(\gamma_-,\gamma_+,C).\]
So the corollary follows directly from the fact  that, by Proposition \ref{combins}, each $r_k(\gamma^-,\gamma^+,C)=0$.
\end{proof}

\subsection{Other maps on the Floer complexes}

For $\eta,H$ as in the previous section (together with appropriate auxiliary data which are suppressed from the notation), the data \[ \frak{c}_{\eta,H}=(\tilde{P}(H)\to P(H),\mathcal{A}_H,\omega,\partial^{\eta,H}) \] comprise the structure of a ``filtered Floer--Novikov complex'' in the sense of \cite{U08},\cite{U10b}, with Floer chain complex equal to $CF(H)$ and boundary operator $\partial^{\eta,H}$ (since we only consider a $\mathbb{Z}_2$-grading, the grading and degree in the definition in \cite{U10b} can just be set equal to zero).  Following those references, for any $c=\sum c_{[\gamma,v]}[\gamma,v]\in CF(H)$ we set \begin{equation}\label{elldef} \ell(c)=\max\{\mathcal{A}_H([\gamma,v])|c_{[\gamma,v]}\neq 0\}.\end{equation} Also, for $\lambda\in\mathbb{R}$, let \[ CF^{\lambda}(H)=\{c\in CF(H)|\ell(c)\leq\lambda\}.\]

In the standard case $\eta=0$, there are a variety of maps that can be defined on the Floer complexes which formally count solutions to appropriate modifications of the Floer equation (\ref{floereqn}).  These maps have straightforward analogues when the deformation parameter $\eta$ is nontrivial; loosely speaking, the maps simply need to be modified in the same way that the standard differential is modified.  Our discussion will use the framework of Kuranishi structures on fiber products of \cite{FOOO09}. In the strongly semipositive case one can instead use the approach described in Section \ref{bigdefsect} to achieve transversality for the relevant fiber products in a more direct way; we omit the details of this, as the arguments are essentially identical to those in Section \ref{bigdefsect}.  At least after restricting to appropriate residual subsets of spaces of auxiliary data, this approach would give rise in the strongly semipositive case to ``Kuranishi-structure-free'' proofs of all of the results in the rest of Section \ref{hamsect} except for Theorem \ref{slowthm} (regarding Theorem \ref{slowthm}, which is used only for the results of Section \ref{capsec} and not for the other main results of the paper, see Remark \ref{nokur}).

\subsubsection{Continuation maps}\label{contsect}
For example, given $\eta$, let $H_-,H_+\co S^1\times M\to \mathbb{R}$ be two strongly nondegenerate Hamiltonians, let $\tilde{H}\co\mathbb{R}\times S^1\times M\to\mathbb{R}$ be a smooth function such that $\tilde{H}|_{\{s\}\times S^1\times M}=H_-$ for $s<-1$ and $\tilde{H}|_{\{s\}\times S^1\times M}=H_+$ for $s>1$, and also (for reasons that will become apparent later) choose an additional smooth function $K\co \mathbb{R}\times S^1\times M\to\mathbb{R}$ having support in $[-1,1]\times S^1\times\mathbb{R}$.

Where $\{J(s,t)\}_{(s,t)\in\mathbb{R}\times S^1}$ is a family of $\omega$-compatible almost complex structures with $\frac{\partial J}{\partial s}=0$ for $|s|$ large, for $C\in \pi_2(\gamma^-,\gamma^+)$ let $\mathcal{N}(\gamma^-,\gamma^+,C,\tilde{H},K)$ denote the space of finite energy solutions $u\co \mathbb{R}\times S^1\to M$ which represent $C$ and obey \begin{equation}\label{continuation} \left(\frac{\partial u}{\partial s}-X_K(s,t,u(s,t))\right)+J(s,t,u(s,t))\left( \frac{\partial u}{\partial t}-X_{\tilde{H}}(s,t,u(s,t))  \right)=0 \end{equation}

As in \cite[Section 20]{FO},\cite[Chapter 7]{FOOO09}, the compactification $C\mathcal{N}(\gamma^-,\gamma^+,C,\tilde{H},K)$ admits a Kuranishi structure with corners, as does the compactification  $C\mathcal{N}_k(\gamma^-,\gamma^+,C,\tilde{H},K)$ of the space $\mathcal{N}_k(\gamma^-,\gamma^+,C,\tilde{H},K)=\mathcal{N}(\gamma^-,\gamma^+,C,\tilde{H},K)\times (\mathbb{R}\times S^1)^k$, and the evaluation maps\newline  $ev_1,\ldots,ev_k\co C\mathcal{N}_k(\gamma^-,\gamma^+,C,\tilde{H},K)\to M$ are weakly submersive. 
Recalling the maps $f_i\co N_i\to M$ that were fixed above Definition \ref{sndeg}, it follows that for any $I=(i_1,\ldots,i_k)\in \{1,\ldots,m\}^k$ the fiber product \[ 
\mathcal{N}_k(\gamma^-,\gamma^+,C,\tilde{H},K;N_I)=C\mathcal{N}_k(\gamma^-,\gamma^+,C,\tilde{H},K)_{(ev_1,\ldots,ev_k)}\times_{(f_{i_1},\ldots,f_{i_k})}(N_{i_1}\times\cdots\times N_{i_k})\] has a Kuranishi structure with corners, with dimension $\bar{\mu}(C)-\delta(I)$.  Letting $\frak{s}_{\tilde{H},K,C,I}$ be the associated multisection, define \[ \Phi_{\eta, \tilde{H},K}\co CF(H_-)\to CF(H_+) \] by extending $\Lambda_{\omega}$-linearly from \begin{equation} \label{defcont}  \Phi_{\eta, \tilde{H},K}[\gamma^-,v^-]=\sum_{\gamma^+\in P(H^+)}\sum_{C\in \pi_2(\gamma_-,\gamma_+)}\sum_{k=0}^{\infty}\frac{1}{k!}\sum_{\substack{I\in\{1,\ldots,m\}^k,\\ \bar{\mu}(C)=\delta(I)}}|\frak{s}_{\tilde{H},K,C,I}^{-1}(0)|\exp\left(\int_C\theta\right)z_I[\gamma^+,v^-\#C].\end{equation}

Some words are in order regarding why the formal sum right hand side of (\ref{defcont}) validly defines an element of $CF(H)$.  Remark first of all that a direct computation shows that, where $E(u)=\int_{\mathbb{R}\times S^1}\left|\frac{\partial u}{\partial s}-X_K(s,t,u(s,t))\right|_{J_{s,t}}^{2}dsdt$ and $\{\tilde{H},K\}=\omega(X_{\tilde{H}},X_K)$, any solution $u$ to (\ref{continuation}) with $[\gamma^+,v^-\#u]=[\gamma^+,v^+]$ obeys \begin{align}\label{contest}
\mathcal{A}_{H_-}&([\gamma^-,v^-])-\mathcal{A}_{H_+}([\gamma^+,v^+]) \nonumber\\&=E(u)-\int_{\mathbb{R}\times S^1}\left(\frac{\partial \tilde{H}}{\partial s}(s,t,u(s,t))-\frac{\partial K}{\partial t}(s,t,u(s,t))-\{\tilde{H},K\}(s,t,u(s,t))\right)dsdt.
\end{align}
Write \[ C^+(\tilde{H},K)=\int_{-\infty}^{\infty}\int_{0}^{1}\max_{\{(s,t)\}\times M}\left(\frac{\partial \tilde{H}}{\partial s}(s,t,\cdot)-\frac{\partial K}{\partial t}(s,t,\cdot)-\{\tilde{H},K\}(s,t,\cdot)\right)dsdt \] 
(note that the integrand is supported in $[-1,1]\times S^1$ by the construction of $\tilde{H}$ and $K$).  
Then (\ref{contest}) shows that, given $[\gamma^-,v^-],[\gamma^+,v^+]$, any cylinder $u$ solving (\ref{continuation}) with $u(s,\cdot)\to \gamma^{\pm}$ as $s\to\pm\infty$ and $[\gamma^+,v^-\#u]=[\gamma^+,v^+]$ obeys $E(u)\leq \mathcal{A}_{H_-}([\gamma^-,v^-])-\mathcal{A}_{H_+}([\gamma^+,v^+])+C^+(\tilde{H},K)$.  Gromov-Floer compactness consequently shows that there can be just finitely many homotopy classes $C\in \pi_2(\gamma^-,\gamma^+)$ which contribute to the coefficient of $[\gamma^+,v^+]$ in the formula (\ref{defcont}) for    $\Phi_{\eta, \tilde{H},K}[\gamma^-,v^-]$.  So, as was the case with the boundary operator, the fact that $\delta(I)\geq 2k$ shows that, since the only pairs $(C,I)$ appearing in (\ref{defcont}) have $\bar{\mu}(C)=\delta(I)$, the coefficient on any given $[\gamma^+,v^+]$ in the expression for $\Phi_{\eta, \tilde{H},K}[\gamma^-,v^-]$ is a sum over just finitely many $C,k,I$ and so is a well-defined element of $\Lambda_{\omega}^{0}$.  Moreover, for any given $m\in\mathbb{R}$ and $[\gamma^-,v^-]\in\tilde{P}(H)$, any cylinder $u$  which contributes to a nonzero coefficient in $\Phi_{\eta, \tilde{H},K}[\gamma^-,v^-]$ on some $[\gamma^+,v^+]$ with $\mathcal{A}_{H_+}([\gamma^+,v^+])\geq m$ necessarily has $E(u)\leq 
C^+(\tilde{H},K)+\mathcal{A}_{H_-}([\gamma^-,v^-])-m$.  Consequently Gromov-Floer compactness implies that there can be just finitely many $[\gamma^+,v^+]$ with $\mathcal{A}_{H_+}([\gamma^+,v^+])\geq m$ which appear with nonzero coefficient in $\Phi_{\eta, \tilde{H},K}[\gamma^-,v^-]$.  This establishes the Novikov finiteness condition for the right hand side of (\ref{continuation}), and so justifies the definition of the $\Lambda_{\omega}$-linear map $\Phi_{\eta, \tilde{H},K}\co CF(H_-)\to CF(H_+)$.

Moreover, the map $\Phi_{\eta, \tilde{H},K}$ is a chain map (with respect to the differentials $\partial^{\eta,H_{\pm}}$); this follows from a consideration of the boundaries of one-dimensional moduli spaces $\mathcal{N}_k(\gamma^-,\gamma^+,C,\tilde{H},K;N_I)$ in much the same way as in the proof that $(\partial^{\eta,H})^2=0$.

Observe also that, since we always have $E(u)\geq 0$, the estimate (\ref{contest}) shows that, for all $c\in CF(H_-)$,  \begin{equation}\label{filtcont} \ell(\Phi_{\eta, \tilde{H},K}c)\leq \ell(c)+C^+(\tilde{H},K);\end{equation} thus, on the filtered subcomplexes $CF^{\lambda}(H_-)$ of $CF(H_-)$, $\Phi_{\eta,\tilde{H},K}$ restricts as a map \[ \Phi_{\eta,\tilde{H},K}\co CF^{\lambda}(H_-)\to CF^{\lambda+C^+(\tilde{H},K)}(H_+).\]  

A noteworthy special case is that in which the two Hamiltonians $H_{\pm}$ are normalized (i.e. $\int_M H_{\pm}(t,\cdot)\omega^n=0$ for all $t$) and induce paths $\{\phi_{H_{\pm}}^{t}\}_{t\in [0,1]}$ in $Ham(M,\omega)$ which are homotopic rel endpoints.  In this case, as is done in \cite[p. 20]{U09}, we may choose $\tilde{H}\co \mathbb{R}\times S^1\times M\to\mathbb{R}$ so that the $\{\phi_{\tilde{H}(s,\cdot)}^{t}\}$  ($-1\leq s\leq 1$) realize a homotopy from $\{\phi_{H_-}^{t}\}$ to $\{\phi_{H_+}^{t}\}$ with each $\tilde{H}(s,\cdot)$ normalized.  If we then define $K(s,t,\cdot)\co M\to\mathbb{R}$ to be the unique mean zero function so that $\frac{d}{ds}(\phi_{\tilde{H}(s,\cdot)}^{t}(p))=X_{K(s,t,\cdot)}(\phi_{\tilde{H}(s,\cdot)}^{t}(p))$ for all $p$, it will hold that \[ 
\frac{\partial \tilde{H}}{\partial s}(s,t,\cdot)-\frac{\partial K}{\partial t}(s,t,\cdot)-\{\tilde{H},K\}(s,t,\cdot)=0 \]  (this is a well-known consequence of \cite[Proposition I.1.1]{Ban} and the normalization condition on  $H$ and $K$).

So in this special case we have $C^+(\tilde{H},K)=0$, and so $\Phi_{\eta,\tilde{H},K}\co CF^{\lambda}(H_-)\to CF^{\lambda}(H_+)$.  We can equally well construct a similar map in the opposite direction, and then an exact reproduction of the proof of \cite[Lemma 3.8]{U09} shows the following:

\begin{prop}\label{filtiso}  Given a closed symplectic manifold $(M,\omega)$ and $\eta\in \oplus_{i=0}^{n-1}H_{2i}(M;\Lambda_{\omega}^{0})$, let $\tilde{\phi}\in \widetilde{Ham}(M,\omega)$ be represented by strongly nondegenerate, normalized Hamiltonians $H_{\pm}\co S^1\times M\to\mathbb{R}$.  Then for a suitable choice of $\tilde{H},K$, the chain map \[ \Phi_{\eta,\tilde{H},K}\co (CF(H_-),\partial^{\eta,H_-})\to (CF(H_+),\partial^{\eta,H_+})\] is an \emph{isomorphism} of chain complexes which, for each $\lambda\in\mathbb{R}$, restricts as an isomorphism \[ \xymatrix{ CF^{\lambda}(H_-)\ar[r]^{\sim}& CF^{\lambda}(H_+).    }\]
\end{prop}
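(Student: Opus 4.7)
The strategy is to adapt the proof of \cite[Lemma 3.8]{U09} to the $\eta$-deformed setting, taking advantage of the fact that the deformation weights $\exp(\int_C\theta)z_I\in\Lambda_\omega^0$ never shift action upward, so that the action-filtration analysis is essentially unchanged from the undeformed case.

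First I would choose the auxiliary data $\tilde H,K$ as described in the paragraph preceding the statement, so that the isotopies $\{\phi_{\tilde H(s,\cdot)}^t\}$ realize the assumed homotopy rel endpoints from $\{\phi_{H_-}^t\}$ to $\{\phi_{H_+}^t\}$ through normalized Hamiltonians, and $K$ is the unique mean-zero function enforcing $\partial_s\tilde H-\partial_tK-\{\tilde H,K\}\equiv 0$.  As noted just before the statement this makes $C^+(\tilde H,K)=0$, so by (\ref{filtcont}) the continuation map $\Phi:=\Phi_{\eta,\tilde H,K}$ preserves the action filtration.

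Next I would exploit the canonical action-preserving bijection $\tilde P(H_-)\cong\tilde P(H_+)$ provided by the homotopy: each fixed point $p$ of $\phi_{H_-}^1=\phi_{H_+}^1$ yields a moving loop $\gamma_s(t)=\phi^t_{\tilde H(s,\cdot)}(p)$ interpolating between the corresponding periodic orbits of $H_-$ and $H_+$, and a capping disc $v_-$ for $\gamma_-$ extends through this cylinder to a capping disc $v_+$ for $\gamma_+$; the normalization of each $\tilde H(s,\cdot)$ guarantees that this bijection preserves the action functional.  Identifying $CF(H_-)$ and $CF(H_+)$ as filtered $\Lambda_\omega$-modules via this bijection, the central claim is that $\Phi$ has the form $\mathrm{Id}+N$ with $N$ \emph{strictly} action-decreasing.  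Indeed, the canonical family $u(s,t)=\gamma_s(t)$ with $k=0$ marked points is a zero-energy solution of the parametrized Floer equation (\ref{continuation}) in the distinguished homotopy class used above, contributing transversely with sign $+1$ and weight $\exp 0\cdot z_\emptyset=1$ to the matrix element of $\Phi$ between $[\gamma_-,v_-]$ and $[\gamma_+,v_+]$, which gives the identity term.  Every other contribution either comes from a cylinder of strictly positive energy $E(u)>0$, which by (\ref{contest}) and $C^+(\tilde H,K)=0$ lands in strictly lower action, or has $k\geq 1$ marked points with coefficient $z_I\in\Lambda_\omega^0$ contributing a factor $T^g$ with $g>0$; non-constant cylinders again have $E(u)>0$, while zero-energy cylinders with $k\geq 1$ nontrivial incidence conditions at the $f_i(N_i)$ are excluded by arranging the homotopy $\tilde H$ so that each moving orbit $\gamma_s$ is disjoint from these submanifolds (a generic condition compatible with strong nondegeneracy, Definition \ref{sndeg}).

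Finally, since $CF(H_-)$ is completed downward in action (Convention \ref{conv}(d)), any filtration-preserving $\Lambda_\omega$-linear endomorphism of the form $\mathrm{Id}+N$ with $N$ strictly action-decreasing is invertible via the convergent geometric series $\sum_{n\geq 0}(-N)^n$, whose sum again preserves filtration.  Transporting this conclusion through the identification $CF(H_-)\cong CF(H_+)$, one obtains the desired filtered chain isomorphism restricting to isomorphisms $CF^\lambda(H_-)\to CF^\lambda(H_+)$ for each $\lambda\in\R$.  The main obstacle is the third step: rigorously singling out the tautological solution $u(s,t)=\gamma_s(t)$ as a transverse zero-energy contribution to $\Phi$ that computes the identity, and verifying that every remaining contribution strictly lowers action.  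In the semipositive case one can handle transversality at these distinguished solutions by a standard implicit function theorem argument combined with the perturbation framework of Proposition \ref{pertmain}; in general one chooses the abstract Kuranishi perturbations on $C\mathcal{N}_k(\gamma_-,\gamma_+,C,\tilde H,K;N_I)$ compatibly at these canonical solutions, as in \cite[Lemma 3.8]{U09}.
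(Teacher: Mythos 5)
Your proof takes essentially the same route as the paper (which simply delegates to an "exact reproduction" of the proof of \cite[Lemma 3.8]{U09}): exhibit $\Phi_{\eta,\tilde H,K}$ as lower-triangular with respect to the action filtration, using the zero-energy tautological cylinders to produce the diagonal, and invert by a Novikov-convergent geometric series. One detail needs correcting: the weight attached to the canonical cylinder $u(s,t)=\gamma_s(t)$ in class $C$ is $\pm\exp\bigl(\int_C\theta\bigr)$, not $\exp 0\cdot z_\emptyset = 1$. Even though the homotopy $\tilde H$ is rel endpoints, the cylinder it sweeps out is a genuine two-dimensional object and there is no reason for $\int_C\theta$ to vanish when $H_-\neq H_+$. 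This does not invalidate the argument: $\exp\bigl(\int_C\theta\bigr)$ is a unit of $\Lambda_\omega^0$ with valuation $\nu=0$, so under your generator identification $\Phi$ has the form $D+N$ with $D$ a diagonal, filtration-preserving unit and $N$ strictly action-decreasing; writing $\Phi=D(\mathrm{Id}+D^{-1}N)$ then gives the inverse by the same geometric series. You may wish to make this explicit so that the invertibility is seen to rest on $\exp\bigl(\int_C\theta\bigr)$ being a unit rather than on its being $1$.
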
 

\subsubsection{PSS isomorphisms}

The Piunikhin-Salamon-Schwarz isomorphism \cite{PSS} from Morse homology to Floer homology can likewise be adapted to the deformed complexes $(CF(H),\partial^{\eta,H})$.  Given $\eta\in \oplus_{i=0}^{n-1}H_{2i}(M;\Lambda_{\omega}^{0})$, choose a suitably generic pair consisting of a Morse function $f\co M\to\mathbb{R}$ and a Riemannian metric $g$ on $M$ (with the stable and unstable manifolds of $f$ transverse both to each other and to the maps $f_i\co N_i\to M$).  The data $(f,g)$ determine a Morse complex $CM(f;\Lambda_{\omega})=\sum_{p\in Crit(f)}\Lambda_{\omega}\langle p\rangle$, with differential $\partial_f$ counting negative gradient flowlines in the standard way. 

If $H$ is a strongly nondegenerate Hamiltonian, the usual PSS map $\Phi^{PSS}_{0,H}\co CM(f;\Lambda_{\omega})\to CF(H)$ enumerates ``spiked planes'' comprising a half-infinite negative gradient flowline $\zeta\co (-\infty,0]\to M$ for $f$ and a finite-energy map $v\co \mathbb{C}\to M$ obeying, with respect to the standard polar coordinates $r,\theta$ on $\mathbb{C}$, \begin{equation}
\label{psseq} r\frac{\partial v}{\partial r}+J(re^{i\theta},v(re^{i\theta}))\left(\frac{\partial v}{\partial \theta}-\frac{\beta(r)}{2\pi}X_H(\theta/2\pi,v(re^{i\theta}))\right)=0\end{equation} with $\zeta(0)=v(0)$ where $\beta\co [0,\infty)\to [0,1]$ is a smooth monotone function with $\beta(r)=0$ for $r<1/2$ and $\beta(r)=1$ for $r>1$.  A straightforward modification of this map along the lines of the construction of $\partial^{\eta,H}$ produces a deformed PSS map $\Phi^{PSS}_{\eta,H}\co CM(f;\Lambda_{\omega})\to CF(H)$, as follows.

Given $\gamma\in P(H)$, write $\pi_2(\gamma)$ for the set of relative homotopy classes of discs $u\co D^2\to M$ with $v|_{\partial D^2}=\gamma$.  If $C\in \pi_2(\gamma)$, write $\bar{\mu}(C)=\mu_{CZ}(\gamma,u)$ where $u$ is any disc representing $C$.  A finite-energy map $v\co \mathbb{C}\to M$ which obeys (\ref{psseq}) and has $v(re^{i\theta})\to \gamma(\theta/2\pi)$ as $r\to\infty$ determines a class $[v]\in \pi_2(\gamma)$ (by rescaling the radial coordinate to $[0,1)$ and compactifying).  For $C\in \pi_2(\gamma)$, let \[ \mathcal{M}^{PSS}_{k}(\gamma,C)=\{(v,\vec{z})\in Map(\mathbb{C},M)\times \mathbb{C}^k|v\mbox{ satisfies (\ref{psseq})}, [v]=C\}\] and write its standard  compactification as $C\mathcal{M}^{PSS}_{k}(\gamma,C)$, with evaluation maps $ev_i$ ($1\leq i\leq k$) at the $i$th marked point.  Thus an element of $C\mathcal{M}^{PSS}_{k}(\gamma,C)$ amounts to the data of a tree consisting of a solution of (\ref{psseq}) and some number (possibly zero) of pseudoholomorphic spheres and Floer trajectories, with total homotopy class $C$, together with $k$ marked points distributed among the domains of the various components.

Define $e_0\co \mathcal{M}^{PSS}_{k}(\gamma,C)\to M$ by $e_0(v,\vec{z})=v(0)$ (and also use the notation $e_0$ for the extension to the compactification $C\mathcal{M}^{PSS}_{k}(\gamma,C)$), and, for $p\in Crit(f)$, let $j_p\co W^u(p)\to M$ denote the inclusion of the unstable manifold of $p$.  The appropriate spaces of spiked planes can then be written as \[ \mathcal{M}^{PSS}_k(\gamma,C;p,N_I)=C\mathcal{M}^{PSS}_{k}(\gamma,C)_{(e_0,ev_1,\ldots,ev_k)}\times_{(j_p,f_{i_1},\ldots,f_{i_k})}(W^{u}(p)\times N_{i_1}\times\cdots\times N_{i_k}).\] This fiber product has a Kuranishi structure of dimension $\bar{\mu}(C)+ind_f(p)-n-\delta(I)$ where $ind_f(p)$ denotes the Morse index of the critical point $p$.  Where $\frak{s}_{C,p,I}$ is the corresponding multisection, the PSS map $\Phi_{\eta,H}^{PSS}\co CM(f;\Lambda_{\omega})\to CF(H)$ is defined by extending $\Lambda_{\omega}$-linearly from \[ \Phi_{\eta,H}^{PSS}p=\sum_{k=0}^{\infty}\frac{1}{k!}\sum_{\substack{\gamma\in P(H),\\C\in \pi_2(\gamma)}}\sum_{\substack{I\in\{1,\ldots,m\}^k,\\ 
\bar{\mu}(C)+ind_f(p)=n+\delta(I)}}|\frak{s}_{C,p,I}^{-1}(0)|\exp\left(\int_C\theta\right)z_I[\gamma,C],\] where of course $\int_C\theta$ denotes the integral of the closed $\Lambda_{\omega}^{0}$-valued $2$-form $\theta$ over any disc with boundary $\gamma$ representing the homotopy class $C$.  The fact that the right hand side of the above formula validly defines an element of $CF(H)$ follows by an argument very similar to that used earlier to establish the corresponding fact for the continuation map $\Phi_{\eta,\tilde{H},K}$.

In the other direction, one obtains a similar map $\Psi_{\eta,H}^{PSS}\co CF(H)\to CM(f;\Lambda_{\omega})$ which formally enumerates configurations consisting of a finite-energy solution $v\co (\mathbb{C}\cup\{\infty\})\setminus\{0\}\to M$ of the equation 
\begin{equation}\label{psieq} r\frac{\partial v}{\partial r}+J(re^{i\theta},v(re^{i\theta}))\left(\frac{\partial v}{\partial \theta}-\frac{\beta(r^{-1})}{2\pi}X_H(\theta/2\pi,v(re^{ i\theta}))\right)=0 \end{equation} 
and a negative gradient flowline 
$\zeta\co [0,\infty)\to M$ for $f$, with $\zeta(0)=v(\infty)$,  $v(re^{i\theta})\to \gamma(\theta/2\pi)$ as $r\to 0$ for some $\gamma\in P(H)$, and such that $v$ obeys appropriate incidence conditions.  Where, for $p\in Crit(f)$, $\gamma\in P(H)$, $I\subset \{1,\ldots,m\}^k$, and $C\in \pi_2(\gamma)$, we let $\bar{\frak{s}}_{C,p,I}$ be the multisection associated to the Kuranishi structure on the space of such configurations asymptotic to $\gamma$, such that the map 
$\bar{v}(re^{i\theta})=v(r^{-1}e^{i\theta})$ 
represents $C\in \pi_2(\gamma)$, and obeying the incidence conditions corresponding to $I$, the map $\Psi^{PSS}_{\eta,H}\co CF(H)\to CM(f;\Lambda_{\omega})$ takes the form \[ \Psi_{\eta,H}^{PSS}([\gamma,u])=\sum_{p\in Crit(f)}\sum_{k=0}^{\infty}\frac{1}{k!}\sum_{\substack{C,p,I:\\ 
-\bar{\mu}(C)-ind_f(p)=-n+\delta(I)}}|\bar{\frak{s}}_{C,p,I}^{-1}(0)|\exp\left(-\int_C\theta\right)z_IT^{\int_{D^2}u^*\omega-\int_C\omega}p.\] 
(Note that, if $v$ is a solution as above contributing to the term corresponding to $C\in \pi_2(\gamma)$, then  $\int_{D^2}u^*\omega-\int_C\omega$ is the integral of $\omega$ over a sphere obtained by gluing the capping disc $u$ to the solution $v$ along $\gamma$, so that this quantity does belong to $\Gamma_{\omega}$).

The following summarizes some properties of the PSS maps:

\begin{prop}\label{pssprop}For suitable choices of auxiliary data involved in the construction of the deformed PSS maps, the following properties hold:\begin{itemize}
\item[(i)]  The maps $\Phi_{\eta,H}^{PSS}\co (CM(f;\Lambda_{\omega}),\partial_f)\to (CF(H),\partial^{\eta,H})$ and $\Psi_{\eta,H}^{PSS}\co (CF(H),\partial^{\eta,H})\to (CM(f;\Lambda_{\omega}),\partial_f)$ are chain maps, which respect the $\mathbb{Z}_2$-gradings of the respective complexes.
\item[(ii)] If $H_-,H_+$ are two strongly nondegenerate Hamiltonians and if $\tilde{H},K$ are data as in Section \ref{contsect}, resulting in a continuation map $\Phi_{\eta,\tilde{H},K}\co (CF(H_-),\partial^{\eta,H_-})\to (CF(H_+),\partial^{\eta,H_+})$, the maps \[\Phi_{\eta,H_+}^{PSS}\mbox{ and }\Phi_{\eta,\tilde{H},K}\circ\Phi_{\eta,H_-}^{PSS}\co (CM(f;\Lambda_{\omega}),\partial_f)\to (CF(H_+),\partial^{\eta,H_+})\] 
are chain homotopic.
\item[(iii)] The compositions $\Psi_{\eta,H}^{PSS}\circ\Phi_{\eta,H}^{PSS}\co (CM(f;\Lambda_{\omega}),\partial_f) \to (CM(f;\Lambda_{\omega}),\partial_f)$ and $\Phi_{\eta,H}^{PSS}\circ\Psi_{\eta,H}^{PSS}\co (CF(H),\partial^{\eta,H})\to (CF(H),\partial^{\eta,H})$ are each chain homotopic to the identity.
\item[(iv)] For $x=\sum_{i=1}^{\infty}x_ip_iT^{g_i}\in CM(f;\Lambda_{\omega})$ (where $p_i\in Crit(f), x_i\in\mathbb{C}, g_i\in \Gamma_{\omega}$) write \[ \nu(x)=\max\{-g_i|x_i\neq 0\}.\]  Then \[ \ell(\Phi_{\eta,H}^{PSS}x)\leq \nu(x)+\int_{0}^{1}\max_MH(t,\cdot)dt \] 
where $\ell$ is defined in (\ref{elldef}).
\item[(v)] For $c\in CF(H)$, \[ \ell(c)\geq \nu(\Psi_{\eta,H}^{PSS}c)+\int_{0}^{1}\min_M H(t,\cdot)dt.\]
\end{itemize}
\end{prop}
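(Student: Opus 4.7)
The overall strategy is that each of (i)--(v) is the natural adaptation of a well-known property of the undeformed PSS map, and the modifications needed to account for the deformation parameter $\eta$ run precisely parallel to those made for the differential $\partial^{\eta,H}$. So my plan would be to take the classical arguments of Piunikhin--Salamon--Schwarz (say in the form used in \cite{PSS} and refined in \cite{FOOO09}) as templates, replace the moduli spaces of spiked planes by their fiber products with products of the cycles $N_I$, put Kuranishi structures on these fiber products via \cite[Section A1.2]{FOOO09} together with the weakly submersive evaluation maps, and weight the contribution of each configuration by the combinatorial factor $\frac{1}{k!}z_I \exp(\int\theta)$ exactly as in the construction of $\partial^{\eta,H}$. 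Finiteness of all relevant sums follows, as in the construction of $\Phi_{\eta,\tilde{H},K}$, from Gromov--Floer compactness together with the bound $\delta(I)\geq 2k$.

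For (i), I would classify the codimension-one boundary strata of the $1$-dimensional components of the Kuranishi structures on $\mathcal{M}^{PSS}_{k}(\gamma,C;p,N_I)$ (and its analogue for $\Psi_{\eta,H}^{PSS}$). These strata decompose into pieces in which either a gradient flowline of $f$ breaks at the Morse end, or a Floer cylinder splits off at the cylindrical end, with the $k$ marked points distributing themselves among the two resulting components. The marked-point distribution is handled by exactly the combinatorial identity of Proposition \ref{combins}: the sum over subsets $S\subset\{1,\ldots,k\}$ of the products of curve counts on the two pieces reproduces $\partial^{\eta,H}\circ \Phi^{PSS}_{\eta,H}$ and $\Phi^{PSS}_{\eta,H}\circ \partial_f$ respectively, while multiplicativity of $\exp(\int_{C_-\#C_+}\theta)=\exp(\int_{C_-}\theta)\exp(\int_{C_+}\theta)$ from (\ref{concat}) ensures the $\theta$-weights combine correctly. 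For (ii) and (iii) I would consider one-parameter families of auxiliary data (in (ii) a homotopy of the PSS interpolation data through which $\tilde{H},K$ is grown in; in (iii) a deformation shrinking the PSS/inverse-PSS neck either to a pair of gradient flowlines or to a Floer cylinder) and read off the chain homotopies from the boundaries of the resulting parametrized moduli spaces, again with the same combinatorial/Stokes' theorem bookkeeping. The $\mathbb{Z}_2$-grading statement in (i) is a direct dimension count: the codimension $\delta(I)$ has the same parity as the Conley--Zehnder index shift in the standard PSS index formula.

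For (iv) and (v), I would use the standard energy identity for solutions of (\ref{psseq}). If $v\co \mathbb{C}\to M$ solves (\ref{psseq}) with $v(re^{i\theta})\to \gamma(\theta/2\pi)$ and represents $C\in\pi_2(\gamma)$, then a direct computation using the cutoff $\beta$ gives
\[ 0\le E(v) = -\int_{C}\omega + \int_{0}^{1}H(t,\gamma(t))\,dt - \int_{0}^{\infty}\beta'(r)\!\!\int_{0}^{1}\! H(t,v(re^{2\pi it}))\,dt\,dr \le -\mathcal{A}_H([\gamma,C]) + \int_0^1\max_M H(t,\cdot)\,dt, \]
using $\beta'\ge 0$ and $\int_0^\infty \beta'=1$. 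Consequently any $[\gamma,v^-\#C]$ appearing with nonzero coefficient in $\Phi^{PSS}_{\eta,H}p$ satisfies $\mathcal{A}_H([\gamma,v^-\#C])\le \int_0^1\max_M H\,dt$; taking into account the action shift $\mathcal{A}_H([\gamma, v\#A_g])=\mathcal{A}_H([\gamma,v])-g$ under multiplication by $T^g$ yields (iv). The inequality (v) is the mirror-image estimate obtained from the analogous energy identity for (\ref{psieq}), where the sign of the analogous cutoff term reverses and one obtains a lower bound involving $\min_M H$.

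The main obstacle, as in Section \ref{bigdefsect}, is transversality for the fiber products as marked points collide, which in (ii) and (iii) interacts nontrivially with the parametrizing variable of the chain homotopy and with the nodes at which the spiked planes attach to Floer cylinders or gradient flowlines. In the strongly semipositive case this would be handled by adapting the perturbation scheme $\tilde{\ev}_k$ of Section \ref{bigdefsect} to each of the relevant spaces of hybrid configurations; in general one appeals to the fiber product construction for Kuranishi structures of \cite[Section A1.2]{FOOO09} and its behavior under the gluing maps of \cite[Section 7.1]{FOOO09}. Once transversality is in hand, the remaining verifications are direct applications of the combinatorial and Stokes arguments used already for Corollary \ref{delsquared}.
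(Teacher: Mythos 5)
Your outline for (i), (ii), (iv), and (v) follows essentially the same path as the paper: treat the incidence conditions exactly as in the proof that $(\partial^{\eta,H})^2=0$, use (\ref{concat}) and the combinatorics of Proposition~\ref{combins} for the boundary strata, and for (iv)/(v) use the energy identity for solutions of (\ref{psseq}) and (\ref{psieq}). Two minor points: your displayed energy identity for (iv) has all three terms with the wrong sign (the correct identity is $E(u) = \int_{\mathbb{C}}u^*\omega - \int_0^1 H(t,\gamma(t))\,dt + \int\beta'H$, which rearranges to the action bound $\mathcal{A}_H([\gamma,v]) = -E(u) + \int\beta'H \le \int_0^1\max_M H$); and you should note explicitly, as the paper does, that $\exp(\int_C\theta)z_I\in\Lambda^{0}_{\omega}$, so these factors cannot increase $\ell$ --- otherwise the passage from the action estimate to the filtration bound for $\Phi^{PSS}_{\eta,H}x$ is incomplete.

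The real gap is in (iii), where ``shrinking the neck to a pair of gradient flowlines or to a Floer cylinder'' does not on its own produce the identity. Shrinking the neck for $\Psi^{PSS}_{\eta,H}\circ\Phi^{PSS}_{\eta,H}$ yields a map $\phi_{\bar H}$ that enumerates configurations of the form (flowline)--(perturbed holomorphic sphere carrying the $k$ marked points)--(flowline). Identifying $\phi_{\bar H}$ with the identity requires three additional arguments: (a) a cobordism argument showing the chain homotopy class of $\phi_{\bar H}$ is independent of the Hamiltonian perturbation, so $\bar H$ may be set to zero; (b) the $S^1$-action on the resulting $J$-holomorphic sphere moduli spaces, which is locally free away from constant spheres, forcing $A=0$ and constant spheres once transversality is achieved on the quotient; and (c) a cobordism argument allowing one to vary the cycles used for the incidence conditions, followed by a dimension count showing that a constant sphere can satisfy nontrivially generic incidence conditions only when $k=0$. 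Without (b) and (c) one cannot rule out contributions from nontrivial spheres or from nonzero $k$, so the limit of the neck-shrinking is not obviously the Morse-theoretic identity. For $\Phi^{PSS}_{\eta,H}\circ\Psi^{PSS}_{\eta,H}$ your reduction to a continuation map $\Phi_{\eta,\tilde H,0}$ with $s$-independent $\tilde H$ is the right idea, but one still needs the free $\mathbb{R}$-translation action (including its action on marked points) to kill all contributions except trivial cylinders, as well as the gluing analysis of \cite{OZ} to justify the initial degeneration. You should spell these steps out rather than leaving them inside ``the same combinatorial/Stokes bookkeeping.''
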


\begin{proof} Properties (i) and (ii) follow from standard gluing and cobordism arguments as in \cite{PSS} and \cite{Lu04}; the only new feature is the presence of incidence conditions, which are handled in the same way as in the proof that $(\partial^{\eta,H})^2=0$.  (The grading property is a straightforward consequence of the definitions (in particular see the second displayed equation of \cite[Remark 5.4]{RS}), taking into account that $\delta(I)$ is always even.)

Consider the composition $\Psi_{\eta,H}^{PSS}\circ\Phi_{\eta,H}^{PSS}$.  A  gluing argument as in \cite{PSS},\cite[Proposition 4.6]{Lu04} shows that this composition is equal to a map $\phi_{\bar{H}}\co CM(f;\Lambda_{\omega})\to CM(f;\Lambda_{\omega})$ given by \begin{equation} \label{psiphi} \phi_{\bar{H}}p=\sum_{q\in Crit(f)}\sum_{k=0}^{\infty}\frac{1}{k!}\sum_{A\in \pi_2(M)}\sum_{\substack{I\in \{1,\ldots,m\}^k,\\ 2\langle c_1(M),A\rangle+ind_f(p)-ind_f(q)=\delta(I)}}n(\bar{H};p,q,A,I)z_I\exp\left(\int_C\theta\right)T^{\int_C\omega}q.\end{equation} 
Here the rational number $n(\bar{H};p,q,A,I)$ formally enumerates triples $(\zeta_-,u,\zeta_+)$ where \begin{itemize} \item $\zeta_-\co (-\infty,0]\to M$, $\zeta_+\co [0,\infty)\to M$ are negative gradient flowlines of $f$ with $\zeta_-(\tau)\to p$ as $\tau\to -\infty$ and $\zeta_+(\tau)\to q$ as $\tau\to\infty$;\item $u\co \mathbb{C}\cup\{\infty\}\to M$ is a solution to $\delbar_{J,\bar{H}}u=0$ which represents $A$ in $\pi_2(M)$ and satisfies incidence conditions corresponding to $I$, for a suitable Hamiltonian perturbation $\bar{H}$ which vanishes near $0$ and $\infty$; and \item $u(0)=\zeta_-(0)$ and $u(\infty)=\zeta_+(0)$.\end{itemize}

Meanwhile, a standard cobordism argument shows that the chain homotopy class of the map $\phi_{\bar{H}}$ is independent of the Hamiltonian perturbation $\bar{H}$.  Moreover, a cobordism argument also shows that the chain homotopy class of $\phi_{\bar{H}}$ is unchanged if we replace the cycles $f_i\co N_i\to M$ used for the incidence conditions $I=\{i_1,\ldots,i_k\}$ with an arbitrary family of choices \[ \{(\alpha_{1}^{I},\ldots,\alpha_{k}^{I})|I\in \{1,\ldots,m\}^k, k\geq 0\}\] so that for each $I,j$ the cycle $\alpha_{j}^{I}$ represents the same homology class $c_{i_j}$ as does $f_{i_j}$.   

In particular, we can choose $\bar{H}=0$, in which case the spherical component $u$ of the triple $(\zeta_-,u,\zeta_+)$ is required to be $J$-holomorphic. There is then an $S^1$-action on the relevant moduli spaces (induced by rotating the sphere) and as in \cite[Proposition 4.7]{Lu04}, \cite[p. 1036]{FO} the fact that this action is locally free on all of the spaces except those corresponding to a topologically trivial $u$ implies that only the class $A=0\in \pi_2(M)$ contributes nontrivially to $\phi_0$, and the only contributions come from constant maps $u\co S^2\to M$ to points $x$ of the intersections $W^u(p)\cap W^s(q)$ (which have dimension $ind_f(p)-ind_f(q)$).  Moreover, the point $x$ must meet the cycles $\alpha_{1}^{I},\alpha_{k}^{I}$; if these latter are mutually transverse to each other  and to the $W^u(p),W^s(q)$ (as we may and do choose them to be) this imposes a condition of codimension $\sum_{j=1}^{k}(2n-2d(i_j))=\delta(I)+2k$.  So since the only terms in (\ref{psiphi}) corresponding to $A=0$ have 
$ind_f(p)-ind_f(q)=\delta(I)$  the only contributions to $\phi_{0}$ (for these choices of  the $\alpha_{j}^{I}$) have $k=0$, and so just as in \cite{PSS},\cite[Proposition 4.7]{Lu04} we find that the map $\phi_{0}$ is the identity.

The proof that $\Phi_{\eta,H}^{PSS}\circ\Psi_{\eta,H}^{PSS}\co (CF(H),\partial^{\eta,H})\to (CF(H),\partial^{\eta,H})$ is chain homotopic to the identity is just the same as that outlined in \cite{PSS}, \cite{Lu04} (note that to complete this outline one needs to incorporate the gluing analysis of \cite{OZ}): using the appropriate gluing and cobordism arguments one finds that $\Phi_{\eta,H}^{PSS}\circ\Psi_{\eta,H}^{PSS}$ is chain homotopic to the map that in the notation of Section \ref{contsect} would be denoted $\Phi_{\eta,\tilde{H},0}$, with $\tilde{H}(s,t,m)=H(t,m)$.  But this latter map is the identity as in \cite[(20.7)]{FO}, since with the exception of the ``trivial cylinders'' $u(s,t)=\gamma(t)$ all of the cylinders which might contribute to it are members of orbits of free $\mathbb{R}$-actions and so do not arise in zero-dimensional moduli spaces.  This completes the proof of part (iii).

As for (iv), note that in order for $[\gamma,v]$ to appear with nonzero coefficient in $\Phi_{\eta,H}^{PSS}p$ there must exist a solution $u\co \mathbb{C}\to M$ to (\ref{psseq}) asymptotic to $\gamma$ having $\int_{\mathbb{C}}u^{*}\omega=\int_{D^2} v^*\omega$.  In this case a computation gives \begin{align*}\mathcal{A}_H([\gamma,v])&=-\int_{\mathbb{C}}u^*\omega+\int_{0}^{1}H(t,\gamma(t))dt
\\&=-E(u)+\int_{0}^{1}\int_{0}^{\infty}\beta'(r)H(t,u(re^{2\pi it}))drdt\\&\leq \int_{0}^{1}\max_M H(t,\cdot)dt,\end{align*} where we've used the facts that $\int_{0}^{\infty}\beta'(r)dr=1$ and that the  energy $E(u)=\int_{\mathbb{C}}\left|\frac{\partial u}{\partial r}\right|^2rdrd\theta$ is nonnegative. Bearing in mind that the expression $\exp(\int_C\theta)z_I$ belongs to $\Lambda_{\omega}^{0}$, so that its action on $CF(H)$ does not increase the value of $\ell$,  this implies that, for any $p\in Crit(f)$, we have \[ \ell(\Phi_{\eta,H}^{PSS}p)\leq \int_{0}^{1}\max_M H(t,\cdot)dt,\] and then (iv) follows from obvious properties of the function $\ell$.

The proof of (v) is similar: For a term $T^gp$ to appear in $\Psi_{\eta,H}^{PSS}[\gamma,u]$ there must be a solution $v\co (\mathbb{C}\cup\{\infty\})\setminus \{0\}\to M$ of (\ref{psieq}), asymptotic to $\gamma$ as $|z|\to 0$, and having \[ \int_{(\mathbb{C}\cup\{\infty\})\setminus \{0\}}v^*\omega=-\int_{D^2}u^*\omega+g\]  For such a solution we find \begin{align*} \int_{(\mathbb{C}\cup\{\infty\})\setminus\{0\}}v^*\omega&=E(v)-\int_{0}^{1}H(t,\gamma(t))dt-\int_{0}^{1}\int_{0}^{\infty}\frac{d}{dr}\left(\beta(r^{-1})\right)H(t,v(re^{2\pi it}))drdt \\&\geq -\int_{0}^{1}H(t,\gamma(t))dt+\int_{0}^{1}\min_M H(t,\cdot)dt.\end{align*}Hence we obtain \[ g\geq \int_{D^2}u^*\omega-\int_{0}^{1}H(t,\gamma(t))dt+\int_{0}^{1}\min_M H(t,\cdot)dt=-\mathcal{A}_H([\gamma,u])+\int_{0}^{1}\min_M H(t,\cdot)dt,\] and then from the definitions of $\Psi_{\eta,H},\ell,\nu$ we see that \[ \nu(\Psi_{\eta,H} c)\leq \ell(c)-\int_{0}^{1}\min_M H(t,\cdot)dt\] for any $c\in CF(H)$.

\end{proof}

\subsubsection{Pair of Pants products}

We now discuss the deformed versions of the pair of pants product on Floer homology.  We continue to regard as fixed the class $\eta\in \oplus_{i=0}^{n-1}H_{2i}(M;\Lambda_{\omega}^{0})$ and the representing chains $f_i\co N_i\to M$ and  $2$-form $\theta\in \Omega^{2}_{cl}(M;\Lambda_{\omega}^{0})$.  

In general, if $H\co S^1\times M\to\mathbb{R}$ is any Hamiltonian and if $\chi\co [0,1]\to [0,1]$ is a smooth monotone function with $\chi(0)=0$, $\chi(1)=1$, and $\chi'(0)=\chi'(1)$, define \[ H^{\chi}(t,m)=\chi'(t)H(\chi(t),m).\]    Note that contractible $1$-periodic orbits of $X_{H^{\chi}}$ are just reparametrizations of contractible $1$-periodic orbits of $X_H$, in view of which $H^{\chi}$ is strongly nondegenerate if and only if $H$ is.  Since $H$ and $H^{\chi}$ represent the same element of $\widetilde{Ham}(M,\omega)$,  if $H$ (hence also $H^{\chi}$) is normalized and strongly nondegenerate then the $\eta$-deformed Floer complexes of $H$ and $H^{\chi}$ are isomorphic as $\mathbb{R}$-filtered chain complexes by
Proposition \ref{filtiso}.

Fix a smooth monotone function $\zeta\co [0,1/2]\to [0,1]$ such that $\zeta(0)=0$, $\zeta(1/2)=1$, and $\zeta'$ vanishes to infinite order at both $t=0$ and $t=1/2$.  Given smooth functions $H,K\co S^1\times M\to \mathbb{R}$, define $H\lozenge K\co S^1\times M\to\mathbb{R}$ by \[ H\lozenge K(t,m)=\left\{\begin{array}{ll} \zeta'(t)H(\zeta(t),m) & 0\leq t\leq 1/2 \\ \zeta'(t-1/2)K(\zeta(t-1/2),m) & 1/2\leq t\leq 1\end{array}\right..\]
The choice of $\zeta$ ensures that $H\lozenge K$ is well-defined and smooth; also, if $H$ and $K$ are normalized then so is $H\lozenge K$.

Be given two Hamiltonians $H,K\co S^1\times M\to\mathbb{R}$.  For some small $\ep>0$, we assume that $H(t,\cdot)=K(t,\cdot)=0$ for $|t|\leq \ep$; this can be achieved by replacing $H,K$ with $H^{\chi},K^{\chi}$ as defined above for some $\chi\co [0,1]\to [0,1]$ as above whose derivative vanishes identically on $[0,\ep]\cup[1-\ep,1]$. We assume that $H,K,$ and $H\lozenge K$ are each strongly nondegenerate; a standard argument shows that this conditions holds for generic pairs $(H,K)$. We now explain the definition of the pair of pants product \[ \ast^{Floer}_{\eta}\co CF(H)\otimes CF(K)\to CF(H\lozenge K),\] carefully arranging the details so that the product will behave well with respect to the filtrations on the complexes. The construction of course closely resembles that in \cite[Section 4.1]{Sc00}, but we phrase it a bit differently, working always in terms of an explicit smooth trivialization of the relevant bundles in order to facilitate the introduction of incidence conditions.

Let $\Sigma$ denote the thrice-punctured sphere, and let $\Sigma_0\subset \Sigma$ be the compact submanifold with boundary obtained by deleting from $\Sigma$ small punctured-disc neighborhoods of the punctures.  

\begin{center}\begin{figure}\label{pantsfig}
\includegraphics[scale=0.9]{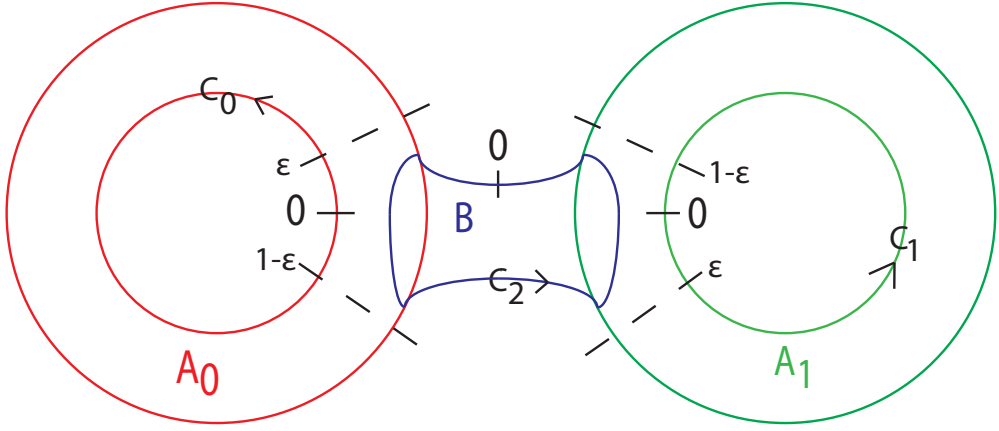} \caption{The surface $\Sigma_0$, decomposed into annuli $A_0,A_1$ and a square $B$.  The thrice-punctured sphere $\Sigma$ is obtained by adding cylindrical ends $(-\infty,0]\times C_0,(-\infty,0]\times C_1,[0,\infty)\times C_2$. }
\end{figure}
\end{center}

As in Figure 1, express $\Sigma_0$ as a union of disjoint annuli $A_0,A_1\cong [0,1]\times S^1$ and a square $B\cong [0,1]\times [0,1]$.  For $i=0,1$ let $t_i$ be the angular ($S^1=\mathbb{R}/\mathbb{Z}$) coordinates on the annuli $A_0,A_1$; values of the $t_i$ at certain points are indicated in the figure.  Consider the manifold $\Sigma_0\times M$ with projection $p_2\co \Sigma_0\times M\to M$, equipped with the symplectic form $\omega_0$ defined by \begin{align*} \omega_0|_{A_0\times M}&=p_{2}^{*}\omega-d(H(t_0,\cdot)dt_0) \\
\omega_0|_{A_1\times M}&=p_{2}^{*}\omega-d(K(t_1,\cdot)dt_1) \\ \omega_0|_{B\times M}&=p_{2}^{*}\omega.\end{align*}  Because of our assumption that $H(t,\cdot)=K(t,\cdot)=0$ for $|t|\leq \ep$, the above definitions of $\omega_0$ are consistent on the overlaps $(A_i\cap B)\times M$. 

For any parametrization of the third boundary component $C_2$ of $\Sigma_0$ by a coordinate $t_2\in\mathbb{R}/\mathbb{Z}$ with orientation and basepoint $t_2=0$ as in the figure, the restriction $\omega_0|_{C_2\times M}$ will take the form \[ \omega_0|_{C_2\times M}=p_{2}^{*}\omega+\beta(t_2)\wedge dt_2\] where $\beta\co \mathbb{R}/\mathbb{Z}\to \Omega^1(M)$.  By considering the horizontal distribution $(TM)^{\perp_{\omega_0}}$, one finds that we will have $\beta(t_2)=-d((H\lozenge K)^{\tilde{\chi}}(t,\cdot))$ for some reparametrizing function $\tilde{\chi}\co [0,1]\to [0,1]$; indeed, we may and do choose the parametrization of $C_2$ so that $\tilde{\chi}$ is the identity and so \[ \omega_0|_{C_2\times M}=\omega-d((H\lozenge K)(t_2,m)dt_2).\]  Accordingly, where we view the thrice-punctured sphere $\Sigma$ as obtained from $\Sigma_0$ by attaching cylindrical ends $\hat{C}_i=(-\infty,0]\times C_i$ ($i=0,1$, where $C_i=\{0\}\times S^1\subset A_i$) and $\hat{C}_2=[0,\infty)\times C_2$ to the three boundary components $C_0,C_1,C_2$ of $\Sigma_0$ we may define a closed $2$-form $\hat{\omega}\in \Omega^2(\Sigma\times M)$ by \begin{align*} \hat{\omega}|_{\Sigma_0\times M}&=\omega_0\\
\hat{\omega}|_{\hat{C}_0}&=p_{2}^{*}\omega-d(H(t,\cdot)dt)\\
\hat{\omega}|_{\hat{C}_1}&=p_{2}^{*}\omega-d(K(t,\cdot)dt)\\
\hat{\omega}|_{\hat{C}_2}&=p_{2}^{*}\omega-d((H\lozenge K)(t,\cdot)dt)
\end{align*}

Let $\mathcal{H}\subset T(\Sigma\times M)$ be the $\hat{\omega}$-orthogonal complement of the vertical bundle $TM\subset T(\Sigma\times M)$.  Thus we have a splitting $T(\Sigma\times M)=\mathcal{H}\oplus TM$; given $v\in T(\Sigma\times M)$ write $v^{vt}$ for the component of $v$ in the $TM$ summand.  Notice that it is a direct consequence of the construction of $\hat{\omega}$ that \begin{equation}\label{horvan}\mbox{If $v^{vt}=0$ then }\iota_v\hat{\omega}=0
\end{equation}

Let $j\co T\Sigma\to T\Sigma$ be the standard complex structure and choose a smooth family $\{J_z\}_{z\in \Sigma}$ of $\omega$-compatible almost complex structures on $M$, such that for $z=(s_i,t_i)$ on the ends $\hat{C}_i\cong (-\infty,0]\times S^1$ or $[0,\infty)\times S^1$ we have $J_{(s_i,t_i)}=J_{t_i}^{i}$ for some one-periodic families $J_{t}^{i}$ ($i=0,1,2$).  Given $u\co \Sigma\to M$, define $U\co \Sigma\to  \Sigma\times M$ by $U(z)=(z,u(z))$, and define the energy $E(u)$ as the integral of the $2$-form $e_u$ on $\Sigma$ whose value $e_u(z)\in \wedge^2T_{z}^{*}\Sigma$ at $z\in \Sigma$ is given by choosing a basis $\{e_1,e_2\}$ for $T_z\Sigma$ with $e_2=je_1$, letting $e^1,e^2\in T_{z}^{*}\Sigma$ be the dual basis, and putting \[ e_u(z)=\frac{1}{2}\left(\omega((U_*e_1)^{vt},J(U_*e_1)^{vt})+\omega((U_*e_2)^{vt},J(U_*e_2)^{vt})\right)e^1\wedge e^2.\]

Where again $U(z)=(z,u(z))$ for $u\co \Sigma\to M$, if we have, for each $z\in \Sigma,v\in T_{z}\Sigma$, \begin{equation}\label{crvt} J(U_*v)^{vt}=(U_*jv)^{vt},
\end{equation} then (using (\ref{horvan}))  if $e_1,e_2\in T_z\Sigma$ with $e_2=je_1$, we have \[ (U^*\hat{\omega})_z=\hat{\omega}(U_*e_1,U_*e_2)e^1\wedge e^2=\hat{\omega}((U_*e_1)^{vt},(U_*e_2)^{vt})e^1\wedge e^2=e_u(z),\] and hence \begin{equation}\label{posint}\int_{\Sigma}U^*\hat{\omega}=E(u)\geq 0 \mbox{ if $E(u)<\infty$ and (\ref{crvt}) holds}. 
\end{equation}

Now for a suitable zeroth order term $Y$, the equation (\ref{crvt}) can be rewritten directly as an equation for a map $u\co \Sigma\to M$ of the form \[ du+J_z(u)\circ du\circ j+Y(z,u)=0.\]  Along the cylindrical ends $\hat{C}_i$, one finds more specifically that (\ref{crvt})  is equivalent to a standard Floer equation:\begin{align*} \frac{\partial u}{\partial s}+J^{0}_{t}(u(s,t))\left(\frac{\partial u}{\partial t}-X_{H}(t,u(s,t))\right)=0& \mbox{ on }\hat{C}_0 \\ \frac{\partial u}{\partial s}+J^{1}_{t}(u(s,t))\left(\frac{\partial u}{\partial t}-X_{K}(t,u(s,t))\right)=0& \mbox{ on }\hat{C}_1 \\ \frac{\partial u}{\partial s}+J^{2}_{t}(u(s,t))\left(\frac{\partial u}{\partial t}-X_{H\lozenge K}(t,u(s,t))\right)=0& \mbox{ on }\hat{C}_2 .\end{align*}  

In particular, a finite-energy solution $u$ to (\ref{crvt}) will necessarily be asymptotic to some $\gamma_0\in P(H)$ as $s\to -\infty$ in $\hat{C}_0$; to some $\gamma_1\in P(K)$ as $s\to -\infty$ in $\hat{C}_1$; and to some $\gamma_2\in P(H\lozenge K)$ as $s\to +\infty$ in $\hat{C_2}$.  Given such $\gamma_0,\gamma_1,\gamma_2$, let $\pi_2(\gamma_0,\gamma_1;\gamma_2)$ denote the set of relative homotopy classes  of maps $u\co \Sigma\to M$ with these asymptotics.  If for $i=0,1$ we choose discs $v_i\co D^2\to M$ with $v_i|_{\partial D^2}=\gamma_i$, for any $P\in \pi_2(\gamma_0,\gamma_1;\gamma_2)$ we obtain a relative homotopy class of discs $v_0\#P\#v_1$ bounding $\gamma_2$ by gluing in the obvious way.  

\begin{lemma}\label{actions} If $u\co \Sigma\to M$ represents $P\in \pi_2(\gamma_0,\gamma_1;\gamma_2)$ then where $U(z)=(z,u(z))$ we have \[ \int_{\Sigma}U^*\hat{\omega}=\mathcal{A}_H([\gamma_0,v_0])+\mathcal{A}_K([\gamma_1,v_1])-\mathcal{A}_{H\lozenge K}([\gamma_2,v_0\#P\#v_1]).\]
\end{lemma}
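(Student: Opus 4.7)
The plan is to apply Stokes' theorem to convert $\int_\Sigma U^*\hat\omega$ into $\int_\Sigma u^*\omega$ plus boundary contributions which exactly match the Hamiltonian integrals appearing in the three action functionals.

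First I would construct a smooth $1$-form $\alpha$ on $\Sigma\times M$ such that $\hat\omega = p_2^*\omega - d\alpha$. Set $\alpha = H(t_0,m)\,dt_0$ on $A_0\cup\hat{C}_0$, $\alpha = K(t_1,m)\,dt_1$ on $A_1\cup\hat{C}_1$, $\alpha = (H\lozenge K)(t_2,m)\,dt_2$ on $\hat{C}_2$, and $\alpha = 0$ on $B$. The standing assumption that $H(t,\cdot) = K(t,\cdot) = 0$ for $|t|\leq\ep$ ensures that these local expressions for $\alpha$ vanish along the arcs at which $A_0$ and $A_1$ meet the square $B$, so they piece together into a globally smooth $1$-form. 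By construction, $\hat\omega = p_2^*\omega - d\alpha$ throughout $\Sigma\times M$, whence $\int_\Sigma U^*\hat\omega = \int_\Sigma u^*\omega - \int_\Sigma d(U^*\alpha)$.

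Next I would exhaust $\Sigma$ by the compact subsets $\Sigma_T$ obtained from $\Sigma_0$ by attaching $[-T,0]\times C_i$ for $i=0,1$ and $[0,T]\times C_2$, apply Stokes to $\int_{\Sigma_T}d(U^*\alpha)$, and let $T\to\infty$. The boundary is the union of three truncation circles, and on each the integrand converges (using the asymptotic behavior of $u$) to an integral of the corresponding Hamiltonian around the appropriate orbit $\gamma_i$. Keeping track of orientations: on the negative ends $\hat{C}_0,\hat{C}_1$ the outward normal at $s=-T$ is $-\partial_s$, inducing boundary orientation $-\partial_t$ and hence the limits $-\int_0^1 H(t,\gamma_0(t))\,dt$ and $-\int_0^1 K(t,\gamma_1(t))\,dt$; on the positive end $\hat{C}_2$ the opposite sign gives $+\int_0^1(H\lozenge K)(t,\gamma_2(t))\,dt$. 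This yields
\[
\int_\Sigma U^*\hat\omega = \int_\Sigma u^*\omega + \int_0^1 H(t,\gamma_0(t))\,dt + \int_0^1 K(t,\gamma_1(t))\,dt - \int_0^1 (H\lozenge K)(t,\gamma_2(t))\,dt.
\]

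Finally, I would match this with the claimed expression $\mathcal{A}_H([\gamma_0,v_0])+\mathcal{A}_K([\gamma_1,v_1])-\mathcal{A}_{H\lozenge K}([\gamma_2,v_0\#P\#v_1])$ by observing that the concatenation which forms the capping disc $v_0\#P\#v_1$ glues the discs $v_0,v_1$ onto the negative ends of $\Sigma$ along orientation-reversing identifications of $\partial D^2$ with the boundary circles at $s=-\infty$, so that
\[
\int_{v_0\#P\#v_1}\omega = \int_{D^2}v_0^*\omega + \int_\Sigma u^*\omega + \int_{D^2}v_1^*\omega.
\]
Substituting this together with the displayed formula for $\int_\Sigma U^*\hat\omega$ into the definition $\mathcal{A}_H([\gamma,v]) = -\int_{D^2}v^*\omega + \int_0^1 H(t,\gamma(t))\,dt$ yields the claimed identity. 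The principal obstacle is orientation bookkeeping at the cylindrical ends and in the capping-disc gluing; this is really all the lemma asserts, the analytic content being just Stokes' theorem plus the exponential decay of $u$ at infinity (needed to justify the passage to the limit $T\to\infty$).
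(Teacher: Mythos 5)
Your proof is correct in outline, but it takes a genuinely different route from the paper's. You construct a global primitive $\alpha$ with $\hat\omega = p_2^*\omega - d\alpha$ and apply Stokes' theorem directly on $\Sigma$, passing to the limit over truncations $\Sigma_T$. The paper instead caps off the three cylindrical ends by discs $D_i$ to form $S^2$, extends $\hat\omega$ to a form $\tilde\omega$ on $S^2\times M$, and exploits the fact that the combined map $\tilde U \co S^2 \to S^2\times M$ has $M$-component null-homotopic, so $\int_{S^2}\tilde U^*\tilde\omega = \int_{S^2}f_p^*\tilde\omega = 0$ for a constant section $f_p$. The paper's route avoids any discussion of convergence at the ends and any need to produce a globally defined primitive (the constancy claim is checked on the compact $S^2$); your route is more elementary but requires more bookkeeping.

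One point you should tighten: your justification for the global smoothness of $\alpha$ only addresses the seams $A_i \cap B$, where the vanishing of $H,K$ for $|t|\leq\ep$ does the job. You must also match the expression $(H\lozenge K)(t_2,m)\,dt_2$ on $\hat{C}_2$ with the expressions $H(t_0,m)\,dt_0$, $K(t_1,m)\,dt_1$, and $0$ on the pieces of $C_2$ lying in $A_0$, $A_1$, $B$ respectively. On the $A_0$-piece one needs $(H\lozenge K)(t_2,m)\,dt_2 = H(t_0,m)\,dt_0$ as $1$-forms (not merely that their differentials agree), which holds because of the chain rule and the precise definition $H\lozenge K(t,m) = \zeta'(t)H(\zeta(t),m)$; on the $B$-pieces one needs $(H\lozenge K)(t_2,\cdot) = 0$, which again follows from the vanishing assumption. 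The paper sidesteps exactly this issue by working at the $2$-form level and choosing the $t_2$-parametrization to normalize $\omega_0|_{C_2\times M}$; since you work at the primitive level you need to say a word about why the same normalization holds one degree lower. This is a small gap, easily filled, and not a flaw in the approach.
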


\begin{proof} Cap off the cylindrical ends $\hat{C}_i$ of $\Sigma$ by discs $D_i$ (identified with $\{z\in \mathbb{C}\cup\{\infty\}||z|\leq 1\}$ for $i=0,1$ and with $\{z\in \mathbb{C}\cup\{\infty\}||z|\geq 1\}$ for $i=2$) to form a copy of $S^2$.  Extend the form $\hat{\omega}$ on $\Sigma\times M$ to a form $\tilde{\omega}$ on $S^2\times M$ by, where $(r,2\pi t)$ are polar coordinates and $\beta\co [0,1]\to [0,1]$ is a smooth monotone function equal to $0$ for $s<1/3$ and to $s>2/3$, putting $\tilde{\omega}|_{D_0\times M}=\omega-d(\beta(r^2)H(t,\cdot)dt)$, $\tilde{\omega}|_{D_1\times M}=\omega-d(\beta(r^2)K(t,\cdot)dt)$, and $\tilde{\omega}|_{D_2\times M}=\omega-d(\beta(r^{-2})H\lozenge K(t,\cdot)dt)$.

Now for $p\in M$ the map $f_p\co S^2\to S^2\times M$ defined by $f_p(z)=(z,p)$ is easily seen to have \[ \int_{S^2}f_{p}^{*}\tilde{\omega}=\int_{0}^{1}H(t,p)dt+\int_{0}^{1}K(t,p)dt-\int_{0}^{1}(H\lozenge K)(t,p)dt=0.\] 
Let $v_2\co D_2\to M$ be any map such that, where $I\co D^2\to D_2$ is the orientation-reversing diffeomorphism $re^{2\pi it}\mapsto r^{-1}e^{2\pi it}$, the composition $v_2\circ I\co D^2\to M$ represents  the relative homotopy class $v_0\#P\#v_1$ in $\pi_2(\gamma_2)$.
The map $\tilde{U}\co S^2\to S^2\times M$ obtained by combining the map $U$ on $\Sigma$ with the maps $V_i\co z\mapsto (z,v_i(z))$ on $D_i$ for $i=0,1,2$ 
has its projection to $M$ homotopic to $0$, so \[ \int_{S^2}\tilde{U}^{*}\tilde{\omega}=\int_{S^2}f_{p}^{*}\tilde{\omega}=0.\]  But \begin{align*}
\int_{S^2}\tilde{U}^{*}\tilde{\omega}&=\int_{\Sigma}U^*\hat{\omega}+\int_{D_0}V_{0}^{*}(\omega-d(\beta(r^2)H(t,\cdot)dt))+\int_{D_1}V_{1}^{*}(\omega-d(\beta(r^2)K(t,\cdot)dt))\\                          &\quad +\int_{D_2}V_{2}^{*}(\omega-d(\beta(r^{-2})H\lozenge K(t,\cdot)dt))
\\&=\int_{\Sigma}U^*\hat{\omega}-\mathcal{A}_H([\gamma_0,v_0])-\mathcal{A}_K([\gamma_1,v_1])+\mathcal{A}_{H\lozenge K}([\gamma_2,v_0\#P\#v_1]),\end{align*} as follows from an application of Stokes' theorem.  \end{proof}

Given $P\in \pi_2(\gamma_0,\gamma_1;\gamma_2)$, choose an arbitrary representative $u$ of $P$ and arbitrary capping discs $v_0,v_1\co D^2\to M$ for $\gamma_0,\gamma_1$ and set \[ \bar{\mu}(P)=\mu_{CZ}(\gamma_2,v_0\#u\#v_1)-\mu_{CZ}(\gamma_0,v_0)-\mu_{CZ}(\gamma_1,v_1)-n.\]  (This is easily seen to be independent of the choices of $v_0,v_1$).  By \cite[Theorem 3.3.11]{Sc95}, at any finite energy solution $u$ of (\ref{crvt}) which represents the class $P\in \pi_2(\gamma_0,\gamma_1;\gamma_2)$, the linearization of (\ref{crvt}) is Fredholm of index $\bar{\mu}(P)$.  Thus in the usual way one obtains a Kuranishi structure with corners of dimension $\bar{\mu}(P)+2k$ on the compactified moduli space $C\mathcal{M}_k(\gamma_0,\gamma_1,\gamma_2,P)$ of such solutions with $k$ marked points, and, for $I=(i_1,\ldots,i_k)$ a Kuranishi structure with corners of dimension $\bar{\mu}(P)-\delta(I)$ on the fiber product \[ \mathcal{M}(\gamma_0,\gamma_1,\gamma_2,P;N_I)=C\mathcal{M}_k(\gamma_0,\gamma_1,\gamma_2,P)_{(ev_1,\ldots,ev_k)}\times_{(f_{i_1},\ldots,f_{i_k})}(N_{i_1}\times\cdots\times N_{i_k}).\]  Then where the associated multisections are denoted $\frak{s}_{P,I}$,  the pair of pants product \[ \ast^{Floer}_{\eta}\co CF(H)\otimes CF(K)\to CF(H\lozenge K) \] is defined by extending linearly from \[ [\gamma_0,v_0]\ast^{Floer}_{\eta}[\gamma_1,v_1]=\sum_{k=0}^{\infty}\frac{1}{k!}\sum_{\substack{\gamma_2\in P(H\lozenge K),\\P\in \pi_2(\gamma_0,\gamma_1;\gamma_2)}}\sum_{\substack{I\in \{1,\ldots,m\}^k,\bar{\mu}(P)=\delta(I)}}|\frak{s}_{P,I}^{-1}(0)|\exp\left(\int_P\theta\right)z_I[\gamma_2,v_0\#P\#v_1].\]  We have: 
\begin{prop}\label{prodfilt}\begin{itemize}\item[(i)] $\ast^{Floer}_{\eta}$ is a chain map (with respect to the differential $\partial^{\eta,H}\otimes 1+(-1)^{|\cdot|}1\otimes \partial^{\eta,K}$ on the domain and $\partial^{\eta,H\lozenge K}$ on the range).
\item[(ii)] For $\lambda,\mu\in\mathbb{R}$, $\ast^{Floer}_{\eta}$ restricts as a map \[ \ast^{Floer}_{\eta}\co CF^{\lambda}(H)\otimes CF^{\mu}(K)\to CF^{\lambda+\mu}(H\lozenge K).\]\end{itemize}
\end{prop}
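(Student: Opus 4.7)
The plan is to prove the two parts by entirely different mechanisms: part (i) is a standard Kuranishi cobordism/gluing argument, essentially an analogue of Corollary \ref{delsquared} with the thrice-punctured sphere $\Sigma$ playing the role of the cylinder, while part (ii) is a direct action calculation using Lemma \ref{actions} together with the nonnegativity statement \eqref{posint}.

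For part (i), I would fix $\gamma_0\in P(H)$, $\gamma_1\in P(K)$, $\gamma_2\in P(H\lozenge K)$, a class $P\in \pi_2(\gamma_0,\gamma_1;\gamma_2)$, and a tuple $I\in\{1,\ldots,m\}^k$ with $\bar{\mu}(P)=\delta(I)+1$, and then examine the codimension-one boundary of the $1$-dimensional Kuranishi space $\mathcal{M}(\gamma_0,\gamma_1,\gamma_2,P;N_I)$. By exactly the same gluing analysis that underlies Proposition \ref{pertmain}(iii) (adapted from the cylinder to the three cylindrical ends of $\Sigma$), this boundary consists of three types of broken configurations, one for each end $\hat{C}_0,\hat{C}_1,\hat{C}_2$: a Floer cylinder for $H$, $K$, or $H\lozenge K$ glued onto the incoming or outgoing end of a pair-of-pants solution, with the $k$ marked points distributed between the two pieces via a subset $S\subset\{1,\ldots,k\}$ (indexing $J_-(I,S)$ and $J_+(I,S)$ exactly as in \eqref{bdryfib}). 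Weighting each boundary point by $\exp\left(\int\theta\right)z_I$ and summing via the same combinatorial reorganization used in Proposition \ref{combins} (which absorbs the $\frac{1}{k!}$ factors into a sum over subsets $S$), the vanishing of the total signed boundary count yields
\[ \partial^{\eta,H\lozenge K}(x\ast^{Floer}_{\eta}y)=(\partial^{\eta,H}x)\ast^{Floer}_{\eta}y+(-1)^{|x|}x\ast^{Floer}_{\eta}(\partial^{\eta,K}y), \]
where the Koszul sign on the second term comes from the orientation of the boundary stratum corresponding to breaking at $\hat{C}_1$ rather than $\hat{C}_0$.

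For part (ii), it suffices by $\Lambda_{\omega}$-linearity to check the estimate on generators $[\gamma_0,v_0]\in CF^{\lambda}(H)$ and $[\gamma_1,v_1]\in CF^{\mu}(K)$. Any generator $[\gamma_2,v_0\#P\#v_1]$ appearing with nonzero coefficient in $[\gamma_0,v_0]\ast^{Floer}_{\eta}[\gamma_1,v_1]$ arises from a perturbed solution to the pair-of-pants equation \eqref{crvt} representing $P\in \pi_2(\gamma_0,\gamma_1;\gamma_2)$, so by Lemma \ref{actions} together with \eqref{posint} we have
\[ 0\leq E(u)=\int_{\Sigma}U^*\hat{\omega}=\mathcal{A}_H([\gamma_0,v_0])+\mathcal{A}_K([\gamma_1,v_1])-\mathcal{A}_{H\lozenge K}([\gamma_2,v_0\#P\#v_1]), \]
which immediately gives $\mathcal{A}_{H\lozenge K}([\gamma_2,v_0\#P\#v_1])\leq \lambda+\mu$. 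The remaining multiplicative factor $\exp\left(\int_P\theta\right)z_I$ belongs to $\Lambda_{\omega}^{0}$, since $\theta\in \Omega^{2}_{cl}(M;\Lambda_{\omega}^{0})$ and each $z_i\in\Lambda_{\omega}^{0}$; because the action of $T^g$ with $g\geq 0$ sends $[\gamma_2,v]$ to $[\gamma_2,v\#A_g]$, which decreases action by $g$, multiplication by an element of $\Lambda_{\omega}^{0}$ cannot raise the value of $\ell$.  Combining these observations gives $\ell([\gamma_0,v_0]\ast^{Floer}_{\eta}[\gamma_1,v_1])\leq \lambda+\mu$, which is the desired filtration bound.

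The only genuinely delicate point is ensuring that the gluing-theoretic identification of the boundary of $\mathcal{M}(\gamma_0,\gamma_1,\gamma_2,P;N_I)$ proceeds compatibly with the $\tilde{\ev}_k$-type perturbations used to enforce transversality of the incidence conditions at colliding marked points; this is where the technology either of \cite{FOOO09} or, in the semipositive case, of the $\tau_{\beta,i}$-construction of Section \ref{bigdefsect} and Appendix \ref{app} is used, exactly as in Proposition \ref{pertmain}(iii). Everything else—in particular the positivity $\int_{\Sigma}U^*\hat{\omega}\geq 0$ that drives part (ii)—was built into the setup by the careful choice of the connection form $\hat{\omega}$ on $\Sigma\times M$ making $H$, $K$, and $H\lozenge K$ appear as the Hamiltonians governing the three cylindrical ends.
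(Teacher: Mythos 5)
Your proposal matches the paper's own proof essentially verbatim: for (i) the paper invokes "a standard gluing argument as in \cite{Sc95}, combined with the same analysis of incidence conditions as in the proof that $(\partial^{\eta,H})^{2}=0$" (i.e. the combinatorics of Proposition \ref{combins}), and for (ii) the paper likewise cites (\ref{posint}) together with Lemma \ref{actions}. You have simply spelled out details the paper leaves implicit — the indexing by subsets $S$, the Koszul sign, and the observation that $\exp(\int_P\theta)z_I\in\Lambda_{\omega}^{0}$ cannot raise $\ell$ — all of which are correct and consistent with arguments given elsewhere in the paper (cf.\ the proof of Proposition \ref{pssprop}(iv)).
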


\begin{proof}  The first item follows by a standard gluing argument as in \cite{Sc95}, combined with the same analysis of incidence conditions as in the proof that $(\partial^{\eta,H})^{2}=0$.  The second item follows from (\ref{posint}) and Lemma \ref{actions}: indeed, 
$\mathcal{M}(\gamma_0,\gamma_1,\gamma_2,P;N_I)$ is empty unless there is a representative of $P$ which obeys (\ref{crvt}), in which case (\ref{posint}) and Lemma \ref{actions} show that \[ \mathcal{A}_{H\lozenge K}([\gamma_2,v_0\#P\#v_1])\leq \mathcal{A}_H([\gamma_0,v_0])+\mathcal{A}_H([\gamma_1,v_1]).\]  The conclusion then follows directly from the definitions of $*_{\eta}$ and of the filtration.
\end{proof}

Now consider, for generic Morse functions $f,g,h$, the composition \[ \xymatrix{CM(f;\Lambda_{\omega})\otimes CM(g;\Lambda_{\omega})\ar[rr]^{\Phi^{PSS}_{\eta,H}\otimes \Phi^{PSS}_{\eta,K}} &  &CF(H)\otimes CF(K)\ar[dll]_{\ast^{Floer}_{\eta}} \\ CF(H\lozenge K)
\ar[rr]_{\Psi^{PSS}_{\eta,H\lozenge K}} &  &  CM(h;\Lambda_{\omega})}  \]

A gluing argument shows that this map $\ast^{Morse}_{\eta}=\Psi^{PSS}_{\eta,H\lozenge K}\circ \ast^{Floer}_{\eta}\circ (\Phi^{PSS}_{\eta,H}\otimes \Phi^{PSS}_{\eta,K})$ is obtained by extending linearly from (for $p\in Crit(f),q\in Crit(g)$) \[ p\ast^{Morse}_{\eta} q=\sum_{k=0}^{\infty}\frac{1}{k!}\sum_{\substack{A\in \pi_2(M),\\r\in Crit(h)}}\sum_{\substack{I\in \{1,\ldots,m\}^k,\\2c_1(A)+ind_f(p)+ind_g(q)-ind_h(r)=2n+\delta(I)}}n_L(p,q;r,A,I)\exp\left(\int_{A}\theta\right)z_IT^{\int_A\omega}r.\]  Here $n_L(p,q;r,A,I)$ enumerates solutions $u\co S^2\to M$ to an equation $\delbar_{J,L}u=0$ for a suitable Hamiltonian perturbation $L$, which represent the class $A\in \pi_2(M)$; pass through the unstable manifolds $W^u(p;f), W^u(q;g)$ and the stable manifold $W^s(r;h)$; and additionally satisfy incidence conditions corresponding to $I=(i_1,\ldots,i_k)$.

Now, a cobordism argument shows that the chain homotopy class of such a map is independent of the Hamiltonian perturbation $L$, so the map on homology is unaffected if we replace $L$ in the above description by $0$.  The homologies of the Morse complexes $CM(\cdot;\Lambda_{\omega})$ are of course canonically isomorphic to $H_*(M;\Lambda_{\omega})$, and so arguing as in \cite[Section 5]{PSS} we find that, on homology, the map $\ast^{Morse}_{\eta}$ induces the map $H_*(M;\Lambda_{\omega})\otimes H_*(M;\Lambda_{\omega})\to H_*(M;\Lambda_{\omega})$ given by, where $\{c_j\}_{j=1}^{b}$ is a basis for $H_*(M;\Lambda_{\omega})$ with dual basis $\{c^j\}$,
\begin{align*} 
a\otimes b &\mapsto & \sum_{k=0}\frac{1}{k!}\sum_{j=1}^{b}\sum_{A\in \pi_2(M)}\sum_{I\in\{1,\ldots,m\}^k}\langle a,b,c_j,z_{i_1}c_{i_1},\ldots,z_{i_k}c_{i_k}\rangle_{0,k+3,A}\exp\left(\int_C\theta\right)T^{\int_{A}\omega}c^j\\& &= \sum_{k=0}\frac{1}{k!}\sum_{j=1}^{b}\sum_{A\in \pi_2(M)}\langle a,b,c_j,\eta,\ldots,\eta\rangle_{0,k+3,A}T^{\int_{A}\omega}c^j,\end{align*} recalling that by definition $\eta=PD[\theta]+\sum_{i=1}^{m}z_ic_i$.  But the last formula above is precisely that for the $\eta$-deformed quantum product $\ast_{\eta}$ on $H_*(M;\Lambda_{\omega})$.  This proves\footnote{To be precise we are implicitly using here the fact that the map $H_*(M;\Lambda_{\omega})\to HF(H)$ induced on homology by the PSS chain map is independent of the Morse function used; this can be established by a standard continuation-type argument that is left to the reader.}:

\begin{prop}\label{ringiso} Where $HF(H)_{\eta}$ denotes the homology of the complex $(CF(H),\partial^{\eta,H})$, the map $\underline{\Phi}^{PSS}_{\eta,H}\co H_*(M;\Lambda_{\omega})\to HF(H)_{\eta}$ on homology induced by  $\Phi_{\eta,H}^{PSS}\co CM(f;\Lambda_{\omega})\to CF(H)$ is an isomorphism of $\Lambda_{\omega}$-modules.  Furthermore, where $\underline{\ast}^{Floer}_{\eta}$ is induced on homology by $\ast_{\eta}^{Floer}$, we have a commutative diagram \[  \xymatrix{ H_*(M;\Lambda_{\omega})\otimes H_*(M;\Lambda_{\omega})\ar[d]_{\ast_{\eta}}\ar[rr]^{\underline{\Phi}^{PSS}_{\eta,H}\otimes \underline{\Phi}^{PSS}_{\eta,K}}& & HF(H)_{\eta}\otimes HF(K)_{\eta} \ar[d]^{\underline{\ast}^{Floer}_{\eta}} \\ H_*(M;\Lambda_{\omega})\ar[rr]^{\underline{\Phi}^{PSS}_{\eta,H\lozenge K}} & & HF(H\lozenge K)_{\eta}
} \] 
\end{prop}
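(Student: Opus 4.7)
\medskip

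My plan is to deduce Proposition \ref{ringiso} almost entirely from material established earlier in the section, assembling the pieces in two stages: first the isomorphism claim, and then the commutativity of the diagram.

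For the isomorphism statement, I would observe that Proposition \ref{pssprop}(iii) provides chain homotopies making both $\Psi^{PSS}_{\eta,H}\circ\Phi^{PSS}_{\eta,H}$ and $\Phi^{PSS}_{\eta,H}\circ\Psi^{PSS}_{\eta,H}$ equal to the identity on homology. Since the homology of the Morse complex $(CM(f;\Lambda_{\omega}),\partial_f)$ is canonically $H_*(M;\Lambda_{\omega})$, this directly shows $\underline{\Phi}^{PSS}_{\eta,H}$ is a $\Lambda_{\omega}$-linear isomorphism with inverse $\underline{\Psi}^{PSS}_{\eta,H}$. (Grading compatibility with the $\mathbb{Z}/2$-structure is already noted in Proposition \ref{pssprop}(i) and uses that $\delta(I)$ is always even.)

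For the commutative diagram, the bulk of the argument is already present in the discussion immediately preceding the proposition. I would organize it as follows. First, compose with $\underline{\Psi}^{PSS}_{\eta,H\lozenge K}$ on the left, reducing the desired identity to the statement that, on homology, the composition
\[ \ast^{Morse}_{\eta}=\Psi^{PSS}_{\eta,H\lozenge K}\circ \ast^{Floer}_{\eta}\circ (\Phi^{PSS}_{\eta,H}\otimes \Phi^{PSS}_{\eta,K}) \]
coincides with $\ast_{\eta}$. Next I would invoke the gluing argument indicated in the text: broken configurations consisting of two spiked planes (one of PSS type at each puncture), a pair-of-pants solution, and an anti-PSS solution at the outgoing end can be glued to a single $J$-holomorphic sphere with three unstable/stable manifold incidences (at the three capped punctures) and with $k$ additional marked points constrained to the $f_i(N_i)$. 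This identifies $\ast^{Morse}_{\eta}$ (up to chain homotopy) with the map whose matrix coefficients enumerate $J$-holomorphic spheres through $W^u(p;f)$, $W^u(q;g)$, $W^s(r;h)$, and cycles representing $z_{i_j}c_{i_j}$.

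Finally, applying the cobordism argument (invariance under the auxiliary Hamiltonian perturbation $L$, as used in the proof of Proposition \ref{pssprop}(iii)) allows one to set $L=0$, after which standard Morse-theoretic arguments as in \cite[Section 5]{PSS} identify the matrix coefficients with the Gromov--Witten invariants $\langle a,b,c_j,z_{i_1}c_{i_1},\ldots,z_{i_k}c_{i_k}\rangle_{0,k+3,A}$; summing over $k$ and using multilinearity and the definition $\eta=PD[\theta]+\sum z_ic_i$ yields exactly the formula (\ref{firstmult}) for $\ast_{\eta}$. The main technical obstacle is justifying the gluing step that identifies the triple composition with the Morse-theoretic Gromov--Witten map: one needs the PSS gluing of \cite{PSS} (supplemented by \cite{OZ}) to interact correctly with the fiber-product Kuranishi structures carrying the incidence data, so that the combinatorics of distributing marked points among the three glued components matches the expansion of $\ast_{\eta}$ in powers of $\eta$. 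Once this is in hand, the remaining steps are formal and the commutative diagram follows.
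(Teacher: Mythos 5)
Your proposal matches the paper's proof in structure and substance: the isomorphism follows from Proposition \ref{pssprop}(iii), and the commutative square is reduced (by precomposing with the inverse PSS map) to identifying $\ast^{Morse}_{\eta}$ with $\ast_{\eta}$, via a gluing argument, the cobordism argument setting $L=0$, and the Morse-theoretic identification of matrix coefficients with Gromov--Witten invariants as in \cite[Section 5]{PSS}. The only small point you elide, noted by the paper in a footnote, is that one also needs the PSS map on homology to be independent of the Morse function used, since the reduction step silently compares PSS maps built from different Morse functions $f,g,h$.
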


(Of course, the first sentence already follows from Proposition \ref{pssprop}(iii), and shows that, module-theoretically but not ring-theoretically, $HF(H)_{\eta}$ is independent of both $H$ and $\eta$.)

\subsection{Spectral invariants} \label{specsec}

Given $\eta\in \oplus_{i=0}^{n-1}H_{2i}(M;\Lambda_{\omega}^{0})$ and a strongly nondegenerate Hamiltonian $H\co S^1\times M\to\mathbb{R}$, 
we have the chain complex $(CF(H),\partial^{\eta,H})$ and a PSS isomorphism $\underline{\Phi}^{PSS}_{\eta,H}\co H_*(M;\Lambda_{\omega})\to HF(H)_{\eta}$ to the homology $HF(H)_{\eta}$ of $(CF(H),\partial^{\eta,H})$.  Accordingly, for such data $H,\eta$, and for any class $a\in H_*(M;\Lambda_{\omega})\setminus \{0\}$, we may define the $\eta$-deformed spectral invariant:
\[ \rho(a;H)_{\eta}=\inf\{\ell(c)|c\in CF(H),[c]=\underline{\Phi}^{PSS}_{\eta,H}a\in HF(H)_{\eta}\}.\]  

The finiteness of $\rho(a;H)_{\eta}$ follows from Proposition \ref{rhoprop} (iii) below (or, on more general grounds, one could appeal to \cite[Theorem 1.3]{U08}).

Let us also introduce some notation pertaining to $H_*(M;\Lambda_{\omega})$.  First a general element $a\in H_*(M;\Lambda_{\omega})$ can be written as $a=\sum_{g\in \Gamma_{\omega}}a_gT^g$ where each $a_g\in H_*(M;\mathbb{C})$, and we put \[ \nu(a)=\max\{-g|a_g\neq 0\} \] (just as was done at the chain level in Proposition \ref{pssprop} (iv)).  Also, for $a=\sum_{g\in \Gamma_{\omega}}a_gT^g,b=\sum_{g\in \Gamma_\omega} b_g T^g$, put \[ \Pi(a,b)=\sum_{g\in \Gamma_{\omega}}a_g\cap b_{-g} \] where $\cap$ denotes the Poincar\'e intersection pairing.  It follows from standard properties of Gromov--Witten invariants that \[ \Pi(a\ast_{\eta}b,[M])=\Pi(a,b),\] independently of $\eta$.

Recall also the definition of the Hofer norm on the space of continuous functions $H\co S^1\times M\to\mathbb{R}$: \[ \|H\|=\int_{0}^{1}\left(\max H(t,\cdot)-\min H(t,\cdot)\right)dt.\]  

\begin{prop}\label{rhoprop} The spectral invariants $\rho(\cdot;\cdot)_{\eta}$ have the following properties, for any $a,b\in H_*(M;\Lambda_{\omega})\setminus\{0\}$:\begin{itemize}\item[(i)] If $r\co S^1\to \mathbb{R}$ is a smooth function then \[ \rho(a;H+r)_{\eta}=\rho(a;H)_{\eta}+\int_{0}^{1}r(t)dt,\] where $(H+r)(t,m)=H(t,m)+r(t)$. 

\item[(ii)] If $H$ and $K$ are both strongly nondegenerate, then \[ \rho(a;H)_{\eta}-\rho(a;K)_{\eta}\leq \int_{0}^{1}\max_{M}(H-K)(t,\cdot) dt.\]  Consequently if 
$H$ and $K$ are both normalized then \[ |\rho(a;H)_{\eta}-\rho(a;K)_{\eta}|\leq \|H-K\|,\] and so the function $\rho(a;\cdot)_{\eta}$ extends by continuity (with respect to $\|\cdot\|$) to the set of continuous $H\co S^1\times M\to\mathbb{R}$ such that each $H(t,\cdot)$ has mean zero for all $t$, and then to all continuous $H\co S^1\times M\to\mathbb{R}$ by (i) above.

\item[(iii)] $\nu(a)+\int_{0}^{1}\min H(t,\cdot)dt\leq \rho(a;H)_{\eta}\leq \nu(a)+\int_{0}^{1}\max H(t,\cdot)dt$ for all $H\in C(S^1\times M,\mathbb{R})$.

\item[(iv)] If $H$ is strongly nondegenerate, there is $c\in CF(H)$ such that $\rho(a;H)_{\eta}=\ell(c)$.

\item[(v)] If $H$ and $K$ both represent the same element in $\widetilde{Ham}(M,\omega)$ and are both normalized, then \[ \rho(a;H)_{\eta}=\rho(a;K)_{\eta}.\]

\item[(vi)] For any $H,K\in C(S^1\times M,\mathbb{R})$,
\[ \rho(a\ast_{\eta}b;H\lozenge K)_{\eta}\leq \rho(a;H)_{\eta}+\rho(b;K)_{\eta}.\]

\item[(vii)] Where $\bar{H}(t,m)=-H(t,\phi_{H}^{t}(m))$ (so that $\phi_{\bar{H}}^{t}=(\phi_{H}^{t})^{-1}$), we have \[ \rho(a;\bar{H})=-\inf\{\rho(x;H)|\Pi(x,a)\neq 0\}.\]

\item[(viii)] If $\phi\in Symp(M,\omega)$ is any symplectomorphism then $\rho(\phi_*a;H\circ \phi^{-1})_{\phi_*\eta}=\rho(a;H)_{\eta}$.

\end{itemize}
\end{prop}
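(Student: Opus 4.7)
The plan is to derive all eight properties from the structural results already established: the filtered PSS properties of Proposition \ref{pssprop}, the filtered chain isomorphism of Proposition \ref{filtiso}, the pair-of-pants filtration estimate of Proposition \ref{prodfilt}, and the ring-level commutative diagram of Proposition \ref{ringiso}. Most items reduce cleanly to these ingredients and parallel the undeformed arguments of \cite{Sc00}, \cite{Oh05}; the only genuinely new content is checking that the incidence conditions and $2$-form deformation interact correctly with the Poincar\'e-type pairing needed for item (vii).

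First I would dispatch items (i), (iii), and (iv). For (i), $X_{H+r} = X_H$, so $\tilde P(H+r) = \tilde P(H)$ and all moduli spaces defining $\partial^{\eta,H+r}$ and the PSS maps agree with those for $H$, while the action functional shifts uniformly by $\int_0^1 r(t)\,dt$ on every element of $\tilde P(H)$. Item (iii) follows from Proposition \ref{pssprop}(iv)--(v): representing $a$ by a Morse cycle $x$ with $\nu(x)=\nu(a)$ and applying $\Phi^{PSS}_{\eta,H}$ gives the upper bound, while applying $\Psi^{PSS}_{\eta,H}$ to any chain representative $c$ of $\underline{\Phi}^{PSS}_{\eta,H}a$ yields (by Proposition \ref{pssprop}(iii)) a Morse cycle for $a$, so $\nu(\Psi^{PSS}_{\eta,H}c) \ge \nu(a)$, giving the lower bound. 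For (iv), the set of action values below any fixed level is finite modulo the $\Gamma_\omega$-action, so a minimizing sequence of chain representatives, adjusted by boundaries to remove high-filtration components, attains the infimum.

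Items (ii), (v), (vi), (viii) then follow directly from the filtered maps already constructed. For (ii), choose an interpolating homotopy $\tilde H(s,t,\cdot) = (1-\beta(s))K(t,\cdot) + \beta(s)H(t,\cdot)$ with secondary perturbation $0$; then $C^+(\tilde H,0) = \int_0^1\max_M(H-K)(t,\cdot)\,dt$, so (\ref{filtcont}) and Proposition \ref{pssprop}(ii) give the asymmetric inequality, and the Hofer-continuous extension follows in the standard way. Item (v) is immediate from Proposition \ref{filtiso}. Item (vi) follows from Propositions \ref{prodfilt}(ii) and \ref{ringiso}: representatives of $\underline{\Phi}^{PSS}_{\eta,H}a$ and $\underline{\Phi}^{PSS}_{\eta,K}b$ with near-optimal filtrations have a pair-of-pants product of filtration at most the sum representing $\underline{\Phi}^{PSS}_{\eta,H\lozenge K}(a \ast_\eta b)$. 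For (viii), a symplectomorphism $\phi$ conjugates the entire construction: $\gamma \mapsto \phi\circ\gamma$ identifies $\tilde P(H)$ with $\tilde P(H\circ\phi^{-1})$ preserving actions, and transporting the almost complex structures, Morse data, vector fields, and cycles $f_i : N_i \to M$ by $\phi$ yields canonical bijections of all moduli spaces, multisections, and PSS maps defining the deformed spectral invariants for $(H\circ\phi^{-1},\phi_*\eta)$ with their analogues for $(H,\eta)$.

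The main obstacle is item (vii). Following \cite{Sc00}, I would build a chain-level pairing $\Delta\co CF(H) \otimes CF(\bar H) \to \Lambda_\omega$ using the involution $[\gamma,v] \mapsto [\bar\gamma,\bar v]$ with $\bar\gamma(t) = \gamma(1-t)$, which identifies the generators of the two complexes and negates action values. One arranges $\Delta$ to vanish on $CF^\lambda(H) \otimes CF^\mu(\bar H)$ whenever $\lambda + \mu \le 0$ and, via the PSS isomorphisms $\underline{\Phi}^{PSS}_{\eta,H}$ and $\underline{\Phi}^{PSS}_{\eta,\bar H}$, to induce on homology the Poincar\'e intersection pairing $\Pi$. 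The deformation-specific point is that $\Pi$ is independent of $\eta$: in the relevant bulk-deformed Gromov--Witten expression only the two point-class insertions contribute, and two-point Gromov--Witten invariants with no additional marked points reduce to intersection numbers (as recorded before Proposition \ref{rhoprop}). Given non-degeneracy of $\Pi$, the duality formula then follows by the standard Legendre-transform argument relating the infimal filtrations on the two sides of a non-degenerate filtered pairing. The technical subtleties lie in matching orientations under the involution and in verifying compatibility of the incidence conditions under $t \mapsto 1-t$, both of which are handled by the same fiber-product Kuranishi (or, in the semipositive case, perturbed fiber product) machinery developed in Section \ref{hamsect}.
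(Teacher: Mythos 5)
Your overall strategy mirrors the paper's: each item is reduced to the filtered structural statements already proved (Propositions \ref{pssprop}, \ref{filtiso}, \ref{prodfilt}, \ref{ringiso}), exactly as the paper does. Most items are handled correctly and by the same arguments. There are, however, two places where your treatment either undersells a genuine difficulty or elides a step the paper needs.

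The substantive concern is item (iv). You assert that finiteness of the action spectrum modulo $\Gamma_{\omega}$ lets one take ``a minimizing sequence of chain representatives, adjusted by boundaries to remove high-filtration components,'' which then ``attains the infimum.'' This is not a proof, and the naive version of it fails: the action spectrum modulo $\Gamma_{\omega}$ is finite, but $\Gamma_{\omega}$ need not be discrete in $\mathbb{R}$, so the set of filtration values is in general dense. There is no a priori reason the infimal filtration level is realized, and removing ``high-filtration components'' by boundary corrections is precisely the kind of operation that can spiral without convergence in the Novikov completion. The paper handles this by citing the main theorem of \cite{U08}, which is a genuine result about filtered Floer--Novikov complexes and is exactly what is needed here; your sketch does not substitute for it.

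A smaller point is item (v), which you call ``immediate from Proposition \ref{filtiso}.'' That proposition gives a filtered chain isomorphism $CF(H_-)\to CF(H_+)$, but by itself it does not tell you that this isomorphism identifies $\underline{\Phi}^{PSS}_{\eta,H_-}a$ with $\underline{\Phi}^{PSS}_{\eta,H_+}a$; for that one must additionally invoke Proposition \ref{pssprop}(ii) (naturality of the PSS map under continuations). Also, one should first reduce by continuity (item (ii)) to the case that both Hamiltonians are strongly nondegenerate, since $\rho(\cdot;\cdot)_{\eta}$ is only directly defined for such Hamiltonians.

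On (vii), your ``Legendre transform'' argument for a nondegenerate filtered pairing is morally the same as what the paper does via the opposite-complex formalism and \cite[Corollary 1.3]{U10b}, and your observation that the 2-form and incidence terms cancel in the pairing matches the paper's footnote. One small imprecision: the chain pairing vanishes on $CF^{\lambda}(H)\otimes CF^{\mu}(\bar H)$ precisely when $\lambda+\mu < 0$, not $\le 0$; the borderline case $\lambda+\mu=0$ is where the pairing is nontrivial and is what drives the duality. Items (i), (ii), (iii), (vi), (viii) are handled correctly and as in the paper.
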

 
\begin{proof} Since, in light of the results proven so far, most of these properties follow by straightforwardly  adapting arguments that are well-known in the $\eta=0$ case (see, \emph{e.g.}, \cite{Oh06}), we just briefly indicate the ingredients of the proofs.

(i) is an immediate consequence of the definitions, since replacing $H$ by $H+r$ does not affect the Floer differential or the PSS map, and affects the function $\ell$ by addition of $\int_{0}^{1}r(t)dt$.
The first sentence of (ii) follows from Proposition \ref{pssprop}(ii) combined with the estimate (\ref{filtcont}) applied to the continuation map $\Phi_{\tilde{H},0,\eta}$ with $\tilde{H}(s,t,m)=\beta(s)K(t,m)+(1-\beta(s))H(t,m)$ where $\beta\co \mathbb{R}\to [0,1]$ is smooth and monotone with $\beta(s)=0$ for $s<-1$ and $\beta(s)=1$ for $s>1$.  The second sentence of (ii) follows from the first by interchanging the roles of $H$ and $K$ and then using the fact that a mean-zero function cannot have a positive global minimum.

(iii) follows directly from Proposition \ref{pssprop} (iv) and (v), together with an approximation argument via (ii) in case $H$ is not strongly nondegenerate (the first inequality also uses that $\underline{\Psi}_{\eta,H}^{PSS}\circ\underline{\Phi}_{\eta,H}^{PSS}$ is the identity).

(iv) is a special case of the main result of \cite{U08}.

As for (v), by continuity we may reduce to the case that $H$ and $K$ are both strongly nondegenerate.  But in that case the statement follows directly from Proposition \ref{filtiso} together with the naturality statement Proposition \ref{pssprop}(ii).

In (vi), by continuity we may assume that $H,K$, and $H\lozenge K$ are all strongly nondegenerate (since generic pairs $(H,K)$ have this property). Moreover by (v) we can reduce to the case that $H(t,\cdot)=K(t,\cdot)=0$ for $|t|<\ep$ where $\ep>0$.     But in that case the result follows from Propositions \ref{prodfilt} and \ref{ringiso}.

Now consider (vii).  The pairing $\Pi\co H_*(M;\Lambda_{\omega})\otimes H_*(M;\Lambda_{\omega})\to \mathbb{C}$ is, for a suitably generic Morse function $f$, induced on homology by the pairing  \begin{align*} \Pi^{Morse}\co CM(-f;\Lambda_{\omega})\otimes CM(f;\Lambda_{\omega})&\to \mathbb{C} \\ \left(\sum_{\substack{g\in \Gamma_{\omega},\\p\in Crit(f)}}a_{g,p}T^gp\right)\otimes \left(\sum_{\substack{g\in \Gamma_{\omega},\\q\in Crit(f)}}b_{g,q}T^gq\right)&\mapsto\sum_{\substack{g\in\Gamma_{\omega},\\p\in Crit(f)}}a_{-g,p}b_{g,p}.\end{align*}  (Of course we use here the fact that $Crit(f)=Crit(-f)$.) 

Meanwhile, since elements of $\tilde{P}(\bar{H})$ are precisely obtained from elements of $\tilde{P}(H)$ by orientation reversal, and since formal negative gradient flowlines of $\mathcal{A}_{\bar{H}}$ are equivalent to formal negative gradient flowlines of $\mathcal{A}_H$ with both $s$ and $t$ coordinates reversed\footnote{hence the integrals of $2$-forms are the same, not opposite to each other, over the corresponding flowlines} the Floer complex $(CF(\bar{H}),\partial^{\eta,\bar{H}})$ is the opposite complex to the Floer complex $(CF(H),\partial^{\eta,H})$ in the sense defined in \cite{U10b}.  For $[\gamma,v]\in P(H)$ write $[\bar{\gamma},\bar{v}]\in \tilde{P}(\bar{H})$ for the generator obtained by reversing the orientations of both $\gamma$ and $v$.  Then, as in \cite[Section 1.4]{U10b}, we have a pairing \[ \Pi^{Floer}\co CF(\bar{H})\otimes CF(H)\to\mathbb{C} \] defined by \[ \Pi^{Floer}\left(\sum_{[\gamma,v]\in\tilde{P}(H)} a_{[\bar{\gamma},\bar{v}]}[\bar{\gamma},\bar{v}],\sum_{[\delta,w]\in\tilde{P}(H)} b_{[\delta,w]}[\delta,w]\right)=\sum_{[\gamma,v]\in\tilde{P}(H)}a_{[\bar{\gamma},\bar{v}]}b_{[\gamma,v]}.\]

Now the spiked planes counted by the map $\Phi_{\eta,\bar{H}}\co CM(-f;\Lambda_{\omega})\to CF(\bar{H})$ are equivalent to those counted by the map $\Psi_{\eta,H}\co CF(\bar{H})\to CM(f;\Lambda_{\omega})$, in view of which one obtains the adjoint relation \[ \Pi^{Morse}(d,\Psi_{\eta,H}c)=\Pi^{Floer}(\Phi_{\eta,\bar{H}}d,c)\quad (c\in CF(H),d\in CM(-f;\Lambda_{\omega})).\]  Consequently, where $\underline{\Pi}^{Floer}\co HF(\bar{H})_{\eta}\otimes HF(H)_{\eta}\to\mathbb{C}$ is the pairing on homology induced by $\Pi^{Floer}$, we find, for $x,a\in H_*(M;\Lambda_{\omega})$, \[ \Pi(x,a)=\Pi(x,\underline{\Psi}_{\eta,H}(\underline{\Phi}_{\eta,H}a))=\underline{\Pi}^{Floer}(\underline{\Phi}_{\eta,\bar{H}}x,\underline{\Phi}_{\eta,H}a).\]  Consequently (vii) follows from the definition of $\rho$ and \cite[Corollary 1.3]{U10b}.

Finally, (viii) is a consequence of standard naturality properties of the construction of $\rho$.
\end{proof}

For $\tilde{\phi}\in \widetilde{Ham}(M,\omega)$, $a\in H_*(M;\Lambda_{\omega})\setminus\{0\}$, $\eta\in \oplus_{i=0}^{n-1}H_{2i}(M;\Lambda_{\omega}^{0})$, define \[ c(a;\tilde{\phi})_{\eta}=\rho(a;H)_{\eta}\mbox{ where $H\co S^1\times M\to\mathbb{R}$ is normalized and $\tilde{\phi}=\tilde{\phi}_H$}.\]  (By Propositition \ref{rhoprop} (v) the right hand side is independent of the choice of $H$ with $\tilde{\phi}_H=\tilde{\phi}$.)

\begin{prop}\label{disp}  Suppose that $H\co S^1\times M\to\mathbb{R}$ is a Hamiltonian with support contained in a set of form $S^1\times S$ where $S\subset M$ is a displaceable compact subset (i.e., there is $K\co S^1\times M\to\mathbb{R}$ with $\phi_{K}^{1}(S)\cap S=\varnothing$).  Suppose also that $a\in H_*(M;\Lambda_{\omega})$ obeys $a\ast_{\eta}a=a$.  Then \[ \lim_{k\to\infty}\frac{c(a;\tilde{\phi}_{H}^{k})_{\eta}}{k}=\frac{-\int_{0}^{1}\int_M H(t,\cdot)\omega^n}{\int_M\omega^n}.\]
\end{prop}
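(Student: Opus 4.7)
The proof adapts the classical Entov--Polterovich Calabi-property argument \cite{EP03} to the $\eta$-deformed setting, using that Proposition \ref{rhoprop} records exactly the formal properties (triangle inequality, normalization, conjugation invariance, duality) that the classical argument exploits.

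The first step is a normalization reduction. Writing $\bar H(t) := \frac{1}{Vol(M)}\int_M H(t,\cdot)\omega^n$ and $H_0 := H - \bar H$, the Hamiltonian $H_0$ is normalized and generates $\tilde\phi_H$, so $H_0^{\lozenge k}$ is a normalized representative of $\tilde\phi_H^k$. Since $H^{\lozenge k}$ and $H_0^{\lozenge k}$ differ by a function of $t$ alone whose $[0,1]$-integral equals $k\int_0^1\bar H(t)\,dt$, Proposition \ref{rhoprop}(i) yields
\[ c(a;\tilde\phi_H^k)_\eta \;=\; \rho(a;H^{\lozenge k})_\eta - \frac{k\int_0^1\int_M H\omega^n\,dt}{Vol(M)}, \]
so the proposition becomes equivalent to $\rho(a;H^{\lozenge k})_\eta/k \to 0$. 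Proposition \ref{rhoprop}(vi) with $a\ast_\eta a = a$ gives subadditivity $\rho(a;H^{\lozenge(k+l)})_\eta \leq \rho(a;H^{\lozenge k})_\eta + \rho(a;H^{\lozenge l})_\eta$, while Proposition \ref{rhoprop}(iii) furnishes a linear lower bound; Fekete's lemma then guarantees that the limit exists and equals $\inf_k \rho(a;H^{\lozenge k})_\eta/k$.

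The substance is the displacement input. Fix a normalized $F$ with $\phi_F^1(S)\cap S = \varnothing$; then $\phi_F^{-1}(S)\cap S = \varnothing$ as well, and the Hamiltonians $H$ and $H\circ\phi_F^{-1}$ have disjoint supports, hence vanishing Poisson bracket and commuting flows. Consequently $\tilde\phi_H^k$ commutes with $\tilde\phi_F\tilde\phi_H^{-k}\tilde\phi_F^{-1}$, and one has the commutator identity
\[ [\tilde\phi_H,\tilde\phi_F]^k \;=\; [\tilde\phi_H^k,\tilde\phi_F] \;=\; (\tilde\phi_H^k\tilde\phi_F\tilde\phi_H^{-k})\cdot\tilde\phi_F^{-1}. \]
The final expression exhibits the $k$-th iterate of the commutator as a product of two factors each of Hofer norm $\|F\|$, so its Hofer norm is at most $2\|F\|$ \emph{uniformly in $k$}. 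Proposition \ref{rhoprop}(ii) then shows that $c(a;[\tilde\phi_H,\tilde\phi_F]^k)_\eta$ is uniformly bounded in $k$, giving $\lim_k c(a;[\tilde\phi_H,\tilde\phi_F]^k)_\eta/k = 0$.

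The main obstacle, and the final step, is to convert this vanishing commutator asymptotic into the vanishing of $\rho(a;H^{\lozenge k})_\eta/k$. Here I follow the classical Calabi-property argument: the homogenized function $\tilde\phi \mapsto \lim c(a;\tilde\phi^k)_\eta/k$ is subadditive and conjugation-invariant (the latter via Proposition \ref{rhoprop}(viii)), and the estimate above shows it vanishes on any element of the form $[\tilde\phi_H,\tilde\phi_F]$ where $F$ displaces $\mathrm{supp}(H)$. Combining this vanishing with the subadditivity bound applied to the factorization $\tilde\phi_H^k = [\tilde\phi_H^k,\tilde\phi_F]\cdot(\tilde\phi_F\tilde\phi_H^k\tilde\phi_F^{-1})$ and its analogue for $\tilde\phi_H^{-k}$, together with the duality statement of Proposition \ref{rhoprop}(vii), yields matching asymptotic upper and lower bounds on $\rho(a;H^{\lozenge k})_\eta/k$ which force it to zero. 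The idempotency $a\ast_\eta a = a$ enters only to license the use of Proposition \ref{rhoprop}(vi) as a true subadditivity statement; this is the sole point at which $a$ and $\eta$ appear, so the argument goes through verbatim from the undeformed case.
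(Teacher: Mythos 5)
Your first two steps (reduction via Proposition \ref{rhoprop}(i) to showing $\rho(a;H^{\lozenge k})_\eta/k\to 0$, and existence of the limit via subadditivity and Fekete) are fine, and the observation that $[\tilde\phi_H,\tilde\phi_F]^k=[\tilde\phi_H^k,\tilde\phi_F]$ has Hofer norm at most $2\|F\|$ uniformly in $k$, hence $\lim_k c(a;[\tilde\phi_H,\tilde\phi_F]^k)_\eta/k=0$, is correct. But the final step does not close up, and this is where the real content of the statement lives.

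The factorization $\tilde\phi_H^k=[\tilde\phi_H^k,\tilde\phi_F]\cdot(\tilde\phi_F\tilde\phi_H^k\tilde\phi_F^{-1})$ together with subadditivity and conjugation-invariance (Proposition \ref{rhoprop}(viii), noting that Hamiltonian diffeomorphisms act trivially on homology) yields only
\[
c(a;\tilde\phi_H^k)_\eta\;\le\; c(a;[\tilde\phi_H^k,\tilde\phi_F])_\eta\;+\;c(a;\tilde\phi_H^k)_\eta,
\]
i.e. $c(a;[\tilde\phi_H^k,\tilde\phi_F])_\eta\ge 0$, which is vacuous. The analogous factorization for $\tilde\phi_H^{-k}$ does the same. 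Conjugation invariance collapses the conjugated factor back to $c(a;\tilde\phi_H^{\pm k})_\eta$, so no upper or lower bound on $\rho(a;H^{\lozenge k})_\eta/k$ is produced. You would need something like $\mu(\tilde\phi_H^{-1})=-\mu(\tilde\phi_H)$ for the homogenized function (that is, that $\mu$ is a genuine homogeneous quasimorphism), but establishing that $c(a;\cdot)_\eta$ is a quasimorphism is precisely what the subsequent results of Section \ref{Calabi} aim to prove, \emph{under an additional hypothesis} that is not available here; invoking it would be circular.

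The argument the paper gives bypasses this entirely and uses spectrality, which your proposal never touches. Taking a strongly nondegenerate displacing $K$, the displacement hypothesis forces the set of $1$-periodic orbits of $(sH^{\lozenge k})\lozenge K$ and their action values to be independent of $s$ (no orbit can enter $S$). Proposition \ref{rhoprop}(iv) (spectrality) says $\rho(a;(sH^{\lozenge k})\lozenge K)_\eta$ lies in this fixed spectrum, and Proposition \ref{rhoprop}(ii) says it varies continuously in $s$; together these force it to be constant in $s$. Comparing $s=1$ with $s=0$ and normalizing via Proposition \ref{rhoprop}(i) gives the exact identity
\[
c(a;\tilde\phi_K\circ\tilde\phi_H^k)_\eta\;=\;c(a;\tilde\phi_K)_\eta-\frac{k\int_0^1\int_M H\,\omega^n}{\int_M\omega^n},
\]
and the triangle inequality (Proposition \ref{rhoprop}(vi)) bounds $|c(a;\tilde\phi_K\circ\tilde\phi_H^k)_\eta-c(a;\tilde\phi_H^k)_\eta|$ by the $k$-independent constant $c(a;\tilde\phi_K)_\eta+c(a;\tilde\phi_K^{-1})_\eta$; dividing by $k$ finishes the proof. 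The missing idea in your proposal is this use of spectrality to turn displaceability into constancy of the spectral invariant along the scaling family.
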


\begin{proof}  Given Proposition \ref{rhoprop}, the proof is essentially identical to that of \cite[Proposition 3.3]{EP03}.  Namely, if $K$ is as in the statement and is strongly nondegenerate (as we can arrange, since the condition $\phi_{K}^{1}(S)\cap S=\varnothing$ is an open one on $K$) one finds using Proposition \ref{rhoprop} (ii),(iv) that $\rho(a;(sH^{\lozenge k})\lozenge K)_{\eta}$ is independent of $s$, and hence that (by Proposition \ref{rhoprop} (i)) \[ c(a;\tilde{\phi}_K\circ\tilde{\phi}_{H}^{k})_{\eta}=c(a;\tilde{\phi}_K)_{\eta}-k\frac{\int_{0}^{1}\int_M H(t,\cdot)\omega^n}{\int_M\omega^n}.\]  Since the triangle inequality Proposition \ref{rhoprop}(vi) shows \[ |c(a;\tilde{\phi}_K\circ\tilde{\phi}_{H}^{k})_{\eta}-c(a;\tilde{\phi}_{H}^{k})_{\eta}|\leq c(a;\tilde{\phi}_K)_{\eta}+c(a;\tilde{\phi}_{K}^{-1})_{\eta},\] the result follows. 
\end{proof}

Typically, the spectral invariants $\rho(a;H)_{\eta}$ are difficult to compute.  However, we will now discuss an important exception.  Let $H\co M\to\mathbb{R}$ be a smooth function (which we view as a time-independent Hamiltonian).  Following the terminology in \cite[Definition 4.3]{U10a}, we will call $H$ \emph{slow} if all contractible periodic orbits of the Hamiltonian vector field $X_H$ having period at most $1$ are constant, and \emph{flat} if, at all critical points $p\in Crit(H)$, every periodic orbit of period at most $1$ of the linearized flow $(\phi_{H}^{t})_*\co T_pM\to T_pM$ is constant.

Also, given a class $a\in H_*(M;\Lambda_{\omega})$, write \[ a=\left(\sum_{g\in \Gamma_{\omega}}a_gT^g\right)[M]+a'\] where $a'\in \oplus_{i=0}^{2n-1}H_i(M;\Lambda_{\omega})$, and set \[ \nu_{[M]}(a)=\max\{-g|a_g\neq 0\} \] (said differently, $\nu_{[M]}(a)=\nu(a-a')$).

The following generalizes \cite[Theorem IV]{Oh05} and \cite[Proposition 4.1]{U10a}, which apply when $\eta=0$ and $a=[M]$. The result plays an important role in the results of the following Section \ref{capsec}, though it is not used for the other main theorems of this paper.

\begin{theorem}\label{slowthm}  Let $H\co M\to\mathbb{R}$ be a slow autonomous Hamiltonian, and $a\in H_*(M;\Lambda_{\omega})\setminus\{0\}$.  Then \[ \rho(a;H)_{\eta}\geq \max H+\nu_{[M]}(a).\]  In particular,\[ \mbox{if $\nu_{[M]}(a)=\nu(a)$, then }\rho(a;H)_{\eta}=\max H+\nu(a).\]
\end{theorem}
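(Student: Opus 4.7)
The plan is to reduce the inequality via the duality pairing of Proposition \ref{rhoprop}(vii) to a companion upper bound on a specific point-class spectral invariant for a slow Hamiltonian, and then to invoke the $\eta$-deformed analogue of the argument of \cite[Theorem IV]{Oh05} and \cite[Proposition 4.1]{U10a} to establish that bound.

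Since $H$ is autonomous, $H$ is preserved by $\phi_H^t$, so $\bar H(t,m) = -H(\phi_H^t(m)) = -H(m)$ and in particular $-H$ is again slow. Proposition \ref{rhoprop}(vii) then reads
\[
\rho(a;H)_\eta = -\inf\bigl\{\rho(x;-H)_\eta : \Pi(x,a)\neq 0\bigr\}.
\]
With the decomposition $a = \bigl(\sum_g a_g T^g\bigr)[M] + a'$ used to define $\nu_{[M]}(a)$, set $g_0 = -\nu_{[M]}(a)$, so $a_{g_0}\neq 0$. The class $x_0 = T^{-g_0}[pt]$ satisfies $\Pi(x_0,a) = a_{g_0}\neq 0$ by direct calculation from the formula for $\Pi$ and the facts $[pt]\cap[M] = 1$ and $[pt]\cap c = 0$ for $c$ of degree less than $2n$; moreover $\nu(x_0) = g_0$, and the Novikov shift identity $\ell(T^g(\cdot)) = \ell(\cdot) - g$ gives $\rho(x_0;-H)_\eta = \rho([pt];-H)_\eta + g_0$. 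Consequently the theorem reduces to the key upper bound
\[
\rho([pt];G)_\eta \leq \min G \qquad\text{for every slow autonomous $G$,}
\]
applied here with $G = -H$; in view of the matching lower bound $\rho([pt];G)_\eta \geq \min G$ of Proposition \ref{rhoprop}(iii), the upper bound in fact amounts to an equality.

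For the key upper bound I would first use Proposition \ref{rhoprop}(ii) to approximate $G$ in Hofer norm by strongly nondegenerate slow Hamiltonians possessing a unique global minimum $p_{\min}$; such approximations exist because slowness survives sufficiently $C^2$-small time-dependent perturbations and a generic such perturbation is nondegenerate. For such $G$, I would adapt the (dual of the) argument of \cite[Proposition 4.1]{U10a} to exhibit a chain $c \in CF^{\min G}(G)$ representing $\underline{\Phi}^{PSS}_{\eta,G}([pt])$, namely the single Floer generator $[p_{\min},\mathrm{const}]$ of action exactly $G(p_{\min}) = \min G$. Slowness forces every Floer generator appearing with nonzero coefficient in $\partial^{\eta,G}[p_{\min},\mathrm{const}]$ to correspond to a constant orbit at a critical point of $G$, and the combined action and index constraints then pin these down sufficiently to show that the differential vanishes; the fact that $[p_{\min},\mathrm{const}]$ represents $[pt]$ follows from a PSS computation with Morse function having $p_{\min}$ as its unique index-zero critical point. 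The $\eta$-deformation introduces auxiliary incidence conditions on the cycles $f_i(N_i)$, but by arranging the $f_i(N_i)$ at the outset to avoid $p_{\min}$ (which is permitted by the general-position choice of the constraint chains) these conditions are incompatible with the constant configuration and so the $k\geq 1$ deformation terms are absent at the top action. The ``in particular'' assertion then follows immediately from combining the just-established lower bound with the universal upper bound $\rho(a;H)_\eta \leq \nu(a) + \max H$ of Proposition \ref{rhoprop}(iii).

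The main technical obstacle lies in the preceding paragraph: one must rule out nontrivially-capped and non-global-minimum contributions to $\underline{\Phi}^{PSS}_{\eta,G}([pt])$ and verify that $\partial^{\eta,G}[p_{\min},\mathrm{const}]$ vanishes, through a combination of the slow condition, careful choice of auxiliary data, and the structure of the PSS and Floer moduli spaces --- essentially the dual of the analysis carried out for $\rho([M];H)_\eta \geq \max H$ in the $\eta=0$ case, with the $\eta$-deformation adding only the comparatively routine requirement of positioning the constraint cycles $N_i$ away from $p_{\min}$.
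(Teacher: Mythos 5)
Your proof is correct but takes a genuinely different, and in one respect cleaner, structural route than the paper's. The paper argues directly: after reducing to $H$ slow, flat, Morse, strongly nondegenerate, it chooses a Morse function $f$ with a unique maximum $p_0$ at a maximum of $H$ and uses the $S^1$-equivariant structure of the PSS and Floer moduli spaces to show that \emph{every} cycle representing $\underline{\Phi}^{PSS}_{\eta,H}a$ carries the nonzero coefficient $T^{-\nu_{[M]}(a)}\lambda_0$ on $[p_0,0]$, which forces $\ell\geq\nu_{[M]}(a)+H(p_0)$; that argument requires the extra step of checking that $\partial^{\eta,H}d$ cannot alter the $[p_0,0]$-coefficient. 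You instead apply Proposition \ref{rhoprop}(vii) (with $\bar H$ in place of $H$, using $\bar{\bar H}=H$ and, for autonomous $H$, $\bar H=-H$) together with the test class $x_0=T^{-g_0}[pt]$ to convert the lower bound into the single \emph{upper} bound $\rho([pt];-H)_\eta\leq -\max H$; since an upper bound only requires one low-action representative, and the chain-map property of $\Phi^{PSS}_{\eta,G}$ makes $[p_{\min},\mathrm{const}]$ a cycle for free once you show it equals $\Phi^{PSS}_{\eta,G}(p_{\min,f})$, you can skip the analysis of $\partial^{\eta,G}d$ entirely. The technical core --- the $S^1$-equivariant identification of the deformed PSS map with a Morse-theoretic count --- is the same, but appears in its dual form: $W^u(p_{\min,f};f)$ is a single point, just as $W^s(p_0;H)$ is in the paper, and the key matrix-element computation $m(p_{\min,f},q)=\delta_{q,p_{\min,G}}$ mirrors $m(p,p_0)=\delta_{p,p_0}$. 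Three small points to tighten: you need the approximating Hamiltonian to be \emph{flat} (this is what gives $\bar\mu(A_q)=n-ind_G(q)+2c_1(A)$ for constant orbits, on which the index accounting rests); the approximating perturbation must stay \emph{autonomous} --- your phrase ``time-dependent perturbations'' is a slip, since genuine time-dependence destroys the $S^1$-equivariance --- and in fact the appropriate statement is \cite[Theorem 4.5]{U10a} used in the paper; and the elimination of the $k\geq 1$ incidence contributions is, in full generality, a dimension count ($S^1$-fixed configurations sweep out one-dimensional subsets of $M$ and so generically miss the codimension-$\geq 4$ cycles $N_{i_j}$), though in your special case where the fixed configuration is constant at $p_{\min}$ the strong-nondegeneracy positioning observation you invoke does suffice.
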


\begin{proof} The second sentence follows from the first together with Proposition \ref{rhoprop}(iii).

By \cite[Theorem 4.5]{U10a}, our slow autonomous Hamiltonian $H$ may be arbitrarily well-approximated in $C^0$ norm by an autonomous Hamiltonian which is slow, flat, and Morse; a further arbitrarily small perturbation yields an autonomous Hamiltonian which is additionally strongly nondegenerate (which, given the ``slow'' property, just means that its critical points miss the images of the fixed maps $f_i\co N_i\to M$).  So by the continuity statement Proposition \ref{rhoprop}(ii), it suffices to prove the theorem under the assumption that $H$ is slow, flat,  Morse, and strongly nondegenerate.

Since $H$ is slow, $P(H)$ consists of the critical points of $H$, and for $q\in P(H)=Crit(H)$ the set $\pi_2(q)$ of relative homotopy classes of discs with boundary mapping to $q$ can be identified with $\pi_2(M)$.  Since $H$ is flat, for $A\in \pi_2(M)$ the corresponding element $A_q\in \pi_2(q)$ has $\bar{\mu}(A_q)=n-ind_H(q)+2c_1(A)$.  

Choose a Morse function $f\co M\to\mathbb{R}$ having a unique local (and global) maximum at a point $p_0$ such that $H(p_0)=\max H$, and consider the PSS map \[ \Phi_{\eta,H}^{PSS}\co CM(f;\Lambda_{\omega})\to CF(H),\] constructed using a family of almost complex structures $\{J(re^{i\theta})\}_{re^{i\theta}\in\mathbb{C}}$ which is independent of the angular coordinate $\theta$.  This map is given by, for $p\in Crit(f)$, \begin{equation}\label{recpss} \Phi_{\eta,H}^{PSS}p= \sum_{k=0}^{\infty}\frac{1}{k!}\sum_{\substack{q\in Crit(H),A\in \pi_2(M)}}\sum_{\substack{{I}\in \{1,\ldots,m\}^k,\\ind_f(p)-ind_H(q)+2c_1(A)=\delta(I)}}|\frak{s}_{A,p,I}^{-1}(0)|\exp\left(\int_C\theta\right)z_I[p,A] \end{equation}

Recall here that $\frak{s}_{A,p,I}$ is the multisection associated to the Kuranishi structure on the fiber product \[  \mathcal{M}^{PSS}_k(q,A;p,N_I)=C\mathcal{M}^{PSS}_{k}(q,A)_{(e_0,ev_1,\ldots,ev_k)}\times_{(j_p,f_1,\ldots,f_k)}(W^{u}(p)\times N_{i_1}\times\cdots\times N_{i_k}) \]
 where $C\mathcal{M}^{PSS}_{k}(q,A)$ is the compactification of the space of perturbed-holomorphic planes with $k$ marked points representing $A$ and asymptotic to $q$; thus a general element of $C\mathcal{M}^{PSS}_{k}(q,A)$ has one plane component and possibly a variety of other components, each of which is either a cylindrical solution to the Floer equation for $H$ or is a $J(r)$-holomorphic sphere for some  $r$.
 
 Now $\mathcal{M}^{PSS}_k(q,A;p,N_I)$ has an $S^1$-action given by angular rotation of each of the plane and cylinder components of the domain.  Just as in \cite[Mainlemma 22.4]{FO}, the fixed locus of this action is isolated; away from the fixed locus the action is locally free, and so one can construct a Kuranishi structure of dimension $-1$ on the quotient of the complement of the fixed locus and lift it to $\mathcal{M}^{PSS}_k(q,A;p,N_I)$ in order to deduce that, for suitable choices of the multisection $\frak{s}_{A,p,I}$, the quantity $|\frak{s}_{A,p,I}^{-1}(0)|$ receives contributions only from the fixed locus of the $S^1$ action on $\mathcal{M}^{PSS}_k(q,A;p,N_I)$ (i.e. the locus for which all planar or cylindrical components are independent of the angular coordinate).  An object in this fixed locus is equivalent to the data of: a solution $x\co [0,\infty)\to M$ of $rx'(r)=-\beta(r)\nabla H(x(r))$ such that $x(0)\in W^u(p;f)$ (corresponding to the $\mathbb{C}$-component); some number of negative gradient flowlines of $H$  (corresponding to the cylindrical components); and some number of spherical bubbles; all subject to appropriate incidence conditions.  Now a dimension count\footnote{This count uses the fact that the appropriate expected dimension is $ind_f(p)-ind_H(q)+2c_1(A)-\delta(I)$, which depends on the fact that $H$ is flat; if $H$ were not flat then some of the conclusions that we are drawing here would be false.}  shows that, after perturbing by a generic multisection, all of the spherical components will be constant (which forces $A=0$ since the non-spherical components are $S^1$-independent), that we must have $k=0$ (so $I=\varnothing$; the point is that now that the spiked planes map to  $1$-dimensional objects in $M$ they will not satisfy nontrivial incidence conditions) and that there will be no cylindrical components.  
 
 Consequently (\ref{recpss}) simplifies to \[ \Phi_{PSS,\eta}p=\sum_{q:ind_f(p)-ind_H(q)=0}m(p,q)[q,0],\] where the number $m(p,q)$ enumerates pairs consisting of: a negative gradient flowline $y\co (-\infty,0]\to M$ for $f$ with $y(t)\to p$ as $t\to -\infty$; and a solution $x\co [0,\infty)\to M$ to $rx'(r)=-\beta(r)\nabla H(x(r))$ with $x(r)\to q$ as $r\to \infty$ and $x(0)=y(0)$.
 
 Now if $p=p_0$ (the common global maximum of $f$ and $H$) the only such pair $(y,x)$ has both $y$ and $x$ equal to the constant map to $p_0$, and as in \cite[p. 14]{Oh05} we obtain $m(p_0,p_0)=1$.  On the other hand if $p\neq p_0$, since the only solution $x\co [0,\infty)\to M$ with $x(r)\to p_0$ as $r\to\infty$ is the constant map to $p_0$, if $m(p,p_0)\neq 0$ we would require a negative gradient flowline $y$ for $f$ asymptotic at $-\infty$ to $p$ with $y(0)=p_0$, which is impossible since $p_0$ is a maximum for $f$.  Thus \[ m(p,p_0)=\left\{\begin{array}{ll}1 & p=p_0 \\ 0 & p\neq p_0\end{array}\right.\]
 
 We now prove the theorem.  If $\nu_{[M]}(a)=-\infty$ the statement is vacuous, so assume $\nu_{[M]}(a)\in\mathbb{R}$.   Since $p_0$ is the unique local (and global) maximum of $f$, the class $a$ is represented by an element of the form \[ T^{-\nu_{[M]}(a)}\lambda_0p_0+\sum_{\substack{p\in Crit(f)},\\p\neq p_0}\mu_p p,\] where $\lambda_0=\sum_{g\in \Gamma_{\omega}}a_gT^g\in \Lambda_{\omega}^{0}$ has $a_0\neq 0$, and where $\mu_p\in \Lambda_{\omega}$.  Consequently, $\underline{\Phi}^{PSS}_{\eta,H}a\in HF(H)_{\eta}$ is represented by an element of the form \[ c=T^{-\nu_{[M]}(a)}\lambda_0[p_0,0]+\left(\begin{array}{cc}\mbox{terms involving $q\in P(H)$}\\ \mbox{with $q\neq p_0$}\end{array}\right).\]  Evidently we have $\ell(c)\geq \nu_{[M]}(a)+H(p_0)$.  
 
 Now any representative of $\underline{\Phi}^{PSS}_{\eta,H}a$ will take the form $c+\partial^{\eta,H}d$ for some $d\in CF(H)$.  Just as in the proof of \cite[Theorem 5.1]{Oh05} (and similarly to the situation with the PSS map above), for a suitable choice of multisection the differential $\partial^{\eta,H}$ will only receive contributions from Floer cylinders which are independent of the $S^1$-variable, i.e. from negative gradient flowlines of $H$.  But (other than a constant flowline, which has the wrong index) there are no negative gradient flowlines for $H$ asymptotic at $+\infty$ to the maximum $p_0$, and so the coefficient on $p_0$ in $\partial^{\eta,H}d$ is zero.  Consequently the coefficient on $[p_0,0]$ in $c+\partial^{\eta,H}d$ is, independently of $d$, equal to $T^{-\nu_{[M]}(a)}\lambda_0$, and so we have, for all $d$, \[ \ell(c+\partial^{\eta,H}d)\geq \nu_{[M]}(a)+H(p_0).\]  This immediately implies the theorem.\end{proof}

\begin{remark}
\label{nokur} Removing the dependence on Kuranishi structures in Theorem \ref{slowthm} is a somewhat more delicate matter than for the other results in this paper.  The proof of Theorem \ref{slowthm} relies on an argument that, if the Hamiltonian and almost complex structure are taken independent of time, moduli spaces of spiked discs underlying the PSS maps admit an action of $S^1$, and that therefore only spiked discs which are fixed by this $S^1$ action will appear in zero-dimensional moduli spaces if these spaces are cut out transversely.  If one does not want to use Kuranishi structures to achieve this transversality, then one can adapt \cite[Theorem 7.4]{FHS} to argue that time-independent $H$ and $J$ can be chosen so that the spaces are cut out transversely \emph{except at multiply-covered spiked discs}.  However, this only helps if  the expected dimensions of a spaces of multiply-covered spiked discs never exceed of a space of simple spiked discs in the same homology class. One can check that this can be arranged provided that the minimal Chern number of $(M,\omega)$ is at least $n$.  Thus under this latter topological condition (which of course is stronger than strong semipositivity) one can dispense with Kuranishi structures in the proof of Theorem \ref{slowthm} using the techniques from Section \ref{bigdefsect}; however in greater generality Kuranishi structures (or something similar) do seem to be needed.
\end{remark}

\section{Capacity estimates}\label{capsec}

Recall that the $\pi_1$-sensitive Hofer--Zehnder capacity of the symplectic manifold $(M,\omega)$ is, by definition, \[ c_{HZ}^{\circ}(M,\omega)=\sup\{\max H-\min H|H\co M\to\mathbb{R}\mbox{ is a slow Hamiltonian}.\}  \]  We begin with the following easy consequence of Theorem \ref{slowthm}.

\begin{cor}\label{hzest}  Fix $C>0$ and $\eta\in \oplus_{i=0}^{n-1}H_{2i}(M;\Lambda_{\omega}^{0})$ and suppose that one of the following two conditions holds: \begin{itemize} \item[(i)] There are $a,b\in H_*(M;\Lambda_{\omega})\setminus\{0\}$ such that, for all autonomous $H\co M\to\mathbb{R}$, \[ (\rho(a,H)_{\eta}-\nu_{[M]}(a))+(\rho(b,\bar{H})_{\eta}-\nu_{[M]}(b))\leq C;\] \emph{or} \item[(ii)] Where $[pt]$ is the standard generator of $H_0(M;\mathbb{C})$, for all $H\co M\to\mathbb{R}$, \[ \rho([pt];H)_{\eta}+\rho([pt];\bar{H})_{\eta}\geq -C.\] \end{itemize} Then the $\pi_1$-sensitive Hofer-Zehnder capacity of $(M,\omega)$ obeys the bound \[ c_{HZ}^{\circ}(M)\leq C.\]
\end{cor}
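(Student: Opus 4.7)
The plan is to reduce the corollary to Theorem \ref{slowthm}. Given an arbitrary slow autonomous Hamiltonian $H\co M\to \mathbb{R}$, the goal is to show that $\max H - \min H \leq C$; taking the supremum over such $H$ then yields $c_{HZ}^{\circ}(M)\leq C$. The key preliminary observation is that because an autonomous $H$ is constant along its own flow, the time-reversed Hamiltonian satisfies $\bar H = -H$, which is itself autonomous and slow (its periodic orbits coincide as unparametrized sets with those of $H$). In particular both $H$ and $\bar H$ are eligible for the lower bounds supplied by Theorem \ref{slowthm}.

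Case (i) is then immediate. Theorem \ref{slowthm} gives
\[ \rho(a;H)_{\eta}\geq \max H+\nu_{[M]}(a), \qquad \rho(b;\bar H)_{\eta}\geq \max \bar H+\nu_{[M]}(b)=-\min H+\nu_{[M]}(b). \]
Adding these and rearranging yields
\[ (\rho(a;H)_{\eta}-\nu_{[M]}(a))+(\rho(b;\bar H)_{\eta}-\nu_{[M]}(b))\geq \max H-\min H, \]
and hypothesis (i) bounds the left side by $C$.

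For case (ii), I would combine Theorem \ref{slowthm} applied to the fundamental class $[M]$ (for which $\nu_{[M]}([M])=0$) with the duality property Proposition \ref{rhoprop}(vii). Applied with the roles of $H$ and $\bar H$ reversed, duality reads $\rho([pt];H)_{\eta}=-\inf\{\rho(x;\bar H)_{\eta}\mid \Pi(x,[pt])\neq 0\}$. Since $\Pi([M],[pt])=1\neq 0$, taking $x=[M]$ gives
\[ \rho([pt];H)_{\eta}\leq -\rho([M];\bar H)_{\eta}\leq -\max \bar H=\min H, \]
where the middle inequality is Theorem \ref{slowthm} applied to the slow Hamiltonian $\bar H$. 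The symmetric argument with $H$ and $\bar H$ interchanged produces $\rho([pt];\bar H)_{\eta}\leq -\max H$. Summing and invoking hypothesis (ii),
\[ -C\leq \rho([pt];H)_{\eta}+\rho([pt];\bar H)_{\eta}\leq \min H-\max H, \]
which yields the required bound.

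There is no substantial technical obstacle; the content of the corollary is essentially packaged in Theorem \ref{slowthm} and Proposition \ref{rhoprop}(vii). The one point requiring a little care is the reduction $\bar H=-H$ for autonomous $H$, which is what allows $\max \bar H$ to be rewritten as $-\min H$ and thereby links the two spectral bounds produced by Theorem \ref{slowthm} to the oscillation $\max H-\min H$ entering the definition of $c_{HZ}^{\circ}(M)$.
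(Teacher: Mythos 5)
Your proof of case (i) is correct and matches the paper's argument exactly: apply Theorem \ref{slowthm} to $H$ and to $\bar H=-H$, add, and use the hypothesis.

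Case (ii), however, contains a sign error that is fatal to the argument. From the duality
\[
\rho([pt];H)_{\eta}=-\inf\{\rho(x;\bar H)_{\eta}\mid \Pi(x,[pt])\neq 0\},
\]
plugging in the particular class $x=[M]$ gives an \emph{upper} bound on the infimum, namely $\inf\{\rho(x;\bar H)_{\eta}\mid\ldots\}\leq\rho([M];\bar H)_{\eta}$, and therefore after negating a \emph{lower} bound
\[
\rho([pt];H)_{\eta}\geq -\rho([M];\bar H)_{\eta},
\]
not the upper bound $\rho([pt];H)_{\eta}\leq -\rho([M];\bar H)_{\eta}$ that you wrote. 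The conclusion you drew, $\rho([pt];H)_{\eta}\leq\min H$, is also inconsistent with Proposition \ref{rhoprop}(iii), which (since $\nu([pt])=0$) gives $\rho([pt];H)_{\eta}\geq\min H$; your claimed inequality would force equality for all slow $H$, which is not available. Note also that Theorem \ref{slowthm} cannot be applied directly to the class $[pt]$, since $\nu_{[M]}([pt])=-\infty$.

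The correct route, which the paper takes, uses duality in a different way: one writes $\rho([pt];H)_{\eta}+\rho([pt];\bar H)_{\eta}$ as minus the sum of two infima via Proposition \ref{rhoprop}(vii), so that hypothesis (ii) says $\inf\{\rho(a;\bar H)_{\eta}\mid\Pi(a,[pt])\neq 0\}+\inf\{\rho(b;H)_{\eta}\mid\Pi(b,[pt])\neq 0\}\leq C$. For any $\ep>0$ one then picks classes $a,b$ nearly achieving these infima. The crucial observation is that $\Pi(a,[pt])\neq 0$ forces $\nu_{[M]}(a)\geq 0$ (and likewise for $b$), so Theorem \ref{slowthm} applied to $a$ and $b$ (not to $[pt]$) yields $\rho(a;\bar H)_{\eta}\geq\max\bar H$ and $\rho(b;H)_{\eta}\geq\max H$, hence $\max H+\max\bar H\leq C+\ep$. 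The idea of applying Theorem \ref{slowthm} to near-minimizers of the dual expression, rather than to a fixed class, is the ingredient your argument is missing.
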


\begin{proof}
We need to show that, under either of the given conditions, for any slow autonomous Hamiltonian $H\co M\to\mathbb{R}$ we have $\max H-\min H\leq C$.  Of course for an autonomous Hamiltonian one has $\bar{H}=-H$, so this is equivalent to showing that $\max H+\max\bar{H}\leq C$.  

The sufficiency of (i) is then clear from Theorem \ref{slowthm}, since if $H$ is slow we have \[ \max H+\max \bar{H}\leq (\rho(a;H)_{\eta}-\nu_{[M]}(a))+(\rho(b;\bar{H})_{\eta}-\nu_{[M]}(b))\leq C\] by assumption.

Now instead assume (ii). Let $H\co M\to\mathbb{R}$ be slow, and choose any $\ep>0$.  By Proposition \ref{rhoprop}(vii) we have \[ -C\leq -\inf\{\rho(a;\bar{H})_{\eta}|\Pi(a,[pt])\neq 0\}-\inf\{\rho(b;H)_{\eta}|\Pi(b,[pt])\neq 0\},\] so there are $a,b$ with $\Pi(a,[pt])\neq 0$, $\Pi(b,[pt])\neq 0$, and $\rho(a;\bar{H})+\rho(b;H)\leq C+\ep$.  But from the definition of the pairing $\Pi$ one sees easily that in order for $\Pi(a,[pt])\neq 0$ one must have $\nu_{[M]}(a)\geq 0$.  This fact (for both $a$ and $b$) together with Theorem \ref{slowthm} gives \[\max H+\max \bar{H}\leq C+\ep,\] completing the proof  since $\ep$ was arbitrary.

\end{proof}

\begin{lemma}\label{mainhzlemma}  Suppose that $(M,\omega)$ admits a nonzero Gromov--Witten invariant of the form \[ \langle [pt],a_0,[pt],a_1,\ldots,a_k\rangle_{0,k+3,A},\] where $A\in H_2(M;\mathbb{Z})/torsion$ and $a_0,\ldots,a_k$ are rational homology classes of even degree.  Then for an open dense set of possible choices of the deformation parameter $\eta\in \oplus_{i=0}^{n-1}H_{2i}(M;\mathbb{C})$ we have \[ \nu_{[M]}([pt]\ast_{\eta}a_0)\geq -\langle [\omega],A\rangle.\]
\end{lemma}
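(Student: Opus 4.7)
The plan is to reduce $\nu_{[M]}([pt]\ast_{\eta}a_0)\geq-\langle[\omega],A\rangle$ to the non-vanishing of the coefficient
\[ \Psi(\eta):=\sum_{\substack{A'\in H_2(M;\Z)/\text{torsion}\\ \langle[\omega],A'\rangle=\langle[\omega],A\rangle}}\sum_{l=0}^{\infty}\frac{1}{l!}\langle [pt],a_0,[pt],\eta^l\rangle_{0,l+3,A'} \]
of $T^{\langle[\omega],A\rangle}[M]$ in $[pt]\ast_\eta a_0$, and then to show that $\Psi$ is a nonzero entire function on the finite-dimensional complex vector space $V:=\oplus_{i=0}^{n-1}H_{2i}(M;\mathbb{C})$.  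Granting the second assertion, $\Psi$ vanishes only on a proper analytic subvariety of $V$, which is closed and nowhere dense; its complement is the required open dense subset, on which $\nu_{[M]}([pt]\ast_\eta a_0)\geq -\langle[\omega],A\rangle$ by definition of $\nu_{[M]}$.

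First I would split $\eta=\eta_D+\eta'$ with $\eta_D\in H_{2n-2}(M;\mathbb{C})$ the divisor part and $\eta'\in\oplus_{i=0}^{n-2}H_{2i}(M;\mathbb{C})$, and repeat the divisor-axiom manipulation already performed at the end of Section~\ref{quant1} (expand $(\eta_D+\eta')^l$ by the binomial theorem, pull out each divisor insertion as a factor $\langle\eta_D,A'\rangle$, and relabel $m=l-j$) to obtain the factorisation
\[ \Psi(\eta)=\sum_{A'\colon \langle[\omega],A'\rangle=\langle[\omega],A\rangle}\exp(\langle\eta_D,A'\rangle)\,\tilde{Q}_{A'}(\eta'),\qquad \tilde{Q}_{A'}(\eta'):=\sum_{m=0}^{\infty}\frac{1}{m!}\langle[pt],a_0,[pt],\eta'^m\rangle_{0,m+3,A'}. \]
Thus the dependence of each summand of $\Psi$ on the divisor direction is concentrated entirely in the exponential factor, while the polynomial factor $\tilde{Q}_{A'}(\eta')$ depends only on the non-divisor coordinates.

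Next I would verify that $\tilde{Q}_A\not\equiv 0$.  Applying the divisor axiom to the hypothesis to strip the insertions among $a_1,\ldots,a_k$ that lie in $H_{2n-2}(M;\mathbb{Q})$ produces
\[ \langle[pt],a_0,[pt],a_1,\ldots,a_k\rangle_{0,k+3,A}=\left(\prod_{i\in D}\langle a_i,A\rangle\right)\langle[pt],a_0,[pt],a_{i_1},\ldots,a_{i_s}\rangle_{0,s+3,A}, \]
where $D$ indexes the stripped divisor classes and every remaining $a_{i_j}$ lies in $\oplus_{i=0}^{n-2}H_{2i}(M;\mathbb{Q})$; the left-hand side being nonzero forces the reduced invariant on the right to be nonzero as well (in the degenerate case $s=0$ this just says that the constant term of $\tilde{Q}_A$ is nonzero).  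Expanding each $a_{i_j}$ in the paper's fixed basis $\{c_i\}$ of $\oplus_{i=0}^{n-2}H_{2i}$, writing $\eta'=\sum s_ic_i$, and combining multilinearity of the GW invariant with its symmetry in the $\eta'$-slots exhibits a specific monomial in the $s_i$ whose coefficient in $\tilde{Q}_A(\eta')$ is a positive rational multiple of some nonzero basis invariant $\langle[pt],a_0,[pt],c_{j_1},\ldots,c_{j_s}\rangle_{0,s+3,A}$; hence $\tilde{Q}_A$ is a nonzero polynomial.

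The hardest step, and the one addressing the main obstacle---potential cancellation between the contribution of $A$ and those of other curve classes of the same symplectic area---is the final one.  The Poincar\'e intersection pairing $H_{2n-2}(M;\mathbb{C})\otimes H_2(M;\mathbb{C})\to\mathbb{C}$ is nondegenerate, so distinct classes in $H_2(M;\Z)/\text{torsion}$ induce distinct $\mathbb{C}$-linear functionals $\langle\cdot,A'\rangle$ on $H_{2n-2}(M;\mathbb{C})$.  The classical linear independence of exponentials with distinct frequencies (over the ring of polynomials in the remaining variables $\eta'$) therefore forbids $\Psi\equiv 0$: such an identity would force every $\tilde{Q}_{A'}\equiv 0$ and in particular contradict the non-vanishing of $\tilde{Q}_A$ just shown.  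This delivers $\Psi$ as the required nonzero entire function on $V$ and completes the plan.
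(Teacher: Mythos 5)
Your proof is correct and follows essentially the same approach as the paper: split $\eta$ into divisor and lower-degree parts, reduce via the divisor axiom, verify the lower-degree polynomial $\tilde{Q}_A$ is nonzero using the hypothesized nonvanishing invariant, and use nondegeneracy of the intersection pairing on $H_{2n-2}\otimes H_2$ to rule out cancellation among curve classes $A'$ of equal area. The only cosmetic difference is that you phrase the final non-cancellation step invariantly (linear independence of exponentials with distinct linear frequencies over the polynomial ring in $\eta'$), whereas the paper works in coordinates and observes that the map $B\mapsto(\Delta_1\cap B,\ldots,\Delta_s\cap B)$ is injective, so the monomial $\prod_k(e^{y_k})^{\Delta_k\cap A}$ isolates the contribution of $A$; these two formulations are equivalent.
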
 

\begin{proof}  Since $[pt]\cap [pt]=0$, the class $A$ appearing in the Gromov--Witten invariant cannot be zero, in view of which none of the classes $a_0,\ldots,a_k$ can be a multiple of the fundamental class.  Moreover using the divisor axiom we can reduce to the case that $a_1,\ldots,a_k\in \oplus_{i=0}^{n-2}H_{2i}(M;\mathbb{Q})$.  Choose a homogeneous basis $\Delta_1,\ldots,\Delta_N$ for $\oplus_{i=0}^{n-1}H_{2i}(M;\mathbb{Z})/torsion$, with $\Delta_1,\ldots,\Delta_s$ (for some $s< N$) a basis for $H_{2n-2}(M;\mathbb{Z})/torsion$.  Using the multilinearity and symmetry properties of the Gromov--Witten invariants we can assume that our nonzero Gromov--Witten invariant takes the form \[ 0\neq \langle [pt],a_0,[pt],\underbrace{\Delta_{s+1},\ldots,\Delta_{s+1}}_{\alpha_{s+1}},\ldots,\underbrace{\Delta_N,\ldots,\Delta_N}_{\alpha_N}\rangle_{0,\sum \alpha_j+3,A}\] for some $\alpha=(\alpha_{s+1},\ldots,\alpha_N)\in \mathbb{N}^{N-s}$.

For $\vec{y}=(y_1,\ldots,y_s)\in \mathbb{C}^s$ and $\vec{z}=(z_{s+1},\ldots,z_N)\in \mathbb{C}^{N-s}$, write \[ \eta(\vec{y},\vec{z})=\sum_{i=1}^{s}y_i\Delta_i+\sum_{i=s+1}^{N}z_i\Delta_i.\]  Consider the class $[pt]\ast_{\eta(\vec{y},\vec{z})}a_0\in H_*(M;\Lambda_{\omega})$ as a function of $(\vec{y},\vec{z})$.  On expanding out the formula for $[pt]\ast_{\eta(\vec{y},\vec{z})}a_0$
and using the symmetry properties of the Gromov--Witten invariants, one finds that the coefficient on the fundamental class $[M]$ is an expression of the shape \[ \sum_{g\in \Gamma_{\omega}}\sum_{\beta=(\beta_{s+1},\ldots,\beta_N)\in \mathbb{N}^{N-s}}f_{g,\beta}(\vec{y})z_{s+1}^{\beta_{s+1}}\cdots z_{N}^{\beta_N}T^{g}\] for a certain function $f_{g,\beta}(\vec{y})$ of $\vec{y}$ which is identically zero for all but finitely many $\beta$\footnote{This finiteness statement follows from the dimension axiom for Gromov--Witten invariants; see the proof of Proposition \ref{fin} for the argument};  for the particular values $g=\langle[\omega],A\rangle$ and $\beta=\alpha$ we have 
\begin{align*} \left(\prod_{i=s+1}^{N}(\alpha_i !)\right)&f_{\langle[\omega],A\rangle,\alpha}(\vec{y})=\\&\sum_{\substack{B\in H_2(M;\mathbb{Z})/tors,\\ \langle[\omega],B\rangle=\langle[\omega],A\rangle}} \langle [pt],a_0,[pt],\underbrace{\Delta_{s+1},\ldots,\Delta_{s+1}}_{\alpha_{s+1}},\ldots,\underbrace{\Delta_N,\ldots,\Delta_N}_{\alpha_N}\rangle_{0,\sum \alpha_j+3,B}\left(\prod_{k=1}^{s}(e^{y_k})^{\Delta_k\cap B}\right) \end{align*} 
where $\cap$ denotes the Poincar\'e intersection pairing.  Now since $\{\Delta_k\}_{1\leq k\leq s}$ forms a basis for $H_{2n-2}(M;\mathbb{Z})/torsion$, the map $B\mapsto (\Delta_1\cap B,\ldots,\Delta_s\cap B)$ is injective.  Thus our assumed nonzero Gromov--Witten invariant implies that the coefficient on $\prod_{k=1}^{s}(e^{y_k})^{\Delta_k\cap A}$ in the above sum is nonzero.

This proves that the coefficient on $T^{\langle[\omega],A\rangle}[M]$ in the expansion of $[pt]\ast_{\eta(\vec{y},\vec{z})}a_0$ is a polynomial in $e^{y_i},z_i$ having a nonzero coefficient multiplying \[ \left(\prod_{k=1}^{s}(e^{y_k})^{\Delta_k\cap A}\right)\prod_{k=s+1}^{N}z_{i}^{\alpha_i}.\]  In particular, since this polynomial is not the zero polynomial, there is an open dense set of choices of $(\vec{y},\vec{z})$ at which the coefficient on $T^{\langle[\omega],A\rangle}[M]$ does not vanish.  It then follows directly from the definition of $\nu_{[M]}$ that, for $(\vec{y},\vec{z})$ in this open dense set, we have $\nu_{[M]}([pt]\ast_{\eta(\vec{y},\vec{z})}a_0)\geq -\langle[\omega],A\rangle$.
\end{proof}

\begin{cor}\label{mainhzcor} Suppose that $(M,\omega)$ admits a nonzero Gromov--Witten invariant of the form \[ \langle [pt],a_0,[pt],a_1,\ldots,a_k\rangle_{0,k+3,A},\] where $A\in H_2(M;\mathbb{Z})/torsion$ and $a_0,\ldots,a_k$ are rational homology classes of even degree.  Then \[ c_{HZ}^{\circ}(M,\omega)\leq \langle[\omega],A\rangle.\]
\end{cor}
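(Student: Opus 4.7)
The plan is to combine Lemma \ref{mainhzlemma} with criterion (ii) of Corollary \ref{hzest}, using the triangle inequality and the duality property of spectral invariants to convert the algebraic information provided by the lemma into the needed universal estimate on $\rho([pt];H)_{\eta}+\rho([pt];\bar H)_{\eta}$.

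First I would fix an $\eta\in\oplus_{i=0}^{n-1}H_{2i}(M;\mathbb{C})$ supplied by Lemma \ref{mainhzlemma}, so that $\nu_{[M]}([pt]\ast_{\eta}a_0)\geq -\langle[\omega],A\rangle$. By definition of $\nu_{[M]}$, this means there is some $g_0\leq\langle[\omega],A\rangle$ such that the coefficient $\lambda_{g_0}$ of $T^{g_0}[M]$ in the fundamental-class part of $[pt]\ast_{\eta}a_0$ is nonzero. Setting $x=T^{-g_0}\bigl([pt]\ast_{\eta}a_0\bigr)$, the coefficient of $T^0[M]$ in $x$ is $\lambda_{g_0}\neq 0$, so $\Pi(x,[pt])=\lambda_{g_0}\neq 0$.

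Next I would estimate $\rho(x;H)_{\eta}$. By Proposition \ref{rhoprop}(vi) (triangle inequality) together with Proposition \ref{rhoprop}(v) (invariance under reparametrization, applied to $H\lozenge 0$ versus $H$), we have
\[ \rho([pt]\ast_{\eta}a_0;H)_{\eta}\leq \rho([pt];H)_{\eta}+\rho(a_0;0)_{\eta}.\]
Since $a_0\in H_*(M;\mathbb{Q})\subset H_*(M;\Lambda_{\omega})$ is a classical homology class, $\nu(a_0)=0$, and Proposition \ref{rhoprop}(iii) gives $\rho(a_0;0)_{\eta}=0$. Meanwhile, multiplication by $T^{-g_0}$ shifts the action filtration by $+g_0$, so $\rho(x;H)_{\eta}=\rho([pt]\ast_{\eta}a_0;H)_{\eta}+g_0$. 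Combining,
\[ \rho(x;H)_{\eta}\leq \rho([pt];H)_{\eta}+g_0\leq \rho([pt];H)_{\eta}+\langle[\omega],A\rangle.\]

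Finally, by the duality Proposition \ref{rhoprop}(vii),
\[ \rho([pt];\bar H)_{\eta}=-\inf\{\rho(y;H)_{\eta}\mid \Pi(y,[pt])\neq 0\}\geq -\rho(x;H)_{\eta}\geq -\rho([pt];H)_{\eta}-\langle[\omega],A\rangle,\]
so $\rho([pt];H)_{\eta}+\rho([pt];\bar H)_{\eta}\geq -\langle[\omega],A\rangle$ uniformly in $H$. Applying Corollary \ref{hzest}(ii) with $C=\langle[\omega],A\rangle$ yields $c_{HZ}^{\circ}(M,\omega)\leq \langle[\omega],A\rangle$. The main obstacle is really confined to Lemma \ref{mainhzlemma} itself, which extracts from the given Gromov--Witten invariant a value of $\eta$ forcing $[pt]\ast_{\eta}a_0$ to have a nontrivial fundamental-class component at a sufficiently high filtration level; once that lemma is in hand, the deduction above is a straightforward exercise in manipulating the spectral invariants.
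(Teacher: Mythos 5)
Your proof is correct and follows essentially the same route as the paper: you apply Lemma \ref{mainhzlemma} to find $\eta$, isolate a fundamental-class term in $[pt]\ast_{\eta}a_0$ at filtration level $g_0\leq\langle[\omega],A\rangle$, verify $\Pi(T^{-g_0}([pt]\ast_\eta a_0),[pt])\neq 0$, and then combine the triangle inequality (vi), duality (vii), and the normalization $\rho(a_0;0)_\eta=\nu(a_0)=0$ from (iii) to obtain the universal lower bound needed for Corollary \ref{hzest}(ii). The paper phrases the same computation with the single class $[pt]\ast_{\eta}T^{\gamma}a_0$ where $\gamma=-g_0$, which is literally the same element as your $x$ by $\Lambda_\omega$-linearity of $\ast_{\eta}$; the difference is purely cosmetic bookkeeping.
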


\begin{proof} By Lemma \ref{mainhzlemma}, choose $\eta\in \oplus_{i=0}^{n-1}H_{2i}(M;\mathbb{C})$ so that $\gamma:=\nu_{[M]}([pt]\ast_{\eta}a_0)\geq -\langle [\omega],A\rangle$.  From the definition of the Poincar\'e pairing $\Pi$, one then sees that \[ \Pi([pt]\ast_{\eta}T^{\gamma}a_0,[pt])=\Pi(T^{\gamma}([pt]\ast_{\eta}a_0),[pt])\neq 0.\]  Consequently, for any normalized Hamiltonian $H$, we have by Proposition \ref{rhoprop}(vi),(vii), \begin{align*} \rho([pt];\bar{H})_{\eta}&=\sup\{-\rho(a;H)_{\eta}|\Pi(a,[pt])\neq 0\}\geq -\rho([pt]\ast_{\eta}T^{\gamma}a_0;H)_{\eta}&\geq -\rho([pt];H)_{\eta}-\rho(T^{\gamma}a_0;0)_{\eta}\end{align*}  (we've also used here that, for normalized $H$ and any $a,\eta$, $\rho(a;H\lozenge 0)_{\eta}=\rho(a;H)_{\eta}$, which follows from Proposition \ref{rhoprop} (v) and the fact that $H\lozenge 0$ is normalized if $H$ is).  

Now recall that $a_0$ is a rational homology class, in light of which we have $\rho(T^{\gamma}a_0;0)_{\eta}=\nu(T^{\gamma}a_0)=-\gamma$ by Proposition \ref{rhoprop} (iii).  Thus we obtain, for any $H$, \[  \rho([pt];\bar{H})_{\eta}+\rho([pt];H)_{\eta}\geq \gamma\geq -\langle[\omega],A\rangle,\] so the desired result follows from Case (ii) of Corollary \ref{hzest}.
\end{proof}

\begin{remark}\label{fixmark} More generally, one could consider the ``mixed'' invariants that in \cite{RT} are denoted by 
$\Phi_{A,\omega,0}(a_1,\ldots,a_k|b_1,\ldots,b_l)$ (with $k\geq 3$); for this invariant one specifies fixed marked points $z_i\in S^2$ ($1\leq i\leq k$) and generic representatives $\alpha_i$ of $a_i$ and $\beta_j$ of $b_j$ and formally enumerates pairs $(u,\{w_j\}_{1\leq j\leq l})$ consisting of a pseudoholomorphic representatives $u\co S^2\to M$ of $A$ with $u(z_i)\in \alpha_i$, $u(w_j)\in\beta_j$ for all $i,j$.  A modification of the proof of Corollary \ref{mainhzcor} 
shows that, if there is a nonzero invariant of the form $\Phi_{A,\omega,0}([pt],[pt],a_3,\ldots,a_k|b_1,\ldots,b_l)$ (with the classes $b_j$ even-dimensional) then we have the same estimate $c_{HZ}^{\circ}(M,\omega)\leq \langle [\omega],A\rangle$.  The key idea is to modify  the pair of pants product $\ast_{\eta}^{Floer}\co CF(H)\otimes CF(K)\to CF(H\lozenge K)$ to a map 
$\ast_{\eta,\vec{a},I}^{Floer}\co CF(H)\otimes CF(K)\to CF(H\lozenge K)$ which counts pairs of pants $u\co\Sigma\to M$ like those in the definition of $\ast_{\eta}^{Floer}$ which additionally satisfy $u(z_i)\in \alpha_i$ for $3\leq i\leq k$ for some preassigned fixed marked points $z_i$.  (On homology, this has the same effect as composing the pair of pants product and the quantum cap actions (as in \cite[Example 3.4]{PSS}) by each of the $a_i$ for $3\leq i\leq k$.)  On homology, one obtains the same type of triangle inequality for the spectral invariants (as in Proposition \ref{rhoprop} (vi)) for this operation as one does for the pair-of-pants product, as a result of which the proof of Corollary \ref{mainhzcor} can be extended to this case.  Details are left to the reader.

\end{remark}

\section{Calabi quasimorphisms}\label{Calabi}

Recall that if $G$ is a group, a \emph{quasimorphism} on $G$ is a map $\mu\co G\to\mathbb{R}$ such that there exists a constant $C$ (called the \emph{defect} of $\mu$) such that, for all $g,h\in G$, we have \[ |\mu(gh)-\mu(g)-\mu(h)|\leq C.\]

\begin{prop}\label{qmprop1}  Let $(M,\omega)$ be a closed symplectic manifold, $C>0$, $\eta\in \oplus_{i=0}^{n-1}H_{2i}(M;\Lambda_{\omega}^{0})$, and suppose that $e\in H_*(M,\Lambda_{\omega})$ has the properties that $e\ast_{\eta}e=e$ and, for all $\tilde{\phi}\in\widetilde{Ham}(M,\omega)$, \begin{equation}\label{qm1} \rho(e;H)_{\eta}+\rho(e;\bar{H})_{\eta}\leq C\end{equation} for all $H\co S^1\times M\to\mathbb{R}$.  Then the function $c(e;\cdot)_{\eta}\co \widetilde{Ham}(M,\omega)\to\mathbb{R}$ defined by $c(e;\tilde{\phi})_{\eta}=\rho(e;H)_{\eta}$ for any normalized $H$ with $\tilde{\phi}_H=\tilde{\phi}$ defines a quasimorphism with defect at most $C$.
\end{prop}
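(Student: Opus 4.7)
The plan is to establish the required two-sided inequality for the defect directly from the triangle inequality and duality properties of the deformed spectral invariants (Proposition \ref{rhoprop}(vi), (vii)) together with the idempotent hypothesis $e \ast_\eta e = e$ and the bound (\ref{qm1}).

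First I would prove the upper bound. Given $\tilde\phi,\tilde\psi\in\widetilde{Ham}(M,\omega)$, pick normalized strongly nondegenerate $H,K$ with $\tilde\phi=\tilde\phi_H$, $\tilde\psi=\tilde\phi_K$. Then $H\lozenge K$ is also normalized (by convention (g)), and by convention (g) its time-one map represents $\tilde\psi\tilde\phi$ (so after swapping labels we get $\tilde\phi\tilde\psi$ from $K\lozenge H$; either convention is fine as long as we stay consistent). Using Proposition \ref{rhoprop}(vi) with $a=b=e$ and $e\ast_\eta e=e$,
\[
c(e;\tilde\phi\tilde\psi)_\eta=\rho(e;K\lozenge H)_\eta\leq \rho(e;K)_\eta+\rho(e;H)_\eta=c(e;\tilde\phi)_\eta+c(e;\tilde\psi)_\eta.
\]

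Next I would derive the lower bound by applying the same inequality to $\tilde\phi\tilde\psi$ and $(\tilde\psi)^{-1}$. Since $\tilde\phi_{\bar K}=(\tilde\phi_K)^{-1}$ and $\bar K$ is normalized when $K$ is, we have $c(e;\tilde\psi^{-1})_\eta=\rho(e;\bar K)_\eta$. The triangle inequality gives
\[
c(e;\tilde\phi)_\eta=c(e;(\tilde\phi\tilde\psi)\tilde\psi^{-1})_\eta\leq c(e;\tilde\phi\tilde\psi)_\eta+\rho(e;\bar K)_\eta,
\]
and the hypothesis (\ref{qm1}) yields $\rho(e;\bar K)_\eta\leq C-\rho(e;K)_\eta=C-c(e;\tilde\psi)_\eta$. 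Combining,
\[
c(e;\tilde\phi\tilde\psi)_\eta\geq c(e;\tilde\phi)_\eta+c(e;\tilde\psi)_\eta-C.
\]
Together with the upper bound this gives $|c(e;\tilde\phi\tilde\psi)_\eta-c(e;\tilde\phi)_\eta-c(e;\tilde\psi)_\eta|\leq C$, proving the quasimorphism property with defect at most $C$.

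There is essentially no obstacle here, as all the substantive work is contained in the earlier results that are being cited: the triangle inequality Proposition \ref{rhoprop}(vi) (which itself rests on Propositions \ref{prodfilt} and \ref{ringiso}), the duality/continuity Proposition \ref{rhoprop}(ii),(v) (needed to guarantee that the spectral numbers and the concatenation $H\lozenge K$ interact properly with normalization), and the hypothesis (\ref{qm1}) which encodes the ``asymptotic symmetry'' between $H$ and $\bar H$. The only small care required is in tracking the composition convention from (g) so that $H\lozenge K$ really corresponds to the product $\tilde\phi_K\circ\tilde\phi_H$ in $\widetilde{Ham}(M,\omega)$; after that, the argument is a two-line application of the triangle inequality, exactly as in \cite{EP03} but with $\ast_0$ replaced by $\ast_\eta$ throughout.
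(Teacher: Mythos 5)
Your proof is correct and takes essentially the same approach as the paper: subadditivity of $c(e;\cdot)_\eta$ from the triangle inequality and the idempotent property, combined with the bound $c(e;\tilde\phi)_\eta + c(e;\tilde\phi^{-1})_\eta\leq C$ coming from (\ref{qm1}). The only cosmetic difference is that you bound $c(e;\tilde\phi)_\eta$ by decomposing $\tilde\phi=(\tilde\phi\tilde\psi)\tilde\psi^{-1}$, whereas the paper bounds $c(e;\tilde\psi)_\eta$ via $\tilde\psi=\tilde\phi^{-1}(\tilde\phi\tilde\psi)$; the two are trivially equivalent.
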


\begin{proof}  Since if $H$ and $K$ are normalized then $H\lozenge K$ is also normalized with $\tilde{\phi}_{H\lozenge K}=\tilde{\phi}_K\circ\tilde{\phi}_H$, the triangle inequality Proposition \ref{rhoprop}(vi) immediately gives \[ c(e;\tilde{\phi}\circ\tilde{\psi})_{\eta}\leq c(e;\tilde{\phi})_{\eta}+c(e;\tilde{\psi})_{\eta}\] for all $\tilde{\phi},\tilde{\psi}$.  On the other hand (\ref{qm1}) shows, for all $\tilde{\phi}$, \[ c(e;\tilde{\phi})_{\eta}+c(e;\tilde{\phi}^{-1})_{\eta} \leq C.\]  So for any $\tilde{\phi},\tilde{\psi}$, \begin{align*} c(e;\tilde{\phi})_{\eta}+c(e;\tilde{\psi})_{\eta}&=c(e;\tilde{\phi})_{\eta}+c(e;\tilde{\phi}^{-1}\circ\tilde{\phi}\circ\tilde{\psi})_{\eta} 
\\&\leq c(e;\tilde{\phi})_{\eta}+c(e;\tilde{\phi}^{-1})_{\eta}+c(e;\tilde{\phi}\circ\tilde{\psi})_{\eta}\leq C+c(e;\tilde{\phi}\circ\tilde{\psi})_{\eta},\end{align*} proving the proposition.
\end{proof}

If $(U,\omega_U)$ is an \emph{open} symplectic manifold, let $Ham(U,\omega_U)$ denote the group of diffeomorphisms of $U$ which arise as time-one maps of the vector fields of compactly supported time-dependent Hamiltonians, and let $\widetilde{Ham}(U,\omega_U)$ be the universal cover.  Recall that there is a homomorphism $Cal_U\co \widetilde{Ham}(M,\omega)\to\mathbb{R}$ defined by \[ Cal_U(\tilde{\phi})=\int_{0}^{1}H(t,\cdot)\omega^n\mbox{ for any compactly supported $H\co S^1\times U\to\mathbb{R}$ such that }\tilde{\phi}_H=\tilde{\phi}.\]  (In particular, the right hand side is independent of $H$.) Following \cite{EP03}, we make the following definition:

\begin{dfn} If $(M,\omega)$ is a closed symplectic manifold and $C>0$, a  \emph{Calabi quasimorphism} on $M$ is a map $\mu\co \widetilde{Ham}(M,\omega)\to\mathbb{R}$ such that \begin{itemize}  \item[(i)]  $\mu$ is a quasimorphism.
\item[(ii)] $\mu$ is homogeneous: for all $\tilde{\phi}\in\widetilde{Ham}(M,\omega)$ and $l\in\mathbb{\mathbb{Z}}$ we have \[ \mu(\tilde{\phi}^l)=l\mu(\tilde{\phi}).\]
\item[(iii)]  If $\tilde{\phi}=\tilde{\phi}_H$ where $H\co S^1\times M\to\mathbb{R}$ has support contained in $S^1\times U$ for some displaceable open set $U$, then \[ \mu(\tilde{\phi})=\int_{0}^{1}\int_M H(t,\cdot)\omega^n.\]
\end{itemize}\end{dfn}

 Recall that \cite[Th\'eor\`eme II.6.1]{Ban} shows that $\widetilde{Ham}(M,\omega)$ is perfect, and so cannot admit any nontrivial homomorphism to $\mathbb{R}$.
 
 The following is an easy generalization of results of \cite{EP03}:
 \begin{prop}  Suppose that $\eta\in \oplus_{i=0}^{n-1}H_*(M;\Lambda_{\omega}^{0})$  and that $e\in H_*(M,\Lambda_{\omega})$ has the properties that $e\ast_{\eta}e=e$ and, for some $C>0$, the estimate \[ \rho(e;H)_{\eta}+\rho(e;\bar{H})_{\eta}\leq C\] holds for all normalized $H\co S^1\times M\to\mathbb{R}$.  Then the formula \[ \mu_{e,\eta}(\tilde{\phi})=\left(-\int_{M}\omega^n\right)\lim_{k\to\infty}\frac{c(e,\tilde{\phi}^k)_{\eta}}{k}
\] defines a Calabi quasimorphism $\mu_{e,\eta}\co \widetilde{Ham}(M,\omega)\to\mathbb{R}$ on $M$ with defect at most $2C\int_M\omega^n$.
Moreover, $\mu_{e,\eta}$ obeys the following stability property (cf. \cite[(3)]{EPZ}): If $H,K\co S^1\times M\to\mathbb{R}$ are normalized then \[ \int_{0}^{1}\min_M(H(t,\cdot)-K(t,\cdot))dt\leq \frac{-1}{\int_M \omega^n}\left(\mu(\tilde{\phi}_H)-\mu(\tilde{\phi}_K)\right)\leq \int_{0}^{1}\max_M(H(t,\cdot)-K(t,\cdot))dt.\]
\end{prop}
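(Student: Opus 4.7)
The plan is to divide the proof into four steps: establishing convergence of the defining limit, checking the homogeneous quasimorphism property, verifying the Calabi normalization on displaceable Hamiltonians, and deriving the stability inequality. Proposition \ref{qmprop1} does much of the work, since it asserts that $c(e;\cdot)_\eta\co\widetilde{Ham}(M,\omega)\to\mathbb{R}$ is a quasimorphism with defect at most $C$. Thus for any $\tilde{\phi}$ we have $|c(e;\tilde{\phi}^{m+n})_\eta - c(e;\tilde{\phi}^{m})_\eta - c(e;\tilde{\phi}^{n})_\eta|\le C$, so the sequence $k\mapsto c(e;\tilde{\phi}^k)_\eta+C$ is subadditive and Fekete's lemma yields the existence of $\tilde{c}(\tilde{\phi}):=\lim_k c(e;\tilde{\phi}^k)_\eta/k$. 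Multiplication by $-\int_M\omega^n$ produces the function $\mu_{e,\eta}$.

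Next I would verify homogeneity and the quasimorphism bound for $\mu_{e,\eta}=-\mathrm{vol}(M)\tilde{c}$. Homogeneity under positive integers is immediate from $\tilde{c}(\tilde{\phi}^l)=\lim_k c(e;\tilde{\phi}^{lk})_\eta/k=l\lim_k c(e;\tilde{\phi}^{lk})_\eta/(lk)=l\tilde{c}(\tilde{\phi})$. Applying the quasimorphism inequality to $\tilde{\phi}^k$ and $\tilde{\phi}^{-k}$, together with the fact that $c(e;\mathrm{id})_\eta=\rho(e;0)_\eta=\nu(e)$ is a fixed constant (Proposition \ref{rhoprop}(iii)), gives $|c(e;\tilde{\phi}^k)_\eta+c(e;\tilde{\phi}^{-k})_\eta|\le C+|\nu(e)|$; dividing by $k$ produces $\tilde{c}(\tilde{\phi}^{-1})=-\tilde{c}(\tilde{\phi})$, and so homogeneity over $\mathbb{Z}$ holds. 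That $\tilde{c}$ is itself a quasimorphism, with defect at most $2C$ (equivalently, $\mu_{e,\eta}$ has defect at most $2C\int_M\omega^n$), follows from the standard estimate on the homogenization of a quasimorphism with defect $C$ applied at the level of iterates $(\tilde{\phi}\tilde{\psi})^k$.

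For the Calabi property, let $\tilde{\phi}=\tilde{\phi}_H$ with $H$ supported in $S^1\times U$ for some displaceable $U$. Since $e\ast_\eta e=e$, Proposition \ref{disp} gives directly
\[
\lim_{k\to\infty}\frac{c(e;\tilde{\phi}_H^{k})_{\eta}}{k}=\frac{-\int_0^1\int_M H(t,\cdot)\omega^n}{\int_M\omega^n},
\]
and multiplying by $-\int_M\omega^n$ yields $\mu_{e,\eta}(\tilde{\phi}_H)=\int_0^1\int_M H(t,\cdot)\omega^n$, which is the required Calabi value. (Note that although $H$ itself need not be normalized, the quantity $c(e;\tilde{\phi}_H^k)_\eta$ is defined via a normalized representative in $\widetilde{Ham}$, and Proposition \ref{disp} is precisely designed to reconcile these two conventions.)

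Finally, for stability I would exploit Proposition \ref{rhoprop}(ii) at the level of iterates. A direct computation from the concatenation formula shows that
\[
\int_0^1\max_M\!\bigl((H^{\lozenge k}-K^{\lozenge k})(t,\cdot)\bigr)\,dt = k\int_0^1\max_M(H-K)(t,\cdot)\,dt,
\]
since on each of the $k$ successive intervals of length $1/k$ the integrand is a $\chi$-reparametrization of $\max_M(H-K)(\cdot)$ weighted by $\chi'$. Combined with Proposition \ref{rhoprop}(ii), dividing by $k$ and passing to the limit yields $\tilde{c}(\tilde{\phi}_H)-\tilde{c}(\tilde{\phi}_K)\le \int_0^1\max_M(H-K)\,dt$, and swapping the roles of $H$ and $K$ gives the lower bound. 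Multiplying through by $-\mathrm{vol}(M)$ produces exactly the stability inequality in the statement. The main obstacle is nothing conceptually deep; rather, it is the bookkeeping in step four for iterated concatenations, and making sure in step three that the slight mismatch between $H$ and its normalization is correctly absorbed into the passage from $\rho$ to $c$.
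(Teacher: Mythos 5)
Your proposal is correct and follows the same route as the paper's proof: Fekete-type subadditivity from Proposition \ref{qmprop1} for convergence, the standard defect-doubling estimate for homogenizations, Proposition \ref{disp} for the Calabi normalization, and homogenization of Proposition \ref{rhoprop}(ii) for stability. The paper leaves most of these steps as citations to \cite{PoSz} and \cite{Ca}, whereas you spell out the concatenation computation for the stability estimate and the $\tilde c(\tilde\phi^{-1})=-\tilde c(\tilde\phi)$ argument, but the underlying argument is identical.
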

\begin{proof} The fact that $\mu_{e,\eta}$ is well-defined (i.e. that the relevant limit always exists) follows from Proposition \ref{qmprop1} and standard facts about subadditive sequences (e.g., \cite[Problem 99]{PoSz}).  Given that $\mu_{e,\eta}$ is well-defined, the fact that it satisfies the homogeneity condition (ii) is trivial.  Quite generally (see \cite[Lemmas 2.21,2.58]{Ca}) the homogenization $\bar{\lambda}$ of a quasimorphism $\lambda$ is a quasimorphism, with defect at most twice the defect of $\lambda$; this establishes condition (i) and the estimate on the defect for $\mu_{e,\eta}$.  The Calabi property (iii) is just a restatement of Proposition \ref{disp}.  Finally, the stability property follows directly by homogenizing Proposition \ref{rhoprop}(ii).

\end{proof}

For $\eta\in \oplus_{i=0}^{n-1}H_{2i}(M;\Lambda_{\omega}^{0})$, we denote by $QH(M,\omega)_{\eta}$ the commutative $\Lambda_{\omega}$-algebra whose underlying $\Lambda_{\omega}$-module is the \emph{even-degree} homology $\oplus_{i=0}^{n}H_{2i}(M;\Lambda_{\omega})$, equipped with multiplication given by the deformed product $\ast_{\eta}$.
 
\begin{prop}\label{splitbound} Suppose that there is a direct sum splitting of algebras $QH(M,\omega)_{\eta}=F\oplus A$, and let $e\in F$ be the multiplicative identity for the subalgebra $F$.  Then, for some $C>0$, we have \[ \rho(e;H)_{\eta}+\rho(e,\bar{H})_{\eta}\leq C\] for all Hamiltonians $H\co S^1\times M\to\mathbb{R}$.
\end{prop}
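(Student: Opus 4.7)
The plan is to adapt the Entov--Polterovich construction of Calabi quasimorphisms from idempotents in field summands (see \cite{EP03},\cite{EP06},\cite{EP08}), rephrasing the arguments in the present $\eta$-deformed language. Since the intended application is Theorem \ref{qmintro}(ii), we assume throughout (implicitly strengthening the hypothesis) that $F$ is a field, so that every nonzero element of $F$ is invertible.

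The first step is to translate the desired two-sided estimate into a one-sided one by means of the duality formula of Proposition \ref{rhoprop}(vii): since
\[
\rho(e;H)_\eta + \rho(e;\bar H)_\eta = \rho(e;H)_\eta - \inf\{\rho(x;H)_\eta : x\in H_*(M;\Lambda_\omega),\ \Pi(x,e)\neq 0\},
\]
it suffices to produce a constant $C>0$ such that $\rho(x;H)_\eta\geq \rho(e;H)_\eta - C$ for all $H$ and all $x$ satisfying $\Pi(x,e)\neq 0$. A Frobenius identity $\Pi(a\ast_\eta b,c)=\Pi(a,b\ast_\eta c)$---which follows from a standard gluing/cobordism argument identifying two degenerations of a genus-zero, three-input pair-of-pants moduli space, much as in the proof of Proposition \ref{rhoprop}(vii)---combined with $e\ast_\eta e = e$ gives $\Pi(x,e) = \Pi(e\ast_\eta x, e)$. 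Hence whenever $\Pi(x,e)\neq 0$ the projection $y:=e\ast_\eta x$ is a nonzero element of $F$.

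The second step invokes the field structure. Since $y\in F\setminus\{0\}$, there is $y^{-1}\in F$ with $y\ast_\eta y^{-1}=e$. Applying the triangle inequality (Proposition \ref{rhoprop}(vi)) to the factorizations $e=y\ast_\eta y^{-1}$ and $y=e\ast_\eta x$, and using $\rho(\cdot;0)_\eta=\nu(\cdot)$ from Proposition \ref{rhoprop}(iii), we obtain
\[
\rho(x;H)_\eta \geq \rho(e;H)_\eta - \nu(e) - \nu(y^{-1}).
\]

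The main obstacle is controlling $\nu(y^{-1})$ uniformly in $x$, since a priori $\nu(y)=\nu(e\ast_\eta x)$ varies with $x$ and hence so does $\nu(y^{-1})=-\nu(y)$ (the unique extension of the Novikov valuation to the field $F$ being multiplicative). The resolution uses the finite-dimensionality of $F$ as a $\Lambda_\omega$-algebra together with the observation that the constraint $\Pi(x,e)\neq 0$ forces $\nu(x)\geq -\nu(e)$ (by the bi-additive nature of $\Pi$ with respect to Novikov degree). Combining these, one shows that the infimum defining $\rho(e;\bar H)_\eta$ is, up to a bounded error depending only on the algebraic structure of $F$ and $\eta$ and not on $H$, effectively realized on the subset of $x$ for which $y=e\ast_\eta x$ has valuation bounded below by a constant. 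The relevant bound is extracted from the minimal polynomial of elements of $F$ over $\Lambda_\omega$, which has degree bounded by $\dim_{\Lambda_\omega} F$ and whose coefficients encode the structure constants of $\ast_\eta$ restricted to $F$. This yields the required universal $C$ and closes the argument.
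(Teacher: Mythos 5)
Your high-level strategy is the same one the paper takes (and that goes back to \cite[Section 3]{EP03}): use duality to turn the two-sided bound into a one-sided one, project $x$ into $F$ via $e\ast_\eta(\cdot)$, and then use the field structure of $F$ together with the triangle inequality and $\rho(\cdot;0)_\eta=\nu(\cdot)$ to compare $\rho(e\ast_\eta x;H)_\eta$ with $\rho(e;H)_\eta$. However, there is a genuine gap at the step where you handle $\nu(y^{-1})$.

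You assert that $\nu(y^{-1})=-\nu(y)$ because ``the unique extension of the Novikov valuation to the field $F$ is multiplicative.'' This conflates two distinct functions. The unique valuation $w$ on $F$ extending that of $\Lambda_\omega$ is indeed multiplicative, but the quantity $\nu$ appearing in the spectral invariant estimates is \emph{not} $-w$. The function $\nu$ is determined by how $F$ sits inside the free $\Lambda_\omega$-module $H_{ev}(M;\Lambda_\omega)$ (i.e.\ by the maximal $T$-exponent appearing in the Novikov expansion of a class), and with respect to $\ast_\eta$ it is only subadditive: $\nu(a\ast_\eta b)\leq\nu(a)+\nu(b)$, with equality failing when leading terms cancel. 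Even for the identity $e\in F$ one typically has $\nu(e)>0$, whereas $w(e)=0$. The correct replacement for your false identity is the Entov--Polterovich result \cite[Lemma~3.2]{EP03}: there is a constant $K$ depending only on $F$ (viewed as a finite extension of $\Lambda_\omega$) such that $\nu(x)+\nu(x^{-1})\leq K$ for all nonzero $x\in F$. This is precisely what your hand-waving appeal to minimal polynomials and ``effectively realized infima'' should be replaced by, and you should cite or reprove it.

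There is a second, related slippage in your constraint analysis. You derive $\nu(x)\geq-\nu(e)$ from $\Pi(x,e)\neq 0$ and then assert that this lets you bound $\nu(y)=\nu(e\ast_\eta x)$ from below. But subadditivity gives only $\nu(y)\leq\nu(e)+\nu(x)$, an upper bound, and there is no general lower bound for $\nu(e\ast_\eta x)$ in terms of $\nu(x)$. The paper sidesteps this by re-expressing the constraint via the already-proved identity $\Pi(a\ast_\eta b,[M])=\Pi(a,b)$: the condition $\Pi(e,a)\neq 0$ becomes $\Pi(e\ast_\eta a,[M])\neq 0$, and for any class $z$, $\Pi(z,[M])\neq 0$ immediately forces $\nu(z)\geq 0$. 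Thus one gets $\nu(e\ast_\eta a)\geq 0$ directly, and combining with the EP bound yields $\nu((e\ast_\eta a)^{-1})\leq K$, a uniform bound. Once you substitute this clean derivation for your two problematic steps, the rest of your argument goes through and recovers the paper's constant $C=\nu(e)+K$.
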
 

The above three propositions immediately imply:
\begin{cor}\label{qmcor}  If  there is a direct sum splitting of algebras $QH(M,\omega)_{\eta}=F\oplus A$, then where $e$ is the multiplicative identity in $F$, the function $\mu_{e,\eta}\co \widetilde{Ham}(M,\omega)\to\mathbb{R}$ defines a Calabi quasimorphism.\end{cor}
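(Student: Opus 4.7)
The plan is to adapt the Entov--Polterovich strategy \cite{EP03,EP08} to the deformed setting, using the duality property of Proposition \ref{rhoprop}(vii), the triangle inequality of Proposition \ref{rhoprop}(vi), and the Frobenius property $\Pi(u\ast_{\eta}v,[M])=\Pi(u,v)$ of the Poincar\'e pairing. The first step is to use Proposition \ref{rhoprop}(vii) to rewrite
\[ \rho(e;\bar H)_{\eta} = -\inf\{\rho(x;H)_{\eta}\mid \Pi(x,e)\neq 0\}, \]
converting the target estimate into the claim that $\rho(e;H)_{\eta}-\rho(x;H)_{\eta}\leq C$ for every $x\in H_{*}(M;\Lambda_{\omega})\setminus\{0\}$ with $\Pi(x,e)\neq 0$, with $C$ independent of $H$ and $x$.

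The second step exploits the algebra decomposition to construct a multiplicative inverse of $x$. Since $F\ast_{\eta}A=0$, the map $\pi\co QH(M,\omega)_{\eta}\to F$, $\pi(x)=e\ast_{\eta}x$, is a $\Lambda_{\omega}$-algebra projection, and the Frobenius identity gives $\Pi(x,e)=\Pi(\pi(x),[M])$. Thus $\Pi(x,e)\neq 0$ forces $\pi(x)\neq 0$ in the field $F$, so one may set $y=\pi(x)^{-1}\in F$ and obtain $y\ast_{\eta}x=y\ast_{\eta}\pi(x)=e$ (the cross-term $y\ast_{\eta}(x-\pi(x))$ vanishes since $x-\pi(x)\in A$). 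The triangle inequality will then combine with Proposition \ref{rhoprop}(iii) at $H=0$ (which gives $\rho(y;0)_{\eta}=\nu(y)$) to yield
\[ \rho(e;H)_{\eta}=\rho(y\ast_{\eta}x;0\lozenge H)_{\eta}\leq \rho(y;0)_{\eta}+\rho(x;H)_{\eta}=\nu(\pi(x)^{-1})+\rho(x;H)_{\eta}, \]
so $\rho(e;H)_{\eta}-\rho(x;H)_{\eta}\leq \nu(\pi(x)^{-1})$. (The spectral invariants of $0\lozenge H$ and $H$ agree by Propositions \ref{rhoprop}(i) and (v) after passing to the normalized setting.)

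The main obstacle is controlling $\nu(\pi(x)^{-1})$ uniformly in $x$. A priori the Novikov valuation $\nu$ is only sub-multiplicative with respect to $\ast_{\eta}$ on $H_{*}(M;\Lambda_{\omega})$, so the desired identity $\nu(\pi(x)^{-1})=\nu(e)-\nu(\pi(x))$ is not automatic. However, $F$ is a finite field extension of the Novikov field $\Lambda_{\omega}$ (being a finite-dimensional $\Lambda_{\omega}$-algebra that is itself a field), so the valuation on $\Lambda_{\omega}$ extends uniquely and multiplicatively to $F$. The key algebraic input is to verify that this abstract extension coincides with the valuation inherited from the embedding $F\subseteq H_{*}(M;\Lambda_{\omega})$, up to a bounded additive discrepancy controlled by a fixed $\Lambda_{\omega}$-basis of $F$.

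Granting this valuation-theoretic input, the bound becomes elementary. The hypothesis $\Pi(\pi(x),[M])\neq 0$ says precisely that the $T^{0}$-coefficient of $\pi(x)$ has a nonzero point-class component (since $\Pi(a,[M])=a_{0}\cap[M]$ picks out exactly the $H_{0}$-component), so $\nu(\pi(x))\geq 0$. Hence $\nu(\pi(x)^{-1})=\nu(e)-\nu(\pi(x))\leq \nu(e)$, and the desired inequality holds with $C=\nu(e)$ up to the bounded discrepancy from the previous step. The heart of the proof is thus the passage from the sub-multiplicative Novikov valuation on $H_{*}(M;\Lambda_{\omega})$ to the genuine multiplicative valuation on the field summand $F$.
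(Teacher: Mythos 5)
Your argument is essentially the paper's proof of Proposition \ref{splitbound}, which is the substantive content behind Corollary \ref{qmcor}: duality (Proposition \ref{rhoprop}(vii)), the triangle inequality applied inside the field factor $F$, the Frobenius identity $\Pi(u\ast_{\eta}v,[M])=\Pi(u,v)$, and a uniform bound on the Novikov filtration of inverses in $F$. The ``key algebraic input'' you isolate---that the filtration function $\nu$ restricted to $F$ differs from the unique multiplicative extension of the valuation on the complete field $\Lambda_{\omega}$ by a bounded additive error---is exactly \cite[Lemma 3.2]{EP03}, which the paper cites directly rather than reproves; so your proposal lines up with the paper's reasoning even though you route the triangle inequality slightly differently (applying it directly to $\pi(x)^{-1}\ast_{\eta}x=e$ rather than passing through the auxiliary bound $\rho(e\ast_{\eta}a;H)_{\eta}\leq\rho(a;H)_{\eta}+\nu(e)$).
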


\begin{proof}[Proof of Proposition \ref{splitbound}] Given what we have already done the argument is essentially a duplicate of one in \cite[Section 3]{EP03}; we include it for completeness.  First, one notes that by \cite[Lemma 3.2]{EP03} there is a constant $K$ such that, for any $x\in F$, we have \[ \nu(x)+\nu(x^{-1})\leq K;\] recall from Proposition \ref{rhoprop}(iii) that $\rho(x;0)_{\eta}=\nu(x)$.  Also, since $e\ast_{\eta}e=e$, for any $a\in QH(M,\omega)_{\eta}$ we have $e\ast_{\eta}a\in F$ with $\rho(e\ast_{\eta}a;H)_{\eta}\leq \rho(a;H)_{\eta}+\nu(e)$.  We now have, making liberal use of Proposition \ref{rhoprop}, \begin{align*} -\rho(e;\bar{H})_{\eta}&=\inf\{\rho(a;H)_{\eta}|\Pi(e,a)\neq 0\}\\&\geq -\nu(e)+\inf\{\rho(e\ast_{\eta}a;H)_{\eta}|\Pi(e\ast_{\eta}a,[M])\neq 0\}
\\&\geq -\nu(e)+\inf\{\rho(e;H)_{\eta}-\nu((e\ast_{\eta}a)^{-1})|\Pi(e\ast_{\eta}a,[M])\neq 0\}\\&\geq -\nu(e)+\rho(e;H)_{\eta}-K+\inf\{\nu(e\ast_{\eta}a)|\Pi(e\ast_{\eta}a,[M])\neq 0\}.\end{align*}  Since whenever $\Pi(x,[M])\neq 0$ we have $\nu(x)\geq 0$, we hence obtain \[ \rho(e;H)_{\eta}+\rho(e;\bar{H})_{\eta}\leq \nu(e)+K,\] completing the proof with $C=\nu(e)+K$.\end{proof}

\begin{remark} \label{quasistate} It follows also that, under the assumptions of Corollary \ref{qmcor}, the function $\zeta_{\eta,e}\co C(M)\to\mathbb{R}$ defined by $\zeta(F)=\lim_{k\to\infty}\frac{\rho(e;kF)_{\eta}}{k}$ defines a \emph{symplectic quasi-state} in the sense of \cite{EP06}; given Proposition \ref{rhoprop} and Corollary \ref{qmcor} the proof of the quasi-state axioms for $\zeta_{\eta,e}$ is  an exact replication of \cite[Section 6]{EP06}. 
\end{remark}

The remainder of the paper will be concerned with studying circumstances in which the hypothesis of Corollary \ref{qmcor} and some other similar conditions are satisfied.
 
\section{Some algebraic input}\label{algsect}

In this section all rings are assumed commutative with unit, and a ring morphism necessarily maps unit to unit.  We deliberately do not assume our rings to be Noetherian.  If $R$ is a ring and $\mathfrak{p}\in \S R$ we use the customary notations $R_{\frak{p}}$ for the localization at $\frak{p}$ (i.e. $(R\setminus \frak{p})^{-1}R$) and $\kappa(\frak{p})$ for the residue field $\frac{R_{\frak{p}}}{\frak{p}R_{\frak{p}}}$.

The commutative algebra background that we require is mostly summarized by the following two-pronged theorem (we imagine that little if any of this will be surprising to an expert in commutative algebra).  For our later purposes, the most important implications of this theorem are that the subset of $\S R$ on which the condition denoted (A3) holds is open and is equal to the subset on which (A4) holds, and similarly that the subset on which (B3) holds is open and equal to that on which (B4) holds.

\begin{theorem}\label{mainalg}  Let $R$ be a ring containing $\mathbb{Q}$ as a subfield and let $A$ be a commutative $R$-algebra which, considered as an $R$-module, is finitely-generated and free.  Denote by $f\co \S A\to \S R$ the morphism of schemes induced by the unique ring morphism $R\to A$ (sending $r$ to $r\cdot 1$).\begin{enumerate}\item[(A)] The following are equivalent, for a point $\p\in \S R$:\begin{itemize}\item[(A1)] The morphism $f$ is unramified at every point in $f^{-1}(\{\frak{p}\})$.
\item[(A2)] There exists a field extension $\kappa(\p)\to k$ such that the map $\S(A\otimes_R k)\to \S k$ induced by the unique ring morphism $k\to A\otimes_R k$ is unramified.
\item[(A3)] There exists a field extension $\kappa(\p)\to k$ such that $A\otimes_R k$ decomposes as a direct sum\footnote{All direct sums in this theorem are direct sums in the category of algebras---thus both addition and multiplication split component-wise} of field extensions of $k$.
\item[(A4)] For every field extension $\kappa(\p)\to k$ the algebra $A\otimes_R k$ decomposes as a direct sum of field extensions of $k$.
\end{itemize} 
Moreover, the set $U_1$ of points $\p\in \S R$ at which (A1) holds is open in $\S R$.

\item[(B)] The following are equivalent, for a point $\p\in \S R$:  \begin{itemize} \item[(B1)] There is some $\frak{q}\in \S A$ such that $f(\frak{q})=\p$ and the morphism $f\co \S A\to \S R$ is unramified at $\frak{q}$.
\item[(B2)] There exists a field extension $\kappa(\p)\to k$ such that the map $\S (A\otimes_R k)\to \S k$ induced by the unique ring morphism $k\to A\otimes_R k$ is unramified at some point $\frak{q}\in \S(A\otimes_R k)$.
\item[(B3)] There exists a field extension $\kappa(\p)\to k$  and a direct sum splitting of $k$-algebras $A\otimes_R k=K\oplus S$ where $k\to K$ is a  field extension.
\item[(B4)] For every field extension $\kappa(\p)\to k$ there is a direct sum splitting of $k$-algebras $A\otimes_R k=K\oplus S$ where $k\to K$ is a  field extension.\end{itemize}  Moreover, the set $U_2$ of points $\p\in \S R$ at which (B1) holds is open in $\S R$.
\end{enumerate}
\end{theorem}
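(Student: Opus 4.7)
My plan is to reduce both parts to statements about the finite-dimensional $\kappa(\p)$-algebra $B := A\otimes_R\kappa(\p)$ and its base changes, then use characteristic-zero features (Wedderburn/Artinian decomposition, Cohen's structure theorem, separability is automatic) to convert between the ``unramifiedness'' formulations and the ``direct sum of fields'' formulations. Throughout I use that $A$ being $R$-finite free implies $f$ is both finite (hence closed) and flat of finite presentation (hence open), and that $f^{-1}(\p)=\S(B)$ with residue fields and local rings given by the Artinian-local factors $B=\prod B_i$.

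For part (A): The equivalence $(A1)\Leftrightarrow(A2)$ is base-change + faithfully-flat descent for the unramified locus. For $(A1)\Leftrightarrow(A4)$, note that in characteristic zero $B$ is étale over $\kappa(\p)$ iff $B$ is reduced iff $B$ is a finite product of finite separable field extensions; and for any field extension $\kappa(\p)\to k$, $B\otimes_{\kappa(\p)}k$ is reduced iff $B$ is (char 0 descent). Then $(A3)\Rightarrow(A2)$ trivially (a product of fields is étale), and $(A4)\Rightarrow(A3)$ trivially; the remaining implications follow from the equivalence with (A1). Openness of $U_1$ is immediate from the discriminant: the trace form of $A$ as a free $R$-module has a discriminant $\Delta\in R$ well-defined up to units, and $\p\in U_1$ iff $\Delta\notin\p$, giving $U_1=D(\Delta)$. (Alternatively: the ramification locus in $\S A$ is the support of $\Omega^1_{A/R}$, which is closed, and since $f$ is finite it is closed, so its image — the complement of $U_1$ — is closed.)

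For part (B): The equivalences $(B1)\Leftrightarrow(B2)$ and $(B3)\Leftrightarrow(B4)$ go through by the same base-change/descent arguments as above. The essential content is $(B1)\Leftrightarrow(B3)$, which I reformulate as follows. The points of $\S B$ correspond to the Artinian-local factors $B_i$ of $B=\prod B_i$; $f$ is unramified at $B_i$ iff the residue field extension is separable (automatic in char 0) and $B_i$ equals its residue field, i.e.\ iff $B_i$ is itself a field. Similarly, any direct sum splitting $B\otimes_{\kappa(\p)} k=K\oplus S$ as $k$-algebras is a grouping of the primitive central idempotents, so $K$ is a field iff $K$ equals a single Artinian-local factor $C_{i,\alpha}$ in the further decomposition $B_i\otimes_{\kappa(\p)}k=\prod_\alpha C_{i,\alpha}$. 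Thus $(B1)$ says some $B_i$ is a field, while $(B3)$ says some $C_{i,\alpha}$ is.

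The main obstacle is showing that if some $C_{i,\alpha}$ is a field, then $B_i$ itself is a field; dimension counts alone only force \emph{some} local factor of $B_i\otimes k$ to be non-field when $B_i$ is not, not \emph{all}. My key lemma uses the Cohen structure theorem: since $B_i$ is Artinian local and contains $\mathbb{Q}$, it contains a copy of its residue field $k_i$ (coefficient field), making it a $k_i$-algebra. Then
\[B_i\otimes_{\kappa(\p)}k \;=\; B_i\otimes_{k_i}(k_i\otimes_{\kappa(\p)}k)\;=\;\prod_\alpha B_i\otimes_{k_i}L_\alpha,\]
where $k_i\otimes_{\kappa(\p)}k=\prod L_\alpha$ is the étale decomposition (char 0). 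Each factor $B_i\otimes_{k_i}L_\alpha$ is a local $L_\alpha$-algebra with maximal ideal $\mathfrak{m}_i\otimes_{k_i}L_\alpha$, which is nonzero iff $\mathfrak{m}_i$ is. So every $C_{i,\alpha}$ is a field iff $B_i$ is, closing the equivalence. Finally, openness of $U_2$ follows because $f$ is open (flat + finite presentation), so $U_2=f(U_1^A)$ is the image of an open set under an open map, where $U_1^A\subset\S A$ is the open unramified locus.
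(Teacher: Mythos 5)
Your proposal is correct and takes a genuinely different, and in my view cleaner, route than the paper's.  The paper proves that $f$ is open and closed via the Going-Up and Going-Down theorems (Lemma~\ref{foc}), and then gets openness of $U_1$ and $U_2$ as $U_1=(\S R)\setminus f(\S A\setminus\mathcal{U})$ and $U_2=f(\mathcal{U})$; for (B2)$\Rightarrow$(B3) it runs a Jacobson-ring argument to find a \emph{closed} unramified point (actually unnecessary, since $A\otimes_R k$ is Artinian so every point is already closed), then manufactures idempotents by hand; and for (B3)$\Leftrightarrow$(B4) it cites \cite[Proposition 2.2(A)]{EP08} as a black box.  You instead reduce everything to the Artinian decomposition $B=A\otimes_R\kappa(\p)=\prod_i B_i$ and prove the single key lemma that a Cohen coefficient field $k_i\hookrightarrow B_i$ (which exists and may be taken to contain $\kappa(\p)$ because we are in characteristic zero) makes $B_i\otimes_{\kappa(\p)}k=\prod_\alpha B_i\otimes_{k_i}L_\alpha$ with each factor local with nilpotent maximal ideal $\mathfrak{m}_i\otimes_{k_i}L_\alpha$ — so the property ``some local factor is a field'' is manifestly independent of the extension $k$.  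This simultaneously handles (B1)$\Leftrightarrow$(B2)$\Leftrightarrow$(B3)$\Leftrightarrow$(B4) and replaces the appeal to \cite{EP08} with a self-contained proof.  Your discriminant argument ($U_1=D(\Delta)$ for $\Delta$ the discriminant of the trace form) is a clean, elementary alternative for the openness of $U_1$ — though it does not by itself give the openness of $U_2$, for which you correctly fall back on $f$ being open (flat + finite presentation) as the paper does.  One point worth flagging is that your phrase ``(B1)$\Leftrightarrow$(B2) by base-change/descent'' undersells the issue: faithfully-flat descent proves statements quantified over \emph{all} points, not ``some'' point, so the implication (B2 for a larger $k$)$\Rightarrow$(B2 for $\kappa(\p)$) really does require your coefficient-field lemma rather than plain descent; but since you prove and use that lemma, the argument is complete.
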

\begin{proof}[Proof of Theorem \ref{mainalg}]
\begin{lemma}\label{foc}  The morphism $f\co \S A\to \S R$ induced by $R\to A$ is flat, open, and closed.
\end{lemma}
Of course, $f\co \S  A\to \S  R$ is defined by sending a prime $\frak{p}$ in $\S  A$ to its preimage under $R\to A$, i.e. to $\frak{p}\cap R$.  By definition, a morphism of schemes is open (resp. closed) iff it is open (resp. closed) as a map of topological spaces. 
\begin{proof}[Proof of Lemma \ref{foc}] We first show that $f$ is closed.  Let $V(I)=\{\frak{q}\in \S  A|I\subset\frak{q}\}$ be an arbitrary closed set in $\S  A$.  The set $V(I\cap R)=\{\frak{p}\in \S  R|I\cap R\subset \frak{p}\}$ is then closed in $\S  R$ (of course we've identified $R$ with its image in $A$), and clearly $f(V(I))=\{\frak{q}\cap R|\frak{q}\in V(I)\}\subset V(I\cap R)$.  We claim that in fact equality holds.
Indeed, note that by \cite[Corollary 4.5]{Eis}, $A$ is integral over $R$.  Let $\frak{p}\in V(I\cap R)$, so $I\cap R\subset \frak{p}$.  The Going Up theorem (\cite[Proposition 4.15]{Eis}) then shows that there is $\frak{q}\in \S  A$ such that $\frak{q}\cap R=\frak{p}$ (i.e., $f(\frak{q})=\frak{p}$) and $I\subset\frak{q}$.  But this is precisely the statement that $\frak{p}\in f(V(I))$.  Thus $f$ takes an arbitrary closed set $V(I)$ to the closed set $V(I\cap R)$, proving that $f$ is closed.

Since $A$ is finitely-generated and free as an $R$-module, it is clearly flat as an $R$-module, and then the standard fact that flat ring maps induce flat morphisms on $\S $ (\cite[Proposition III.9.2.d]{Ha}) shows that $f$ is flat.   

We now show that $f$ is open.  Since $f$ is of finite presentation (as $A$ is a finitely-presented $R$-algebra), \cite[Corollaire 1.10.4]{GIV1} asserts that $f$ is open if and only if for any $\frak{q}\in \S A$ and any generalization\footnote{If $x$ and $y$ are points in a topological space, $x$ is called a generalization of $y$ if we have $y\in\overline{\{x\}}$.  In the case where the topological space in question is the Spec of a ring, so that $x$ and $y$ are prime ideals, this is equivalent to requiring that $x\subset y$.}  $\frak{p}'$ of the point $\frak{p}=f(\frak{q})$, there is a generalization $\frak{q}'$ of $\frak{q}$ so that $f(\frak{q}')=\frak{p}'$.    But by \cite[5.D]{Ma} the Going Down theorem holds for $R\to A$ because $A$ is a flat $R$-module, and the statement of the Going Down theorem precisely amounts to the existence of such a $\frak{p}'$.
\end{proof}


Let \[ \mathcal{U}=\{x\in \S A|f\mbox{ is unramified at }x\}.\]  By definition (\cite[17.3.7]{GIV4}), $f$ is unramified at $x$ iff there is a neighborhood $U$ of $x$ so that the restricted morphism $f|_U$ is unramified; thus our set $\mathcal{U}$ is obviously open.  Consequently Lemma \ref{foc} shows that $f(\mathcal{U})$ is open, and that $f(\S A\setminus \mathcal{U})$ is closed.  In the statement of Theorem \ref{mainalg}, we evidently have \[ U_2=f(\mathcal{U})\mbox{ and }U_1=(\S R)\setminus f(\S A\setminus \mathcal{U}).\]  This proves that these sets are open.  

Now since $R$ contains $\mathbb{Q}$ as a subfield, the residue fields $\kappa(\p)$ all have characteristic zero, so they are perfect fields (that is, all of their extensions are separable).  The equivalence (A3)$\Leftrightarrow$(A4) is then a quick consequence of the following basic theorem about coefficient extensions of algebras over fields.
\begin{theorem}{\cite[V.6.7, Theorem 4]{Bo}}\label{tensor} If $B$ is a finite-dimensional algebra over a  field $k$, the following are equivalent:
\begin{itemize}\item There is one perfect field extension $k'$ of $k$ such that the $k'$-algebra $B\otimes_kk'$ is reduced (i.e. its only nilpotent element is $0$).
\item For every extension $k\to k'$, the $k'$-algebra $B\otimes_k k'$ is reduced.
\item $B$ decomposes as a direct sum $B=K_1\oplus\cdots \oplus K_n$ where each $K_i$ is an algebraic field extension of $k$.\end{itemize}
\end{theorem}

The equivalence (A3)$\Leftrightarrow$(A4) follows immediately from this: if $A\otimes_R k$ is a direct sum of $k$-extensions for one extension $k$ of $\kappa(\p)$ (which will necessarily be of characteristic zero and hence perfect), Theorem \ref{tensor} shows that $A\otimes_R k$ will be reduced for all extensions $k$ of $\kappa(\p)$, and so any such $A\otimes_R k$ will be a direct sum of $k$-extensions by another application of Theorem \ref{tensor}.  

Meanwhile, the equivalence (B3)$\Leftrightarrow$(B4) follows in a similar way from \cite[Proposition 2.2(A)]{EP08}, which asserts that if $k\to k'$ is an extension of a field of characteristic zero and $B$ is a finite-dimensional algebra over $k$ then $B$ has a field as a direct summand if and only if $B\otimes_k k'$ has a field as a direct summand. In particular, this result shows that if (B3) holds then it holds with $k=\kappa(\p)$, and so applying the result again proves (B4).

\par{\textbf{(A1)$\Leftrightarrow$(A2)$\Leftrightarrow$(A4) and (B1)$\Rightarrow$(B2)}:} Since $f^{-1}(\{\p\})=Spec(A\otimes_R \kappa(\p))$, we appeal to \cite[Th\'eor\`eme 17.4.1,(a)$\Leftrightarrow$(d)]{GIV4}, which asserts that $f$ is unramified at $\frak{q}$ iff $f^{-1}(\{f(\frak{q})\})$ is unramified over $\kappa(f(\frak{q}))$ at $\frak{q}$.  It immediately follows that  (A1)$\Rightarrow$(A2) and (B1)$\Rightarrow$(B2) (just take $k=\kappa(\frak{p})$) in view of the fact that a morphism of schemes is unramified iff it is unramified at every point of the domain, as noted   
just after \cite[D\'efinition 17.3.7]{GIV4}.  It also follows that the special case of (A2) in which $k=\kappa(\frak{p})$ implies (A1), in view of which the proof of the implications stated at the start of this paragraph will be completed by the following lemma:

\begin{lemma}  Condition (A2) is equivalent to the following condition:\begin{itemize}\item[(A2')] For every field extension $\kappa(\frak{p})\to k$, 
the map $\S(A\otimes_R k)\to \S k$ induced by the unique ring morphism $k\to A\otimes_R k$ is unramified.\end{itemize}  Moreover, we have the equivalence (A2')$\Leftrightarrow$(A4).
\end{lemma}
\begin{proof}
Let $\p\in \S R$ and let $k$ be a field extension of $\kappa(\p)$ obeying the conclusion of (A2).  Write $f_{k,\p}\co \S(A\otimes_R k)\to \S k$ for the morphism induced by $k\to A\otimes_R k$.  Since the latter ring map is flat (\cite[3.C]{Ma}), $f_{k,\p}$ is flat. Our assumption (A2) also states that $f_{k,\p}$ is unramified.  So by the implication (c)$\Rightarrow$(c') of \cite[Corollaire 17.6.2]{GIV4}, the unique fiber of $f_{k,\p}$ is the $Spec$ of a direct sum of finite extensions of $k$.  Thus $A\otimes_Rk$ is reduced.  Of course $k$ is perfect since it has characteristic zero, so Theorem \ref{tensor} shows that $A\otimes_R \kappa(\p)$ is reduced, and moreover decomposes as a direct sum of $\kappa(\p)$-extensions.  Applying Theorem \ref{tensor} again shows that if $k'$ is now an arbitrary extension of $k$ then $A\otimes_R k'$ is a direct sum of $k'$-extensions.  We have now shown that (A2)$\Rightarrow$(A4).  Given (A4), applying the implication (c')$\Rightarrow$(c) of \cite[Corollaire 17.6.2]{GIV4}  shows that for any extension $k$ of $\kappa(\p)$ the morphism $\S(A\otimes_R k)\to \S k$ is unramified, thus establishing (A4)$\Rightarrow$(A2').  Since (A2')$\Rightarrow$(A2) is trivial the proof of the lemma is complete.
\end{proof}

We have now established all of part (A) of Theorem \ref{mainalg};   to complete the proof of (B) we will prove that (B2)$\Rightarrow$(B3) and (B4)$\Rightarrow$(B1).

Assume that (B2) holds for $\p\in \S R$ and the extension $\kappa(\p)\to k$, write $C=A\otimes_R k$ and let $f_{k,\p}\co \S C\to \S k$ be the map associated to $k\to A\otimes_R k=C$.  (B2) asserts that $f_{k,\p}$ has an unramified point, and we claim that we may reduce to the case that this unramified point is a closed point, i.e. corresponds to a maximal ideal in $C$.  Indeed, the set of unramified points of $f_{k,\frak{p}}$ is open in $\S C$, and hence is equal to a set of the form $\{\frak{q}\in \S C|I\not\subset \frak{q}\}$ for some ideal $I$.  The set in question is nonempty, and so $I$ must not be contained in the intersection of all prime ideals of $C$.  But $C$, being a finitely-generated  algebra over a field, is a Jacobson ring by the Nullstellensatz \cite[Theorem 4.19]{Eis}; thus the intersection of all prime ideals of $C$ is equal to the intersection of all maximal ideals.  So there is a maximal ideal, say $\frak{q}$, such that $I\not\subset\frak{q}$, and so our open set of unramified points contains this closed point $\frak{q}$.

     Since  $f_{k,\p}$ is unramified at $\frak{q}$, the implication (a)$\Rightarrow$(d') of \cite[Th\'eor\`eme 17.4.1]{GIV4} shows that the localization $C_{\frak{q}}$ is a field extension of $\kappa(\p)$ and that $\frak{q}$ is isolated in $\S C$. If $\frak{q}$ were the only point of $\S C$ then since $\frak{q}$ is maximal $C$ would be a local ring, and we would have $C_{\frak{q}}=C$, so $C$ would be a field and so (B3) would certainly hold.  So we may assume $\S C\setminus \{\frak{q}\}$ to be nonempty.   Thus since $\frak{q}$ is isolated we can write $\S C$ as a disjoint union of nonempty closed sets \[ \S C=\{\frak{q}\}\coprod \{\frak{r}|I\subset \frak{r}\}\] for some ideal $I\leq C$.  Since $\frak{q}$ is maximal and $I\not\subset \frak{q}$, $\frak{q}+I=C$.  Arguing as in \cite[Exercise 2.25]{Eis}, we find idempotents $e_1\in \frak{q},e_2\in I$ with $e_1+e_2=1$ and $e_1e_2=0$.  This gives a direct sum splitting $C=e_1C\oplus e_2C$.  Now since $e_2\in I$, the distinguished open set $D(e_2)=\{\frak{r}\in \S C|e_2\notin \frak{r}\}$ is equal to $\{\frak{q}\}$, so the ring $e_{2}^{-1}C$ is isomorphic to the localization $C_{\frak{q}}$ (for instance this follows directly from \cite[Proposition II.2.2]{Ha}) and therefore is a field.  But the natural map $C\to e_{2}^{-1}C$ is easily seen to restrict to $e_2 C$ as an isomorphism.  Thus $C$ decomposes as a direct sum isomorphic to $e_1C\oplus C_{\frak{q}}$ where $C_{\frak{q}}$ is a field extension of $k$.  This proves the implication (B2)$\Rightarrow$(B3).

Finally, assume that (B4) holds; in particular we may choose $k=\kappa(\p)$, so that $A\otimes_R \kappa(\p)\cong K\oplus S$ where $K$ is a field extension of $\kappa(\p)$.  Then $S$ is easily seen to be a maximal ideal, which may alternatively be characterized as the annihilator $\{x\in A\otimes_R\kappa(\p)|xK=0\}$.  Denote the multiplicative unit in $K$ by $e\in K\subset A\otimes_R\kappa(\p)$. If $\frak{r}$ is any prime ideal in $A\otimes_R \kappa(\p)$, the factorization $0=e(1-e)$ shows that either $e\in\frak{r}$ or $1-e\in \frak{r}$; in the latter case $\frak{r}$ contains and hence is equal to $S$.  Thus the open set $D(e)=\{\frak{r}|e\notin \frak{r}\}$ is equal to $\{S\}$.  So $S$ is an isolated point of  $\S(A\otimes_R \kappa(\p))$ and
 the local ring $(A\otimes_R \kappa(\p))_S$ at $S$ is isomorphic to $e^{-1}(A\otimes_R \kappa(\p))$, which in turn is isomorphic to the field $K$.  
So $(A\otimes_R \kappa(\p))_S\cong K$ is a field extension of $\kappa(\p)$, which is separable since we are working in characteristic zero.  So the implication (d')$\Rightarrow$(a) of \cite[Th\'eor\`eme 17.4.1]{GIV4} proves that $\S A\to \S R$ is unramified at the point $\frak{q}=\iota(S)$ where $\iota\co \S(A\otimes_R\kappa(\p))\to \S A$ is the map induced by the natural map $A\to A\otimes_R\kappa(\p)$.  This completes the proof of the implication (B4)$\Rightarrow$(B1) and thus of all of Theorem \ref{mainalg}.
\end{proof}

\begin{dfn}\label{ssdfn} Let $A$ be an $R$-algebra as in Theorem \ref{mainalg}.\begin{itemize}
\item We say that $A$ is \emph{generically semisimple} if the subset $U_1\subset \S R$ of Theorem \ref{mainalg}(A) is nonempty.
\item We say that $A$ is \emph{generically field-split} if the subset $U_2\subset \S R$ of Theorem \ref{mainalg}(B) is nonempty.
\end{itemize}
\end{dfn}

\begin{prop}\label{basechange} Let $A$ be an $R$-algebra as in Theorem \ref{mainalg}, let $\phi\co R\to S$ be a ring map, and consider the resulting $S$-algebra $A\otimes_R S$.\begin{itemize}\item[(i)] If $A\otimes_R S$ is generically semisimple (resp. generically field-split) then $A$ is generically semisimple (resp. generically field-split).
\item[(ii)] Suppose that $R$ and $S$ are integral domains and that the ring map $\phi\co R\to S$ is injective.  Then $A\otimes_R S$ is generically semisimple (resp. generically field-split) if and only if $A$ is generically semisimple (resp. generically field-split).
\end{itemize}
\end{prop}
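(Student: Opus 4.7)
The approach is to reduce everything to the base-change-invariant reformulations provided by conditions (A3)/(A4) and (B3)/(B4) of Theorem \ref{mainalg}, together with the tensor product identity $(A\otimes_R S)\otimes_S \kappa(\mathfrak{q})=A\otimes_R\kappa(\mathfrak{q})$ which holds for any $\mathfrak{q}\in \S S$.

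For part (i), suppose $A\otimes_R S$ is generically semisimple, and pick $\mathfrak{q}\in U_1$ for the $S$-algebra $A\otimes_R S$; set $\mathfrak{p}=\phi^{-1}(\mathfrak{q})\in \S R$. The induced map $R/\mathfrak{p}\to S/\mathfrak{q}$ is injective (since $\phi(r)\in \mathfrak{q}$ forces $r\in \phi^{-1}(\mathfrak{q})=\mathfrak{p}$), so localizing gives a field inclusion $\kappa(\mathfrak{p})\hookrightarrow \kappa(\mathfrak{q})$.  By condition (A4) applied to $A\otimes_R S$ at $\mathfrak{q}$, the $\kappa(\mathfrak{q})$-algebra $A\otimes_R\kappa(\mathfrak{q})$ decomposes as a direct sum of field extensions of $\kappa(\mathfrak{q})$, and so taking $\kappa(\mathfrak{p})\to\kappa(\mathfrak{q})$ as the extension called for in condition (A3) for $A$ at $\mathfrak{p}$, we conclude that $\mathfrak{p}\in U_1$ for $A$. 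The generically field-split case is identical, with (A4) replaced by (B4) and (A3) replaced by (B3).

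For part (ii), the implication from $A\otimes_R S$ being generic to $A$ being generic is (i).  For the reverse, we use that $R$ and $S$ are integral domains, so the zero ideal is the generic point of $\S R$ and of $\S S$ and belongs to every nonempty open subset.  Thus if $U_1$ (resp.~$U_2$) for $A$ is nonempty it contains $(0)\in \S R$, so $A\otimes_R \mathrm{Frac}(R)$ is a direct sum of field extensions of $\mathrm{Frac}(R)$ (resp.~has a field as a direct summand).  Because $\phi$ is injective and both rings are integral domains, $\phi$ extends to an injection $\mathrm{Frac}(R)\hookrightarrow\mathrm{Frac}(S)$.  Since $\mathrm{Frac}(R)$ has characteristic zero and is therefore perfect, Theorem \ref{tensor} (in the semisimple case), respectively the cited result \cite[Proposition 2.2(A)]{EP08} (in the field-split case), implies that the property in question is preserved under base change from $\mathrm{Frac}(R)$ to $\mathrm{Frac}(S)$.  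Hence $(A\otimes_R S)\otimes_S \mathrm{Frac}(S)\cong A\otimes_R \mathrm{Frac}(S)$ enjoys the same property, i.e.\ $(0)\in \S S$ lies in $U_1$ (resp.~$U_2$) for $A\otimes_R S$.

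There is no substantive obstacle: Theorem \ref{mainalg} has packaged the needed equivalences and openness, so the work consists entirely of combining the identification $(A\otimes_R S)\otimes_S \kappa(\mathfrak{q})=A\otimes_R\kappa(\mathfrak{q})$, the residue field inclusion $\kappa(\phi^{-1}(\mathfrak{q}))\hookrightarrow\kappa(\mathfrak{q})$, and—for (ii) only—the observation that a nonempty open subset of the spectrum of a domain contains the generic point.  The one point that must not be skipped is invoking characteristic zero to guarantee that $\mathrm{Frac}(R)$ is perfect, so that Theorem \ref{tensor} (and the analogous statement for field summands) applies to the base change $\mathrm{Frac}(R)\to\mathrm{Frac}(S)$.
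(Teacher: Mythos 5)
Your proof is correct and follows essentially the same approach as the paper's: both reduce to the extension-invariant conditions (A3)/(A4) and (B3)/(B4) via the identity $(A\otimes_R S)\otimes_S\kappa(\mathfrak{q})=A\otimes_R\kappa(\mathfrak{q})$, using the residue-field inclusion $\kappa(\phi^{-1}(\mathfrak{q}))\hookrightarrow\kappa(\mathfrak{q})$ for (i) and the generic points of the two domains for (ii). The only cosmetic difference is that the paper phrases (i) in terms of a ring map $\psi\colon S\to k$ and the kernel of $\psi\circ\phi$ rather than explicitly contracting a chosen prime $\mathfrak{q}\in U_1$, but these are the same argument.
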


\begin{proof} For (i), that $A\otimes_R S$ is generically semisimple (resp. generically field split) implies, by the equivalence (A1)$\Leftrightarrow$(A3) (resp. (B1)$\Leftrightarrow$(B3)), that there is a field $k$ and a ring map $\psi\co S\to k$ such that $(A\otimes_R S)\otimes_S k$ decomposes as a direct sum of extensions of $k$ (resp. has an extension of $k$ as a direct summand) (indeed we could take $k=\kappa(\p)$ where $\p$ is an arbitrary element of $U_1$ (resp. $U_2$)).  Now where $k$ is made into an $R$-algebra via the map $\psi\circ\phi$, we have \[ A\otimes_R k=(A\otimes_R S)\otimes_S k.\]  Where $\frak{q}=\ker(\psi\circ\phi)$, $\frak{q}$ is a prime ideal of $R$, and since $\kappa(\frak{q})$ is the  field of fractions of $R/\frak{q}$ the map $\psi\circ\phi$ factors as a composition $R\to \kappa(\frak{q})\to k$ where $\kappa(\frak{q})\to k$ is a field extension.  So since $A\otimes_R k$  decomposes as a direct sum of extensions of $k$ (resp. has an extension of $k$ as a direct summand) we have $\frak{q}\in U_1$ (resp. $\frak{q}\in U_2$), proving (i).

As for (ii), since $R$ and $S$ are integral domains their spectra contain generic points $\eta_R\in \S R$, $\eta_S\in \S S$, corresponding to the zero ideals in the respective rings.  The residue fields $\kappa(\eta_R)$ and $\kappa(\eta_S)$  at these generic points are just the fields of fractions of the respective domains, and so the monomorphism $R\to S$ induces a field extension $\kappa(\eta_R)\to \kappa(\eta_S)$.  Moreover we have \[ (A\otimes_R \kappa(\eta_R))\otimes_{\kappa(\eta_R)}\kappa(\eta_S)=(A\otimes_R S)\otimes_S \kappa(\eta_S).\]  Consequently, by Theorem \ref{tensor}  and \cite[Proposition 2.2(A)]{EP08}, $A\otimes_R \kappa(\eta_R)$ decomposes as a direct sum of extensions of $\kappa(\eta_R)$ (resp. has an extension of $k$ as a direct summand)
if and only if the corresponding property holds for $(A\otimes_R S)\otimes_S \kappa(\eta_S)$.  Now any nonempty open set in $\S R$ contains $\eta_R$, and likewise any nonempty open set in $\S S$ contains $\eta_S$, so  by the equivalence (A1)$\Leftrightarrow$(A3) (resp. (B1)$\Leftrightarrow$(B3))
it follows that the set $U_{1}^{R}\subset\S R$ (resp. $U_{2}^{R}\subset \S R$) associated to $A$ via Theorem \ref{mainalg} is nonempty if and only if the corresponding subset of $\S S$ associated to $A\otimes_R S$ via Theorem \ref{mainalg} is nonempty.\end{proof}

\section{Quantum homology II} \label{quant2} 

What we will call the ``universal big quantum homology'' $\AR$  in this paper may be regarded as an invariant associated to a pair $(M,\mathcal{C})$ where  $\mathcal{C}$ is a nonempty connected component of the space of symplectic forms on the $2n$-dimensional closed manifold $M$; the pair $(M,\mathcal{C})$ shall be fixed throughout this section.  This invariant is a commutative algebra $\mathbb{A}_{R_M}$ over a ring $R_M$; this latter ring will be referred to as the ``universal quantum coefficient ring'' of $M$ (or, more properly, of $(M,\mathcal{C})$).  Writing \[ H_k(M):=\frac{H_k(M;\mathbb{Z})}{torsion},\quad H_{ev}(M)=\oplus_{i=0}^{n}H_{2i}(M),\] 
let  $\{\Delta_0,\Delta_1,\ldots,\Delta_N\}$ be an integral basis of $H_{ev}(M)$ for which each $\Delta_i$ has some even homogeneous grading $|\Delta_i|$, such that \[ \Delta_0=[M],\mbox{ and for some }s\in \{1,\ldots,N\},|\Delta_j|=2n-2\Leftrightarrow 1\leq i\leq s.\]
Thus the subgroup of $H_{ev}(M)$ with codimension at least $4$ is spanned by $\Delta_{s+1},\ldots,\Delta_N$.  As a module, we will have $\AR=H_{ev}(M)\otimes_{\mathbb{Z}}R_M$, with $\{\Delta_0,\ldots,\Delta_N\}$ serving as a standard $R_M$-basis for $\AR$.  We must now describe the ring $R_M$, and the multiplication rule for $\AR$.

\subsection{The universal quantum coefficient ring $R_M$}

As before Gromov--Witten invariants are denoted with the notation $\langle a_1,\ldots,a_k\rangle_{0,k,\beta}$; from now on we will always take $a_i\in H_*(M)$ to be homogeneous, and $\beta\in H_2(M)$.  Note that these quantities are rational numbers (integers if $M$ is semipositive) which are independent of $J$ and of the particular symplectic form $\omega$ representing the deformation class $\mathcal{C}$.  The quantity is nonzero only when \begin{equation}\label{gwdim} \sum_{i=1}^{k}(2n-|a_i|)=2\left(n+\langle c_1(TM),\beta\rangle+(k-3)\right).\end{equation}  

Let \[ H_{2}^{eff}(M)=\{\beta\in H_{2}(M)|(\exists a_1,\ldots,a_k\in H_*(M))(\langle a_1,\ldots,a_k\rangle_{0,k,\beta}\neq 0)\}\] and define the ``GW-effective cone'' to be \[ C^{eff}=C^{eff}(M)=\left\{\left.\sum_{i=1}^{l}n_i\beta_i\right|n_1,\ldots,n_l\in\mathbb{N},\beta_1,\ldots,\beta_l\in H_{2}^{eff}(M).\right\}.\]

Since the Gromov--Witten invariants are independent of the choice of $\omega\in\mathcal{C}$, so too is the GW-effective cone $C^{eff}$.  

\begin{lemma}\label{cone} \begin{itemize}\item[(i)] If $\omega$ is any symplectic form in the deformation class $\mathcal{C}$ and if $D\in\mathbb{R}$, there are only finitely many elements $\beta\in C^{eff}$ such that $\int_{\beta}\omega\leq D$.
\item[(ii)]
If $\beta\in C^{eff}$ then there are just finitely many pairs $(\beta_1,\beta_2)$ such that $\beta_1,\beta_2\in C^{eff}$ and $\beta_1+\beta_2=\beta$.
\end{itemize}
\end{lemma}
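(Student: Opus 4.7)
The plan splits naturally along the two items of the lemma, with (i) doing the real work and (ii) deduced from it.

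For (i), the key input is Gromov compactness combined with the fact that nonvanishing of a Gromov--Witten invariant $\langle a_1,\ldots,a_k\rangle_{0,k,\beta}$ forces the geometric moduli space of genus-zero stable $J$-holomorphic maps in class $\beta$ to be nonempty (the virtual fundamental class can only be nontrivial when there is an underlying stable map, for any tame $J$). I would first invoke Gromov compactness to conclude that, for any fixed $\omega\in\mathcal{C}$ and any $D\in\mathbb{R}$, the set
\[
H_2^{eff}(M)\cap \{\beta\,|\,\langle[\omega],\beta\rangle\leq D\}
\]
is finite; in particular, since by positivity of area every nonzero $\beta\in H_2^{eff}(M)$ has $\langle[\omega],\beta\rangle>0$, finiteness of this set with $D=1$ forces the existence of $\epsilon>0$ such that $\langle[\omega],\beta\rangle\geq \epsilon$ for every nonzero $\beta\in H_2^{eff}(M)$.

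With $\epsilon$ in hand, (i) follows from a counting argument. Any $\beta\in C^{eff}$ with $\langle[\omega],\beta\rangle\leq D$ can be written as $\beta=\sum_{i=1}^{l}n_i\beta_i$ with $n_i\in\mathbb{N}$ and $\beta_i\in H_2^{eff}(M)\setminus\{0\}$ (dropping any zero summands). The bound $\sum n_i\langle[\omega],\beta_i\rangle\leq D$ together with $\langle[\omega],\beta_i\rangle\geq\epsilon$ forces $\sum n_i\leq D/\epsilon$, which bounds both the number of summands and the multiplicities; meanwhile each $\beta_i$ lies in the finite set $H_2^{eff}(M)\cap\{\langle[\omega],\cdot\rangle\leq D\}$. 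So only finitely many sums, hence finitely many $\beta$, occur.

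For (ii), choose any $\omega\in\mathcal{C}$ and set $D=\langle[\omega],\beta\rangle$. For any splitting $\beta=\beta_1+\beta_2$ with $\beta_1,\beta_2\in C^{eff}$, each $\beta_j$ is a non-negative integer combination of classes in $H_2^{eff}(M)$, each of which has non-negative $\omega$-area, so $\langle[\omega],\beta_j\rangle\geq 0$; the identity $\langle[\omega],\beta_1\rangle+\langle[\omega],\beta_2\rangle=D$ then forces $\langle[\omega],\beta_j\rangle\in[0,D]$ for $j=1,2$, and (i) shows that each $\beta_j$ ranges over a finite set. So finitely many pairs.

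The main obstacle is the extraction of the uniform positive lower bound $\epsilon$, which is really the place where Gromov compactness enters; the rest is bookkeeping with non-negative integer cones. One minor technical point to address carefully is the passage from ``$\langle a_1,\ldots,a_k\rangle_{0,k,\beta}\neq 0$'' to ``there is a genuine stable $J$-holomorphic representative of $\beta$,'' so that Gromov compactness applies on the nose; this is standard but should be cited.
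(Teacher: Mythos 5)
Your proof is correct and takes essentially the same route as the paper: Gromov compactness gives only finitely many classes in $H_2^{eff}(M)$ with $\omega$-area at most $D$, a uniform positive lower bound on the area of nonzero classes then bounds the number of summands and multiplicities, and (ii) follows by applying (i) with $D=\int_\beta\omega$. The only cosmetic difference is that you extract the lower bound $\epsilon$ as a separate preliminary step, whereas the paper obtains it in passing after first observing that each $\beta_i$ in a decomposition already satisfies $\int_{\beta_i}\omega\leq D$.
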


\begin{proof} If $\beta\in C^{eff}\setminus\{0\}$ with $\int_{\beta}\omega\leq D$, say $\beta=\sum_{i=1}^{l} n_i\beta_i$ with each $n_i\beta_i\neq 0$ (so $n_i\geq 1$) and $\beta_i\in H_{2}^{eff}(M)$, then for $J$ an arbitrary $\omega$-compatible almost complex structure the existence of a nonzero $\langle a_1,\ldots,a_k\rangle_{0,k,\beta_i}$ produces for each $i$ a genus-zero $J$-holomorphic bubble tree representing $\beta_i$.
Then $\int_{\beta_i}\omega\leq \int_{\beta}\omega\leq D$ for each $i$.  
By Gromov compactness, there are only finitely many nonzero classes $\beta_i$ of energy at most $D$ represented by a $J$-holomorphic bubble tree, and $\omega$ evaluates on each of these as at least some positive number $\hbar$.  Consequently there are only finitely many positive integer combinations of these $\beta_i$ having energy at most $D$, and therefore there are only finitely many possibilities for the class $\beta$.  This proves (a).

Part (b) then follows immediately: choose an arbitrary symplectic form $\omega$ from $\mathcal{C}$.  If $\beta_1+\beta_2=\beta$ and $\beta_1,\beta_2\in C^{eff}$, then $\beta_1$ and $\beta_2$ are necessarily each drawn from among the finitely many classes $\gamma\in C^{eff}$ with $\int_{\gamma}\omega\leq D:=\int_{\beta}\omega$.
\end{proof}

We can now define the ring $R_M$: set theoretically, let \[ R_M=\left\{\left.\sum_{\beta\in C^{eff}}f_{\beta}q^{\beta}\right|(\forall \beta\in C^{eff})(f_{\beta}\in\mathbb{Q}[x_{s+1},\ldots,x_N])\right\}.\]

We use the obvious componentwise addition $\sum f_{\beta}q^{\beta}+\sum g_{\beta}q^{\beta}=\sum (f_{\beta}+g_{\beta})q^{\beta}$,  while multiplication is, as one would expect, defined by \[ \left(\sum_{\beta\in C^{eff}}f_{\beta}q^{\beta}\right)\left(\sum_{\eta\in C^{eff}}g_{\eta}q^{\eta}\right)=\sum_{\zeta\in C^{eff}}\left(\sum_{\beta+\eta=\zeta}f_{\beta}g_{\eta}\right)q^{\zeta}.\]  That the right hand side is well-defined follows directly from Lemma \ref{cone}(ii), which ensures that the inner sum on the right is finite for any given $\zeta$.  So since $0\in C^{eff}$ and $C^{eff}$ is closed under addition, $R_M$ is a well-defined ring (with unit $1:=q^0$).  It is not difficult to check that $R_M$ is an integral domain. On the other hand I do not know what assumptions, if any, on $M$ are needed to ensure that $R_M$ is Noetherian; fortunately, Theorem \ref{mainalg} applies regardless of whether or not the ring $R$ in its hypothesis is Noetherian.

Given other conventions in the literature, it perhaps bears emphasizing that while an element of $R_M$ may have a nonzero coefficient $f_{\beta}$ on $q^{\beta}$ for infinitely many different $\beta$, the coefficients $f_{\beta}$ themselves are taken to be \emph{polynomials}, not power series, in the variables $x_{s+1},\ldots,x_N$.  These latter variables may be regarded as being dual to the basis $\{\Delta_{s+1},\ldots,\Delta_N\}$ for $\oplus_{i=0}^{n-2}H_{2i}(M)$ from earlier.  Formal variables dual to the basis $\{\Delta_1,\ldots,\Delta_s\}$ for $H_{2n-2}(M)$ (or, more accurately, exponentiated versions of these formal variables) can be regarded as being incorporated into the formal symbol $q$.  It will typically not be true that the various polynomials $f_{\beta}$ appearing in a given element of $R_M$ have uniformly bounded degree.

\begin{remark}\label{whyrm}This choice of coefficient ring $R_M$ is motivated by the fact that it enjoys the following two properties:
\begin{itemize}\item[(a)]  The quantum homology $\AR$ of $M$  may be naturally defined as an algebra over $R_M$ for any $M$.
\item[(b)] For many other rings $\Lambda$ obeying property (a), there is a diagram of ring maps incorporating the rings $R_M$ and $\Lambda$ which allows Proposition \ref{basechange} to be used to relate the properties of the quantum homology with coefficients in $\Lambda$ to the properties of $\AR$.
\end{itemize}

We will see many examples of (b) below.  In the simplest cases, the diagram alluded to in (b) simply consists of a map $R_M\to \Lambda$, and the quantum homology with coefficients in $\Lambda$ is just $\AR\otimes_{R_M}\Lambda$.  In other cases the diagram will be more complicated: the one involved in our discussion of the quantum homology of blowups $\tilde{M}$ in Section \ref{blsect} takes the form \begin{equation}\label{bldiag}\xymatrix{ R_{\tilde{M}}\ar[d] & B  \ar@<-.4ex>@{^{(}->}[d]\ar[r] & B/ZB \\ R_{\tilde{M}}^{0} \ar@<-.4ex>@{^{(}->}[r]& R_{\tilde{M}}^{0}[q^{\pm\frac{E'}{n-1}}] & }\end{equation}
\end{remark}

\subsection{Quantum multiplication}\label{mult}

Having introduced $R_M$, we now define the quantum product $\ast$ on the big quantum homology $\AR$ in a standard way.  Recall that $\AR$ is freely generated as a $R_M$-module by the homogeneous basis $\Delta_0,\dots,\Delta_N$ for $H_{ev}(M)$, where $\Delta_0=[M]$ and $\Delta_1,\ldots,\Delta_s$ span $H_{2n-2}(M)$.  For $i,j,k\in\{0,\ldots,N\}$, consider the formal sum \[ c_{ijk}=\sum_{\beta\in H_{2}^{eff}}\left(\sum_{\alpha=(\alpha_{s+1},\ldots,\alpha_N)\in\mathbb{N}^{N-s}}\frac{1}{\alpha!}\langle\Delta_i,\Delta_j,\Delta_k,\underbrace{\Delta_{s+1},\ldots,\Delta_{s+1}}_{\alpha_{s+1}},\ldots,\underbrace{\Delta_{N},\ldots,\Delta_{N}}_{\alpha_{N}}\rangle_{0,|\alpha|+3,\beta}x^{\alpha}\right)q^{\beta}.\]

Here we use standard multi-index notation for a tuple of nonnegative integers $\alpha=(\alpha_{s+1},\ldots,\alpha_N)$, namely $|\alpha|=\sum \alpha_i$, $\alpha!=\prod_{i=s+1}^{N}(\alpha_{i}!)$, and $x^{\alpha}=x_{s+1}^{\alpha_{s+1}}\cdots x_{N}^{\alpha_N}$.

\begin{prop}\label{fin} For each $i,j,k$ we have $c_{ijk}\in R_M$.
\end{prop}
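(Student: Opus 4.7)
The plan is to verify the two defining conditions for membership in $R_M$: first, that for each fixed $\beta \in C^{eff}$, the inner sum is a polynomial in $x_{s+1},\ldots,x_N$ (i.e.\ only finitely many multi-indices $\alpha$ contribute nonzero terms), and second, that the outer formal sum is indexed by $C^{eff}$. The second is immediate since $H_2^{eff}\subset C^{eff}$, so the whole matter reduces to the polynomiality of the inner sum.

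For the polynomiality, I would invoke the dimension axiom (\ref{gwdim}) for Gromov--Witten invariants. Applied to the invariant appearing in $c_{ijk}$, nonvanishing forces
\[ (2n-|\Delta_i|)+(2n-|\Delta_j|)+(2n-|\Delta_k|)+\sum_{l=s+1}^{N}\alpha_l(2n-|\Delta_l|)=2n+2\langle c_1(TM),\beta\rangle+2|\alpha|. \]
By the choice of basis, $|\Delta_l|\leq 2n-4$ for $l\geq s+1$, so $2n-|\Delta_l|\geq 4$. Hence the $\alpha$-dependent term on the left is at least $4|\alpha|$, giving
\[ 2|\alpha|\leq 2\langle c_1(TM),\beta\rangle -4n+|\Delta_i|+|\Delta_j|+|\Delta_k|. \]
Thus $|\alpha|$ is bounded by a constant depending only on $i,j,k,\beta$, so only finitely many $\alpha\in\mathbb{N}^{N-s}$ yield a nonzero Gromov--Witten invariant for fixed $\beta$. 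Consequently the inner sum defining the coefficient of $q^\beta$ is a finite $\mathbb{Q}$-linear combination of monomials $x^\alpha$, i.e.\ lies in $\mathbb{Q}[x_{s+1},\ldots,x_N]$.

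There is no obstacle to the outer sum: by construction, $R_M$ permits an arbitrary formal sum $\sum_{\beta\in C^{eff}}f_\beta q^\beta$ with $f_\beta\in\mathbb{Q}[x_{s+1},\ldots,x_N]$, with no finiteness restriction on the set of $\beta$ for which $f_\beta\neq 0$. (This is precisely the asymmetry built into the definition of $R_M$ noted in the paragraph after its definition.) Therefore $c_{ijk}\in R_M$. The only real content is the dimension-axiom bound on $|\alpha|$; everything else is bookkeeping with the definitions.
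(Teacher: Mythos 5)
Your proof is correct and follows essentially the same route as the paper: invoke the dimension formula (\ref{gwdim}), use the fact that $2n - |\Delta_l| \geq 4$ for $l > s$ to bound $4|\alpha|$ below the total codimension, and rearrange to get a bound on $|\alpha|$ depending only on $i,j,k,\beta$. The observation that the outer sum over $\beta$ imposes no further constraint by the design of $R_M$ is a helpful bit of explicit bookkeeping that the paper leaves implicit.
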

\begin{proof} This proposition amounts to the statement that, for any given $\beta\in H_{2}^{eff}(M)$, the expression \[ \sum_{\alpha=(\alpha_{s+1},\ldots,\alpha_N)\in\mathbb{N}^{N-s}}\frac{1}{\alpha!}\langle\Delta_i,\Delta_j,\Delta_k,\underbrace{\Delta_{s+1},\ldots,\Delta_{s+1}}_{\alpha_{s+1}},\ldots,\underbrace{\Delta_{N},\ldots,\Delta_{N}}_{\alpha_{N}}\rangle_{0,|\alpha|+3,\beta}x^{\alpha}\] is a polynomial, which in turn is to say that, again for any given $\beta\in H_{2}^{eff}(M)$, there are just finitely many multi-indices $\alpha$ such that \[  \langle\Delta_i,\Delta_j,\Delta_k,\underbrace{\Delta_{s+1},\ldots,\Delta_{s+1}}_{\alpha_{s+1}},\ldots,\underbrace{\Delta_{N},\ldots,\Delta_{N}}_{\alpha_{N}}\rangle_{0,|\alpha|+3,\beta}\neq 0.\]  
Bearing in mind that, for $l=s+1,\ldots,N$, we have $2n-|\Delta_l|\geq 4$, by (\ref{gwdim}) the above invariant can be nonzero only if \[ 4|\alpha|+(2n-|\Delta_i|)+(2n-|\Delta_j|)+(2n-|\Delta_k|)\leq 2(n+\langle c_1(TM),\beta\rangle+|\alpha|),\] which in turn forces \[ |\alpha|\leq \frac{|\Delta_i|}{2}+\frac{|\Delta_j|}{2}+\frac{|\Delta_k|}{2}+\langle c_1(TM),\beta\rangle-2n.\]  Since, for fixed $\beta$, there are only finitely many multi-indices $\alpha$ obeying this bound on $|\alpha|$ the proposition follows.
\end{proof}

For $k=0,\ldots,N$ define a dual element $\Delta^k\in H_{2n-|\Delta_k|}$ by the property that \[ \Delta_j\cap \Delta^k=\delta_{j}^{k}\mbox{ for all }j\] where $\cap$ is the Poincar\'e intersection pairing and  $\delta_{j}^{k}$ is the Kronecker symbol (equivalently, $\Delta^k=\sum_j g^{kj}\Delta_j$ if $\{g^{kj}\}$ is the inverse of the matrix representing the Poincar\'e pairing in the basis $\Delta_0,\ldots,\Delta_N$).  The multiplication law for the algebra $\AR$ is then defined by extending bilinearly from \[ \Delta_i\ast \Delta_j=\sum_{k=0}^{N}c_{ijk}\Delta^k.\]  Since the $c_{ijk}$ belong to $R_M$ this multiplication law is well-defined.  $\AR$ is then a commutative (since we are restricting to even dimensional homology) algebra with unit $\Delta_0=[M]$; from \cite[Section 4]{KM} it follows that the associativity of the algebra is a formal consequence of a certain set of axioms for Gromov--Witten invariants, and in \cite[Section 23]{FO} it is shown that the Gromov--Witten invariants for general symplectic manifolds constructed in \cite{FO} indeed satisfy all of the axioms needed for associativity (\cite{LiT},\cite{R} also contain such results).



We have now associated to the deformation class $(M,\mathcal{C})$ of symplectic manifolds a ring $R_M$ and an $R_M$-algebra $\AR$ which, module-theoretically, is free and finitely generated. Theorem \ref{mainalg} and Definition \ref{ssdfn} thus apply to the algebra $\AR$, so we may consider the questions of whether $\AR$ is generically semisimple or generically field-split (in which case we say that the symplectic deformation class $(M,\mathcal{C})$  ``has generically semisimple big quantum homology'' or ``has generically field-split big quantum homology,'' respectively).



\subsection{Other coefficient systems}

Given a ring map $\phi\co R_M\to S$, we may form a quantum homology ring with coefficients in $S$: \[ QH^{\phi}(M;S):=\AR\otimes_{R_M}S\] where we use $\phi$ to view $S$ as an $R_M$-algebra (if the map $\phi$ is obvious from the context we will just write $QH(M;S)$).  Thus 
$QH^{\phi}(M;S)$ is the $S$-algebra freely generated as a module by $\Delta_0,\ldots,\Delta_N$ with the multiplication law \[ \Delta_i\ast\Delta_j=\sum_{k=0}^{N}\phi(c_{ijk})\Delta^k\] where $c_{ijk}\in R_M$ are the constants defined at the start of Section \ref{mult}.
As mentioned in Remark \ref{whyrm}, our choice of universal quantum coefficient ring $R_M$ has been motivated in part by the existence of many ring maps from $R_M$ to various rings in common use as coefficient rings for quantum homology.

\subsubsection{Small quantum homology}\label{sqh}
For example, let \[ R_{M}^{0}=\left\{\left.\sum_{\beta\in C^{eff}}c_{\beta}q^{\beta}\right|c_{\beta}\in\mathbb{Q}\right\}\] where as before $C^{eff}$ is the GW-effective cone and we use the obvious ``power series'' multiplication (which is well-defined by Lemma \ref{cone}(ii)).  There is an obvious map $\sigma\co R_M\to R_{M}^{0}$ defined by $\sigma\left(\sum f_{\beta}q^{\beta}\right)=\sum f_{\beta}(0)q^{\beta}$. 

\begin{dfn}\label{sqdfn} Let a deformation class $(M,\mathcal{C})$ of symplectic manifolds be given. \begin{itemize} \item The \emph{small quantum homology} of $(M,\mathcal{C})$ is the $R_{M}^{0}$-algebra \[ QH(M;R_{M}^{0})=QH^{\sigma}(M;R_{M}^{0})=\AR\otimes_{R_M}R_{M}^{0}\] where $\sigma\co R_M\to R_{M}^{0}$ is the above map.\item We say that $(M,\mathcal{C})$ has generically semisimple small quantum homology (resp. has generically field-split small quantum homology) if the $R_{M}^{0}$-algebra $QH(M;R_{M}^{0})$ is generically semisimple (resp. generically field-split) in the sense of Definition \ref{ssdfn}.\end{itemize} \end{dfn}


\begin{prop} If $(M,\mathcal{C})$ has generically semi-simple (resp. generically field-split) small quantum homology, then $(M,\mathcal{C})$ has 
generically semi-simple (resp. generically field-split) big quantum homology.
\end{prop}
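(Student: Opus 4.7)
The plan is to deduce this essentially immediately from Proposition \ref{basechange}(i). Unwinding definitions, the small quantum homology is by construction $QH(M;R_{M}^{0})=\AR\otimes_{R_M}R_{M}^{0}$ via the map $\sigma\co R_M\to R_{M}^{0}$ that sends $\sum f_{\beta}q^{\beta}$ to $\sum f_{\beta}(0)q^{\beta}$, i.e.\ that kills the polynomial variables $x_{s+1},\ldots,x_N$. The hypothesis that $(M,\mathcal{C})$ has generically semisimple (resp.\ generically field-split) \emph{small} quantum homology is exactly the statement that the $R_{M}^{0}$-algebra $\AR\otimes_{R_M}R_{M}^{0}$ is generically semisimple (resp.\ generically field-split) in the sense of Definition \ref{ssdfn}.

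The first step is to note that $\AR$ is, as an $R_M$-module, finitely generated and free (with basis $\Delta_0,\ldots,\Delta_N$), and $R_M$ contains $\mathbb{Q}$, so the hypotheses of Proposition \ref{basechange} apply with $R=R_M$, $S=R_{M}^{0}$, $A=\AR$, and $\phi=\sigma$. Then I apply Proposition \ref{basechange}(i) directly: if $\AR\otimes_{R_M}R_{M}^{0}$ is generically semisimple (resp.\ generically field-split), then so is $\AR$ itself. By the definition given at the end of Section \ref{mult}, this is precisely what it means for $(M,\mathcal{C})$ to have generically semisimple (resp.\ generically field-split) big quantum homology.

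There is no real obstacle here, only a point worth remarking on: one might be tempted to try to apply the stronger biconditional Proposition \ref{basechange}(ii), but this would require $\sigma\co R_M\to R_{M}^{0}$ to be injective, which it is not (every monomial in the $x_i$ is sent to zero). So the converse direction (generically semisimple big quantum homology implies generically semisimple small quantum homology) does not follow from this argument, consistent with the one-way form of the proposition's statement and with the discussion in the introduction concerning examples like the Ostrover--Tyomkin $4$-fold, where these notions can genuinely differ.
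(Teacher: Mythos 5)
Your proof is correct and uses exactly the paper's argument: apply Proposition \ref{basechange}(i) to the ring map $\sigma\co R_M\to R_M^0$ and the $R_M$-algebra $\AR$, after verifying that $\AR$ is finitely generated free over $R_M$ and that $R_M\supset\mathbb{Q}$. The paper simply cites Proposition \ref{basechange}(i) without spelling out the verification, and your closing remark about why the converse fails (non-injectivity of $\sigma$, so part (ii) is unavailable) is a worthwhile addition.
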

\begin{proof}
Indeed, this follows immediately from Proposition \ref{basechange}(i).
\end{proof}
Consulting the definitions of the map $\sigma$ and of the multiplication law in big quantum homology, we see that $QH(M;R_{M}^{0})$ is the free $R_{M}^{0}$-module generated by $\Delta_0,\ldots,\Delta_N$ subject to the multiplication law\[ \Delta_i\ast\Delta_j=\sum_{k=0}^{N}\left(\sum_{\beta\in H_{2}^{eff}(M)}\langle \Delta_i,\Delta_j,\Delta_k\rangle_{0,3,\beta}q^{\beta}\right)\Delta^k, \] consistently with a formulation that some readers may find more familiar (again, $\{\Delta^l\}$ is a Poincar\'e dual basis to $\{\Delta_l\}$).

\subsubsection{Novikov rings} \label{novikov}

Choose a symplectic form $\omega$ belonging to the given deformation class $\mathcal{C}$ of forms on $M$,  with de Rham cohomology class $[\omega]$.
Let the subgroup $\Gamma_{\omega}\leq \mathbb{R}$ and the Novikov ring $\Lambda_{\omega}$ be as before (see Section \ref{quant1}).



Consider a general element \[ \eta=\eta_D+\sum_{i=s+1}^{N}\eta_i\Delta_i\in \bigoplus_{k=0}^{n-1}H_{2k}(M;\Lambda_{\omega}^{0})\] where $\eta_i\in\Lambda_{\omega}^{0}$ and $\eta_D\in H_{2n-2}(M;\Lambda_{\omega}^{0})$; here as before  $\Delta_{s+1},\ldots,\Delta_N$ is a fixed basis of $\oplus_{k=0}^{n-2}H_{2k}(M)$.  Define a map \begin{align}\label{phieta} \phi_{\eta}\co R_M &\to \Lambda_{\omega} \nonumber\\\sum_{\beta\in C^{eff}}\left(\sum_{\alpha=(\alpha_{s+1},\ldots,\alpha_N)}c_{\alpha}x^{\alpha}\right)q^{\beta}&\mapsto \sum_{\beta}\left(\sum_{\alpha}c_{\alpha}\prod_{i=s+1}^{N}\eta_{i}^{\alpha_i}\right)\exp(\eta_D\cap \beta)T^{\langle [\omega],\beta\rangle}.\end{align}  Here $\eta_D\cap\beta$ denotes the Poincar\'e intersection pairing between the ``divisor'' class $\eta_D$ and $\beta\in H_{2}(M)$.   That this map is well-defined (i.e. that $\phi_{\eta}$ sends every element of $R_M$ to a formal sum which obeys the finiteness condition in the definition of $\Lambda_{\omega}$) follows directly from Lemma \ref{cone}(i).  

\begin{dfn}  Let $(M,\omega)$ be a symplectic manifold, determining a symplectic deformation class $(M,\mathcal{C})$ where $\omega\in\mathcal{C}$. Let $\eta=\eta_D+\sum_{i=s+1}^{N}\eta_i\Delta_i\in \oplus_{k=0}^{n-1}H_{2k}(M;\Lambda_{\omega}^{0})$. \begin{itemize}\item[(i)] The \emph{$\eta$-deformed quantum homology} of $(M,\omega)$, denoted $QH(M,\omega)_{\eta}$, is the $\Lambda_{\omega}$-algebra \[ 
QH(M,\omega)_{\eta}=QH^{\phi_{\eta}}(M;\Lambda_{\omega})=\AR\otimes_{R_M}\Lambda_{\omega},\] where the $R_M$-algebra structure on $\Lambda_{\omega}$ is that induced by the ring map $\phi_{\eta}$ of (\ref{phieta}).
\item[(ii)] In the special case that $\eta=\eta_D\in H_{2n-2}(M)$, $QH(M,\omega)_{\eta}$ will also be called the ``$\eta$-twisted small quantum homology'' of $(M,\omega)$.
\end{itemize}
\end{dfn}

This is clearly consistent with the terminology from before: as in Section \ref{Calabi} $QH(M,\omega)_{\eta}$ is the even part of the algebra $(H_*(M;\Lambda_{\omega});\ast_{\eta})$.

To prepare for our next result, we introduce some notation:
\begin{dfn} Given a basis $\mathcal{B}=\{\Delta_0,\ldots,\Delta_N\}$ for $H_{ev}(M)$ with $\Delta_0=[M]$ and $\Delta_1,\ldots,\Delta_s$ a basis for $H_{2n-2}(M)$, we define \[ \EB\co \oplus_{k=0}^{n-1}H_{2k}(M;\mathbb{C})\to \mathbb{C}^N\] by \[ \EB\left(\sum_{i=1}^{N}\eta_i\Delta_i\right)=\left(e^{\eta_1},\ldots,e^{\eta_s},\eta_{s+1},\ldots,\eta_N\right),\] and \[ \EB^{0}\co H_{2n-2}(M;\mathbb{C})\to \mathbb{C}^s \] by \[ \EB^{0}\left(\sum_{i=1}^{s}\eta_i\Delta_i\right)=\left(e^{\eta_1},\ldots,e^{\eta_s}\right).\]
\end{dfn}

\begin{theorem} \label{bigsymp}For a closed symplectic manifold $(M,\omega)$, the following are equivalent:\begin{itemize} \item[(i)] For some $\eta\in \oplus_{k=0}^{n-1}H_{2k}(M;\Lambda_{\omega}^{0})$, the $\eta$-deformed quantum homology $QH(M,\omega)_{\eta}$ is a semisimple $\Lambda_{\omega}$-algebra (resp. has a field as a direct summand).
\item[(ii)] Where $\mathcal{C}$ is the deformation class of $\omega$, $(M,\mathcal{C})$ has generically semisimple big quantum homology (resp. has generically field-split big quantum homology).
\item[(iii)]  There is a nonzero Laurent polynomial \[ f\in \mathbb{Q}[z_1,z_{1}^{-1},\ldots,z_s,z_{s}^{-1},z_{s+1},z_{s+2},\ldots,z_N]\] such that, for all $\eta\in \oplus_{k=0}^{n-1}H_{2k}(M;\mathbb{C})$ such that $f(\EB(\eta))\neq 0$, the $\eta$-deformed quantum homology $QH(M,\omega)_{\eta}$ is a semisimple $\Lambda_{\omega}$-algebra (resp. has a field as a direct summand).\end{itemize}
\end{theorem}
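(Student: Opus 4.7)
The plan is to prove the cycle (i)$\Rightarrow$(ii)$\Rightarrow$(iii)$\Rightarrow$(i), with the semisimple and field-split cases handled in parallel via the observation that $QH(M,\omega)_\eta=\AR\otimes_{R_M}\Lambda_\omega$, where the tensor product is formed along the homomorphism $\phi_\eta\co R_M\to\Lambda_\omega$ of (\ref{phieta}). This reduces everything to the interaction of Theorem \ref{mainalg} and Proposition \ref{basechange} with the maps $\phi_\eta$.

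The implication (i)$\Rightarrow$(ii) is essentially formal: since $\Lambda_\omega$ is a field, $\S\Lambda_\omega$ is a single point, and the equivalences (A1)$\Leftrightarrow$(A3) and (B1)$\Leftrightarrow$(B3) of Theorem \ref{mainalg} together with Theorem \ref{tensor} identify ``generically semisimple'' (resp. ``generically field-split'') for a finite $\Lambda_\omega$-algebra with being honestly semisimple (resp. with having a field as a direct summand); Proposition \ref{basechange}(i), applied to the base change along $\phi_\eta$, then extracts (ii) from (i). The converse direction (iii)$\Rightarrow$(i) is equally immediate: the nonvanishing locus of any nonzero Laurent polynomial $f$ meets $\EB(\oplus_{k=0}^{n-1}H_{2k}(M;\mathbb{C}))=(\mathbb{C}^*)^s\times\mathbb{C}^{N-s}$, so one picks any $\eta\in\oplus_{k=0}^{n-1}H_{2k}(M;\mathbb{C})\subset\oplus_{k=0}^{n-1}H_{2k}(M;\Lambda_\omega^0)$ with $f(\EB(\eta))\neq 0$ to witness (i).

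The heart of the matter is (ii)$\Rightarrow$(iii), where the explicit Laurent polynomial $f$ is to be extracted. Granted (ii), the open subset $U_1$ (resp. $U_2$) of $\S R_M$ from Theorem \ref{mainalg} is nonempty, so, distinguished opens forming a basis of the Zariski topology, one selects $h\in R_M$ with $\varnothing\neq D(h)\subset U_1$ (resp. $U_2$); in particular $h$ is not nilpotent. Writing $h=\sum_{\beta\in C^{eff}}h_\beta(x)q^\beta$ with $h_\beta\in\mathbb{Q}[x_{s+1},\ldots,x_N]$, one computes
\[ \phi_\eta(h)=\sum_{g\in\Gamma_\omega}F_g(\eta)\,T^g,\qquad F_g(\eta)=\sum_{\substack{\beta\in C^{eff}\\ \langle[\omega],\beta\rangle=g}}h_\beta(\eta_{s+1},\ldots,\eta_N)\prod_{i=1}^{s}(e^{\eta_i})^{\Delta_i\cap\beta}; \]
by Lemma \ref{cone}(i) each inner sum is finite, so each $F_g$ is a Laurent polynomial in $\EB(\eta)$. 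Letting $g_0$ be the smallest value of $\langle[\omega],\beta\rangle$ attained on $\{\beta:h_\beta\neq 0\}$ (attained by Lemma \ref{cone}(i)), the proposal is to take $f=F_{g_0}$: granted $F_{g_0}\neq 0$ as a Laurent polynomial, whenever $f(\EB(\eta))\neq 0$ the coefficient of $T^{g_0}$ in $\phi_\eta(h)$ is nonzero, so $\ker\phi_\eta\in D(h)\subset U_1$ (resp. $U_2$), whereupon condition (A3) (resp. (B3)) of Theorem \ref{mainalg} at $\p=\ker\phi_\eta$ together with Theorem \ref{tensor} (resp. \cite[Proposition 2.2(A)]{EP08}) propagates the decomposition through the residue field extension $\kappa(\p)\hookrightarrow\Lambda_\omega$ to yield (iii).

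The principal obstacle is the verification that $F_{g_0}\neq 0$, ruling out accidental cancellation among the finitely many $\beta\in C^{eff}$ lying over $g_0$. The argument rests on Poincar\'e duality with the chosen basis $\Delta_1,\ldots,\Delta_s$ of $H_{2n-2}(M)$: the map $\beta\mapsto(\Delta_1\cap\beta,\ldots,\Delta_s\cap\beta)$ is injective on $H_2(M)/\mathrm{tors}$, so the Laurent monomials $\prod_{i=1}^{s}z_i^{\Delta_i\cap\beta}$ for distinct $\beta$ are distinct and hence linearly independent over $\mathbb{Q}(z_{s+1},\ldots,z_N)$. Since every polynomial $h_\beta$ contributing to $F_{g_0}$ is nonzero in $\mathbb{Q}[x_{s+1},\ldots,x_N]$, this independence forces $F_{g_0}\neq 0$, completing the plan.
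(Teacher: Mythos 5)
Your proposal is correct and follows essentially the same route as the paper's proof: (i)$\Rightarrow$(ii) via Proposition \ref{basechange}(i), (iii)$\Rightarrow$(i) trivially, and (ii)$\Rightarrow$(iii) by selecting a nonzero $h\in R_M$ with $D(h)$ inside the relevant open set of Theorem \ref{mainalg}, building the Laurent polynomial from the lowest-order $T$-coefficient of $\phi_\eta(h)$, and using injectivity of $\beta\mapsto(\Delta_1\cap\beta,\ldots,\Delta_s\cap\beta)$ to rule out cancellation. The extraction of $f=F_{g_0}$ and the nonvanishing argument are exactly what appears in the paper (cf.\ equation (\ref{dfnf})), with only cosmetic differences in how the residue-field step is spelled out.
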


\begin{remark}\label{algind} Note that since, in (iii), the polynomial $f$ has its coefficients in $\mathbb{Q}$, it follows that when any of the above equivalent conditions holds, if we choose a particular $\eta$ with the property that the coordinates of $\EB(\eta)=\left(e^{\eta_1},\ldots,e^{\eta_s},\eta_{s+1},\ldots,\eta_N\right)$ are algebraically independent over $\mathbb{Q}$, then $QH(M,\omega)_{\eta}$ will automatically be semisimple (resp. have a field as a direct summand) for this specific choice of $\eta$.
\end{remark}

\begin{proof}  The fact that (i)$\Rightarrow$(ii) follows directly from Proposition \ref{basechange}(i).  The implication (iii)$\Rightarrow$(i) is trivial.  It remains to prove that (ii)$\Rightarrow$(iii).

Accordingly, assume that $(M,\mathcal{C})$ has generically semisimple big quantum homology (resp. has generically field-split big quantum homology).
Thus the open subset $U_1$ (resp. $U_2$) of $\S R_M$ produced by applying Theorem \ref{mainalg} to the $R_M$-algebra $\AR$ is nonempty.  Recall that  a basis for the topology of $\S R_M$ is formed by distinguished open sets of the form $D(g)=\{\p\in \S R_M|g\notin \p\}$ where $g\in R_M$.  So the open set produced by Theorem \ref{mainalg} contains one of these sets $D(g)$ with $g\neq 0$ (as of course $D(0)=\varnothing$); we fix this $g$.  

Since the codomain of the map $\phi_{\eta}\co R_M\to \Lambda_{\omega}$  is a field, $\ker\phi_{\eta}$ is a prime ideal; let us denote this prime ideal by $\p_{\eta}$.  If $\p_{\eta}\in D(g)$, then $\AR\otimes_{R_M}\kappa(\p_{\eta})$ is semisimple (resp. has a field as a direct summand).  Now $\phi_{\eta}\co R_M\to \Lambda_{\omega}$ factors through the canonical map $R_M\to \kappa(\p_{\eta})$ to give a field extension $\kappa(\p_{\eta})\to \Lambda_{\omega}$, so the equivalences in Theorem \ref{mainalg} show that $QH(M,\omega)_{\eta}$ is semisimple (resp. has a field as a direct summand) whenever the same property holds for $\AR\otimes_{R_M}\kappa(\p_{\eta})$.

As such, the proof will be complete if we show that, whenever $0\neq g\in R_M$, there is $f\in \mathbb{Q}[z_1,z_{1}^{-1},\ldots,z_s,z_{s}^{-1},z_{s+1},z_{s+2},\ldots,z_N]$ such that $\ker\phi_{\eta}\in D(g)$ whenever $f(\EB(\eta))\neq 0$.  Of course, $\ker\phi_{\eta}\in D(g)$ if and only if $\phi_{\eta}(g)\neq 0$.  Let us write \[ g=\sum_{\beta\in C^{eff}}g_{\beta}q^{\beta} \] where each $g_{\beta}\in \mathbb{Q}[z_{s+1},\ldots,z_N]$.  Since $g\neq 0$, let $\lambda_0$ be the minimal value of $\langle [\omega],\beta\rangle$ over all those $\beta$ with $g_{\beta}\neq 0$.  By Lemma \ref{cone}(i), there are just finitely many $\beta\in H_2(M)$, say $\beta_1,\ldots,\beta_k$, such that $g_{\beta}\neq 0$ and $\langle [\omega],\beta\rangle=\lambda_0$.

For $i=1,\ldots,s$ and $j=1,\ldots,k$ write \[ \Delta_i\cap \beta_j=n_{ij}.\]  Then, if \[ \eta=\sum_{i=1}^{N}\eta_i\Delta_i,\] the coefficient on $T^{\lambda_0}$ in $\phi_{\eta}(g)$ is \[ \sum_{j=1}^{k}g_{\beta_j}(\eta_{s+1},\ldots,\eta_{N})\prod_{i=1}^{s}(e^{\eta_i})^{n_{ij}}.\]  So let \begin{equation}\label{dfnf} f(z_1,\ldots,z_N)=\sum_{j=1}^{k}g_{\beta_j}(z_{s+1},\ldots,z_{N})\prod_{i=1}^{s}z_{i}^{n_{ij}}.\end{equation}  The above discussion and the definitions show that we will have $\ker\phi_{\eta}\in D(g)$ whenever $f(\EB(\eta))\neq 0$.  So the proof will be complete if we establish that $f$ is not the zero polynomial.  But, recalling that by definition $H_2(M)=\frac{H_2(M;\mathbb{Z})}{torsion}$, since $\{\Delta_1,\ldots,\Delta_s\}$ is a basis for $H_{2n-2}(M)$, the map \begin{align*}  H_2(M)&\to \mathbb{Z}^s \\ \beta &\mapsto (\Delta_1\cap\beta,\ldots,\Delta_s\cap \beta)\end{align*} is injective.  Consequently the only terms in (\ref{dfnf}) with powers of $z_{1},\ldots,z_{s}$ respectively equal to $n_{i1},\ldots,n_{is}$  are those arising from $j=1$.  So since $g_{\beta_1}$ is not the zero polynomial it follows that $f$ is not the zero polynomial and we are done. 
\end{proof}

Similarly, we have 
\begin{theorem} \label{smallsymp}For a closed symplectic manifold $(M,\omega)$, the following are equivalent:\begin{itemize} \item[(i)] For some $\eta\in H_{2n-2}(M;\Lambda_{\omega}^{0})$, the $\eta$-twisted small quantum homology $QH(M,\omega)_{\eta}$ is a semisimple $\Lambda_{\omega}$-algebra (resp. has a field as a direct summand).
\item[(ii)] Where $\mathcal{C}$ is the deformation class of $\omega$, $(M,\mathcal{C})$ has generically semisimple small quantum homology (resp. has generically field-split small quantum homology).
\item[(iii)]  There is a nonzero Laurent polynomial \[ f\in \mathbb{Q}[z_1,z_{1}^{-1},\ldots,z_s,z_{s}^{-1}]\] such that, for all $\eta\in H_{2n-2}(M;\mathbb{C})$ such that $f(\EB^{0}(\eta))\neq 0$, the $\eta$-twisted small quantum homology $QH(M,\omega)_{\eta}$ is a semisimple $\Lambda_{\omega}$-algebra (resp. has a field as a direct summand).\end{itemize}
\end{theorem}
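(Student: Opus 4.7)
The proof proceeds in close parallel with that of Theorem \ref{bigsymp}, with the role of $R_M$ played by $R_{M}^{0}$ and the key map $\phi_{\eta}\co R_M\to\Lambda_{\omega}$ replaced by the simpler map $\phi_{\eta}^{0}\co R_{M}^{0}\to\Lambda_{\omega}$ defined for $\eta=\sum_{i=1}^{s}\eta_i\Delta_i\in H_{2n-2}(M;\Lambda_{\omega}^{0})$ by
\[ \phi_{\eta}^{0}\left(\sum_{\beta\in C^{eff}}c_{\beta}q^{\beta}\right)=\sum_{\beta\in C^{eff}}c_{\beta}\exp(\eta\cap\beta)T^{\langle [\omega],\beta\rangle}.\]
The well-definedness of this map follows from Lemma \ref{cone}(i) just as before.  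The first observation is that when $\eta\in H_{2n-2}(M;\Lambda_{\omega}^{0})$, the map $\phi_{\eta}\co R_M\to\Lambda_{\omega}$ of (\ref{phieta}) factors as $\phi_{\eta}=\phi_{\eta}^{0}\circ\sigma$, where $\sigma\co R_M\to R_{M}^{0}$ is the map of Section \ref{sqh}; this is because setting $\eta_i=0$ for $i>s$ in (\ref{phieta}) leaves only the $\alpha=0$ contributions.  Consequently
\[ QH(M,\omega)_{\eta}=\mathbb{A}_{R_M}\otimes_{R_M}\Lambda_{\omega}=(\mathbb{A}_{R_M}\otimes_{R_M}R_{M}^{0})\otimes_{R_{M}^{0}}\Lambda_{\omega}=QH(M;R_{M}^{0})\otimes_{R_{M}^{0}}\Lambda_{\omega},\]
where $\Lambda_{\omega}$ is viewed as an $R_{M}^{0}$-algebra via $\phi_{\eta}^{0}$.

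The implication (i)$\Rightarrow$(ii) is then immediate from Proposition \ref{basechange}(i) applied to the $R_{M}^{0}$-algebra $QH(M;R_{M}^{0})$ and the ring map $\phi_{\eta}^{0}$. The implication (iii)$\Rightarrow$(i) is trivial: choose for instance $\eta=\sum_{i=1}^{s}\eta_i\Delta_i$ with $e^{\eta_1},\ldots,e^{\eta_s}$ algebraically independent over $\mathbb{Q}$, which forces $f(\EB^{0}(\eta))\neq 0$.

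For the main implication (ii)$\Rightarrow$(iii), assume $QH(M;R_{M}^{0})$ is generically semisimple (resp.\ generically field-split).  By Theorem \ref{mainalg}, the corresponding open set $U_1$ (resp.\ $U_2$) in $\S R_{M}^{0}$ is nonempty, hence contains a distinguished open $D(g)=\{\p\in \S R_{M}^{0}\mid g\notin\p\}$ for some $0\neq g\in R_{M}^{0}$. Since $\Lambda_{\omega}$ is a field, $\ker\phi_{\eta}^{0}$ is a prime ideal, and as in the proof of Theorem \ref{bigsymp} the map $\phi_{\eta}^{0}$ factors through the canonical map $R_{M}^{0}\to\kappa(\ker\phi_{\eta}^{0})$ to yield a field extension; thus whenever $\phi_{\eta}^{0}(g)\neq 0$ (equivalently $\ker\phi_{\eta}^{0}\in D(g)$), the algebra $QH(M;R_{M}^{0})\otimes_{R_{M}^{0}}\kappa(\ker\phi_{\eta}^{0})$ is semisimple (resp.\ has a field summand), and by the equivalences of Theorem \ref{mainalg} the same holds for $QH(M,\omega)_{\eta}$.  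So it suffices to construct a nonzero Laurent polynomial $f\in\mathbb{Q}[z_1,z_{1}^{-1},\ldots,z_s,z_{s}^{-1}]$ such that $f(\EB^{0}(\eta))\neq 0$ implies $\phi_{\eta}^{0}(g)\neq 0$.

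To do this, write $g=\sum_{\beta\in C^{eff}}c_{\beta}q^{\beta}$ with $c_{\beta}\in\mathbb{Q}$, and let $\lambda_0=\min\{\langle[\omega],\beta\rangle\mid c_{\beta}\neq 0\}$; by Lemma \ref{cone}(i) there are only finitely many classes $\beta_1,\ldots,\beta_k$ realizing this minimum with $c_{\beta_j}\neq 0$.  The coefficient of $T^{\lambda_0}$ in $\phi_{\eta}^{0}(g)$ is then
\[ \sum_{j=1}^{k}c_{\beta_j}\prod_{i=1}^{s}(e^{\eta_i})^{\Delta_i\cap\beta_j},\]
so we set
\[ f(z_1,\ldots,z_s)=\sum_{j=1}^{k}c_{\beta_j}\prod_{i=1}^{s}z_{i}^{\Delta_i\cap\beta_j}\in\mathbb{Q}[z_1,z_{1}^{-1},\ldots,z_s,z_{s}^{-1}].\]
Since $\{\Delta_1,\ldots,\Delta_s\}$ is a basis of $H_{2n-2}(M)$, the map $\beta\mapsto(\Delta_1\cap\beta,\ldots,\Delta_s\cap\beta)$ is injective on $H_2(M)$, so the monomials $\prod z_{i}^{\Delta_i\cap\beta_j}$ corresponding to distinct $\beta_j$ are distinct; hence no cancellation occurs and $f$ is a nonzero Laurent polynomial. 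Whenever $f(\EB^{0}(\eta))\neq 0$ the leading $T^{\lambda_0}$-coefficient of $\phi_{\eta}^{0}(g)$ is nonzero, so $\phi_{\eta}^{0}(g)\neq 0$ and we are done.  No step presents a real obstacle: the whole argument is a straightforward adaptation of Theorem \ref{bigsymp}, the only subtlety being the initial verification that $\phi_{\eta}$ factors through $R_{M}^{0}$ when $\eta$ is purely divisorial, allowing the reduction to the small-coefficient-ring algebra $QH(M;R_{M}^{0})$.
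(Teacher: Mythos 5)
Your proposal is correct and is exactly the ``notational adaptation'' of the proof of Theorem \ref{bigsymp} that the paper leaves to the reader: you replace $R_M$ with $R_{M}^{0}$, observe that $\phi_\eta$ factors through $\sigma$ when $\eta$ is divisorial so that $QH(M,\omega)_\eta = QH(M;R_{M}^{0})\otimes_{R_{M}^{0}}\Lambda_\omega$, and then repeat the $D(g)$ argument with the simpler Laurent polynomial $f(z_1,\ldots,z_s)=\sum_j c_{\beta_j}\prod_i z_i^{\Delta_i\cap\beta_j}$. The justification that $f\neq 0$ via injectivity of $\beta\mapsto(\Delta_1\cap\beta,\ldots,\Delta_s\cap\beta)$ is the same mechanism used at the end of the proof of Theorem \ref{bigsymp}, so nothing further is needed.
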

\begin{proof} The proof differs only notationally from that of Theorem \ref{bigsymp} and so is left to the reader.
\end{proof}

A reader who still prefers to work with undeformed (i.e. $\eta=0$) quantum homology may take solace in the following, which is somewhat reminiscent of \cite[Theorem 4.1]{OT} and \cite[Proposition 8.8]{FOOO10}:

\begin{prop}\label{sympclass} Given a deformation class $(M,\mathcal{C})$ of closed symplectic manifolds, the following are equivalent:
\begin{itemize}\item[(i)] There exists a symplectic form $\omega\in\mathcal{C}$ such that the undeformed quantum homology $QH(M,\omega)_0$ is semisimple (resp. has a field as a direct summand).
\item[(ii)] $(M,\mathcal{C})$ has generically semisimple (resp. generically field-split) small quantum homology.
\item[(iii)] Where $[\mathcal{C}]=\{[\omega]\in H^2(M;\mathbb{R})|\omega\in\mathcal{C}\}$, there is a countable intersection $\mathcal{B}$ of open dense subsets of $[\mathcal{C}]$  such that $QH(M,\omega)_0$ is semisimple (resp. has a field as a direct summand) whenever $\omega\in\mathcal{C}$ and $[\omega]\in \mathcal{B}$.\end{itemize}
\end{prop}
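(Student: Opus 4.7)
The plan is to handle the implications cyclically: $(iii) \Rightarrow (i)$ is immediate from Baire category once we know $[\mathcal{C}]$ is a Baire space; $(i) \Rightarrow (ii)$ follows from Proposition \ref{basechange}(i), since for any $\omega\in\mathcal{C}$ the canonical identification $QH(M,\omega)_0 \cong QH(M;R_M^0)\otimes_{R_M^0}\Lambda_\omega$ exhibits $QH(M,\omega)_0$ as a base change of the small quantum homology. The bulk of the work is the implication $(ii)\Rightarrow (iii)$, which is an analogue of the argument for Theorem \ref{bigsymp}, with the cohomology class $[\omega]$ playing the role that the deformation parameter $\eta$ played there. The main obstacle, and the interesting point, is that $[\omega]$ is real-valued and varies over an open subset of $H^2(M;\mathbb{R})$, so instead of the polynomial non-vanishing condition (\ref{dfnf}) we will be forced to work with a countable family of affine hyperplanes and invoke a Baire-type argument.

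For $(ii)\Rightarrow(iii)$, apply Theorem \ref{mainalg} to the $R_M^0$-algebra $QH(M;R_M^0)$, whose hypotheses are satisfied because $\{\Delta_0,\ldots,\Delta_N\}$ is a free $R_M^0$-basis. By Definition \ref{ssdfn}, the corresponding open locus $U_1$ (resp.\ $U_2$) in $\S R_M^0$ is nonempty; since the distinguished opens form a basis, it contains some $D(g)$ with $0\neq g\in R_M^0$, which we fix. For each $[\omega]\in [\mathcal{C}]$, let $\phi_{0,[\omega]}\co R_M^0\to \Lambda_\omega$ denote the ring map $\sum c_\beta q^\beta \mapsto \sum c_\beta T^{\langle [\omega],\beta\rangle}$ (this is well-defined by Lemma \ref{cone}(i) applied to any symplectic form representing $[\omega]$, exactly as in the construction of $\phi_\eta$ in Section \ref{novikov}). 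Writing $\mathfrak{p}_{[\omega]}=\ker\phi_{0,[\omega]}$, the field extension $\kappa(\mathfrak{p}_{[\omega]})\hookrightarrow\Lambda_\omega$ together with the equivalences (A3)$\Leftrightarrow$(A4) (resp.\ (B3)$\Leftrightarrow$(B4)) in Theorem \ref{mainalg} shows that, whenever $\mathfrak{p}_{[\omega]}\in D(g)$, i.e.\ whenever $\phi_{0,[\omega]}(g)\neq 0$, the $\Lambda_\omega$-algebra $QH(M,\omega)_0$ is semisimple (resp.\ has a field direct summand). It therefore suffices to exhibit a countable intersection of open dense subsets of $[\mathcal{C}]$ on which $\phi_{0,[\omega]}(g)\neq 0$.

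Write $g=\sum_{\beta\in S}c_\beta q^\beta$ where $S\subset C^{eff}$ is a countable set and each $c_\beta\in\mathbb{Q}\setminus\{0\}$. For each pair of distinct $\beta_1,\beta_2\in S$, consider the hyperplane $H_{\beta_1,\beta_2}=\{\alpha\in H^2(M;\mathbb{R})\mid \langle \alpha,\beta_1-\beta_2\rangle=0\}$. First I will verify that $[\mathcal{C}]$ is open in $H^2(M;\mathbb{R})$: if $\omega_0\in\mathcal{C}$ and $\alpha$ is any closed $2$-form, then $\omega_0+\varepsilon\alpha$ is symplectic and lies in $\mathcal{C}$ for $|\varepsilon|$ sufficiently small (by openness of the symplectic condition and connectedness of the line), and since the de Rham map from closed $2$-forms to $H^2(M;\mathbb{R})$ is surjective, every nearby cohomology class is realized. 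Because $[\mathcal{C}]$ is a nonempty open subset of the finite-dimensional vector space $H^2(M;\mathbb{R})$, it cannot be contained in any proper affine subspace; thus each $H_{\beta_1,\beta_2}\cap [\mathcal{C}]$ is nowhere dense in $[\mathcal{C}]$, and $[\mathcal{C}]$ is a Baire space. Set
\[
\mathcal{B}=\bigcap_{\substack{\beta_1,\beta_2\in S\\\beta_1\neq\beta_2}}\bigl([\mathcal{C}]\setminus H_{\beta_1,\beta_2}\bigr),
\]
a countable intersection of open dense subsets of $[\mathcal{C}]$. For $[\omega]\in\mathcal{B}$, the real numbers $\langle [\omega],\beta\rangle$ ($\beta\in S$) are pairwise distinct, so in the expansion $\phi_{0,[\omega]}(g)=\sum_{\beta\in S}c_\beta T^{\langle [\omega],\beta\rangle}$ no two terms share a common power of $T$; since each $c_\beta\neq 0$, this element is nonzero in $\Lambda_\omega$, as required. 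Finally $(iii)\Rightarrow (i)$ holds because $\mathcal{B}$ is nonempty (indeed dense) by Baire's theorem, so any $\omega\in\mathcal{C}$ with $[\omega]\in\mathcal{B}$ furnishes the desired form.
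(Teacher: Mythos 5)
Your proof is correct, and the overall architecture (dispatch $(iii)\Rightarrow(i)$ trivially, $(i)\Rightarrow(ii)$ via Proposition \ref{basechange}(i), concentrate on $(ii)\Rightarrow(iii)$) matches the paper. The interesting divergence is in $(ii)\Rightarrow(iii)$. You import the machinery of Theorem \ref{bigsymp}: fix a nonzero $g\in R_M^0$ with $D(g)\subset U_1$ (resp.\ $U_2$), observe that $\p_{[\omega]}\in D(g)$ iff $\phi_{0,[\omega]}(g)\neq 0$, and then force this nonvanishing by choosing $[\omega]$ so that the periods $\langle[\omega],\beta\rangle$ are pairwise distinct for $\beta$ in the (countable) support of $g$, which you arrange by removing the countably many hyperplanes $H_{\beta_1,\beta_2}$ from the open set $[\mathcal{C}]$. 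The paper instead takes $\mathcal{B}_0$ to be the set of classes with rationally independent coefficients in a fixed $\mathbb{Q}$-basis for $H^2(M;\mathbb{Q})$---a dense $G_\delta$ chosen \emph{once and for all}, independent of $g$---which forces the full injectivity of $\beta\mapsto\langle[\omega],\beta\rangle$ on $C^{eff}$, hence the injectivity of $\psi_\omega\co R_M^0\to\Lambda_\omega$, after which Proposition \ref{basechange}(ii) finishes in one line (and, as a bonus, yields the \emph{equivalence} of semisimplicity of $QH(M,\omega)_0$ and generic semisimplicity of the small quantum homology, not just the implication needed for $(iii)$). Both routes are sound; yours is closer in spirit to the proof of Theorem \ref{bigsymp} and is more self-contained in that it only needs the residue-field/distinguished-open mechanism, while the paper's is shorter and produces a $g$-independent set $\mathcal{B}$. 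One small presentational note: you proved the openness of $[\mathcal{C}]$ in $H^2(M;\mathbb{R})$ explicitly, which the paper asserts in passing; that was worth spelling out, since the Baire argument depends on it.
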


\begin{proof} Again (i)$\Rightarrow$(ii) follows from Proposition \ref{basechange}(i) and (iii)$\Rightarrow$(i) is trivial so we just need to prove (ii)$\Rightarrow$(iii).   So assume that $(M,\mathcal{C})$ has generically semisimple (resp. generically field-split) small quantum homology.

We may then choose a basis for $H^2(M;\mathbb{Q})$, let $\mathcal{B}_0\subset H^2(M;\mathbb{R})$ be the set of classes having rationally independent coefficients when written in terms of this basis, and let $\mathcal{B}=\mathcal{B}_0\cap [\mathcal{C}]$.  Since $\mathcal{B}_0$ is a countable intersection of open dense subsets of $H^2(M;\mathbb{R})$ and $[\mathcal{C}]$ is open (because nondegeneracy is an open condition on a $2$-form), $\mathcal{B}$ is a countable intersection of open dense subsets of $[\mathcal{C}]$.  Moreover if $[\omega]\in \mathcal{B}$ then $\beta\mapsto \langle[\omega],\beta\rangle$ is an injective map $C^{eff}\to\mathbb{R}$, and so the map  \begin{align*} \psi_{\omega}\co R_{M}^{0}&\to \Lambda_{\omega} \\ \sum_{\beta\in C^{eff}}c_{\beta}q^{\beta}&\mapsto \sum_{\beta\in C^{eff}}c_{\beta}T^{\langle[\omega],\beta\rangle}\end{align*}  is also injective.  So since $QH(M,\omega)_0=QH(M;R_{M}^{0})\otimes_{R_{M}^{0}}\Lambda_{\omega}$ where $\Lambda_{\omega}$ is made into a $R_{M}^{0}$-module by $\psi_{\omega}$, it follows from Proposition \ref{basechange}(ii) that, for $[\omega]\in\mathcal{B}$,   
$QH(M,\omega)_0$ is semisimple (which, since its coefficient ring is a field, is equivalent to being generically semisimple) if and only if $(M,\mathcal{C})$ has generically semisimple small quantum homology.
\end{proof}
\begin{remark}
There is an obvious isomorphism between our Novikov field $\Lambda_{\omega}$ and the field denoted by $\mathcal{K}_{\Gamma}$ in \cite{EP08}, and the algebra $\Lambda_{\omega}$-algebra $QH(M,\omega)_0$ is straightforwardly seen to be isomorphic to the $\mathcal{K}_{\Gamma}$-algebra $QH_{2n}(M,\omega)$ from \cite{EP08} (in \cite{EP08} a degree-shifting parameter $q$, which does not belong to $\mathcal{K}_{\Gamma}$, is used to move all of the even-degree homology into degree $2n$; note that the ring denoted by $\Lambda_{\Gamma}$  in \cite{EP08} plays a different role than our $\Lambda_{\omega}$).  In particular the notion of semisimple quantum homology from \cite{EP08} is equivalent to, in our notation, the property that $QH(M,\omega)_0$ is semisimple.  In turn, in the case that $(M,\omega)$ is monotone, this notion can be identified with that in \cite{EP03} by the argument in \cite[Section 5]{EP08}.

  In \cite{OT}, the authors use a slightly different convention for the Novikov ring, in that they consider the ring $\mathbb{K}^{\downarrow}$ in which the exponents are allowed to be arbitrary real numbers rather than being restricted to the period group $\Gamma=\Gamma_{\omega}$.  However, as follows from Proposition \ref{basechange} (or, indeed, \cite[Proposition 2.2]{EP08}), this distinction does not affect whether the quantum homology is semisimple or field-split provided that one works in characteristic zero. 

In particular, it follows from this that the symplectic manifolds that were found to have semisimple or field-split quantum homology in \cite{EP03}, \cite{EP08}, or \cite{OT} all fall under the purview of Theorem \ref{smallsymp} and Proposition \ref{sympclass}.

\end{remark}

\subsubsection{The case of convergent structure constants}\label{cvgsect}

Having proven results relating generic semisimplicity in the sense of Definition \ref{ssdfn} to quantum homology over the Novikov rings used in symplectic topology, we now connect Definition \ref{ssdfn} to semisimplicity as it is studied by algebraic geometers. 
 As before, we will consider a basis $\Delta_0,\ldots,\Delta_N$ for $H_{ev}(M)$ with $\Delta_0=[M]$ and $\Delta_1,\ldots,\Delta_s$ spanning $H_{2n-2}(M)$.
Consistently with algebraic geometry conventions, we will choose $\Delta_1,\ldots,\Delta_s$ to be ``nef'' in the sense that $\Delta_i\cap \beta\geq 0$ whenever $\beta\in H_{2}^{eff}$ (it's straightforward to find such a basis, regardless of whether the symplectic deformation class $(M,\mathcal{C})$  
arises from algebraic geometry: choose $\Delta_1$ equal to the Poincar\'e dual to a symplectic form in $\mathcal{C}$ representing a primitive integral homology class; complete this to an integral basis $\Delta_1,\Delta'_2,\ldots,\Delta'_s$ for $H_{2n-2}(M)$, and then for some large integer $K$ set $\Delta_j=\Delta'_j+K\Delta_1$ for $2\leq j\leq s$).

As far as I can tell, there is not a universal consensus in the algebraic geometry community regarding the most appropriate coefficient ring for quantum homology.  Some authors use the Novikov rings of Section \ref{novikov}.  In other cases, a formal power series ring of the form $\mathbb{Q}[[t_1,\ldots,t_N]]$ is used (see for instance \cite[Section 2]{Ir}, \cite{Bay}); in this case the quantum homology can be described in our language as being obtained from the $R_M$-algebra $\mathcal{A}_M$ induced by the coefficient extension \begin{align*} \Phi\co R_M &\to \mathbb{Q}[[t_1,\ldots,t_N]] \\ \sum_{\beta\in C^{eff}}g_{\beta}q^{\beta}&\mapsto \sum_{\beta\in C^{eff}}g_{\beta}(t_{s+1},\ldots,t_N)\prod_{i=1}^{s}t_{i}^{\Delta_i\cap\beta}.\end{align*}  Since $\Phi$ is injective (because $\beta\mapsto (\Delta_1\cap\beta,\ldots,\Delta_s\cap \beta)$ is injective, as was noted in the proof of Theorem \ref{bigsymp}) it immediately follows from Lemma \ref{basechange}(ii) that $(M,\mathcal{C})$ has generically semisimple (resp. generically field-split) quantum homology in our sense if and only if 
this coefficient extension over $\mathbb{Q}[[t_1,\ldots,t_N]]$ is generically semisimple (resp. generically field-split).

The context in which semisimple quantum homology has been of greatest interest in algebraic geometry is when the power series that appear in the algebra converge, so that the coefficient ring may be taken to be $\mathbb{C}$; in this case the quantum homology gives a Frobenius manifold (rather than a formal Frobenius manifold in the terminology of \cite{Man}), and the Frobenius manifolds obtained in the semisimple case have remarkable relations to disparate areas of mathematics (see for instance \cite{Dub}).  

Where $\ep>0$ and $B_{\ep}(\vec{0})$ denotes the ball of radius $\ep$ around the origin in $\mathbb{C}^N$, let \[ S_{\ep}=\{f\in \mathbb{Q}[[t_1,\ldots,t_N]]|f \mbox{ is absolutely convergent on }B_{\ep}(\vec{0})\} \] Also let \[ R_{M,\ep}=\Phi^{-1}(S_{\ep}).\]  Clearly $R_{M,\ep}$ and $S_{\ep}$ are rings, and $\Phi$ restricts to an injective map $\Phi\co R_{M,\ep}\to S_{\ep}$.  Similarly define $R_{M,\ep}^{0}=R_{M}^{0}\cap R_{M,\ep}$.

\begin{dfn} 
Given $\ep>0$, we say that $(M,\mathcal{C})$ has $\ep$-convergent big (resp. small) quantum homology if the structure constants $c_{ijk}$ of the start of Section \ref{mult} (resp. the elements $\sigma(c_{ijk})\in R_{M}^{0}$ where $\sigma$ is defined at the start of Section \ref{sqh}) belong to $R_{M,\ep}$ (resp. $R_{M,\ep}^{0}$).
\end{dfn}

If $(M,\mathcal{C})$ has $\ep$-convergent big quantum homology, then we may define $\AR^{\ep}$ to be the $R_{M,\ep}$-algebra freely generated as a module by the $\Delta_i$ with $\Delta_i\ast \Delta_j=\sum_k c_{ijk}\Delta^k$, so that obviously $\AR=\AR^{\ep}\otimes_{R_{M}^{\ep}}R_M$.  Similarly we may define a version $\AR^{\ep,0}$ of the small quantum homology with coefficients in $R_{M,\ep}^{0}$ so that the small quantum homology as we originally defined it, namely $QH(M;R_{M}^{0})$, is given by   $QH(M;R_{M}^{0})=\AR^{\ep,0}\otimes_{R_{M,\ep}^{0}}R_{M}^{0}$.  

Incidentally, note that if $(M,\mathcal{C})$ is symplectically Fano (i.e. if there is a symplectic form in $\mathcal{C}$ representing $c_1(TM)$; this subsumes all cases where $M$ is Fano in the complex algebraic sense), then the structure constants $\sigma(c_{ijk})$ for the \emph{small} quantum homology are all just finite sums.  Indeed, the $\sigma(c_{ijk})$ involve only Gromov--Witten invariants with three insertions, and (\ref{gwdim}) imposes an upper bound on $\langle c_1(TM),\beta\rangle$ for the homology class $\beta\in H_2(M)$ of the curves counted by such invariants; in the symplectically Fano case Gromov compactness then implies that there can be only finitely many $\beta$ represented by pseudoholomorphic curves which obey this bound. So in this case $(M,\mathcal{C})$ has $\ep$-convergent small quantum homology for every $\ep>0$.  

The definition of $R_{M,\ep}$ ensures that, for any $\vec{t}\in B_{\ep}(\vec{0})\in \mathbb{C}^N$, we have a well-defined ring map \[ ev_{\vec{t}}\co R_{M,\ep}\to \mathbb{C}\] defined by \[ ev_{\vec{t}}\left(\sum_{\beta}g_{\beta}q^{\beta}\right)= \sum_{\beta\in C^{eff}}g_{\beta}(t_{s+1},\ldots,t_N)\prod_{i=1}^{s}t_{i}^{\Delta_i\cap\beta}.\]  By the same token, if $\vec{z}\in\mathbb{C}^{s}$ with $\|\vec{z}\|<\ep$, we have a ring map \[ ev_{\vec{z}}\co R_{M,\ep}^{0}\to \mathbb{C}\] defined by \[ ev_{\vec{z}}\left(\sum_{\beta}g_{\beta}q^{\beta}\right)=\sum_{\beta\in C^{eff}}g_{\beta}\prod_{i=1}^{s}z_{i}^{\Delta_i\cap\beta}.\]

\begin{dfn} If $\vec{t}\in \mathbb{C}^N$ (resp. $\vec{z}\in \mathbb{C}^s$), the big (resp. small) \emph{quantum homology of $(M,\mathcal{C})$ at} $\vec{t}$ (resp. at $\vec{z}$) is the algebra defined by $QH(M)|_{\vec{t}}=\AR^{\ep}\otimes_{R_{M,\ep}}\mathbb{C}$ (resp.
$QH(M)|_{\vec{z}}=\AR^{\ep,0}\otimes_{R_{M,\ep}^{0}}\mathbb{C}$), where $\mathbb{C}$ has been made into an algebra over $R_{M,\ep}$ by the map $ev_{\vec{t}}$ (resp. $\mathbb{C}$  has been made into an algebra over $R_{M,\ep}^{0}$ by the map $ev_{\vec{z}}$).
\end{dfn}

\begin{theorem}\label{cvgthm} If $(M,\mathcal{C})$ has $\ep$-convergent big quantum homology, the following are equivalent:\begin{itemize}
\item[(i)] There exists $\vec{t}\in B_{\ep}(\vec{0})$ such that $QH(M)|_{\vec{t}}$ is a semisimple $\mathbb{C}$-algebra (resp. contains $\mathbb{C}$ as a direct summand).
\item[(ii)] $(M,\mathcal{C})$ has generically semisimple (resp. generically field-split) big quantum homology.
\item[(iii)] There is a nonzero analytic function $f\co B_{\ep}(\vec{0})\to \mathbb{C}$ such that  $QH(M)|_{\vec{t}}$ is a semisimple $\mathbb{C}$-algebra (resp. contains $\mathbb{C}$ as a direct summand) whenever $f(\vec{t})\neq 0$.
\end{itemize}
\end{theorem}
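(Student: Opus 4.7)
The plan is to mimic the structure of the proofs of Theorems \ref{bigsymp} and \ref{smallsymp}, using Proposition \ref{basechange} to shuttle between the $R_M$-algebra $\AR$ and the $R_{M,\ep}$-algebra $\AR^{\ep}$, and then using the evaluation maps $ev_{\vec{t}}\co R_{M,\ep}\to\mathbb{C}$ to relate prime ideals in $\S R_{M,\ep}$ with points $\vec{t}\in B_{\ep}(\vec{0})$.  The key preparatory observation is that since $R_{M,\ep}\hookrightarrow R_M$ is an injection of integral domains (integrality of $R_M$ was noted in Section \ref{quant2}, and the subring property is immediate from the definitions), and since $\AR^{\ep}\otimes_{R_{M,\ep}}R_M=\AR$, Proposition \ref{basechange}(ii) implies that $\AR$ is generically semisimple (resp.\ generically field-split) over $R_M$ if and only if $\AR^{\ep}$ is generically semisimple (resp.\ generically field-split) over $R_{M,\ep}$.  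Thus it suffices to prove all the equivalences with (ii) replaced by the corresponding statement about $\AR^{\ep}/R_{M,\ep}$.

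The implication (iii)$\Rightarrow$(i) is trivial: since $f$ is a nonzero analytic function on a connected open set, one can always find some $\vec{t}$ with $f(\vec{t})\neq 0$.  For (i)$\Rightarrow$(ii), given $\vec{t}\in B_{\ep}(\vec{0})$ for which $QH(M)|_{\vec{t}}$ is semisimple (resp.\ has a field as a direct summand), let $\p_{\vec{t}}=\ker(ev_{\vec{t}})\in \S R_{M,\ep}$.  Then $ev_{\vec{t}}$ factors as a composition of the canonical map $R_{M,\ep}\to\kappa(\p_{\vec{t}})$ with a field extension $\kappa(\p_{\vec{t}})\to\mathbb{C}$, and we have $QH(M)|_{\vec{t}}=\AR^{\ep}\otimes_{R_{M,\ep}}\mathbb{C}=(\AR^{\ep}\otimes_{R_{M,\ep}}\kappa(\p_{\vec{t}}))\otimes_{\kappa(\p_{\vec{t}})}\mathbb{C}$.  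Since the residue fields are all of characteristic zero, the equivalence (A3)$\Leftrightarrow$(A4) (resp.\ (B3)$\Leftrightarrow$(B4)) of Theorem \ref{mainalg} (in other words, Theorem \ref{tensor} and \cite[Proposition 2.2(A)]{EP08}) shows that $\AR^{\ep}\otimes_{R_{M,\ep}}\kappa(\p_{\vec{t}})$ inherits the semisimplicity (resp.\ field-summand) property, so $\p_{\vec{t}}$ lies in the open set $U_1$ (resp.\ $U_2$) associated to $\AR^{\ep}/R_{M,\ep}$ by Theorem \ref{mainalg}.

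For (ii)$\Rightarrow$(iii), assume that the open set $U_1$ (resp.\ $U_2$) associated to $\AR^{\ep}/R_{M,\ep}$ is nonempty.  Since distinguished opens form a basis, there exists $g\in R_{M,\ep}\setminus\{0\}$ with $D(g)\subset U_1$ (resp.\ $D(g)\subset U_2$).  Set $f=\Phi(g)\in S_{\ep}$; by definition of $S_{\ep}$ this is the germ of an absolutely convergent power series on $B_{\ep}(\vec{0})$, which we identify with its analytic extension.  Injectivity of $\Phi$ (established in Section \ref{cvgsect} using that $\beta\mapsto(\Delta_1\cap\beta,\ldots,\Delta_s\cap\beta)$ is injective on $H_2(M)$, given our nef choice of basis) ensures $f\neq 0$.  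For any $\vec{t}$ with $f(\vec{t})\neq 0$, unpacking definitions gives $ev_{\vec{t}}(g)=f(\vec{t})\neq 0$, i.e.\ $g\notin \p_{\vec{t}}$, so $\p_{\vec{t}}\in D(g)$ and hence $\AR^{\ep}\otimes_{R_{M,\ep}}\kappa(\p_{\vec{t}})$ is semisimple (resp.\ has a field summand).  Tensoring up to $\mathbb{C}$ via the extension $\kappa(\p_{\vec{t}})\to\mathbb{C}$ and invoking (A3)$\Leftrightarrow$(A4) (resp.\ (B3)$\Leftrightarrow$(B4)) of Theorem \ref{mainalg} again transfers this property to $QH(M)|_{\vec{t}}$.

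The proof is essentially a mechanical adaptation of the algebraic arguments already developed; the only mildly delicate point is the translation between the algebraic setting (distinguished opens cut out by nonzero elements of $R_{M,\ep}$) and the analytic setting (complements of zero loci of nonzero holomorphic functions on $B_{\ep}(\vec{0})$).  This is precisely handled by the injective ring map $\Phi\co R_{M,\ep}\to S_{\ep}$, whose existence reflects the $\ep$-convergence hypothesis.  No new analytic input is needed beyond the standard fact that a nonzero convergent power series on a ball has a zero locus of empty interior.
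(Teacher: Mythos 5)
Your proof is correct and follows essentially the same route as the paper: reduce to the $R_{M,\ep}$-algebra $\AR^{\ep}$ via the base-change proposition, locate a distinguished open $D(g)\subset U_1$ (resp.\ $U_2$), and observe that $\p_{\vec{t}}\in D(g)$ precisely when the analytic function $\Phi(g)$ is nonzero at $\vec{t}$. Your write-up is slightly more explicit (invoking Proposition~\ref{basechange}(ii) as a two-way reduction at the outset, and spelling out the factorization through $\kappa(\p_{\vec{t}})$ and the (A3)$\Leftrightarrow$(A4)/(B3)$\Leftrightarrow$(B4) equivalences that the paper alludes to as ``an argument that we have used before''), but the underlying argument is identical.
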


There is an essentially identical theorem for small quantum homology, whose statement is left to the reader.

\begin{proof} As has become customary in this paper, (i)$\Rightarrow$(ii) by Theorem \ref{basechange}, while (iii)$\Rightarrow$(i) is trivial, so we just need to prove (ii)$\Rightarrow$(iii).

Assume that $(M,\mathcal{C})$ has generically semisimple (resp. generically field-split) big quantum homology.  Since $\AR=\AR^{\ep}\otimes_{R_{M,\ep}}R_M$, it follows from Theorem \ref{basechange}(i) that the $R_{M,\ep}$-algebra $\AR^{\ep}$ is generically semisimple (resp. generically field-split).  So Theorem \ref{mainalg} produces an open set $U_1$ (resp. $U_2$) in $\S R_{M,\ep}$, which necessarily contains an open set of the form $D(f)=\{\p\in \S R_{M,\ep}|f\notin \p\}$ where $f\neq 0$.  By an argument that we have used before, the point $\vec{t}\in B_{\ep}(\vec{0})$ will have the property that $QH(M)|_{\vec{t}}$ is a semisimple $\mathbb{C}$-algebra (resp. contains $\mathbb{C}$ as a direct summand\footnote{Typically the condition is ``contains a field as a direct summand,'' but in any event this field is a finite extension of the base field, and here the base field in $\mathbb{C}$, whose only finite extension is itself}) provided that $\ker ev_{\vec{t}}\in D(f)$, i.e. provided that $f\notin \ker ev_{\vec{t}}$.  But by definition elements $f$ of $R_{M,\ep}$ are power series which define analytic functions on $B_{\ep}(\vec{0})$, and $ev_{\vec{t}}$ is just given by evaluating such a function at $\vec{t}$.  Thus our condition on $\vec{t}$ is simply that, viewing $f$ now as an analytic function, $f(\vec{t})\neq 0$.   
\end{proof}

\subsubsection{Examples from the literature}\label{lit}

From our above results we can immediately read off from the literature some broad families examples of deformation classes $(M,\mathcal{C})$ which have generically semisimple (small or big) quantum homology:
\begin{itemize} \item Any symplectic toric Fano manifold has generically semisimple small quantum homology.  This follows from the Batyrev-Givental formula for the quantum homology of such a manifold as re-expressed in, \emph{e.g.}, \cite{FOOO10}, \cite{OT}; in particular, in light of Proposition \ref{sympclass} above, we can simply read off this conclusion from \cite[Theorem 4.1]{OT}.
\item Any closed symplectic toric manifold, Fano or not, has generically semisimple big quantum homology.  Indeed, Delzant's theorem \cite{Del} shows that any closed symplectic toric manifold is deformation equivalent to a projective toric manifold (isotope the Delzant polytope to have integral vertices), and Iritani showed \cite[Theorem 1.3]{Ir} that the big quantum homology of a projective toric manifold has convergent structure constants and is generically semisimple in the sense considered in Theorem \ref{cvgthm}.
\item Of the 59 Fano $3$-folds which have no odd rational cohomology, 36 of them were shown to have generically semisimple small quantum homology (over
$\mathbb{C}$) in \cite{Cio}; by the small-quantum-homology version of Theorem \ref{cvgthm} this is equivalent to generically semisimple small quantum homology in our sense (i.e. over $R_{M}^{0}$).
\item It was shown in \cite{Bay} that a blowup at a point of a manifold with generically semisimple quantum homology still has generically semisimple quantum homology (the theorem is stated for big quantum homology, but the proof works equally well for small quantum homology).  Bayer works over a formal power series ring into which $R_M$ admits an embedding as in Section \ref{cvgsect}, so once again this is equivalent to semisimple quantum homology in our sense. 
\end{itemize}

On the negative side, it was observed in \cite{HMT} that a projective algebraic manifold $M$ cannot have generically semisimple quantum homology if there are any nonzero Hodge numbers $h^{p,q}(M)$ with $p\neq q$. In the symplectic category it remains true that a symplectic manifold cannot have generically semisimple quantum homology if it has any nonzero odd Betti numbers (the point is that the product of a Poincar\'e dual pair of odd homology classes would be a nonzero even homology class, which would however be nilpotent by the supercommutativity properties of the Gromov--Witten invariants).  On the other hand nonzero odd Betti numbers do not give any particular obstruction to the quantum homology of $M$ being generically field-split, as Theorem \ref{blowup} below demonstrates.

Also, all evidence points to the notion that one cannot delete the  word ``generically'' from the above discussion.  For example, there is given in  \cite[Section 5]{OT} an example of a monotone Fano toric $4$-fold (thus of $8$ real dimensions) whose \emph{untwisted} small quantum homology is not semisimple. 

\subsection{Symplectic blow-ups}\label{blsect}

The purpose of this final subsection is to prove the following, thus establishing Theorem \ref{examples}(ii):

\begin{theorem}\label{blowup} Let $(\tilde{M},\mathcal{C})$ be a deformation class of symplectic manifolds obtained by blowing up a symplectic manifold $(M,\omega)$ at a point.  Then $(\tilde{M},\mathcal{C})$ has generically field-split small quantum homology.\end{theorem}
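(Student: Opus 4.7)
The plan is to identify an explicit idempotent in a base-change extension of the small quantum homology of $\tilde M$ whose associated direct summand is a field, then use Proposition \ref{basechange} along the diagram \eqref{bldiag} to transfer generic field-splitting back to $QH(\tilde M; R_{\tilde M}^0)$. Let $\mathcal{E}\subset \tilde M$ denote the exceptional divisor, a copy of $\mathbb{C}P^{n-1}$, and let $e=[\mathcal{E}]\in H_{2n-2}(\tilde M)$, $\ell\in H_2(\tilde M)$ be the class of a line in $\mathcal{E}$, so that $\langle c_1(T\tilde M),\ell\rangle=n-1$ and $e\cap\ell=-1$. In particular $\ell$ sits in the nef part of $R_{\tilde M}^0$ as an element $E'$, and $(n-1)$ divides its Chern pairing — this explains the appearance of the formal root $q^{E'/(n-1)}$ in \eqref{bldiag}.

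First I would compute the quantum powers $e^{\ast k}$ in $QH(\tilde M; R_{\tilde M}^0)$ for $k=1,\ldots,n$. A dimension count using \eqref{gwdim} together with the divisor axiom shows that the only $3$-point invariants that can contribute to a structure constant of the form $\sigma(c_{iik\cdots k})$ involving only $e$ and the fundamental class are those in classes of the form $m\ell$, and the Gathmann--Hu--Bayer formulas \cite{Ga,Hu,Bay} evaluate these invariants explicitly (they are determined by the local blowup model, independently of the global geometry of $M$). The upshot is that $e$ satisfies a monic polynomial relation $p(e)=0$ of degree $n$ over a subring of $R_{\tilde M}^0$ built from powers of the Novikov class associated to $\ell$; the pattern of exponents is such that after base change to the integral-domain extension $R_{\tilde M}^0[q^{\pm E'/(n-1)}]$ the polynomial $p$ becomes an ordinary polynomial in $e$ with coefficients in that extension.

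Second, I would show that $p$ factors as $(e-\alpha)\,\tilde p(e)$ with $(e-\alpha)$ and $\tilde p(e)$ coprime, for an explicit $\alpha$ in the extended coefficient ring. This uses the specific form of the blowup invariants: the leading classical term of $p$ comes from intersection on $\mathcal{E}$ and the quantum corrections assemble into a geometric series whose sum isolates a simple root. Choosing a subring $B\subset R_{\tilde M}^0[q^{\pm E'/(n-1)}]$ large enough to contain $\alpha$ and $\tilde p(\alpha)^{-1}$ and setting $\epsilon=\tilde p(\alpha)^{-1}\tilde p(e)$ produces an idempotent in $\mathcal{A}_{R_{\tilde M}}\otimes_{R_{\tilde M}}B$; a rank count in $B$ shows that $\epsilon\cdot(\mathcal{A}_{R_{\tilde M}}\otimes_{R_{\tilde M}}B)$ is free of rank one. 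The quotient $B\to B/ZB$ in \eqref{bldiag} identifies this rank-one factor, after tensoring with the residue field $B/ZB$, as a field direct summand.

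Third, combining Proposition \ref{basechange}(i) (applied to $B\to B/ZB$) and Proposition \ref{basechange}(ii) (applied to the injective extension $R_{\tilde M}^0\hookrightarrow R_{\tilde M}^0[q^{\pm E'/(n-1)}]$, which is an extension of integral domains), one concludes that the $R_{\tilde M}^0$-algebra $QH(\tilde M; R_{\tilde M}^0)$ is generically field-split, which is exactly the assertion of Theorem \ref{blowup}. The main obstacle is the factorization step: one must sort out precisely which Gromov--Witten invariants (including virtual contributions from multiple covers of $\ell$) enter the polynomial $p$, then verify that these assemble into a closed-form expression factoring with a simple root. This is ultimately the geometric content of the blowup formulas of \cite{Ga,Hu,Bay}, but some bookkeeping is required to check that after passing to the root extension the resulting simple factor is defined over a suitable subring $B$ with $B/ZB$ a field.
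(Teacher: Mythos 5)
Your overall strategy is pointed in the right direction --- use the Gathmann--Hu--Bayer blowup formulas, pass to the root extension $R_{\tilde M}^0[q^{\pm E'/(n-1)}]$, find a suitable subring $B$ and reduce mod $Z$ to exhibit an explicit field summand, then transfer back via Proposition \ref{basechange} --- but there is a genuine gap at the level of what object you are working in and what relation the exceptional class actually satisfies.

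The central difficulty is that $\mathcal{A}_{R_{\tilde M}}\otimes_{R_{\tilde M}}B$ does not make sense for the subring $B$ you would need: the natural map $R_{\tilde M}^0 \to R_{\tilde M}^0[q^{\pm E'/(n-1)}]$ does not land in the subring $B=\{$elements with nonnegative powers of $Z\}$, since effective classes such as $E'$ itself give terms with $d_\beta>0$ and hence negative $Z$-powers. So you cannot obtain the object you want by base change. The workaround the paper uses is to consider a $B$-\emph{subalgebra} $N$ of $QH(\tilde M;R_{\tilde M}^0[q^{\pm E'/(n-1)}])$ spanned over $B$ by $H_{ev}(M)$ together with the \emph{rescaled} classes $ZE, Z^2E^2,\ldots,Z^{n-1}E^{n-1}$, not by base change. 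That $N$ is closed under quantum multiplication is the real technical content, and it needs Gathmann's vanishing theorem (together with its symplectic extensions) to control the $Z$-exponents of every structure constant. The base change maneuver you propose in the final step must then be applied to the inclusion $B\hookrightarrow R_{\tilde M}^0[q^{\pm E'/(n-1)}]$ using $N\otimes_B R_{\tilde M}^0[q^{\pm E'/(n-1)}]\cong QH(\tilde M;R_{\tilde M}^0[q^{\pm E'/(n-1)}])$, not to a (nonexistent) $\mathcal{A}_{R_{\tilde M}}\otimes_{R_{\tilde M}}B$.

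The polynomial relation you posit for $e$ is also not available in the ring where you propose to find it: the quantum powers $e^{\ast k}$ have components in $H_{ev}(M)$, not just in $\mathrm{span}\{[M],e,\ldots\}$, so $e$ does not satisfy a monic degree-$n$ relation over (a subring of) $R_{\tilde M}^0$. What the paper instead shows is that, after passing to $N$ and reducing mod $Z$, the cross-terms between the $H_{ev}(M)$ generators and the $Z^jE_j$ generators vanish (this uses Gathmann again), and the subalgebra $C_2$ of $\underline{N}=N/ZN$ spanned by $E_1,\ldots,E_{n-1}$ is isomorphic to $\underline{B}[s]/\langle s^n-(-1)^ns\rangle$ with $s$ corresponding to $ZE$. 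There is no geometric series; the only contributing invariants in classes $rE'$ are those with $r=0$ (classical cap product) and $r=1$ (the local invariants $\langle E_i,E_j,E_k\rangle_{0,3,E'}=-1$ when $i+j+k=2n-1$), both computed in \cite{M}. The factorization of $s^n-(-1)^ns$ over the fraction field of $\underline{B}$ then gives the field direct summand by the Chinese Remainder Theorem. So your factorization heuristic is close to how the paper isolates the field summand, but only after the $Z$-rescaling, the restriction to $N$, and the reduction mod $Z$ --- all three of which are missing from your outline and change both the polynomial and the ring over which it lives.
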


As mentioned earlier, Bayer showed in \cite{Bay} that if $M$ has generically semisimple quantum homology then so does $\tilde{M}$.  At the other extreme, if $M$ is not uniruled, then the \emph{undeformed} quantum homology of the blowup has a field direct summand; this fact is proven based on results of \cite{M} in \cite[Section 3]{EP08}, where its discovery is attributed to McDuff.

Our proof of Theorem \ref{blowup} will be based on largely the same approach used by Bayer in his proof of the semisimple case.  
Let $E\in H_{2n-2}(\tilde{M})$ denote the class of the exceptional divisor, and for $j\geq 1$ abbreviate $E^{\cap j}=E_{j}$.  We have a splitting \[ H_{ev}(\tilde{M})=H_{ev}(M)\oplus span\{E_1,\ldots,E_{n-1}\}\] which is orthogonal with respect to the classical cap product. Also let $E'$ be the class of a line in the exceptional divisor $E$; thus \[ E'=(-1)^{n}E_{n-1}.\]  Let $\Delta_0,\ldots,\Delta_N$ be a basis of the usual form for $H_{ev}(M)$ with $\Delta_1,\ldots,\Delta_s$ a basis for $H_2(M)$, so that $\Delta_1,\ldots,\Delta_s,E'$ is a basis for $H_2(\tilde{M})$.

The standard universal coefficient ring $R_{\tilde{M}}^{0}$ for the small quantum homology of $\tilde{M}$, according to the conventions that we have used so far, consists of formal sums \[ \sum_{\beta\in C^{eff}} c_{\beta}q^{\beta}\] where $c_{\beta}\in\mathbb{Q}$, and where here and below $C^{eff}$ refers to the GW-effective cone of $\tilde{M}$ (not of $M$).  Following \cite{Bay}, we formally adjoin to this ring an invertible element \[ Z=q^{-\frac{1}{n-1}E'} \] to obtain a ring $R_{\tilde{M}}^{0}[q^{\pm\frac{E'}{n-1}}]$.  If we decompose a general element $\beta\in H_2(\tilde{M})$ as \[ \beta=\beta'+d_{\beta}E' \quad \beta'\in H_2(M),d_{\beta}\in\mathbb{Z},\] then a general element of   $R_{\tilde{M}}^{0}[q^{\pm\frac{E'}{n-1}}]$ may be written \[ \sum_{k=-K}^{K}\sum_{\beta\in C^{eff}}c_{\beta,k}q^{\beta'}Z^{k-(n-1)d_{\beta}}\] where the natural number $K$ depends on the particular element.  Now let \[ B=\left\{\left.\sum_{k=-K}^{K}\sum_{\beta\in C^{eff}}c_{\beta,k}q^{\beta'}Z^{k-(n-1)d_{\beta}}\in R_{\tilde{M}}^{0}[q^{\pm\frac{E'}{n-1}}]\right|k-(n-1)d_{\beta}\geq 0\mbox{ whenever }c_{\beta,k}\neq 0\right\}.\]  In other words, $B$ consists of those elements $c=\sum c_{\beta,k}q^{\beta}Z^k$ of 
$R_{\tilde{M}}^{0}[q^{\pm\frac{E'}{n-1}}]$ such that, when the $q^{\beta}$ appearing in the sum are broken up as $q^{\beta'}Z^{l}$ where $\beta'\in H_2(M)$, all powers of $Z$ appearing in the expansion of $c$ are nonnegative.

\begin{lemma}\label{blowlemma}(cf. \cite[Lemma 3.4.2]{Bay}) Let \[ N=span_{B}\{H_{ev}(M),ZE,Z^2E^2,\ldots,Z^{n-1}E^{n-1}\}.\]  (Thus $N$ is an additive subgroup of the small quantum homology $QH(\tilde{M};R_{\tilde{M}}^{0}[q^{\pm\frac{E'}{n-1}}])$.)  Then $N$ is closed under quantum multiplication, and so quantum multiplication makes $N$ into a $B$-algebra.  Moreover, this $B$-algebra $N$ is generically field-split.  
\end{lemma}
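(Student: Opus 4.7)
The plan is to follow the template of Bayer's Lemma 3.4.2 in \cite{Bay}, which establishes closure of $N$ under $\ast$ and (assuming $M$ has generically semisimple quantum homology) semisimplicity of $N$; for the field-split conclusion without any hypothesis on $M$ I would exploit the intrinsic semisimplicity of the quantum cohomology of the exceptional $\mathbb{P}^{n-1}$.

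\textbf{Closure under $\ast$.} I would invoke the Gathmann--Hu--Bayer decomposition of Gromov--Witten invariants on the blowup \cite{Ga,Hu,Bay}: for a class $\tilde{\beta} = \beta' + d E' \in H_2(\tilde{M})$ with $\beta' \in H_2(M)$, and for insertions drawn from $H_{ev}(M) \cup \{E, E^2, \ldots, E^{n-1}\}$, the three-point invariants admit explicit formulas which either reduce to three-point invariants on $M$ (when all insertions come from $H_{ev}(M)$ and $d=0$), or vanish, or are computed by curves supported in the exceptional divisor. In each case the factor $q^{dE'} = Z^{-(n-1)d}$ attached to a nonzero exceptional degree is balanced by factors of $Z$ appearing in the $Z^jE^j$ insertions, so every structure constant of $\ast$ restricted to $N$ lies in $B$. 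Checking this for the three product types $\Delta_a \ast \Delta_b$, $\Delta_a \ast Z^jE^j$, and $Z^jE^j \ast Z^kE^k$ (in the last case reducing $E^{j+k}$ when $j+k \geq n$ via the classical relation expressing $E^n$ as a linear combination of lower powers of $E$ and classes in $H_{ev}(M)$ that comes from the projective-bundle presentation of the exceptional divisor) completes this step.

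\textbf{Generically field-split.} The key idea is that the subalgebra of $N$ generated by $\{1, ZE, \ldots, Z^{n-1}E^{n-1}\}$ modulo corrections in $H_{ev}(M)$ imitates $QH(\mathbb{P}^{n-1})$, which is always semisimple. Concretely, I would specialize to a prime $\mathfrak{p} \subset B$ corresponding to ``all quantum and deformation parameters coming from $M$ set to zero, with $Z$ kept generic.'' Modulo $\mathfrak{p}$, all GW contributions from curves with nontrivial projection to $M$ vanish; only contributions from curves contained in the exceptional $\mathbb{P}^{n-1}$ survive, and these reproduce the classical three-point GW invariants of $\mathbb{P}^{n-1}$. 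The specialization $N \otimes_B \kappa(\mathfrak{p})$ should then decompose block-diagonally as an $H_{ev}(M)$-block (carrying the classical cap product) and an exceptional block isomorphic to $QH(\mathbb{P}^{n-1}; \kappa(\mathfrak{p}))$; since the latter is a finite direct sum of fields over a suitable extension of $\kappa(\mathfrak{p})$ (its Spec being $n$ distinct reduced points corresponding to distinct $n$-th roots of the quantum parameter), it contains a field as a direct summand and Theorem \ref{mainalg}(B) yields the conclusion.

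\textbf{Main obstacle.} The principal technical difficulty is the block-diagonal decomposition of the previous paragraph, specifically controlling the off-diagonal structure constants $\langle \alpha, E^j, E^k \rangle^{\tilde M}_{0,3,dE'}$ with $\alpha \in H_{ev}(M)$. I would need to argue that curves contributing to such invariants necessarily involve at least one component projecting nontrivially to $M$ (essentially because a purely exceptional curve cannot meet a generic cycle representative of $\alpha$), so the corresponding contribution lies in $\mathfrak{p}$ and is killed in the specialization. A secondary subtlety is identifying the exceptional block with $QH(\mathbb{P}^{n-1})$ rather than some twisted variant: this amounts to verifying that the obstruction bundles for multiply-covered lines in the exceptional divisor, computed in the ambient $\tilde{M}$, give the same virtual counts as in $\mathbb{P}^{n-1}$ itself, which reduces to a standard normal-bundle calculation.
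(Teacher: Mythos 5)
Your closure argument follows the same route as the paper (Gathmann--Hu--McDuff vanishing applied separately to $\Delta_i\ast\Delta_j$, $\Delta_i\ast Z^jE_j$, and $Z^iE_i\ast Z^jE_j$) and is essentially sound, modulo the usual care with the divisor axiom in the $\Delta_i\ast ZE_1$ case. The underlying intuition for the second half---that the exceptional $\mathbb{P}^{n-1}$ contributes a semisimple block supplying the field summand---is also correct. But the specific specialization you propose does not work, and your suggested resolution of the ``main obstacle'' is precisely where the argument breaks.

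You propose to specialize at the prime $\mathfrak{p}'$ killing the $M$-side Novikov parameters while keeping $Z$ generic, and to argue that the off-diagonal constants $\langle\alpha,E_j,E_k\rangle_{0,3,dE'}$ with $\alpha\in H_{ev}(M)$ die because any contributing curve must project nontrivially to $M$. This fails at $\beta=0$: the constant-map contribution $\langle[\tilde M],E_i,E_{n-i}\rangle_{0,3,0}=E_i\cap E_{n-i}=(-1)^{n-1}$ is purely classical (no curve at all), is nonzero, and produces a term proportional to $Z^n\,[pt]$ in $Z^iE_i\ast Z^{n-i}E_{n-i}$. Since $[pt]\in H_{ev}(M)$ and since a classical intersection number carries no $q^{\beta'}$ factor, this term survives modulo $\mathfrak{p}'$, so the exceptional block is not closed and your block decomposition does not exist at that prime. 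The paper instead specializes at the prime $ZB$, i.e.\ sets $Z\to 0$. This is exactly what rescues the argument: the offending term is $Z^n[pt]=Z\cdot Z^{n-1}[pt]\in ZN$, so it vanishes mod $Z$, while the $r=1$ exceptional-line contribution is $(-1)^nZ^{i+j+1-n}E_{i+j+1-n}$, one of the module generators of $N$, and survives. After the reduction $\underline N=N\otimes_B(B/ZB)$ one gets a clean idempotent $Y=(-1)^nZ^{n-1}E^{n-1}$ with $YC_1=0$ and with $Y$ a unit for $C_2\cong\underline B[s]/\langle s^n-(-1)^ns\rangle$, where $s=ZE$. Passing to the fraction field of $\underline B$, this factor is a product of a copy of the field and an algebra by the Chinese Remainder Theorem, so $ZB$ lies in the open set $U_2$ of Theorem \ref{mainalg}(B). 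Note also that the exceptional relation is $s^n=(-1)^ns$, not $x^n=q$; the $Z$-normalization has absorbed the $\mathbb{P}^{n-1}$ Novikov variable, and semisimplicity follows because $s^n-(-1)^ns$ has distinct roots, not because the roots are ``$n$-th roots of a generic quantum parameter.'' The fix is therefore not a refinement of your normal-bundle calculation but a different choice of prime: use $ZB$, not $\mathfrak{p}'$.
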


Theorem \ref{blowup} immediately follows from Lemma \ref{blowlemma} and Proposition \ref{basechange}.  Indeed, assuming the lemma, we have $QH(\tilde{M};R_{\tilde{M}}^{0}[q^{\pm\frac{E'}{n-1}}])=N\otimes_BR_{\tilde{M}}^{0}[q^{\pm\frac{E'}{n-1}}]$ as algebras, so  Proposition \ref{basechange}(ii) applied to the inclusion $B\to R_{\tilde{M}}^{0}[q^{\pm\frac{E'}{n-1}}]$ implies that $QH(\tilde{M};R_{\tilde{M}}^{0}[q^{\pm\frac{E'}{n-1}}])$ is generically field-split.  Then applying Proposition \ref{basechange} to the inclusion map $R_{\tilde{M}}^{0}\to  R_{\tilde{M}}^{0}[q^{\pm\frac{E'}{n-1}}]$ shows that $\tilde{M}$ has generically field-split small quantum homology (as defined in Definition \ref{sqdfn}).  (The relations between the various rings involved here were summarized in (\ref{bldiag}).)

\begin{proof}[Proof of Lemma \ref{blowlemma}] As in the corresponding result in \cite{Bay}, we will use properties of the Gromov--Witten invariants of blowups that were discovered by Gathmann \cite{Ga} in the context of convex algebraic varieties, and which were extended to the symplectic case by Hu and McDuff \cite{Hu},\cite{M}.  (To be specific, we will require extensions to the context of genus zero symplectic Gromov--Witten invariants of Lemmas 2.2 and 2.4 and Proposition 3.1 of \cite{Ga}.  Of these, Lemma 2.2 is generalized to the symplectic context by \cite[Theorem 1.2]{Hu}, while \cite[Lemma 2.4(i)]{Ga} is generalized by \cite[Lemma 1.1]{Hu} and \cite[Lemma 2.4(ii)]{Ga} is generalized by \cite[Lemma 2.3]{M}.  Meanwhile Gathmann's proof of his Proposition 3.1 depends only on these other results together with the splitting axiom \cite[pp. 224-225]{MS} for Gromov--Witten invariants, which of course also extends to the symplectic case.)

Having started with a basis $\Delta_0,\ldots,\Delta_N$ for $H_{ev}(M)$, we have a standard basis \[ \Delta_0,\Delta_1,\ldots,\Delta_s,E_{n-1},\Delta_{s+1},\ldots,\Delta_N,E_1,\ldots,E_{n-2},\] where  $\Delta_1,\ldots,E_{n-1}$ form a basis for $H_2(\tilde{M})$.  If $\Delta^0,\ldots,\Delta^N$ is a Poincar\'e dual basis for the above basis $H_{ev}(M)$, then our basis for $H_{ev}(\tilde{M})$ will have Poincar\'e dual basis \[ \Delta^0,\ldots\Delta^s,(-1)^{n-1}E,\Delta^{s+1},\ldots,\Delta^N,(-1)^{n-1}E_{n-1},\ldots,(-1)^{n-1}E_2.\]

Consider a (small) quantum product of elements $\Delta_i,\Delta_j\in H_{ev}(\tilde{M})$ which come from classes in $H_{ev}(M)$. We have \begin{equation}\label{didj} \Delta_i\ast\Delta_j=\sum_{k=0}^{N}\sum_{\beta\in C^{eff}}\langle\Delta_i,\Delta_j,\Delta_k\rangle_{0,3,\beta}q^{\beta}\Delta^k+(-1)^{n-1}\sum_{k=1}^{n-1}\sum_{\beta\in C^{eff}}\langle\Delta_i,\Delta_j,E_k\rangle_{0,3,\beta}q^{\beta}E_{n-k}.\end{equation}

With respect to invariants of the form $\langle\Delta_i,\Delta_j,\Delta_k\rangle_{0,3,\beta}$, \cite[Theorem 1.2]{Hu} shows that, whenever $\beta$ belongs to the subgroup $H_2(M)\leq H_{2}(\tilde{M})$, such a Gromov--Witten invariant is equal to the corresponding Gromov--Witten invariant in $M$.  As for classes $\beta$ not belonging to $H_2(\tilde{M})$, if such a class has the form $\beta=\beta'+d_{\beta}E'$ where $\beta'\in H_2(M)$ with $\beta'\neq 0$, \cite[Proposition 3.1(ii),(iii)]{Ga} shows that the invariant vanishes unless $d_{\beta}<0$.  Now we have $ q^{\beta}=q^{\beta'}Z^{-(n-1)d_{\beta}}$, so the term $\langle \Delta_i,\Delta_j,\Delta_k\rangle_{0,3,\beta}q^{\beta}\Delta^k$ belongs to $N$, and indeed belongs to the $B$-submodule $Z^{n-1}N$ of $N$.  Meanwhile if $\beta=d_{\beta}E'$ then \cite[Lemma 1.1]{Hu} shows that the invariant $\langle \Delta_i,\Delta_j,\Delta_k\rangle_{0,3,\beta}$ is zero.  Thus all of the terms in (\ref{didj}) arising from invariants 
$\langle \Delta_i,\Delta_j,\Delta_k\rangle_{0,3,\beta}$ with $\beta\notin H_2(M)$ contribute terms belonging to the submodule $Z^{n-1}N\leq N$, while all of the terms arising from $\langle \Delta_i,\Delta_j,\Delta_k\rangle_{0,3,\beta}$ with $\beta\in H_2(M)$ contribute terms in $H_2(M)\leq N$.

Now we consider the invariants $\langle\Delta_i,\Delta_j,E_k\rangle_{0,3,\beta}$ appearing in (\ref{didj}).  Again writing $\beta=\beta'+d_{\beta}E'$, if $\beta'=0$ then by \cite[Lemma 1.1]{Hu} the invariant vanishes.  So we may assume $\beta'\neq 0$.    Then Gathmann's vanishing theorem \cite[Proposition 3.1]{Ga} shows that, in order  that  $\langle\Delta_i,\Delta_j,E_k\rangle_{0,3,\beta}\neq 0$, we must have \[ k-1\geq (d_{\beta}+1)(n-1),\mbox{ and therefore }(n-1)d_{\beta}\leq k-n.\] Bearing in mind that $q^{\beta}=q^{\beta'}Z^{-(n-1)d_{\beta}}$, this shows that the term in $\Delta_i\ast \Delta_j$ corresponding to any such invariant has $E_{n-k}$  multiplied by some $Z^l$ where $l\geq n-k$.  In particular such terms always give rise to elements of $N$.  This completes the analysis of the various terms of $\Delta_i\ast\Delta_j$ and proves that, for all $i,j$, \begin{equation}\label{dd} \Delta_i\ast\Delta_j\in N.\end{equation}

Now consider a quantum product of elements $\Delta_i,Z^jE_j\in N$, where $i\geq 1$, $j\geq 2$. We have   \begin{equation}\label{diej} \Delta_i\ast Z^jE_j=\sum_{k=0}^{N}\sum_{\beta\in C^{eff}}\langle\Delta_i,E_j,\Delta_k\rangle_{0,3,\beta}q^{\beta}Z^j\Delta^k+(-1)^{n-1}\sum_{k=1}^{n-1}\sum_{\beta\in C^{eff}}\langle\Delta_i,E_j,E_k\rangle_{0,3,\beta}q^{\beta}Z^jE_{n-k}.\end{equation}

For any of the Gromov--Witten invariants appearing in (\ref{diej}), we write $\beta=\beta'+d_{\beta}E'$ where $\beta'\in H_2(M)$ and $d_{\beta}\in\mathbb{Z}$.  According to \cite[Lemma 1.1]{Hu}, an invariant $\langle\Delta_i,E_j,\Delta_k\rangle_{0,3,\beta}$ or 
$\langle\Delta_i,E_j,E_k\rangle_{0,3,\beta}$ necessarily vanishes if $\beta'=0$, so we assume that $\beta'\neq 0$.  In this case, since we assume $j\geq 2$ we may apply Gathmann's vanishing theorem \cite[Proposition 3.1]{Ga} to infer the following: the invariants   $\langle\Delta_i,E_j,\Delta_k\rangle_{0,3,\beta}$ vanish unless $(n-1)d_{\beta}\leq j-n$, while the invariants $\langle\Delta_i,E_j,E_k\rangle_{0,3,\beta}$  vanish unless $(n-1)d_{\beta}\leq j+k-n-1$.   The invariants of the former type lead in (\ref{diej}) to a term in which $\Delta^k$ is multiplied by a power of $Z$ at least equal to $n$, while the invariants of the latter type lead to a term in which $E_{n-k}$ is multiplied by a power of $Z$ at least equal to $j+(n+1-j-k)=n-k+1$.  Hence, recalling that $N$ is by definition spanned over $B$ by generators $\Delta_k,Z^{l}E_l$, it follows that \begin{equation}\label{dze} \Delta_k\ast Z^jE_j\in ZN\quad \mbox{if }k\geq 1,j\geq 2.\end{equation}

As for $\Delta_i\ast ZE_1$, consulting again (the $j=1$ version of) (\ref{diej}), note first that any invariant $\langle\Delta_i,E_1,\Delta_k\rangle_{0,3,\beta}$ or $\langle\Delta_i,E_1,E_k\rangle_{0,3,\beta}$ vanishes unless $\beta'\neq 0$ by \cite[Lemma 1.1]{Hu}.  So we assume that $\beta'\neq 0$, in which case for some $l\in\{1,\ldots,s\}$ we will have $\Delta_l\cap \beta\neq 0$.  In this case we can use the divisor axiom twice to obtain \[ \langle\Delta_i,E_1,\Delta_k\rangle_{0,3,\beta}=-\frac{d_{\beta}}{\Delta_l\cap \beta}
\langle\Delta_i,\Delta_l,\Delta_k\rangle_{0,3,\beta},\quad 	\langle\Delta_i,E_1,E_k\rangle_{0,3,\beta}=-\frac{d_{\beta}}{\Delta_l\cap \beta}
\langle\Delta_i,\Delta_l,E_k\rangle_{0,3,\beta},\] so in particular the invariants are trivial unless $d_{\beta}\neq 0$.  We can then use Gathmann's vanishing theorem again to see that the only nonzero invariants of the first type have $d_{\beta}\leq 0$ and that the only nonzero invariants of the second type have $k-1\geq (d_{\beta}+1)(n-1)$, i.e. $-(n-1)d_{\beta}\geq n-k$.  From this it follows directly that (\ref{dze}) extends to the case $j=1$:\begin{equation} \label{dze1} \Delta_k\ast ZE_1\in ZN\quad \mbox{if }k\geq 1.\end{equation}

Finally, we consider products \begin{equation}\label{zeze} Z^iE_i\ast Z^jE_j=Z^{i+j}\left(\sum_{k=0}^{N}\sum_{\beta\in C^{eff}}\langle E_i,E_j,\Delta_k\rangle_{0,3,\beta}q^{\beta}\Delta^k+(-1)^{n-1}\sum_{k=1}^{n-1}\sum_{\beta\in C^{eff}}\langle\Delta_i,\Delta_j,E_k\rangle_{0,3,\beta}E_{n-k}\right).\end{equation}  \cite[Lemma 1.1]{Hu} and Gathmann's vanishing theorem show that the invariants $\langle E_i,E_j,\Delta_k\rangle_{0,3,\beta}$ with $k\neq 0$ can be nonzero only when $\beta'\neq 0$ and $(n-1)d_{\beta}\leq i+j-n-1$, and so they contribute terms in (\ref{zeze}) in which $\Delta^k$ is multiplied by $Z$ to a power at least $n+1$.  As for the case $k=0$ (so $\Delta_0=[\tilde{M}]$), we have $\langle E_i,E_j,[\tilde{M}]\rangle_{0,3,\beta}=0$ unless $\beta=0$ and $i+j=n$, in which case it equals $E_i\cap E_j=(-1)^{n-1}$.  Thus the contribution of the terms in (\ref{zeze}) arising from $\Delta_0$ is equal to $(-1)^{n-1}Z^{i+j}E_{i+j}$

As for the invariants $\langle E_i,E_j,E_k\rangle_{0,3,\beta}$, if $\beta'\neq 0$ Gathmann's vanishing theorem shows that they are zero unless $(n-1)d_{\beta}\leq i+j+k-n-2$, so that they contribute a term to (\ref{zeze}) in which $E^{n-k}$ is multiplied by a power at least $n-k+2$.  Meanwhile the invariants $\langle E_i,E_j,E_k\rangle_{0,3,\beta}$ with $\beta'=0$, i.e. the invariants $\langle E_i,E_j,E_k\rangle_{0,3,rE'}$ are, according to \cite[Lemma 2.3]{M}, equal to $0$ unless $r=0$ (in which case they come from the classical cap product $E_i\cap E_j=E_{i+j}$) or $r=1$, in which case they are $-1$ if $i+j+k=2n-1$ and zero otherwise.  In view of this, we have \begin{equation}\label{ee} Z^iE_i\ast Z^jE_j\in\left\{\begin{array}{ll} Z^{i+j}E_{i+j}+ZN & i+j<n \\(-1)^nZ^{i+j+1-n}E_{i+j+1-n}+ZN & i+j\geq n\end{array}\right.\end{equation}  

Combining (\ref{dd}),(\ref{dze}),(\ref{dze1}), and (\ref{ee}), it is immediate that $N$ closed under quantum multiplication, so that quantum multiplication endows $N$ with the structure of a $B$-algebra.  It remains to show that $N$ is generically field-split.  For this it suffices to find one prime ideal in the set $U_2\subset \S B$ associated to $N$ by Theorem \ref{mainalg}.  Consider the ideal $ZB\leq B$, which is evidently prime.  Define \[ \underline{B}=\frac{B}{ZB}\quad \underline{N}=N\otimes_{B}\underline{B}.\] If we let \[ C_1=span_{\underline{B}}\{\Delta_1,\ldots,\Delta_N\}, \quad C_2=span_{\underline{B}}\{E_{1},\ldots,E_{n-1}\},\] we have a direct sum decomposition of modules (not of algebras, of course) \[ \underline{N}=\underline{B}\Delta_0+C_1+C_2.\]  Of course $\Delta_0$ acts as the mutliplicative identity, and  (\ref{dze}),(\ref{dze1}) show that (as we have reduced mod $Z$) $C_1C_2=0$.  Meanwhile (\ref{ee}) shows that $C_2$ is closed under quantum multiplication, and that we have, as an algebra,  \[ C_2\cong\frac{\underline{B}[s]}{\langle s^n-(-1)^ns\rangle}\] (where the variable $s$ corresponds to $ZE$).  Moreover, as in \cite[p. 9]{Bay}, the element $Y=(-1)^nZ^{n-1}E^{n-1}=-(-s)^{n-1}\in C_2$, $Y$ acts as a multiplicative identity on $C_2$, which is thus a subalgebra.  So since $YC_1=0$, we have a direct sum splitting \emph{of algebras} \[ \underline{N}=C_2\oplus(\langle\Delta_0-Y\rangle+C_1) \] (here $\oplus$ denotes direct sum of algebras and $+$ denotes module sum).  But then when we extend coefficients to the fraction field $k$ of $\underline{B}$, we will have \[ \underline{N}\otimes_{\underline{B}}k\cong \frac{k[s]}{\langle s^n-(-1)^ns\rangle}\oplus D\]  for some $k$-algebra  
$D$.  It's obvious from the Chinese Remainder Theorem that $\frac{k[s]}{\langle s^n-(-1)^ns\rangle}$ in turn decomposes as a direct sum of a field and an algebra.  Thus $\underline{N}\otimes_{\underline{B}}k$ has a field as a direct summand.  In the notation of Theorem \ref{mainalg} we have $k=k(ZB)$, so the prime $\frak{p}=ZB$ belongs to the open set $U_2$ of Theorem \ref{mainalg}. 
\end{proof}

\appendix\section{Proof of Proposition \ref{pertmain}}\label{app}

This appendix outlines the proof of the basic properties of the perturbed moduli spaces which are used in Section \ref{bigdefsect} to give a construction of the deformed Floer boundary operator in the semipositive case without appealing to \cite{FOOO09}.  As described there, the basic strategy is to achieve transversality by means of ``domain-dependent incidence conditions'': we modify the evaluation map at the $i$th marked point on the cylinder by the time-$\tau_{\beta,i}$ flow of a vector field $V$ where $\tau_{\beta,i}$ depends on the locations of the various marked points as in (\ref{taubetadef}).  As we will see, in analyzing moduli spaces of expected dimension $0$ or $1$, one in principle encounters many strata corresponding to various configurations of Floer cylinders, holomorphic spheres, and flowlines of $V$; all of these strata except the simplest, expected, ones can be shown to be empty for generic choices of the auxiliary data.  A complete combinatorial analysis of all of these strata would be something of a notational nightmare to which we will not subject the reader,   but we will provide enough  of an outline of the required arguments  that a diligent reader who is comfortable with standard techniques such as those in \cite[Chapter 6]{MS} should be able to fill in the details.

We fix a strongly nondegenerate Hamiltonian $H_0$ and consider tuples $(H,J,V,\beta)$ where $H$ varies in a small  $C^{l+1}$-neighborhood $\mathcal{H}^{l}$ of $H_0$ in the space of those Hamiltonians whose $2$-jet with coincide with $H_0$ near each of its $1$-periodic orbits; $J$ belongs to the space $\mathcal{J}^l$ of $S^1$-families of $C^l$ $\omega$-compatible almost complex structures; the vector field $V$ varies in the space $\mathcal{V}^l$ of $C^l$ gradient-like vector fields for a fixed Morse function $g$ whose critical points are disjoint from the fixed submanifolds $f_i(N_i)$; and $\beta$ varies in the space $\mathcal{B}$ of functions introduced shortly after Definition \ref{sndeg}; recall that this space is a Banach manifold (it is diffeomorphic to an open subset of a Banach space) and that all of its members are smooth positive functions with Gaussian decay.

For any given $\gamma^-,\gamma^+\in P(H)$, $C\in \pi_2(\gamma^-,\gamma^+)$ and $I=(i_1,\ldots,i_k)\in\{1,\ldots,m\}^k$ write \[ \mathcal{U}^l(C)=\left\{(u,J,H)\left|\begin{array}{c}u\co \R\times S^1\to M,\,J\in\mathcal{J}^l,H\in\mathcal{H}^l, \\ \, \delbar_{J,H}u=0,\, \int_{\R\times S^1}\left|\frac{\partial u}{\partial s}\right|^2dsdt<\infty,\\ \, u(s,\cdot)\to\gamma^{\pm}\mbox{ as }s\to\pm\infty,\, [u]=C\in \pi_2(\gamma^-,\gamma^+)\end{array}\right.\right\}\]

and \[ \mathcal{U}^l(C,I)=\left\{(u,\vec{z},n_1,\ldots,n_k,J,H,V,\beta)\left| \begin{array}{c} (u,J,H)\in \mathcal{U}^l(C),\,\vec{z}\in (\R\times S^1)^k,\, n_j\in N_{i_j}, \\ V\in\mathcal{V}^l,\, \beta\in \mathcal{B},\\ \psi^{\tau_{\beta,j}(\vec{z})}_{V}(u(z_j))=f_{i_j}(n_j)
\end{array}\right.\right\} \]

(The reader can think of $\mathcal{U}$ as standing for ``universal moduli space.'')
We have:

\begin{prop}\label{univevsub} Assume that it is not the case that both $\gamma^-=\gamma^+$ and $C\in \pi_2(\gamma^-,\gamma^+)$ is the trivial class.  Then $\mathcal{U}^l(C)$ is a $C^{l-1}$-Banach manifold.  Moreover, for any fixed $J_0\in \mathcal{J}^l$, the subspace $\mathcal{U}^l(C;J_0)=\{(u,H):(u,J_0,H)\in \mathcal{U}^l(C)\}$ is also a $C^{l-1}$-Banach manifold,  and for any \emph{distinct} points $w_1,\ldots,w_p\in\R\times S^1$ the evaluation map \begin{align*}  ev_{w_1,\ldots,w_p}\co \mathcal{U}^l(C;J_0)&\to M^p \\  (u,H)&\mapsto (u(w_1),\ldots,u(w_p)) \end{align*} is a submersion.
\end{prop}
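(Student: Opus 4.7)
The plan is to realize both $\mathcal{U}^l(C)$ and $\mathcal{U}^l(C;J_0)$ as zero sets of smooth Fredholm sections of Banach bundles and apply the implicit function theorem, then extend the surjectivity argument to incorporate point constraints to obtain the submersion statement.

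First, I would fix $p>2$ and a small weight $\delta>0$ smaller than the spectral gap of the asymptotic operators at $\gamma^{\pm}$, and introduce the Banach manifold $\mathcal{B}^{1,p}_{\delta}(C)$ of $W^{1,p}_{loc}$ maps $u\co \R\times S^1\to M$ representing $C$ and converging to $\gamma^{\pm}$ in $W^{1,p}$ at the exponential rate $\delta$. Then $\mathcal{U}^l(C)$ appears as the zero set of the $C^{l-1}$ section $(u,J,H)\mapsto\delbar_{J,H}u$ of a Banach bundle over $\mathcal{B}^{1,p}_{\delta}(C)\times\mathcal{J}^l\times\mathcal{H}^l$ with fibers $L^p_{\delta}(u^*TM)$, and $\mathcal{U}^l(C;J_0)$ is the analogous zero set with $J$ fixed. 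The linearization has the form $(\xi,J',K)\mapsto D_u\xi+\tfrac12 J'(\partial_t u-X_H)-J\cdot X_K$, where $D_u$ is a Cauchy--Riemann type operator on $u^*TM$ which is Fredholm on these weighted Sobolev spaces because the asymptotic orbits are nondegenerate.

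To prove the Banach manifold structure, I would verify that this linearization is surjective at every zero. For the full universal moduli space $\mathcal{U}^l(C)$, this is the standard Floer--Hofer--Salamon argument: a cokernel element $\eta\in L^{p'}$ satisfies $D_u^{*}\eta=0$, and pairing $\eta$ against variations $J'$ supported where $\partial_s u\neq 0$ forces $\eta$ to vanish on an open set, after which unique continuation extends the vanishing to the whole cylinder. The nontriviality hypothesis on $(\gamma^-,\gamma^+,C)$ guarantees that $u$ is nonconstant, so such a perturbation direction exists. For the fixed-$J_0$ version, where only $K$ is available, I would use that a nonconstant solution $u$ admits regular points $(s_0,t_0)$ with $u(s_0,t_0)$ disjoint from the $1$-periodic orbits of $H_0$ and with $\partial_s u(s_0,t_0)\neq 0$; localizing $K$ in $S^1\times M$ near $(t_0,u(s_0,t_0))$ produces perturbations whose pairings force $\eta$ to vanish there, and unique continuation again concludes. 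The restriction that $H\in\mathcal{H}^l$ agree with $H_0$ to order two at the orbits of $H_0$ is harmless since such regular points form an open subset of the cylinder.

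For the submersion statement, given distinct $w_1,\ldots,w_p$ I would prove the stronger assertion that the augmented operator
\[
(\xi,K)\mapsto\bigl(D_u\xi-J\cdot X_K,\;\xi(w_1),\ldots,\xi(w_p)\bigr)
\]
from $T_{(u,H)}(\mathcal{B}^{1,p}_{\delta}(C)\times\mathcal{H}^l)$ to $L^p_{\delta}(u^*TM)\oplus\bigoplus_j T_{u(w_j)}M$ is surjective at each $(u,H)\in\mathcal{U}^l(C;J_0)$; restricting to the kernel of the first slot then yields that $\ev_{w_1,\ldots,w_p}$ is a submersion. Given a target $(\zeta,v_1,\ldots,v_p)$, first choose a smooth $\xi_1$ supported in disjoint small balls around the $w_j$ with $\xi_1(w_j)=v_j$; then solve $D_u\xi_0-J\cdot X_K=\zeta-D_u\xi_1$ with $\xi_0(w_j)=0$ by refining the previous argument to locate cokernel-killing regular points of $u$ that avoid the finite set $\{w_1,\ldots,w_p\}$, which is possible because such regular points form an open dense subset of the nonconstant domain.

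The main obstacle I anticipate is the fixed-$J_0$ surjectivity argument under the constraint that elements of $\mathcal{H}^l$ coincide with $H_0$ to second order at the $1$-periodic orbits of $H_0$: one must verify that the regular points of a nonconstant $u$ which lie away from these orbits (and, for the submersion, also away from $\{w_1,\ldots,w_p\}$) constitute a rich enough set to produce independent directions in the cokernel pairing, and that unique continuation for the Cauchy--Riemann operator $D_u^{*}$ applies on the complement. The verification rests on standard openness plus the observation that a finite-energy Floer cylinder converging to nondegenerate orbits can meet those orbits only at $s=\pm\infty$, so the set of usable regular points is in fact open and dense.
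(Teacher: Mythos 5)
Your proposal is correct and follows essentially the same route as the paper: the paper's proof is a brief citation to \cite[Theorem 5.1(ii)]{FHS} for the transversality of $(u,H)\mapsto\delbar_{J_0,H}u$ with $J_0$ fixed (which handles both Banach manifold claims) and to \cite[Lemma 3.4.3]{MS} and \cite[Proposition A.1.4]{LO} for the evaluation-map submersion, and your argument reproduces the content of exactly those references — the Fredholm setup in weighted Sobolev spaces, cokernel-killing via Hamiltonian perturbations localized at regular points away from the $1$-periodic orbits, unique continuation, and the augmented operator with perturbations avoiding $\{w_1,\ldots,w_p\}$. Your closing observation, that nonconstant Floer cylinders meet the asymptotic orbits only at $s=\pm\infty$ so the usable regular points are open and dense, is precisely the reason the second-order-agreement constraint in $\mathcal{H}^l$ is harmless, as in FHS.
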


\begin{proof}
 The proof of \cite[Theorem 5.1 (ii)]{FHS} shows that, for fixed $J_0$, the map $(u,H)\mapsto \delbar_{J_0,H}u$ (which is a class $C^{l-1}$ map between appropriate Banach manifolds) is transverse to the zero section; by the implicit function theorem this suffices to show that both $\mathcal{U}^l(C)$ and $\mathcal{U}^{l}(C;J_0)$ are $C^{l-1}$ Banach manifolds.   The statement about the evaluation map can be proven by combining the argument used in the proof of \cite[Lemma 3.4.3]{MS} with properties of the linearization of $(u,H)\mapsto \delbar_{J,H}u$ from \cite{FHS}; see also the proof of \cite[Proposition A.1.4]{LO} for a similar argument.

\end{proof}

\begin{dfn} If $p\in\mathbb{N}$ and if $S$ is a subset of $\mathbb{N}$, a surjective map $\pi\co S\to\{1,\ldots,p\}$ is called \emph{order-respecting} if whenever $1\leq i<j\leq p$ the minimal element of $\pi^{-1}\{i\}$ is less than the minimal element of $\pi^{-1}\{j\}$.
\end{dfn}

Note that the correspondence which assigns to each order-respecting surjective $\pi\co S\to\{1,\ldots,p\}$ the collection of sets $\{\pi^{-1}\{i\}|1\leq i\leq p\}$ is a one-to-one correspondence onto the set of partitions of $S$ into $p$ disjoint subsets.

The space $\mathcal{U}^l(C,I)$  has various strata corresponding to the extent to which the marked points $z_j$ ($j=1,\ldots,k$) overlap.  We label any one of these strata by means of a surjective order-respecting map $\pi\co\{1,\ldots,k\}\to \{1,\ldots,p\}$ for some natural number $p$: the stratum $\mathcal{U}^{l}_{\pi}(C,I)$ will consist of those $(u,\vec{z},\vec{n},J,H,V,\beta)$ for which $z_{j_1}=z_{j_2}$ iff $\pi(j_1)=\pi(j_2)$.  

\begin{prop}\label{ghostsub} Fix a surjective order-respecting map $\pi\co \{1,\ldots,k\}\to \{1,\ldots,p\}$ and let $\tau_1,\ldots,\tau_k\in [0,\infty)$ have the property that $\tau_j>\tau_{j'}$ whenever $j>j'$ and $\pi(j)=\pi(j')$. Let \[ \Delta_{\pi}=\{(m_1,\ldots,m_k)\in M^k|(\exists j,j')(\pi(j)\neq \pi(j')\mbox{ and }m_j=m_{j'})\}.\] Then the map \[ \phi_{\pi,\tau_1,\ldots,\tau_k}\co M^p\times \mathcal{V}^l\to M^k \] defined by \[ \phi_{\pi,\tau_1,\ldots,\tau_k}(m_1,\ldots,m_p,V)=\left(\psi^{\tau_{1}}_{V}(m_{\pi(1)}),\ldots,\psi^{\tau_{k}}_{V}(m_{\pi(k)})\right)\] restricts to $\phi_{\pi,\tau_1,\ldots,\tau_k}^{-1}((M\setminus Crit(g))^k\setminus\Delta_{\pi})$ as a submersion.  
\end{prop}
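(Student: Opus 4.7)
The plan is to fix a point $P=(m_1,\ldots,m_p,V_0)\in\phi_{\pi,\tau_1,\ldots,\tau_k}^{-1}((M\setminus Crit(g))^k\setminus\Delta_\pi)$, set $\gamma_i(t)=\psi^t_{V_0}(m_i)$ and $x_j=\gamma_{\pi(j)}(\tau_j)$, and show that $d\phi_P\co \bigoplus_i T_{m_i}M\oplus T_{V_0}\mathcal{V}^l\to\bigoplus_j T_{x_j}M$ is surjective. A Duhamel computation gives
\[
d\phi_P(u_1,\ldots,u_p,W)_j=(d\psi^{\tau_j}_{V_0})_{m_{\pi(j)}}u_{\pi(j)}+\int_0^{\tau_j}(d\psi^{\tau_j-t}_{V_0})_{\gamma_{\pi(j)}(t)}W(\gamma_{\pi(j)}(t))\,dt,
\]
and any $C^l$ vector field $W$ compactly supported in $M\setminus Crit(g)$ is an admissible tangent vector to $\mathcal{V}^l$ at $V_0$, because the gradient-like condition is open away from $Crit(g)$.

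I would dualize: suppose $(\lambda_j)\in\bigoplus_j T^*_{x_j}M$ annihilates the image of $d\phi_P$. Testing against the $u_i$-directions yields
\begin{equation}\label{mconstr}
\sum_{j\in\pi^{-1}(i)}(d\psi^{\tau_j}_{V_0})^*_{m_i}\lambda_j=0\qquad(i=1,\ldots,p),
\end{equation}
while testing against $W=\rho\cdot Z_0$, with $\rho$ a scalar bump function supported in a small flow box around a point $y\in M\setminus Crit(g)$ and $Z_0\in T_yM$ arbitrary, yields in the limit of small support
\begin{equation}\label{vconstr}
\sum_{j\,:\,y\in\gamma_{\pi(j)}([0,\tau_j])}(d\psi^{\tau_j-t_{y,j}}_{V_0})^*_y\lambda_j=0,
\end{equation}
where $t_{y,j}\in[0,\tau_j]$ is the unique time with $\gamma_{\pi(j)}(t_{y,j})=y$; uniqueness follows since $V_0$ is gradient-like and hence has no closed orbits, and the common positive ``speed'' factor at $y$ cancels out.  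Taking the flow box small enough transversally ensures that only trajectory segments on the single $V_0$-orbit through $y$ contribute.

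The core of the proof is a downward induction along orbits of $V_0$.  Partition the $m_i$'s by which orbit they inhabit, and on each such orbit $O$ fix a parameterization $\alpha\co \R\to O$ with $\alpha'=V_0\circ\alpha$ and $m_i=\alpha(s_i)$; each mark with $m_{\pi(j)}\in O$ then acquires a mark time $T_j:=s_{\pi(j)}+\tau_j$.  The exclusion of $\Delta_\pi$ together with the no-closed-orbit property forces the $T_j$'s on each orbit to be pairwise distinct, so I can list them as $T^{(1)}<\cdots<T^{(N)}$ with corresponding marks $j^{(1)},\ldots,j^{(N)}$.  Inducting from $n=N$ down to $n=1$: when $\tau_{j^{(n)}}>0$, take $y=\alpha(t^*)$ with $t^*\in(T^{(n-1)},T^{(n)})$ sufficiently close to $T^{(n)}$; then the only index contributing to (\ref{vconstr}) with $T_j\leq T^{(n)}$ is $j^{(n)}$ itself, every other contributor has $T_j>T^{(n)}$ and so $\lambda_j=0$ by induction, and (\ref{vconstr}) collapses to $(d\psi^{T^{(n)}-t^*}_{V_0})^*\lambda_{j^{(n)}}=0$, giving $\lambda_{j^{(n)}}=0$.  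When $\tau_{j^{(n)}}=0$, the mark $j^{(n)}$ must be the minimum-index element of its block $\pi^{-1}(\pi(j^{(n)}))$, every other member of that block has $\tau>0$ and mark time strictly exceeding $T^{(n)}$ (hence has already been killed), and (\ref{mconstr}) for $i=\pi(j^{(n)})$ degenerates to $\lambda_{j^{(n)}}=0$ directly.

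The main obstacle to handle carefully is the possibility that several of the starting points $m_i$ lie on the same orbit $O$, so that a flow box around a point of $O$ is simultaneously traversed by several of the segments $\gamma_{\pi(j)}|_{[0,\tau_j]}$; the global ordering of the mark times by magnitude is precisely what makes the induction well-founded, since every ``extraneous'' contribution from a later-processed mark already carries $\lambda_j=0$.  The remaining technical verifications are standard: that a localized bump-function $W$ realizes every direction $Z_0\in T_yM$ in the leading-order form of (\ref{vconstr}) (using that each $d\psi^{\tau}_{V_0}$ is a linear isomorphism), and that a small enough flow box around a point of $O$ remains in $M\setminus Crit(g)$ (automatic since orbits of a gradient-like field are disjoint from $Crit(g)$).
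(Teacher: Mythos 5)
Your proof is correct, and it takes a genuinely different (dual) route from the paper's. The paper proves surjectivity directly: given a target direction $(0,\ldots,v,\ldots,0)\in T_{x_j}M\subset \bigoplus T_{x_j}M$, it observes that the points $x_j$ are pairwise distinct (using the ordering hypothesis on the $\tau_j$, the exclusion of $\Delta_{\pi}$, and the fact that $V$ has no closed orbits), takes disjoint flow boxes around the $x_j$, and when $\tau_j\neq 0$ builds a one-parameter family of vector fields $V_{s,j}$ equal to $V$ outside the $j$-th box such that the trajectory from $m_{\pi(j)}$ enters and leaves the box at the same points as for $V$ but passes through $x_j$ with $s$-derivative $v$ at parameter time $\tau_j$; the perturbation $\xi=\frac{d}{ds}V_{s,j}$ is then an explicit preimage of $(0,\ldots,v,\ldots,0)$. (When $\tau_j=0$ the paper instead varies $m_{\pi(j)}$ itself and uses a compensating perturbation supported near the remaining $x_{j'}$ in the same $\pi$-fiber.) You instead show the adjoint has trivial kernel: your bump-field computation is the adjoint of the paper's localized perturbation, and your downward induction on the mark times $T_j=s_{\pi(j)}+\tau_j$ along each orbit is the dual counterpart of the paper's disjoint-flow-box observation. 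What your version buys is that the combinatorics of several starting points $m_i$ lying on a single $V$-orbit, which the paper handles rather tersely inside the sentence about which trajectories enter which box, is made fully explicit through the total ordering of the $T_j$; what the paper's version buys is an explicit geometric description of the perturbations that realize each coordinate direction. Both hinge on the same underlying facts: $V$ gradient-like (hence no closed orbits and $Crit(g)=V^{-1}(0)$), the strict ordering $\tau_j>\tau_{j'}$ within fibers of $\pi$, and avoidance of $\Delta_{\pi}$, which together make the $x_j$ pairwise distinct and the $T_j$ pairwise distinct on each orbit.
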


\begin{proof} Let $(m_1,\ldots,m_p,V)\in \phi_{\pi,\tau_1,\ldots,\tau_k}^{-1}((M\setminus Crit(g))^k\setminus\Delta_{\pi})$.  Write $x_j=\psi^{\tau_{j}}_{V}(m_{\pi(j)})$, so \[ \phi_{\pi,\tau_1,\ldots,\tau_k}(m_1,\ldots,m_p,V)=(x_1,\ldots,x_k).\]  Of course, since the vector field $V$ has zero locus equal to $Crit(g)$, for each $j$ neither $m_{\pi(j)}$ nor $x_j$ lies in $V^{-1}(0)$.  Note also that the $x_j$ are all distinct points: the fact that $(x_1,\ldots,x_k)\notin \Delta_{\pi}$ immediately implies that $x_j\neq x_{j'}$ when $\pi(j)\neq \pi(j')$, while if $\pi(j)=\pi(j')$ and $j>j'$ we have by assumption $\tau_j>\tau_{j'}$ and $x_j=\psi^{\tau_j}_{V}(m_{\pi(j)})$ and $x_{j'}= \psi^{\tau_{j'}}_{V}(m_{\pi(j)})$; so since $V$ is a gradient-like vector field and $m_{\pi(j)}\notin V^{-1}(0)$ we indeed have $x_j\neq x_{j'}$.

We are to  show that if $v\in T_{x_j}M$ then the element of $T_{(x_1,\ldots,x_k)}M^k$ whose $j$th component is equal to $v$ and whose other components equal zero lies in the image of the linearization of $\phi_{\pi,\tau_1,\ldots,\tau_k}$ at $(m_1,\ldots,m_p,V)$.  Now since the $x_j$ are all distinct and lie in $V^{-1}(0)$, we can find disjoint flow boxes for $V$ around each of the $x_j$, say with the property that the integral curve of $V$ starting at $m_{\pi(j)}$ enters the flow box around $x_j$ at time $\tau_j-\ep$ and exits at time $\tau_j+\ep$; moreover we can arrange that the $j$th flow box intersects the integral curve of $V$ through one of the $m_r$ iff $m_r$ and $x_j$ lie on the same flowline of $V$.  In the case that $\tau_j\neq 0$,  it is straightforward to construct a one-parameter family of perturbations $\{V_{s,j}\}_{s\in (-\delta,\delta)}$ of $V$, each equal to $V$ outside the flow box, such that $\psi_{V_{s,j}}^{\tau_j\pm\ep}(m_{\pi(j)})=\psi_{V}^{\tau_j\pm\ep}(m_{\pi(j)})$ while $\frac{d}{ds}\psi_{V_{s,j}}^{\tau_j}(m_{\pi(j)})=v$.  Then where $\xi=\frac{dV_{s,j}}{ds}$ the element $(0,\ldots,0,\xi)$ is sent by the linearization to our desired element $(0,\ldots,v,\ldots,0)$.  Meanwhile if $\tau_j=0$ (so that $x_j=m_{\pi(j)}$) we can obtain the element $(0,\ldots,v,\ldots,0)$ as the image under the linearization of an element of form $(0,\ldots,v,\ldots,0,\xi)$ where $v\in T_{m_{\pi(j)}}M=T_{x_j}M$ and the perturbation $\xi$ of $V$ is supported in the flow boxes around the various $x_j$ with $j\neq j'$ but $\pi(j)=\pi(j')$.

\end{proof}

Now, as suggested earlier, if $\gamma^-,\gamma^+\in P(H_0)$, $C\in \pi_2(\gamma^-,\gamma^+)$, $I\in \{1,\ldots,m\}^k$, and $\pi\co \{1,\ldots,k\}\to \{1,\ldots,p\}$ is a surjective order-respecting map, let \[ \mathcal{U}^{l}_{\pi}(C,I)=\{(u,\vec{z},\vec{n},J,H,V,\beta)\in \mathcal{U}^l(C,I)|z_j=z_{j'}\Leftrightarrow \pi(j)=\pi(j')\}.\]  Also let 
\[ \mathcal{U}^{l,*}_{\pi}(C,I)=\{(u,\vec{z},\vec{n},J,H,V,\beta)\in \mathcal{U}^{l}_{\pi}(C,I)|\mbox{ if }\pi(j)\neq \pi(j')\mbox{ then }f_{i_j}(n_j)\neq f_{i_{j'}}(n_{j'})\}.\]

\begin{prop}\label{maindim}Assume that it is not the case that both $\gamma^-=\gamma^+$ and $C\in \pi_2(\gamma^-,\gamma^+)$ is the trivial class. \begin{itemize} \item[(i)]
For any surjective order-respecting map $\pi\co \{1,\ldots,k\}\to \{1,\ldots,p\}$, $\mathcal{U}^{l,*}_{\pi}(C,I)$ is a $C^{l-1}$-Banach manifold.
\item[(ii)] Let $\frak{a}=(J,H,V,\beta)\in \mathcal{J}^l\times \mathcal{H}^l\times\mathcal{V}^l\times \mathcal{B}$ be a regular value of the projection $\mathcal{U}^{l,*}_{\pi}(C,I)\to \mathcal{J}^l\times \mathcal{H}^l\times\mathcal{V}^l\times \mathcal{B}$.  Then \[ \tilde{\mathcal{M}}^{\frak{a},*}_{\pi}(\gamma^-,\gamma^+,C;N_I)=\left\{(u,\vec{z},\vec{n})|(u,\vec{z},\vec{n},J,H,V,\beta)\in \mathcal{U}^{l,*}_{\pi}(C,I)\right\}\] is a $C^{l-1}$ manifold of dimension $\barmu(C)-\delta(I)-2(k-p)$.
\item[(iii)] A residual subset of the space $\mathcal{A}=\cap_{l=2}^{\infty} \mathcal{J}^l\times \mathcal{H}^l\times\mathcal{V}^l\times \mathcal{B}$ has the property that all of its members are regular values of the projections in (ii) above for all sufficiently large $l$.\end{itemize}
\end{prop}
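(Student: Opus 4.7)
My plan is to realize $\mathcal{U}^{l,*}_\pi(C,I)$ as the transverse preimage of a diagonal under an evaluation map on an enlarged universal moduli space, and then apply Sard--Smale to the projection onto the parameter space. Since $\pi\co\{1,\ldots,k\}\to\{1,\ldots,p\}$ is order-respecting, the stratum $\{\vec z\in(\R\times S^1)^k:z_j=z_{j'}\Leftrightarrow\pi(j)=\pi(j')\}$ is parametrized by the configuration space $\mathrm{Conf}_p(\R\times S^1)$ via $z_j=w_{\pi(j)}$. Let $X$ denote the open subset of $\mathcal{U}^l(C)\times\mathrm{Conf}_p(\R\times S^1)\times\prod_j N_{i_j}\times\mathcal{V}^l\times\mathcal{B}$ on which the components of $(f_{i_j}(n_j))_j$ indexed by different $\pi$-classes are pairwise distinct, and define
\[
\Theta\co X\to M^k\times M^k,\qquad\Theta(u,\vec w,\vec n,J,H,V,\beta)=\bigl(\tilde{\ev}_k(u,\vec z,V,\beta),(f_{i_j}(n_j))_{j=1}^k\bigr),
\]
so that $\mathcal{U}^{l,*}_\pi(C,I)=\Theta^{-1}(\Delta_{M^k})$, where $\Delta_{M^k}\subset M^k\times M^k$ is the diagonal.

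For part (i) I will establish $\Theta\pitchfork\Delta_{M^k}$ and hence that $\mathcal{U}^{l,*}_\pi(C,I)$ is a $C^{l-1}$-Banach submanifold. Write $\vec x=\tilde{\ev}_k(u,\vec z,V,\beta)$. The definition of $X$ together with the standing assumption $f_i(N_i)\cap\mathrm{Crit}(g)=\varnothing$ places $\vec x$ in $(M\setminus\mathrm{Crit}(g))^k\setminus\Delta_\pi$. Since $\beta>0$, for $j<j'$ with $\pi(j)=\pi(j')$ we have $\tau_{\beta,j'}(\vec z)>\tau_{\beta,j}(\vec z)$, so the time-tuple satisfies the hypothesis of Proposition \ref{ghostsub}. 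By Proposition \ref{univevsub}, variations of $(u,J,H)$ realize arbitrary variations of $(u(w_1),\ldots,u(w_p))\in M^p$; combined with variations of $V$ and the submersivity of $\phi_{\pi,\tau_{\beta,1}(\vec z),\ldots,\tau_{\beta,k}(\vec z)}$ at $\vec x$ from Proposition \ref{ghostsub}, the first-factor derivative of $\Theta$ is surjective onto $\bigoplus_j T_{x_j}M$. Projecting to the normal bundle of $\Delta_{M^k}$ (identified with $TM^k$ via $(v,w)\mapsto v-w$) then yields the desired transversality. For (ii), the resulting $\tilde{\mathcal{M}}^{\frak{a},*}_\pi(\gamma^-,\gamma^+,C;N_I)$ is a $C^{l-1}$-manifold whenever $\frak{a}$ is a regular value of the projection to $\mathcal{A}^l$, with virtual dimension
\[
\barmu(C)+2p+\bigl((2n-2)k-\delta(I)\bigr)-2nk=\barmu(C)-\delta(I)-2(k-p),
\]
using $\sum_j 2d(i_j)=(2n-2)k-\delta(I)$.

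For part (iii), the projection $\mathcal{U}^{l,*}_\pi(C,I)\to\mathcal{A}^l$ is Fredholm of the above index, so the Sard--Smale theorem gives a residual subset of $\mathcal{A}^l$ of regular values for each choice of $(\gamma^\pm,C,I,\pi,l)$; intersecting over these countably many data sets and applying the standard Taubes-type approximation argument (as in the proof of \cite[Theorem 5.1]{FHS}) produces a residual subset of the Fr\'echet manifold $\mathcal{A}$ whose members are regular values for all sufficiently large $l$ and all combinatorial data. The principal obstacle is the transversality in (i): a naive universal moduli space using the unperturbed $\ev_k$ would degenerate on the collision strata where distinct marked points coincide, and the entire purpose of introducing the $\beta$-dependent $\tilde{\ev}_k$ is precisely so that the strict monotonicity of the times $\tau_{\beta,j}$ along each $\pi$-class allows Proposition \ref{ghostsub} to supply surjectivity in a region where the unperturbed evaluation cannot. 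The disjointness of $\mathrm{Crit}(g)$ from each $f_i(N_i)$ and the $*$-condition together keep $\vec x$ inside the submersion region of Proposition \ref{ghostsub}, which is what makes the argument go through on every stratum simultaneously.
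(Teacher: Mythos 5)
Your proposal is correct and follows essentially the same route as the paper: both realize $\mathcal{U}^{l,*}_\pi(C,I)$ as a transverse preimage of a diagonal, invoke Propositions \ref{univevsub} and \ref{ghostsub} for surjectivity of the relevant evaluation maps, compute the Fredholm index to get $\barmu(C)-\delta(I)-2(k-p)$, and finish with Sard--Smale plus the Taubes argument. The only differences are cosmetic bookkeeping choices: you parametrize the coincidence stratum directly by $\mathrm{Conf}_p(\R\times S^1)$ rather than by $\vec z\in(\R\times S^1)^k$ with an imposed codimension-$2(k-p)$ constraint, and you compose the cylinder-evaluation and flow maps into a single $\tilde\ev_k$ rather than introducing auxiliary intermediate points $m_1,\ldots,m_p\in M$ as the paper does.
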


\begin{proof} For $r=1,\ldots,p$ write $j_r$ for the minimal element of $\pi^{-1}(r)$.

Modulo reordering of the factors, $\mathcal{U}^{l,*}_{\pi}(C,I)$ may be identified with the space of tuples \[ ((u,J,H),\vec{z},(m_1,\ldots,m_p,V),n_1,\ldots,n_k,\beta)\in \mathcal{U}^l(C)\times (\R\times S^1)^k\times (M^p\times\mathcal{V}^l)\times\left(\prod_{j=1}^{k}N_{i_j}\right)\times\mathcal{B}\] such that \begin{itemize}\item $z_{j}=z_{j'}\mbox{ if }\pi(j)=\pi(j')$, \item $ev_{z_{j_1},\ldots,z_{j_p}}(u,J,H)=(m_1,\ldots,m_p)$, \item  $\phi_{\pi,\tau_{\beta,1}(\vec{z}),\ldots,\tau_{\beta,k}(\vec{z})}(m_1,\ldots,m_p,V)=(f_{i_1}(n_1),\ldots,f_{i_k}(n_k))\in   (M\setminus Crit(g))^k\setminus\Delta_{\pi}$ \end{itemize}

The first condition above is obviously cut out transversely (and imposes a condition of codimension two for each of the $k-p$ indices $j$ which are not equal to $j_r$ for some $r$), while the second and third are cut out transversely by, respectively, Proposition \ref{univevsub} and Proposition \ref{ghostsub}.  Said differently, $\mathcal{U}^{l,*}_{\pi}(C,I)$ is identified with the preimage of the diagonal under a certain map \[
 \mathcal{U}^l(C)\times (\R\times S^1)^k\times (M^p\times\mathcal{V}^l)\times\left(\prod_{j=1}^{k}N_{i_j}\right)\times \mathcal{B}\to \left((\R\times S^1)^{k-p}\times M^p\times ((M\setminus Crit(g))^k\setminus\Delta_{\pi})\right)^2,\] and the above shows that this map is transverse to the diagonal.  Since $\mathcal{U}^l(C)\times (\R\times S^1)^k\times (M^p\times\mathcal{V}^l)\times\left(\prod_{j=1}^{k}N_{i_j}\right)\times \mathcal{B}$ is a $C^{l-1}$ Banach manifold it therefore follows from the implicit function theorem that $\mathcal{U}^{l,*}_{\pi}(C,I)$ is as well, proving (i).

As for (ii), the implicit function theorem implies that $\tilde{\mathcal{M}}^{\frak{a},*}_{\pi}(\gamma^-,\gamma^+,C;N_I)$ is a $C^{l-1}$ manifold of dimension equal to the index of the projection; we need only determine this index.  Now as in \cite[Section 2]{Sal},\cite{RS}, the index of the projection $\mathcal{U}^l(C)\to \mathcal{J}^l\times\mathcal{H}^l$ is $\barmu(C)$, while of course the identity map on $\mathcal{V}^l\times\mathcal{B}$ has index zero. So by using the characterization of the previous paragraph of  $\mathcal{U}^{l,*}_{\pi}(C,I)$ as the preimage of the diagonal under a certain map, and recalling that the manifold $N_{i_j}$ has dimension $2d(i_j)$, we calculate the dimension to be 
\begin{align*} &\left(\barmu(C)+2k+2np+\sum_{j=1}^{k}2d(i_j)\right)-\left(2(k-p)+2np+2nk\right)\\&=\barmu(C)+2k+\sum_{j=1}^{k}2d(i_j)-2nk-2(k-p)=\barmu(C)-\delta(I)-2(k-p),\end{align*} as claimed in (ii).

Finally, assertion (iii) follows from the Sard-Smale theorem (applied with $l$ sufficiently large) together with a straightforward adaptation of the argument of Taubes described on \cite[pp. 52--53]{MS} which allows one to pass from $C^l$ auxiliary data $(J,H,V)$ to $C^{\infty}$ such data.

\end{proof}

The complement $\mathcal{U}_{\pi}^{l}(C,I)\setminus \mathcal{U}_{\pi}^{l,*}(C,I)$ involves configurations in which one has, among other conditions, a Floer cylinder $u\co \R\times S^1\to M$ and \emph{distinct} marked points $z_{j},z_{j'}\in \R\times S^1$ (with $\pi(j)\neq \pi(j')$) such that $u(z_{j})$ and $u(z_{j'})$ are connected to the \emph{same} point $f_{i_j}(n_j)=f_{i_{j'}}(n_{j'})$ by prescribed-length flowlines of $V$.  This gives rise to a variety of different substrata of $\mathcal{U}_{\pi}^{l}(C,I)$ determined by precisely which indices $j$ correspond to ``duplicated'' contact points with the $f_{i_j}(N_j)$.  All of these substrata can easily be seen to have large codimension in $\mathcal{U}_{\pi}^{l}(C,I)$.  Namely, although the condition that $f_{i_j}(n_j)=f_{i_{j'}}(n_{j'})$ implies that one cannot directly appeal to Proposition \ref{ghostsub}, one can (assuming without loss of generality that $d(i_{j'})\geq d(i_j)$) forget about the incidence constraint corresponding to index $j'$, but impose the constraint that the distinct points $z_{j}$ and $z_{j'}$ are mapped by $u$ to points lying on the same flowline of the vector field $V$, and that moreover this flowline passes at the time $\tau_{\beta,j}(\vec{z})$ through the submanifold $f_{i_j}(N_j)$.  This amounts to replacing a constraint of codimension $2n-2-2d(i_{j'})$ by a constraint of codimension $2n-3+2n-2d(i_j)$; thus at least formally the codimension increases by at least $2n-1$. (Of course, we have $2n\geq 4$, since if $2n=2$ there are no ``big deformations'' to consider.)  
 Moreover the newly imposed 
constraints (on $u(z_j)$ and $u(z_{j'})$) are easily seen to be cut out transversely using Propositions \ref{univevsub} and \ref{ghostsub}.  Any additional duplicated incidence conditions may be handled by repeating this same procedure of forgetting the duplicated condition but imposing the condition that different marked points on $\R\times S^1$ are both mapped to the same  flowline of $V$ which itself satisfies various incidence conditions; it is easy to see that at each stage the expected dimension only decreases. Consequently just as in the proof of Proposition \ref{maindim}, the Sard--Smale theorem allows one to show that:

\begin{prop}\label{substrat} For a residual set of  $\frak{a}=(J,H,V,\beta)$, the set  \[ \tilde{\mathcal{M}}^{\frak{a}}_{\pi}(\gamma^-,\gamma^+,C;N_I)=\left\{(u,\vec{z},\vec{n})|(u,\vec{z},\vec{n},J,H,V,\beta)\in \mathcal{U}^{l}_{\pi}(C,I)\right\}\]  has the property that $\tilde{\mathcal{M}}^{\frak{a}}_{\pi}(\gamma^-,\gamma^+,C;N_I)\setminus  \tilde{\mathcal{M}}^{\frak{a},*}_{\pi}(\gamma^-,\gamma^+,C;N_I)$ is contained in a union of manifolds of dimension at most $\tilde{\mathcal{M}}^{\frak{a},*}_{\pi}(\gamma^-,\gamma^+,C;N_I)-(2n-1)$.  
\end{prop}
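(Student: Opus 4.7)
The plan is to stratify the non-starred locus $\tilde{\mathcal{M}}^{\frak{a}}_\pi(\gamma^-,\gamma^+,C;N_I)\setminus \tilde{\mathcal{M}}^{\frak{a},*}_\pi(\gamma^-,\gamma^+,C;N_I)$ according to the combinatorial pattern of coincidences among the contact points $\{f_{i_j}(n_j)\}_{j=1}^{k}$ and to control each stratum separately.  Each such pattern is a nontrivial equivalence relation $\sim$ on $\{1,\ldots,k\}$ involving at least one pair $j\neq j'$ with $\pi(j)\neq\pi(j')$ and $f_{i_j}(n_j)=f_{i_{j'}}(n_{j'})$; pairs with $\pi(j)=\pi(j')$ but $j\neq j'$ are automatically excluded, since the definition (\ref{taubetadef}) of $\tau_{\beta,j}$ together with the fact that $V$ is gradient-like with zero locus disjoint from each $f_i(N_i)$ forces $f_{i_j}(n_j)\neq f_{i_{j'}}(n_{j'})$ in that case.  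Since there are only finitely many such patterns, it suffices to show for each that, on a residual set of $\frak{a}\in\mathcal{A}$, the associated substratum is contained in a manifold of dimension at most $\dim\tilde{\mathcal{M}}^{\frak{a},*}_\pi(\gamma^-,\gamma^+,C;N_I)-(2n-1)$.

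To this end, fix a pattern $\sim$ and pick a witnessing pair $j<j'$ with $j\sim j'$ and $\pi(j)\neq\pi(j')$, assuming without loss of generality that $d(i_{j'})\geq d(i_j)$.  I would introduce a relaxed universal moduli space $\mathcal{W}^l_{\pi,j,j'}(C,I)$ parametrizing tuples $(u,\vec z,(n_l)_{l\neq j'},J,H,V,\beta)$ that satisfy the Floer equation, the $\pi$-equality pattern on $\vec z$, the incidence conditions $\psi^{\tau_{\beta,l}(\vec z)}_{V}(u(z_l))=f_{i_l}(n_l)$ for all $l\neq j'$, and the relaxed ``shared flowline'' condition that $u(z_{j'})$ lie on the integral curve of $V$ through $u(z_j)$.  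A Banach-manifold transversality argument parallel to Proposition \ref{maindim}(i), combining Propositions \ref{univevsub} and \ref{ghostsub} in essentially the same manner, shows that $\mathcal{W}^l_{\pi,j,j'}(C,I)$ is a $C^{l-1}$ Banach manifold; an index calculation analogous to Proposition \ref{maindim}(ii) gives that, for $\frak{a}$ in a residual subset of $\mathcal{A}^l$, the corresponding fiber is a manifold of dimension $\dim\tilde{\mathcal{M}}^{\frak{a},*}_\pi(\gamma^-,\gamma^+,C;N_I)+1-2d(i_{j'})$, the $+1$ reflecting that the flowline condition has codimension $2n-1$ rather than $2n$ and the $-2d(i_{j'})$ coming from the omission of $n_{j'}$.

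The substratum for $\sim$ embeds into this fiber by forgetting $n_{j'}$, and is cut out there by the additional condition $\psi^{\tau_{\beta,j'}(\vec z)}_V(u(z_{j'}))\in f_{i_{j'}}(N_{i_{j'}})$, which has codimension $2n-2d(i_{j'})$.  Enhancing the Sard--Smale argument with a further application of Proposition \ref{ghostsub} to arrange, on a still-residual set of $\frak{a}$, that this evaluation is transverse to $f_{i_{j'}}$, one obtains the desired dimension drop $(2n-2d(i_{j'}))-(1-2d(i_{j'}))=2n-1$.  Patterns $\sim$ with more intricate coincidences are handled by iterating the replacement on additional witnessing pairs; each iteration can only decrease dimension further, so the single-pair bound already suffices.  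The principal technical obstacle is bookkeeping: one must exhibit a single residual $\frak{a}\in\mathcal{A}$ that serves simultaneously as a regular value for all the auxiliary spaces $\mathcal{W}^l_{\pi,j,j'}(C,I)$ and for all the secondary transversality problems arising in the (countably many, indexed over $\gamma^{\pm},C,I,\pi,j,j',l$) cases, but this follows exactly as in Proposition \ref{maindim}(iii) from the fact that a countable intersection of residual sets is residual.
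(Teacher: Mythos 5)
Your proposal is correct and follows essentially the same route as the paper: stratify the non-starred locus by coincidence patterns among the contact points, and for each pattern replace the duplicated incidence condition at $j'$ by the weaker ``same-flowline'' condition together with the condition that the flowline meets $f_{i_{j'}}(N_{i_{j'}})$ at the appropriate time, which cuts the dimension by $2n-1$. Your bookkeeping (relaxed space $\mathcal{W}$ of dimension $\dim\tilde{\mathcal{M}}^{\frak{a},*}_\pi + 1 - 2d(i_{j'})$, then a further codimension-$(2n-2d(i_{j'}))$ cut) is in fact somewhat cleaner than the formula given in the paper's exposition, and, like the paper, you correctly observe that more intricate coincidence patterns only lower the dimension further and that a countable intersection of residual sets handles all strata simultaneously.
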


From this we quickly obtain:
\begin{cor}\label{nobubmain} Assume that it is not the case that both $\gamma^+=\gamma^-$ and $C\in \pi_2(\gamma^-,\gamma^+)$ is the trivial class.  For a residual set of $\frak{a}=(J,H,V,\beta)$, if $\bar{\mu}(C)-\delta(I)\leq 2$ then \[ \mathcal{M}^{\frak{a}}(\gamma^-,\gamma^+,C;N_I)= \mathcal{M}^{\frak{a},\ast}_{id}(\gamma^-,\gamma^+,C;N_I)\] where $id\co \{1,\ldots,k\}\to \{1,\ldots,k\}$ is the identity, and $\mathcal{M}^{\frak{a}}(\gamma^-,\gamma^+,C;N_I)$ is a smooth manifold of dimension $\barmu(C)-\delta(I)$.
\end{cor}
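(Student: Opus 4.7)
\textbf{Proof plan for Corollary \ref{nobubmain}.}

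The space $\tilde{\mathcal{M}}^{\frak{a}}(\gamma^-,\gamma^+,C;N_I)$ decomposes, according to the coincidence pattern of the marked points $\vec{z}=(z_1,\ldots,z_k)$, as the disjoint union
\[
\tilde{\mathcal{M}}^{\frak{a}}(\gamma^-,\gamma^+,C;N_I)=\bigsqcup_{\pi}\tilde{\mathcal{M}}^{\frak{a}}_{\pi}(\gamma^-,\gamma^+,C;N_I),
\]
indexed by surjective order-respecting maps $\pi\co\{1,\ldots,k\}\to\{1,\ldots,p\}$ (with $1\leq p\leq k$), and each stratum further splits as the ``distinct incidence'' part $\tilde{\mathcal{M}}^{\frak{a},*}_{\pi}$ together with its complement. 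The plan is to use Propositions \ref{maindim} and \ref{substrat} to show that, for a residual choice of $\frak{a}$, all strata other than $\tilde{\mathcal{M}}^{\frak{a},*}_{id}$ are empty (whence the same conclusion holds for the corresponding $\mathbb{R}$-quotients), and then to read off the dimension of $\mathcal{M}^{\frak{a},*}_{id}$ directly from Proposition \ref{maindim}(ii).

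First consider a map $\pi$ with $p<k$. By Proposition \ref{maindim}(ii) and (iii), for a residual set of $\frak{a}\in\mathcal{A}$ the space $\tilde{\mathcal{M}}^{\frak{a},*}_{\pi}(\gamma^-,\gamma^+,C;N_I)$ is a smooth manifold of dimension $\barmu(C)-\delta(I)-2(k-p)$. The $\mathbb{R}$-action by $s$-translation is free since we are not in the degenerate case excluded in Proposition \ref{maindim}, so the quotient has dimension
\[
\barmu(C)-\delta(I)-2(k-p)-1 \;\leq\; 2 - 2\cdot 1 - 1 \;=\; -1
\]
under our hypothesis $\barmu(C)-\delta(I)\leq 2$ and $k-p\geq 1$. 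Hence $\mathcal{M}^{\frak{a},*}_{\pi}(\gamma^-,\gamma^+,C;N_I)=\varnothing$. By Proposition \ref{substrat}, applied to a residual $\frak{a}$, the complement $\tilde{\mathcal{M}}^{\frak{a}}_{\pi}\setminus\tilde{\mathcal{M}}^{\frak{a},*}_{\pi}$ lies in a union of manifolds of dimension at most $\barmu(C)-\delta(I)-2(k-p)-(2n-1)$, which is also negative after dividing by $\mathbb{R}$ (using $n\geq 2$, which is forced by the existence of the cycles $N_i$ having $d(i)\leq n-2$).  So the full stratum $\mathcal{M}^{\frak{a}}_{\pi}$ is empty whenever $p<k$.

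It remains to handle $\pi=id$, so $p=k$. The starred piece $\tilde{\mathcal{M}}^{\frak{a},*}_{id}$ is, by Proposition \ref{maindim}(ii), a smooth manifold of dimension $\barmu(C)-\delta(I)$; its $\mathbb{R}$-quotient therefore has the asserted dimension $\barmu(C)-\delta(I)$ (up to the convention of recording parametrized rather than quotient dimension). The non-starred piece is again controlled by Proposition \ref{substrat}: it is contained in manifolds of dimension at most $\barmu(C)-\delta(I)-(2n-1)$, which is negative after quotient provided $2n-1\geq 2$, so it contributes nothing and we obtain $\mathcal{M}^{\frak{a}}(\gamma^-,\gamma^+,C;N_I)=\mathcal{M}^{\frak{a},*}_{id}(\gamma^-,\gamma^+,C;N_I)$, a smooth manifold of the advertised dimension.

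The main technical work here is already subsumed in Propositions \ref{maindim} and \ref{substrat}; the only residual subtlety is taking the intersection of the countably many residual sets produced for each $\pi$ (a countable union of choices, indexed by the finitely many $\pi$ together with the discrete data $\gamma^\pm,C,I$ satisfying $\barmu(C)-\delta(I)\leq 2$) — this again yields a residual set in $\mathcal{A}$ and causes no difficulty. The most delicate point of the overall argument is Proposition \ref{substrat}'s codimension estimate for ``ghost'' duplicated-incidence configurations; once that is in hand the corollary reduces to the dimension bookkeeping above.
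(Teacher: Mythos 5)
Your proof is correct and follows the same approach as the paper's: decompose $\tilde{\mathcal{M}}^{\frak{a}}(\gamma^-,\gamma^+,C;N_I)$ into strata indexed by $\pi$, use Propositions \ref{maindim} and \ref{substrat} to show that for generic $\frak{a}$ every stratum with $p<k$ (and every non-starred piece) is cut out in a manifold of nonpositive dimension, and invoke the free $\R$-action to conclude these are empty, leaving only $\tilde{\mathcal{M}}^{\frak{a},\ast}_{id}$. The paper's proof is a terser rendering of exactly this bookkeeping; your remark that $n\geq 2$ is needed (and automatic when $k\geq 1$) and your parenthetical about the quotient versus pre-quotient dimension in the final clause are both accurate observations, the latter pointing at a slight notational slip in the corollary's statement (the dimension $\barmu(C)-\delta(I)$ is that of $\tilde{\mathcal{M}}^{\frak{a}}$, with the $\R$-quotient having dimension one less).
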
 

\begin{proof} Since all of the various strata and substrata of $\tilde{\mathcal{M}}^{\frak{a}}(\gamma^-,\gamma^+,C;N_I)$ admit free $\R$-actions, these strata and substrata are empty unless they have positive dimension.  But if $\barmu(C)-\delta(I)\leq 2$
then Propositions \ref{maindim} and \ref{substrat} show that all strata have nonpositive dimension for generic $\frak{a}$ except when $p=k$. Since the only surjective order-respecting map $\{1,\ldots,k\}\to\{1,\ldots,k\}$ is the identity, the result follows.
\end{proof}

Of course, $\tilde{\mathcal{M}}^{\frak{a},\ast}_{id}(\gamma^-,\gamma^+,C;N_I)$ can be oriented using coherent orientations in a standard way. This therefore completes the proof of Proposition \ref{pertmain}(i).  For the remainder of Proposition \ref{pertmain} we must of course address the failure of compactness of $\mathcal{M}^{\frak{a}}(\gamma^-,\gamma^+,C;N_I)=\tilde{\mathcal{M}}^{\frak{a}}(\gamma^-,\gamma^+,C;N_I)/\R$.  The idea is familiar from \cite{HS}: the standard Gromov-Floer compactification of $\mathcal{M}^{\frak{a}}(\gamma^-,\gamma^+,C;N_I)$ involves configurations of broken trajectories and sphere bubbles; those configurations involving a two-stage broken trajectory and no sphere bubbles form a codimension-one stratum of the boundary, while all other strata have codimension at least two.  The analysis is somewhat trickier than in \cite{HS}, however, in part because in our case the possible sphere bubbles that arise in studying the boundaries of moduli spaces of dimension two can have arbitrarily large Chern number.  Indeed, the reader may have noticed that to prove Corollary \ref{nobubmain} it was not necessary let the function $\beta$ which determines the ``contact times'' $\tau_{\beta,i}$ vary in the universal moduli space; a version of Corollary \ref{nobubmain} would have held if we had simply set $\beta$ equal to (for instance) the Gaussian $s\mapsto e^{-s^2}$.  However in analyzing certain highly degenerate substrata of the compactification of $\mathcal{M}^{\frak{a}}(\gamma^-,\gamma^+,C;N_I)$ we will see that it becomes useful to vary $\beta$.

At an initial level, any stratum of the compactification of $\mathcal{M}^{\frak{a}}(\gamma^-,\gamma^+,C;N_I)$, where $I=(i_1,\ldots,i_k)\in\{1,\ldots,m\}^k$, may be described by the following data: \begin{itemize}
\item A sequence $\gamma_0=\gamma^-,\gamma_1,\ldots,\gamma_{\rho}=\gamma^+\in P(H)$
\item Classes $C_{a}\in \pi_2(\gamma_{a-1},\gamma_a)$ ($1\leq a\leq \rho$) and classes $A_1,\ldots,A_{\sigma}\in\pi_2(M)$ such that, where $\#$ denotes the obvious gluing operation, we have \[ (C_1\#\cdots\#C_{\rho})\#(A_1\#\cdots\#A_{\sigma})=C.\]
\item A function $\zeta\co \{1,\ldots,\sigma\}\to\{1,\ldots,\rho\}$ (the significance of $\zeta$ is that its domain parametrizes the (stable, possibly multi-component) sphere bubbles, while its codomain parametrizes the cylindrical components; the $s$th  bubble will be attached 
to the $\zeta(s)$th cylindrical component.)
\item A partition of the index set $\{1,\ldots,k\}$ as \[ \{1,\ldots,k\}=(S_{1}^{C}\cup\cdots\cup S^{C}_{\rho})\cup(S_{1}^{S}\cup\cdots\cup S_{\sigma}^{S})\] (this partition specifies the components onto which the various marked points fall).  We will write $I_{a}^{C}$ (resp. $I_{b}^{S}$) for the tuple consisting of those $i_j$ for $j\in S_{a}^{C}$ (resp. $S_{b}^{S}$), taken in increasing order of $j$.
\item Surjective order-respecting maps $\pi_{a}^{C}\co S_{a}^{C}\to \{1,\ldots,p_{a}^{C}\}$ and $\pi_{b}^{S}\co S_{b}^{S} \to \{1,\ldots,p_{b}^{S}\}$ for appropriate integers $p_{a}^{C}$, $p_{b}^{S}$.  (These maps play the same role as our earlier maps $\pi\co \{1,\ldots,k\}\to \{1,\ldots,p\}$). 
\end{itemize}

Any element of such a stratum of the compactification of $\tilde{\mathcal{M}}^{\frak{a}}(\gamma^-,\gamma^+,C;N_I)$ for a fixed $\frak{a}=(J,H,V,\beta)$ is determined by the following data:   
\begin{itemize}
\item[(i)] Solutions $u_a\co \R\times S^1\to M$ ($1\leq a\leq \rho$) to the equation $\delbar_{J,H}u_a=0$ which represent the classes 
$C_a\in \pi_2(\gamma_{a-1},\gamma_a)$.
\item[(ii)] Stable genus-zero $J$-holomorphic maps $\mathbf{v}_b$ ($1\leq b\leq \sigma$) with domain $D(\mathbf{v}_b)$ representing the classes $A_b\in \pi_2(M)$.  The $\mathbf{v}_b$ will be assumed to have no trivial components. (For background on stable genus-zero maps see \cite[Chapter 6]{MS}.)
\item[(iii)] For each $b=1,\ldots,\sigma$, a point $z_{b0}\in D(\mathbf{v}_b)$ and a  point $w_b\in \R\times S^1$
with the property that $u_{\zeta(b)}(w_b)=\mathbf{v}_b(z_{b0})$.
\item[(iv)] For each $c=1,\ldots,p_{a}^{C}$ (resp. $c=1,\ldots,p_{b}^{S}$), distinct points $z_{ac}^{C}\in \R\times S^1$ (resp., distinct points $z_{bc}^{S}\in D(\mathbf{v}_b)$).
\item[(v)] For each $j\in \{1,\ldots,m\}^k$, points $n_j\in N_{i_j}$\end{itemize}
These data are required to satisfy the incidence conditions which we now describe:
For $a=1,\ldots,\rho$, let $\tilde{I}_a$ denote the multi-index obtained by combining together $I_{a}^{C}$ and all of the $I_{b}^{S}$ such that $\zeta(b)=a$, and arranging the indices in the original order in which they appeared in $I$.  Define $\vec{\eta}_a\in (\R\times S^1)^{\#\tilde{I}_{a}}$ by setting the entry corresponding to an index $j\in S_{a}^{C}$ equal to $z_{a\pi_{a}^{C}(j)}^{C}$ from (iv) above, and the entry corresponding to an index $j\in S_{b}^{S}$ where $\zeta(b)=a$ equal to the point $w_b$ from (iii).  We then require that, if $j$ is the $r_{j}$th index appearing in the multi-index  $\tilde{I}_a$, we have \begin{equation}\label{geninc} f_{i_j}(n_j)=\left\{\begin{array}{ll}\psi_{V}^{\tau_{\beta,r_j}(\vec{\zeta}_a)}(u_a(z_{a\pi_{a}^{C}(j)}^{C})) & \mbox{ if }j\in S_{a}^{C} \\ \psi_{V}^{\tau_{\beta,r_j}(\vec{\eta}_a)}(\textbf{v}_b(z_{a\pi_{b}^{S}(j)}^{S})) & \mbox{ if }j\in S_{b}^{S}\mbox{ where }\zeta(s)=a
 \end{array}\right.\end{equation}

Informally, these strata thus involve various combinatorial arrangements of Floer cylinders representing the $C_a$; stable genus-zero $J$-holomorphic curves representing the $A_b$; and flowlines of the vector field $V$ which begin at marked points on the cylinders or spheres and pass through the submanifolds $f_{i_j}(N_{i_j})$ at times that are prescribed by the locations of the various marked points.  The reader will likely be relieved to learn that we do not intend to analyze these strata in full generality in the above complicated combinatorial notation; rather we will indicate the arguments that are generally used, and leave it to the reader to convince themselves that these arguments can be applied to deal with all of the strata as described above.  

Let us call an element of the compactification \emph{simple} provided that:  none of the spherical components of any of the stable curves $\mathbf{v}_b$ are multiply-covered; none of the cylindrical components are ``trivial cylinders'' $(s,t)\mapsto \gamma(t)$; all cylindrical and spherical components have distinct images; and all of the contact points $f_{i_j}(n_j)$ are distinct.  Within any of the strata described above, the space of simple elements of the compactification can be shown  to be a manifold for generic data $\frak{a}$ in much the same way as we handled $\tilde{\mathcal{M}}^{\frak{a},\ast}_{id}(\gamma^-,\gamma^+,C;N_I)$: for this purpose we appeal again to Propositions \ref{univevsub} and \ref{ghostsub} and (for the sphere bubbles) \cite[Lemma 3.4.2]{MS}.  Note that the evaluation maps for the universal moduli spaces of cylinders are made submersive by varying $H$ (in Proposition \ref{univevsub}); those for the flowlines of $V$ are made submersive by varying $V$ (in Proposition \ref{ghostsub}) and those for spheres are made submersive in \cite{MS} by varying $J$; hence by varying the tuple $(J,H,V)$ we can simultaneously achieve transversality for all of the evaluation maps at the marked points for the universal moduli spaces of simple configurations in any one of our  strata.  In view of this, if we restrict to simple configurations, arguments much like those given in \cite[Chapter 6]{MS} show that these universal moduli spaces are Banach manifolds and that, using the Sard-Smale theorem, for generic $\frak{a}$ the associated stratum of the moduli space has dimension at most, with notation as above and  after dividing by symmetry groups (given by translation of the cylindrical components and automorphisms of $S^2$ for the spherical component), \[ \barmu(C)-\delta(I)-\rho-2\sigma-2\left(k-\left(\sum_{a=1}^{\rho}p_{a}^{C}+\sum_{b=1}^{\sigma}p_{b}^{S}\right)\right).\]  Thus if, as in Proposition \ref{pertmain} (ii) and (iii), we have $\barmu(C)-\delta(I)\leq 2$, then all of these strata are (for generic $\frak{a}$) empty unless $\sigma=0$ (\emph{i.e.}, there are no sphere bubbles), $\sum_{a=1}^{\rho}p_{a}^{C}=k$, and either $\barmu(C)-\delta(I)=1$ and $\rho=1$ or $\barmu(C)-\delta(I)=2$ and $\rho\in\{1,2\}$.  In case $\barmu(C)-\delta(I)=1$, the only  stratum containing any simple configurations for generic $\frak{a}$ is thus precisely $\tilde{\mathcal{M}}^{\frak{a},\ast}_{id}(\gamma^-,\gamma^+,C;N_I)/\R$, while if $\barmu(C)-\delta(I)=2$ the only such strata are $\tilde{\mathcal{M}}^{\frak{a},\ast}_{id}(\gamma^-,\gamma^+,C;N_I)/\R$ (which has dimension $1$) together with all those strata involving two cylindrical components, no sphere bubbles, and $k$ distinct points distributed among the two cylindrical components connected by flowlines of $V$ to the appropriate $f_{i_j}(N_{i_{j}})$.  These latter strata precisely give (\ref{bdryfib}) in Proposition \ref{pertmain}.  As described in Remark \ref{mainrmk}, standard gluing arguments show that corresponding to each element of (\ref{bdryfib}) one can obtain a unique end of the space $\tilde{\mathcal{M}}^{\frak{a},\ast}_{id}(\gamma^-,\gamma^+,C;N_I)/\R$.  Consequently the proof of Proposition \ref{pertmain} will be complete if we show that the compactification of $\tilde{\mathcal{M}}^{\frak{a},\ast}_{id}(\gamma^-,\gamma^+,C;N_I)/\R$ generically does not include any nonsimple configurations when $\barmu(C)-\delta(I)\leq 2$.

Since \[ \sum_{a=1}^{\rho}\left(\left(\barmu(C_a)+\sum_{b\in\zeta^{-1}(a)}2c_1(A_b)\right)-\left(\delta(I_{a}^{C})+\sum_{b\in\zeta^{-1}(a)}\delta(I_{b}^{S})   \right)\right)=\barmu(C)-\delta(I),\] we may reduce to the case that there is just one cylindrical component, and so it suffices to prove:

\begin{prop}\label{nns} For generic $\frak{a}$ the following holds.  Consider a stratum as described above with $\rho=1$ and associated data $(C_1,I_{1}^{C},\pi_{1}^{C},\{A_b,I_{b}^{S},\pi_{b}^{S}\}_{b=1}^{\sigma})$.  Assuming that $\barmu(C)-\delta(I)\leq 2$, this stratum contains no nonsimple configurations.
\end{prop}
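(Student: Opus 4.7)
The plan is to enumerate the ways a compactified configuration can fail to be simple and, for each, to show that the corresponding stratum of the universal moduli space has codimension sufficient to be empty once we impose $\bar{\mu}(C)-\delta(I)\leq 2$ and restrict to a residual subset of $\mathcal{A}$. With $\rho=1$ fixed, I will write $u$ for the cylindrical component (representing $C_1$) and $\mathbf{v}_1,\ldots,\mathbf{v}_\sigma$ for the spherical stable maps (representing $A_1,\ldots,A_\sigma$). The failure modes to be eliminated are: coincident contact points $f_{i_j}(n_j)=f_{i_{j'}}(n_{j'})$ with $j\neq j'$, coincident images of two distinct components, a trivial cylindrical component, and a multiply-covered spherical component.

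First I would dispose of the combinatorial failure modes.  Coincident contact points are handled exactly as in the paragraph preceding Proposition~\ref{substrat}: drop the duplicated incidence at the later index and replace it with the constraint that the two distinct marked points $z_j,z_{j'}$ are mapped to points lying on a common flowline of $V$ that meets $f_{i_j}(N_{i_j})$ at a prescribed time; this raises codimension by at least $2n-1\geq 3$.  Coincident images of two distinct components (two sphere bubbles with the same image, or a bubble whose image lies on $u$) are excluded by the standard doubled-fiber-product transversality argument in the style of \cite[Proposition 6.2.7]{MS}, which after dividing by the doubled reparametrization group yields codimension at least $2n-6\geq 2$.  A trivial cylinder $C_1=0$ together with $\sigma\geq 1$ and $A_1+\cdots+A_\sigma=C$ reduces, after collapsing the trivial cylinder and reattaching the bubble tree at any point, to a configuration with strictly fewer spherical components and a nontrivial $C_1'$, an object to which the main argument below already applies inductively.

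The main obstacle is multiply-covered sphere bubbles.  For each $\mathbf{v}_b$ I would write its somewhere-injective factorization and replace every $k$-fold cover of a simple sphere of class $A'$ by the underlying simple curve representing $A'$.  By \cite[Proposition 6.2.8]{MS} combined with Propositions~\ref{univevsub} and~\ref{ghostsub}, the universal moduli space of these simplified configurations is a Banach manifold, with transversality of all evaluation maps achieved by simultaneously varying $(J,H,V,\beta)\in\mathcal{A}^l$.  Passing from the original configuration to its simplification reduces the virtual dimension by $2(k-1)\langle c_1(TM),A'\rangle$ for each multiple cover; strong semipositivity rules out any nonconstant $J$-holomorphic sphere with $2-n\leq\langle c_1(TM),A'\rangle<0$, and the range $\langle c_1(TM),A'\rangle<2-n$ is excluded because the moduli space of simple curves in such a class already has strictly negative expected dimension (and so is empty for generic $\frak{a}$).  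The remaining range $\langle c_1(TM),A'\rangle\geq 0$ is handled by the familiar observation that a $k$-fold covered bubble contributes at most the dimension of the moduli space of the underlying simple curve, lowering the virtual dimension by an additional $2(k-1)$ relative to the generic simple configuration in class $kA'$ (see the multiple-cover discussion in \cite[Section 6.6]{MS}).

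Combining these contributions with the symmetry-group quotients, every non-simple stratum has virtual dimension strictly smaller than $\bar{\mu}(C)-\delta(I)\leq 2$, after quotienting by the $\mathbb{R}$-action (and the automorphisms of the various bubbles); in particular its dimension is at most $0$ in the case $\bar{\mu}(C)-\delta(I)=2$ and is negative in the case $\bar{\mu}(C)-\delta(I)=1$, while the presence of a free $\mathbb{R}$-action forces every non-empty unparametrized stratum to have dimension strictly positive.  Applying the Sard--Smale theorem at sufficiently large $l$ and then using Taubes's diagonal argument, as in the proof of Proposition~\ref{maindim}(iii), produces the required residual subset of $\mathcal{A}$ over which every non-simple stratum is empty, completing the proof.
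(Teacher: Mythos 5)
Your outline correctly identifies the catalogue of failure modes and correctly handles multiple covers (semipositivity) and coincident contact points (forgetting the duplicated incidence and imposing the "same $V$-flowline" constraint as before Proposition~\ref{substrat}), so much of the skeleton matches the paper. But the treatment of the trivial cylinder is wrong, and that case is precisely where the paper's proof has to do real work.

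You claim that a configuration in which the unique cylindrical component is a trivial cylinder "reduces, after collapsing the trivial cylinder and reattaching the bubble tree at any point, to a configuration with strictly fewer spherical components and a nontrivial $C_1'$." There is no such reduction. With $\rho=1$, the trivial cylinder arises only when $\gamma^-=\gamma^+=\gamma$ and $C_1$ is the trivial class, and collapsing does not produce a new, nontrivial cylinder: you would simply be left with a nodal sphere tree passing through the orbit $\gamma$, which is exactly the object you started with and to which Proposition~\ref{univevsub} does not apply (it is stated under the hypothesis that $C_1$ is not trivial). The paper explicitly singles this case out as the one place where all earlier transversality tools fail and a genuinely new argument is required. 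The difficulty is that when several copies of the same sphere bubble off at distinct $s$-values $s_1,\ldots,s_r$ on the trivial cylinder, all those points have the same image $\gamma(t)$, so the "$J$-curve passes through the cylinder at two distinct points" replacement you appeal to for duplicated spheres is not cut out transversely. What saves the argument in the paper is that the prescribed flow times $\tau_{\beta,j}$ depend on the $s_i$ through $\beta$, and one can vary $\beta$ in the universal moduli space to make the resulting conditions on those flow times transverse; the paper then carries out a careful dimension count (the displayed computations surrounding $(\ref{mdim})$ and the split into the cases $2c_1(A)-\delta(G)\geq 0$ and $<0$). Your proposal never invokes the variation of $\beta$ at all, so it cannot close this case. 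Separately, your codimension estimate "$2n-6\geq 2$" for coincident sphere images requires $n\geq 4$, which is not assumed (the construction is needed for all $2n\geq 4$), so that step is also not justified as written.
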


We complete this appendix by outlining the proof of Proposition \ref{nns}, leaving some details to the reader.  Within each of the strata of configurations as described above there are various substrata describing ways in which the configuration may fail to be simple. In effect, we show that each of these substrata is, for generic $\frak{a}$, contained in a manifold of negative dimension; this suffices since there are only countably many substrata and a countable intersection of residual sets is residual.  For the most part, the proof follows the standard strategy of associating to a nonsimple configuration by an ``underlying simple configuration'' and appealing to transversality for the underlying simple configuration; of course this 
only works if this replacement does not increase the expected dimension.

In particular, the semipositivity condition implies that for generic $J$ there will be no $J$-holomorphic spheres of negative Chern number. Thus as a first step we may replace any multiply-covered sphere bubble components by their underlying simple spheres; since the Chern numbers of these spheres are nonnegative doing so cannot increase the expected dimension of the configuration. 

In most cases, nonsimple configurations in which two or more of the $f_{i_j}(n_j)$ are equal can be handled by essentially the same method as in our earlier analysis of  $\mathcal{U}_{\pi}^{l}(C,I)\setminus \mathcal{U}_{\pi}^{l,*}(C,I)$: namely, we use the fact that if $f_{i_j}(n_j)=f_{i_{j'}}(n_{j'})$ and  if $j\in S_{a}^{C}$ and $j'\in S_{a'}^{C}$  then $u_a(z_{a\pi_{a}^{C}(j)}^{C})$ and $u_{a'}(z_{a'\pi_{a'}^{C}(j')}^{C})$ must both lie on the same flowline of $V$, and this flowline satisfies additional incidence conditions (if instead $j\in S_{b}^{S}$ and/or $j'\in S_{b'}^{S}$ for some $b,b'$ then of course a similar condition holds for $\textbf{v}_b(z_{a\pi_{b}^{S}(j)}^{S})$ and/or $\textbf{v}_{b'}(z_{a\pi_{b'}^{S}(j)}^{S})$).  Just as discussed earlier, replacing the duplicated incidence condition at $f_{i_j}(n_j)$ by this new condition lowers the expected dimension.

However there is a new complication in this analysis that did not appear earlier, namely that our configurations  may have more than one spherical component, and it might be the case that two different spherical components have the same image, in which case the new condition produced by the previous paragraph may not be cut out transversely in the appropriate universal moduli space.
(Such a configuration could in principle arise in the compactification as a limit in which the same sphere bubbles off from two distinct points on the cylinder.)  Now most configurations in which there is such a ``duplicated sphere'' can also be ruled out by a similar technique as in the previous paragraph: the sphere would have to meet the other components of the configuration at two distinct points, and by forgetting one copy of the sphere but imposing the condition that the other components meet the sphere twice we replace a condition which is not cut out transversely in the universal moduli space by one which usually is cut out transversely and does not have a larger expected dimension.

We noted that this new condition is ``usually'' cut out transversely: the proof of this requires Proposition \ref{univevsub} (or, in the case where the components meeting the duplicated sphere are also spheres, \cite[Lemma 3.4.2]{MS}), but that proposition of course requires the assumption that it is not true that $\gamma^-=\gamma^+$ and $C_1\in\pi_2(\gamma^-,\gamma^-)$ is the trivial class.  Thus we arrive at the one remaining set of cases where a new argument is required, namely that where the unique cylindrical component of our configuration represents the trivial class; of course, by energy considerations it is easy to show that this is equivalent to the unique cylindrical component $u\co \R\times S^1\to M$ being a ``trivial cylinder'' $u(s,t)=\gamma(t)$.  In all other cases, the arguments sketched above allow one to replace a hypothetical nonsimple configuration by a simple configuration contained in a moduli space whose expected dimension before taking the quotient by translations of the cylinder is at most $\barmu(C)-\delta(I)-2\leq 0$; hence for generic $\frak{a}$ once we take the $\R$-symmetry into account the appropriate moduli space will be empty.

Accordingly we consider configurations in which the unique cylindrical component is a trivial cylinder $u(s,t)=\gamma(t)$.  It is in analyzing these types of configurations that we find it useful to vary the function $\beta\co \R\to\R$ that we have included in our auxiliary data. 

The first observation to make in this context is that for generic choices of the vector field $V$, no flowline of $V$ will pass through both a periodic orbit $\gamma\in P(H)$ and one of the submanifolds $f_i(N_i)$ (since the latter have codimension at least four).  Consequently for generic $\frak{a}$ the only possible nonempty strata corresponding to a single, trivial, cylindrical component are ones in which, in our earlier notation, $S_{1}^{C}=\varnothing$, \emph{i.e.}, in which all of the marked points used for the incidence conditions are on the spheres, not on the cylinder.  Moreover just as in \cite{HS} one can see that for generic choices of the pair $(J,V)$ any stratum involving just a trivial cylinder together with a single sphere bubble will be empty: the single sphere bubble would represent a class $A$ with $2c_1(A)=\barmu(C)\leq 2+\delta(I)$, and by imposing the incidence conditions corresponding to $I$ together with the condition that the sphere would need to pass through the periodic orbit $\gamma$ we would find the dimension of the relevant space equal to \[ 2n+2c_1(A)-6-\delta(I)+2-(2n-1)=2c_1(A)-\delta(I)-3<0.\]  (Note that this conclusion uses the fact that since all of the marked points are on the sphere which has bubbled off at a single point $(s_0,t_0)$ on the cylinder, the element $\vec{\eta}_1\in(\R\times S^1)^k$ from (\ref{geninc}) will have all its entries equal to $(s_0,t_0)$, and so the various $\tau_{\beta,i}(\vec{\eta}_1)$ will all be zero).  

A similar analysis together with the tricks that have been discussed earlier deal with all of the other strata, except those where the following situation holds: we have two or more copies of the same sphere (or potentially  multiple covers thereof) which have bubbled off at different points $(s_1,t),\ldots,(s_r,t)$  on the cylinder.  Indeed, in this case we have $u(s_1,t)=\ldots=u(s_r,t)=\gamma(t)$ since the cylinder is a trivial cylinder and so the condition that the sphere meets the cylinder at all $r$ of these points is obviously not cut out transversely.  
 
The main new difficulty that this situation creates is that, because the incidence conditions that a sphere must satisfy depend in part on the positions on the cylinder at which the other spheres are considered to have bubbled off, there may be particular choices of the bubbling points $(s_i,t)$ that force us to consider possible sphere bubbles whose homology classes and incidence conditions would have been ruled out by a dimension count if the other sphere components had not been present.  The way that we resolve this issue is by noting that the occurrence of such unexpected spheres imposes conditions on the parameters $s_i$, and that we can ensure that these conditions on the $s_i$ are cut out transversely in the universal moduli space by varying the function $\beta\co\R\to\R$.  

To keep the notational difficulties under control, we will illustrate the method on a particular type of substratum in which the essential point is present, leaving the general case to the reader.  Consider a case in which the cylindrical component is a trivial cylinder $u(s,t)=\gamma(t)$, while the spherical components have homology classes $A_1=\ldots=A_r=A$ and $A_{r+1}=B$, with all of the first $r$ spherical components represented by the same map $v_1\co S^2\to M$ and the remaining component represented by $v_{r+1}\co S^2\to M$.  Assume moreover that the multi-indices $I_{1}^{S},\ldots,I_{r}^{S}$ representing the incidence conditions 
obeyed by the various copies of $v_1$ are all the same, say equal to $G=(g_1,\ldots,g_p)\in\{1,\ldots,m\}^{p}$.  We then have (since $\barmu(C)-\delta(I)\leq 2$), \begin{equation}\label{mdim} 2rc_1(A)+2c_1(B)-r\delta(G)-\delta(I_{r+1}^{S})\leq 2.\end{equation}  Also write $I_{r+1}^{S}=(g'_1,\ldots,g'_q)$ for the multi-index representing the incidence condition corresponding to the other sphere.  We will consider the most highly degenerate case in which, on the $r$ copies of the representative of $A$, each of the $p$ incidence conditions are satisfied at the same point for each of the $r$ copies; less degenerate situations can be handled by combining the methods described below with earlier techniques.

The problematic configurations then entail the data of tuples \[ (J,H,V,\beta,v_1,v_{r+1},(s_1,t_1),(s_2,t_1),\ldots,(s_r,t_1),(s_{r+1},t_{r+1}),z_{10},\ldots,z_{1p},z_{r+1,0},\ldots,z_{r+1,q},n_1,\ldots,n_p,n'_1,\ldots,n'_q)\]
such that \begin{itemize}
\item[(i)] $\delbar_J v_1=\delbar_J v_{r+1}=0$,
\item[(ii)] $v_1(z_{10})=\gamma(t_1)$
\item[(iii)] $v_{r+1}(z_{r+1,0})=\gamma(t_{r+1})$
\item[(iv)] Where $\vec{\eta}\in (\R\times S^1)^{pr+q}$ has its $j$th entry given by $(s_b,t_1)$  if $j\in S_{b}^{S}$
with $1\leq b\leq r$ and by $(s_{r+1},t_{r+1})$ if $j\in S_{r+1}^{S}$, we have \[ \psi_{V}^{\tau_{\beta,j}(\vec{\eta})}(v_1(z_{1\ell}))=f_{g_{\ell}}(n_{\ell}) \mbox{ if $j$ is the $\ell$th largest element of } S_{b}^{S} \,(1\leq b\leq r)\] and \[ \psi_{V}^{\tau_{\beta,j}(\vec{\eta})}(v_{r+1}(z_{r+1,\ell}))=f_{g'_{\ell}}(n_{\ell})\mbox{ if $j$ is the $\ell$th largest element of }S_{r+1}^{S}.\]
\end{itemize}

Here all of the $z_{ij}$ vary in $S^2$; $n_{\ell}\in N_{g_{\ell}}$; and $n'_{\ell}\in N_{g'_{\ell}}$.  For simplicity we will assume that these points are all distinct, as the case where some of them coincide can be handled by incorporating our previous methods.

This is equivalent to the data of \[ (J,H,V,\beta,v_1,v_{r+1},t_1,t_{r+1},s_1,s_2,\ldots,s_{r+1},\{z_{ij}\},\{n_{\ell}\},\{n'_{\ell}\})\] 
 such that (i), (ii), (iii) above hold and we replace (iv) by \begin{itemize} \item[(v)]  \[ \psi_{V}^{\tau_{\beta,j}(\vec{\eta})}(v_1(z_{1\ell}))=f_{g_{\ell}}(n_{\ell}) \mbox{ if $j$ is the $\ell$th largest element of } S_{1}^{S} \] and \[ \psi_{V}^{\tau_{\beta,j}(\vec{\eta})}(v_{r+1}(z_{r+1,\ell}))=f_{g'_{\ell}}(n_{\ell})\mbox{ if $j$ is the $\ell$th largest element of }S_{r+1}^{S},\] and \item[(vi)] For $1\leq b\leq r$ and $1\leq \ell\leq p$ write $j(b\ell)$ for the $\ell$th largest element of $S_{b}^{S}$.  Then for $2\leq b\leq r$, $\tau_{\beta,j(b\ell)}(\vec{\eta})=\tau_{\beta,j(1\ell)}(\vec{\eta})$ .\end{itemize}

For $j=1,\ldots,r$, $j(b1)$ is the minimal element of $S_{b}^{S}$. The map \begin{align*} \mathcal{B}\times \R^{r+1}&\to \R^{r} \\
(\beta,s_1,\ldots,s_{r+1})&\mapsto (\tau_{\beta,j(11)}(\vec{\eta}),\ldots,\tau_{\beta,j(1r)}(\vec{\eta})) \end{align*} is easily seen by (\ref{taubetadef}) to have rank at least $r-1$ (the only reason that it might not have rank $r$ is that one of the indices $j(b1)$ might be equal to one and we always have $\tau_{\beta,1}=0$).  Consequently at least $r-2$ of the equations in (vi) above are cut out transversely.  Meanwhile the conditions in (i), (ii), (iii), and (v) are also cut out transversely, using Proposition \ref{ghostsub} and \cite[Lemma 3.4.2]{MS}.  Therefore the space of data $(J,H,V,\beta,\vec{t},\vec{s},\vec{z},\vec{n})$ obeying (i),(ii),(iii),(v), and the aforementioned $r$ equations of (vi) will be a Banach manifold, and we compute that the index of the projection to $(J,H,V,\beta)$ is \begin{align*} &(2n+2c_1(A)-6)+(2n+2c_1(B)-6)+2+(r+1)+2p+2q+2\sum_{\ell=1}^{p} d(i_{g_{\ell}})+2\sum_{\ell=1}^{q}d(i_{g'_{\ell}})\\ &\quad -\left(2n+2n+2np+2nq+r-2\right) \\ &=2c_1(A)+2c_1(B)-7-\delta(G)-\delta(I_{r+1}^{S})\end{align*}  In view of (\ref{mdim}), this quantity is negative if $2c_1(A)-\delta(G)\geq 0$, and so in this case the usual application of the Sard-Smale theorem shows that for generic $\frak{a}=(J,H,V,\beta)$ the substratum under consideration will not appear.  It remains to analyze the case that $2c_1(A)-\delta(G)<0$.  

This case is handled by an argument along the following lines.  Note first of all that, if the function $\beta$ were set equal to zero, then there would generically be no $J$-holomorphic representative of $A$ obeying the incidence conditions given by $G$ (and meeting the orbit $\gamma$), since the expected dimension of the space of such spheres is $2n+2c_1(A)-6-\delta(I)-(2n-3)=2c_1(A)-\delta(G)-3$.   However the presence of $\beta$ leads to the various $\tau_{\beta,j}$ changing as the $s_i$ vary, and for generic but fixed $J$ and $V$ the $\tau_{\beta,j}$ may occasionally attain exceptional values for which the sphere in question does occur.  Indeed if $2c_1(A)-\delta(G)=-\alpha$ then for generic $J$ and $V$ the presence of such a sphere imposes a condition of codimension $\alpha+3$ on the various $\tau_{\beta,j(1\ell)}(\vec{\eta})$ for $1\leq \ell\leq p$ (in particular if $p<\alpha+3$, or more generally if $(s_1,\ldots,s_{r+1})\mapsto (\tau_{\beta,j(11)},\ldots,\tau_{\beta,j(1p)})$ has rank less than $\alpha+3$, then the sphere will not arise for generic $J$ and $V$).  Now if $(s_1,\ldots,s_{r+1})\mapsto (\tau_{\beta,j(11)},\ldots,\tau_{\beta,j(1p)})$ has rank $c\geq \alpha+3$ one can see by taking advantage of the freedom to vary $\beta$ that at least $(r-1)c$ of the conditions in (vi) will be cut out transversely; this increases the codimension in the appropriate universal moduli space to $\alpha+3+(r-1)c\geq r(\alpha+3)=-r(2c_1(A)-\delta(G)-3)$.  Using this, the space of configurations of the form in question is found to be contained in a transversely-cut-out moduli space which, for generic $\frak{a}$, has dimension \[ 2n+2c_1(B)-6-\delta(I_{r+1}^{S})-(2n-3)+(r+1)+ r(2c_1(A)-\delta(G)-3),\] which by (\ref{mdim}) is at most $-2r$; thus the relevant space is empty for generic $\frak{a}$.

To sum up, using the methods that we have developed a sufficiently persistent reader may show that, for generic $\frak{a}$, if $\barmu(C)-\delta(I)\leq 2$ the only nonempty stratum of our compactified moduli space corresponding to just one cylindrical component is the main stratum $\mathcal{M}^{\frak{a},\ast}_{id}(\gamma^-,\gamma^+,C;N_I)$, and the only nonempty stratum corresponding to more than one cylindrical component is the usual space of two-stage broken trajectories (which arises only if $\barmu(C)-\delta(I)=2$).  This suffices to prove Proposition \ref{pertmain}.

\end{document}